\tikzset{
	basic/.style  = {draw, text width=7cm, rectangle,align=center},
	root/.style   = {basic, thin, align=center,
		fill=white!30},
	level 2/.style = {basic, thin,align=center, fill=white!60,
		text width=8em},
	level 3/.style = {basic, thin, align=left, fill=white!60, text width=6.5em}
}
\newcommand{\hlabel}{\phantomsection\label}
\newcommand{\hptg}[2]{\hypertarget{#1}{#2}}
\numberwithin{equation}{section}
\newcommand{\bb}{\mathbb}
\newcommand{\cc}{\mathcal}
\newcommand{\ccP}{\mathscr{P}}
\renewcommand{\ge}{\geqslant}
\renewcommand{\le}{\leqslant}
\renewcommand{\geq}{\geqslant}
\renewcommand{\leq}{\leqslant}
\renewcommand{\Re}{\textrm{Re}}
\renewcommand{\Im}{\textrm{Im}}
\newcommand{\hpt}[2]{#2}
\newcommand{\tred}{}
\newcommand{\tredd}{\textcolor{red}}
\newcommand{\tb}{}
\definecolor{gr}{rgb}{0,.7,0}
\newcommand{\supp}{\operatorname{Supp}}
\newcommand{\tr}{\operatorname{Tr}}
\newcommand{\nab}{\nabla}
\newcommand{\ov}{\overline}
\newcommand{\mf}{\mathfrak}
\newcommand{\bbm}{\mathbbm}
\newcommand{\ttt}{\mathtt}
\newcommand{\bmrm}[2][2]{\mathbf{#2}^{#1}}
\newcommand{\Span}{\mathrm{Span}}
\newcommand{\Range}{\operatorname{Range}}
\newcommand{\fou}{\mathcal{F}}
\newcommand{\fouh}{\mathcal{F}^{\mathbf{e}}}
\newcommand{\FsemiM}{\bm{P}^M[\bm{\psi}]}
\newcommand{\FsemiMd}{\widehat{\bm{P}}^M[\bm{\psi}]}
\newcommand{\AM}{\bm{A}^M}
\newcommand{\HMd}{\bm{H}^{-1}_{\bm{\psi},M}}
\newcommand{\Mpsi}{\mm^{\mathbf e}_{{\bm{\psi}}}}
\newcommand{\Jp}{J_{\psi}}
\newcommand{\Jpo}{J_{\psi}}
\newcommand{\m}{\mathfrak{m}}
\newcommand{\phin}{\varphi^{(y)}_n}
\newcommand{\sqpi}{\frac{1}{\sqrt{2\pi}}}
\newcommand{\Hpsi}{H_\psi}
\newcommand{\Hpsinv}{H^{-1}_\psi}
\newcommand{\Hpsid}{\widehat{H}_{\ov{\psi}}}
\newcommand{\mpsi}{m_{\Hpsi}}
\newcommand{\Wphip}{W_{\phi_+}\left(\frac{1}{2}-\i\xi\right)}
\newcommand{\Wphim}{W_{\phi_-}\left(\frac{1}{2}+\i\xi\right)}
\newcommand{\WBpsi}{W^{(\beta)}_\psi}
\newcommand{\Wphi}{\frac{W_{\phi_+}\left(\frac{1}{2}+\mathrm{i}\xi\right)}{W_{\phi_-}\left(\frac{1}{2}+\mathrm{i}\xi\right)}}
\newcommand{\fouho}{\cc{F}^{e}}
\newcommand{\C}{\mathbf{C}}
\newcommand{\B}{\mathbf{B}}
\newcommand{\D}{\mathbf{D}}
\renewcommand{\SS}{\mathbf{S}}
\newcommand{\A}{\mathbf{A}}
\newcommand{\eps}{\epsilon}
\newcommand{\LL}{\Lambda}
\newcommand{\heb}{\mf{h}_{\eps,\beta}}
\renewcommand{\i}{\mathrm{i}}
\newcommand{\LRe}{\bmrm{L}(\bb{R},e)}
\newcommand{\LLRe}{\bmrm{L}(\bb R^d,{\bf e})}
\newcommand{\NN}{\mathbf{N}}
\newcommand{\mm}{\mathbf{m}}
\newcommand{\f}{\mathbf{f}}
\newcommand{\g}{\mathbf{g}}
\renewcommand{\tau}{\uptau}
\newcommand{\ccA}{\mathscr{A}}
\newcommand{\cD}{\mathcal{D}}
\newcommand{\rH}{\mathrm{H}}
\newcommand{\LLReM}{\bmrm{L}(\bb R^d,\mathbf{e}_M)}
\renewcommand{\tredd}{}
\newtheorem{thm}{Theorem}[section]
\newtheorem{prop}[thm]{Proposition}
\newtheorem{lem}[thm]{Lemma}
\theoremstyle{definition}
\newtheorem{definition}[thm]{Definition}
\theoremstyle{assump}
\newtheorem{corr}[thm]{Corollary}
\theoremstyle{remark}
\newtheorem{rem}[thm]{Remark}
\title[WS orbit of (log)-self-similar Markov semigroups]{Weak similarity orbit of (log)-self-similar Markov semigroups on the Euclidean space}
\author{P.~Patie}\thanks{The authors are grateful to two anonymous referees for useful comments and a careful reading that lead to an improvement of the quality of the paper.}
\address{School of Operations Research and Information Engineering, Cornell University, Ithaca, NY 14853, USA.}
\email{pp396@cornell.edu}
\author{R.~Sarkar}
\address{Department of Mathematics, University of Connecticut, Storrs, CT 06269, USA.}
\email{rohan.sarkar@uconn.edu}
\date{}
\subjclass[2010]{35P05, 47D07, 41A60, 60E07, 42C15, 	 40E05, 30D05, 44A20}
\keywords{Spectral theory, pseudo-differential operator, non-self-adjoint integro-differential operators, Markov semigroups, intertwining, self-similarity,  asymptotic analysis,  Bessel operators, Bernstein functions, special functions}
\begin{document}

\maketitle
\begin{abstract}
We start by identifying a class  of pseudo-differential operators, generated by  the set of continuous negative definite functions,  that are in the weak similarity (WS) orbit of the self-adjoint log-Bessel operator on the Euclidean space. These WS relations turn out to be  useful to first characterize a core for each operator in this class, which enables us to show that they generate  a class, denoted by $\ccP$, of non-self-adjoint $\cc{C}_0$-contraction positive semigroups. Up to a homeomorphism, $\ccP$  includes, as fundamental objects in probability theory, the family of self-similar Markov semigroups on $\bb{R}_+^d$. Relying on the WS orbit, \hptg{C4}{we  characterize the nature of the spectrum of each element in $\ccP$ that is used in their spectral representation} which depends on analytical properties of the Bernstein-gamma functions defined from the associated  negative definite functions, and, it is  either the point, residual, approximate or continuous spectrum. We proceed by providing a spectral representation of each element in $\ccP$ which is expressed in terms of Fourier multiplier operators and valid, at least, on a dense domain of a natural weighted $\bmrm{L}$-space.   Surprisingly, the domain is the full Hilbert space when the spectrum is the residual one, something which seems to be noticed for the first time in the literature. We end up the paper by presenting a series of examples for which all spectral components are computed explicitly in terms of special functions or recently introduced power series.

\end{abstract}

%%%%%%%%% Introduction and main results %%%%%%%%%%%%%%%%%%%%%%%%
\section{Introduction}
The spectral analysis of self-adjoint pseudo-differential operators (PDO) in Hilbert space  is by now well established and has offered many fascinating and deep ramifications in several branches of mathematics, see the monograph of Shubin \cite{Shubin}.
However,  although generic, the theory of non-self-adjoint ones is much less understood and unified, something which seems to be attributed to the variety of phenomena, such as the  instability of the spectrum under small perturbation that one encounters when studying such operators. We refer to the recent monograph of  Sj\"ostrand \cite{Sjos} for a thorough account on non-self-adjoint differential operators and a detailed study of their  spectral asymptotic.

In this  paper, we  develop an  in-depth and detailed analysis of the non-self-adjoint and non-local PDOs in the class \begin{equation*}
\mathscr{A}=\{\bm{A}^M_{\ttt{PDO}}[\bm{\psi}]; \: \bm{\psi}\in\NN^d_b(\bb R), M\in\mathrm{GL}_d(\bb{R})\} \end{equation*}
of densely defined operators in the weighted Hilbert space $\bmrm{L}(\bb{R}^d,{\bf e}_M)$, see \eqref{eq:defeb} for definition, and whose symbols take the form
\begin{equation} \hlabel{eq:symb}
\bbm{a}(\bm{x},\bm{\xi})=-\langle \hptg{C5}{e(-M^{-1}\bm{x})},\bm{\psi}(M^\top\bm{\xi})\rangle, \: \bm{x},\bm{\xi}\in\bb{R}^d %=-\sum_{k=1}^d e^{-(M\bm{x})_i}\psi_i((M\bm{\xi})_i)
\end{equation}
where $e(\bm{x})=(e^{x_1},\ldots, e^{x_d})$, $\langle.,.\rangle$ is the Euclidean inner product, $\bm{\psi}=(\psi_1,\ldots,\psi_d)\in\mathbf{N}_b^d(\bb{R})$, a vector of  continuous definite functions defined in  \eqref{eq:defNbd}, and $M \in {\rm{GL}}_d(\bb{R})$, the group of $d\times d$  invertible matrices. This includes the study of their domains, a classification scheme by weak similarity orbit, a notion that we introduce in Section \ref{sec:ws}, the generation of $\mathcal{C}_0$- positivity-preserving contraction semigroups, the spectral theory and representation of the latter,
and representation as integro-differential operators.

The first motivation  to investigate  this class of (non local) PDOs stems on their intimate connection with the family of self-similar Markov processes on $\bb{R}_+^d$, which have been intensively studied over the
last two decades from both theoretical and applied perspectives, see e.g.~\cite{Lamperti1972,Caballero2006,Patie-abs-08}.
These interests seem to be  attributed to their role played in limit theorems in probability theory.  This probably also \hptg{C7}{explains} the appearance of self-similar Markov processes in
many different studies, such as coalescence-fragmentation \cite{Bertoin_Frag}, random planar maps \cite{Bertoin-Igor-16}, and also in the study of fractional operators \cite{Patie-Simon,patie_zhao2017} to name but a few.  We shall prove that indeed the Dynkin generator of such processes, up to the homeomorphism $\bm{x}\mapsto e(\bm{x})$, coincide, on a core that we identify, with a pseudo-differential operator whose symbol is of the form \eqref{eq:symb} with $M$ the identity matrix.  The current work presents a very detailed spectral analysis of non-local and non-self-adjoint linear operators, for which few instances can be found in the literature.
We also  point out that in the recent paper \cite{MPS}, the authors introduce and develop in-depth analysis a class of continuous-time integer-valued Markov chains which are discrete self-similar with respect to some random discrete dilation operator.

We also aim at pursuing  the program initiated by the first author with M. Savov \cite{patiesavov1,patiesavov} and later, with other co-authors,  on the development of a spectral theory for non-self-adjoint Markov semigroups based on the concept of intertwining or related classification schemes, see  \cite{Choi-Patie, CP, CherPat, Patie-Miclo, MPS, PZS}.   From this perspective, this paper deepens the understanding of such ideas by considering on the one hand pseudo-differential operators combined with the Fourier operators to identify the intertwining relations, in the spirit of the work of Egorov \cite{Ego}. On the other hand, it also reveals that such an approach is efficient to deal with all different components of the spectrum, including the continuous, point, residual and approximate spectrum.

Let us now  describe  our main results and the strategy we have implemented to get them, and, we refer to the diagram \ref{fig:organization} for a visual description of the path we have followed. Our approach relies on the concept of weak similarity (WS) relation. More specifically, we say  that the two linear operators $P, Q$ have a  WS relation if
\begin{equation}\hlabel{eq:wso}
  P\Lambda =\Lambda Q\ \mbox{ on } \D(\Lambda )
\end{equation}
where   $\Lambda \in \mathscr{D}(H)$, i.e. it is a densely defined injective linear operator  with a dense range in a Hilbert space $H$, and we denote its domain by $\D(\Lambda )$,  see Section \ref{sec:ws} for a refined definition. In such case, we say that $P$ is in the WS orbit of $Q$. Since we shall show that, in our context, WS is an equivalence relation,  $Q$ may also be seen as a member of the WS orbit of $P$.

Coming back to our setting, we start  by showing, by a combination of Fourier and complex analysis,  that, under very mild condition, see Remark \ref{rem:cond} for a discussion,  the class of  symbols of the form \eqref{eq:symb} is in the WS orbit of the self-adjoint PDO whose symbol is given by
\begin{equation} \hlabel{eq:symbb}
\bbm{a}_0(\bm{x},\bm{\xi})=-\langle e(-\bm{x}),\bm{\psi}_0(\bm{\xi})\rangle, \: \bm{x},\bm{\xi}\in\bb{R}^d, %=-\sum_{k=1}^d e^{-(M\bm{x})_i}\psi_i((M\bm{\xi})_i)
\end{equation}
where $\bm{\psi}_0(\bm{\xi})=(\xi^2_1,\ldots,\xi^2_d)$. Let us point out  that in \cite{patiesavov}, in the one-dimensional case, relying on  original factorization of some random variables,   intertwining relations (this terminology means that $\Lambda$ in \eqref{eq:wso} is a bounded Markov operator) has been obtained between self-similar semigroups on $\bb{R}_+$ under some conditions on the negative definite functions $\psi$.  Although, by  the  isomorphism aforementioned, these relations could easily be transferred to our framework, the required conditions, which are  rather restrictive,  could not be relaxed. Moreover, an attempt to obtain weak similarity relations from the representation of the  infinitesimal generator as an integro-differential operator or Dynkin characteristic operator given by Lamperti \cite{Lamperti1972} also leads to some restrictive conditions such as, at least, the requirement of  analytical extension to some strip of the L\'evy-Khintchine exponent.

\hpt{R1}{These are the main reasons that have forced us to consider this question from a different and fresh perspective by starting instead} from the class of PDOs with symbols of the form \eqref{eq:symb} to first characterize the WS orbit of log-Bessel semigroups for almost all the class of log-Lamperti semigroups. However, this approach gives rise to several issues, such as the existence of a $\cc{C}_0$-contraction semigroup associated to this PDO's, the positivity preserving property of the latter as well as their relation to log-Lamperti semigroups. We emphasize that the latter is not a trivial exercise as, to the best of our knowledge, there does not exist an equivalent of Volkinski's formula for PDO's. We proceed by explaining the paths we follow to overcome these difficulties.

First, establishing a link between the PDO with symbol  \eqref{eq:symbb} and the Laplacian acting on the space of symmetric functions, and, resorting to  classical results from the theory of self-adjoint operators, we show that it generates the self-adjoint $\cc{C}_0$-contraction  semigroup, on  the weighted Hilbert space $\bmrm{L}(\bb{R}^d,{\mathbf{e}}_M)$, where ${\mathbf{e}}_M$ is defined in \eqref{eq:defeb}, of the log-squared Bessel process, which is also positivity preserving. Unfortunately, these classical  results do not carry over to the class of symbols of the form \eqref{eq:symb}.

To deal with the generation of a, in general non-self-adjoint, $\cc{C}_0$-contraction semigroup on the same  weighted $\bmrm{L}$-space, we first observe that the class of symbols of the form \eqref{eq:symb} is closed under conjugation allowing us to split the analysis of this class of PDO's into two. Then, using, in the one-dimensional case, the WS relation mentioned above  we manage to identify a core for one of these two classes as well as the dissipativity property which guarantees the existence of a $\cc{C}_0$-contraction semigroup. For the other remaining class, we resort to classical results from the theory of $\cc{C}_0$-contraction semigroup.
We also manage to lift the WS relations between the PDO's to the level of the $\cc{C}_0$-contractions semigroups.

Based on the WS relation and the well-known diagonalisation of the Bessel semigroup, see e.g.~\cite{MuS}, we proceed by giving, in Theorem \ref{th:spectral_decomp}, the spectral representation of the non-self-adjoint $\cc{C}_0$-contraction semigroups generated by these PDO's. It  is valid, at least, on a domain, defined in terms of objects related to the negative definite function $\bm{\psi}$, which is either the full or a dense subset of the Hilbert space $\bmrm{L}(\bb{R}^d,{\mathbf{e}}_M)$. The integral representation is expressed in terms of Fourier multiplier operators which are, themselves,  defined in terms of the so-called Bernstein-gamma functions associated to the Wiener-Hopf factors of the negative definite functions $\bm{\psi}$, see \eqref{eq:BG_receq} for definition. In this direction, we mention that spectral properties of some special instances  of self-similar Markov semigroups have already been studied in the literature. For instance, spectral expansion of self-similar semigroups corresponding to fractional derivative operators have been derived in \cite{patie_zhao2017}, the spectral representation of the transition density of stable processes on the half-line and related objects have  been  obtained in \cite{kK, kw,kw1, Mu}.

Next, to prove the positivity preserving property, thanks again to the characterization of the core by the WS relation, we show that each  PDO coincide with the Dynkin  operator of  a positive self-similar Markov process, see  \eqref{eq:dynk} for definition.  %and to get that they still generate $\cc{C}_0$-contraction positive semigroups on a weighted $\bmrm{L}$-space, which are, in general, non-self-adjoint.
The $d$-dimensional case is then deduced by means of a tensorization argument combined with a similarity transform.

We proceed by characterizing the nature of the spectrum which depends on the asymptotic behavior of the Bernstein-gamma functions, see \eqref{eq:BG_receq}, which we manage to reduce to easy-to-check properties on the Wiener-Hopf factors or, directly, on the negative definite functions. The class spans all parts of the spectrum including  the point, residual, approximate or continuous ones, which reveals the flexibility of the concept of WS in this context.  It is already worth pointing out that the spectral representation is valid on  the full Hilbert space (resp.~on a dense domain) when the spectrum is the residual one (resp.~point one), something which, to the best of our knowledge, seems to have been unnoticed in the literature.

The remaining part of the paper is organized as follows. After providing some general notations, we present our main results, including the WS orbit, the nature of the spectrum, the spectral representation  in Section \ref{sec:main}. Before providing the proofs of these results in Section \ref{sec:proof}, we recall, in Section \ref{sec:prelim} some substantial preliminary results and define some tools  that will be essential in the course of the proofs. The last Section is devoted to the detailed description of some new examples for which we express the spectral components in terms of known special functions or recently introduced power series generalizing the later. In Figure~\ref{fig:organization}, we provide a chart to indicate how the main results are connected to each other.

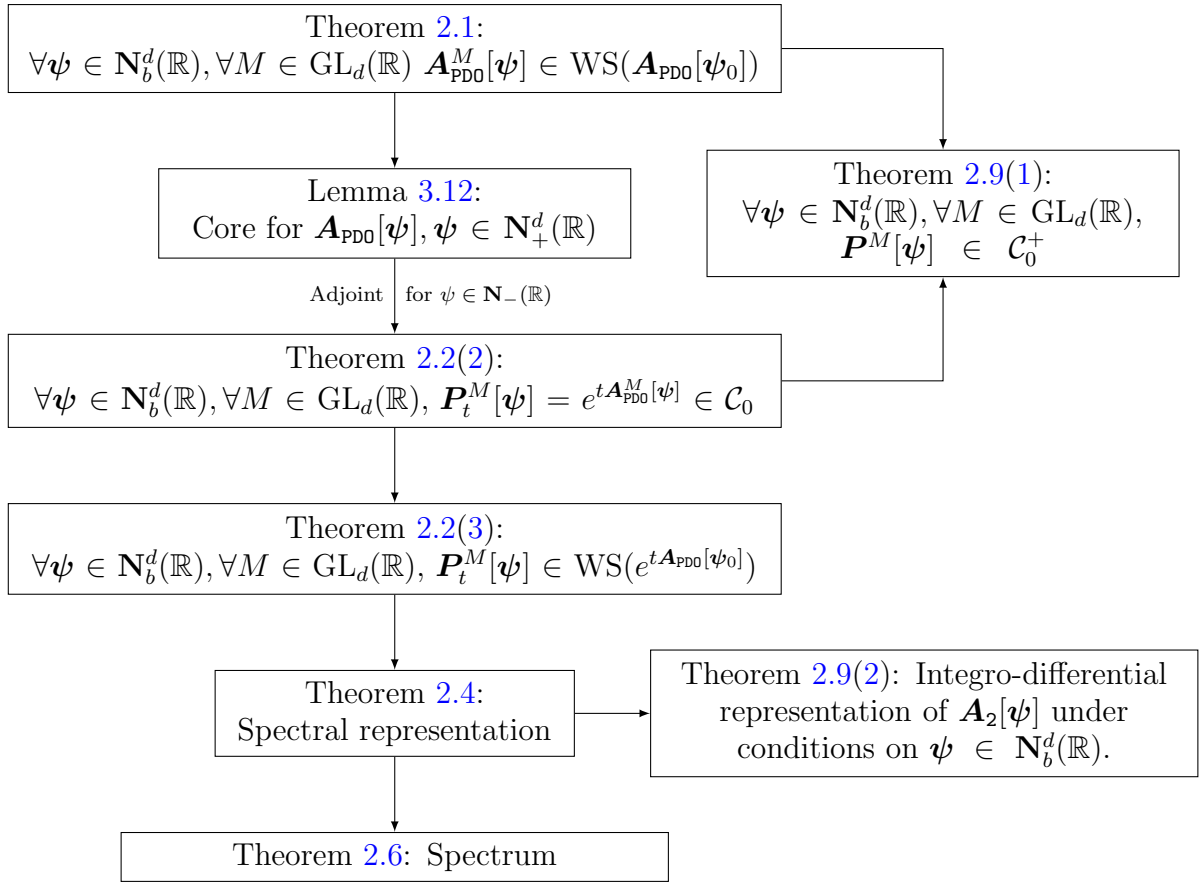
\begin{figure}[ht]
	\centering
	\begin{tikzpicture}[
		level 1/.style={sibling distance=20mm},
		edge from parent/.style={->,draw},
		>=latex]
		%\node[basic, fill=white] (Nb) {$\psi\in\mathbf{N}_+(\mathbb R)$}
		% The first level, as children of the initial tree
		
		\node[minimum width=10.2cm, basic, text width=10cm, text centered, fill=white] (c1)  {Theorem~\ref{thm1:main_thm_1}:\\ $\forall\bm{\psi}\in\NN^d_b(\bb R), \forall M\in\mathrm{GL}_d(\bb R)$  $\bm{A}^M_{\ttt{PDO}}[\bm\psi]\in\mathrm{WS}(\bm{A}_{\ttt{PDO}}[\bm{\psi}_0])$};
		\node[basic, below=of c1, minimum width=6.2cm, basic, text width=6cm, text centered, fill=white] (c2) {Lemma~\ref{lem:generator_intertwining}: \\ Core for $\bm{A}_\ttt{PDO}[\bm\psi], \bm{\psi}\in\NN^d_+(\bb R)$};
		%\node [basic, above left=1.5cm and -1cm of c1] (adj) {$\psi\mapsto \overline{\psi}$};
		\node [basic,below=of c2, minimum width=10.2cm, basic, text width=10cm, text centered, fill=white]  (ws1) {Theorem~\ref{thm:semigroup_generation}\eqref{it:bijection}: \\ $\forall{\bm\psi}\in\NN^d_b(\bb R), \forall M\in\mathrm{GL}_d(\bb R)$, $\bm{P}^M_t[\bm\psi]=e^{t{\bm{A}}^M_{\ttt{PDO}}[\bm\psi]} \in  \cc{C}_0$}; %(\bmrm{L}(\bb{R}^d,{\bm \mu}))
		\node [basic,below=of ws1, minimum width=10.2cm, basic, text width=10cm, fill=white] (sg) {Theorem~\ref{thm:semigroup_generation}\eqref{it:ws_semigroups}: \\ $\forall{\bm\psi}\in\NN^d_b(\bb R), \forall M\in\mathrm{GL}_d(\bb R)$, $\bm{P}^M_t[\bm\psi] \in \mathrm{WS}(e^{t{\bm{A}}_{\ttt{PDO}}[\bm\psi_0]})$}; %. by taking adjoint in $1$-dimension followed by tensorization, similarity transform.};
		%\node[basic,below=1.8cm of sg,fill=white] (ws2){$P[\psi]$ and $Q$ are weakly similar};
		\node[basic,text width=4.5cm, below= of sg] (sp1){Theorem~\ref{th:spectral_decomp}: \\ Spectral representation};
		\node[basic, below= of sp1] (sp2) {Theorem~\ref{thm:spectrum}: Spectrum};
		%\node[basic, left=of c1] (c2) {$\psi\in\mathbf{N}_-(\mathbb R)$};
		\node [basic,  minimum width=3.2cm, text width=6cm, right=of c2] (lamp) {Theorem~\ref{thm:core_multi_d}\eqref{it:feller_semigroup}: \\
$\forall{\bm\psi}\in\NN^d_b(\bb R), \forall M\in\mathrm{GL}_d(\bb R)$, \\ $\bm{P}^M[\bm\psi] \in \cc{C}^+_0$};
		\node [basic, right=of sp1] (core) {Theorem~\ref{thm:core_multi_d}\eqref{it:core}: Integro-differential representation of $\bm{A}_{\ttt{2}}[\bm\psi]$ under  conditions on $\bm{\psi}\in\NN^d_b(\bb R)$.};

		\draw [->] (c1) -- (c2);
		\draw[->]  (c2) -- node[left]{\tiny{Adjoint}}node[right]{\tiny{for ${\psi}\in\NN_-(\bb R)$}} (ws1) ;
		\draw[->] (ws1) -- (sg);
		\draw[->] (sg) -- (sp1);
		%\draw[->] (ws2) -- (sp);
		\draw[->] (sp1) -- (core);
		\draw[->] (ws1.east) -| (lamp.south);
		\draw[->] (sp1) -- (sp2);
		\draw[->] (c1.east) -| (lamp.north);
		%\draw[->] (c2.south) -- (ws1.east);
		%\draw[->] (c2.south) |- (ws2.west);
		%\draw[<->] (c2.north)|-(adj.west);
		%\draw[<->] (adj.south) to (c1.north);
		%\draw[<->] (c2.north) to [out=50] node[midway,above]{$\psi\mapsto\overline{\psi}$} (c1.north);
	\end{tikzpicture}
	\caption{User guide on the connection between the main results}\label{fig:organization}
\end{figure}

\subsection{Notations and preliminaries}
\subsubsection{Functional Spaces and operators}  Throughout this paper, we denote by $\bmrm{L}(\bb{R}^d,\bm{\mu})$ the class of all square integrable functions $\f$ with respect to the non-negative Borel measure $\bm{\mu}$ on $\bb{R}^d$ endowed with the natural inner-product $\langle \f,\g\rangle_{\bm{\mu}}=\int_{\bb{R}^d} \f(\bm{x})\overline{\g(\bm{x})}\bm{\mu}(d\bm{x})$. In particular, when $\bm{\mu}(d\bm{x})=e^{\langle \bm{x},\bm{1}\rangle}d\bm{x}$ (resp.~$\bm{\mu}(d\bm x)=e(x)dx=e^x dx, x \in \bb{R}$) we denote the Hilbert space by $\bmrm{L}(\bb{R}^d,\mathbf{e})$ (resp.~$\bmrm{L}(\bb{R},e)$). Throughout the paper, we use the notation ``$\perp$'' to indicate orthogonality relation on the Hilbert spaces.

For a univariate function $f$ and a $d$-dimensional vector $\bm{x}=(x_1,x_2,\ldots, x_d)\in\bb R^d$, we denote $f(\bm{x})=(f(x_1),f(x_2),\ldots, f(x_d))$. For any $k\in\bb{N}\cup \{0\}$, we denote by $\C^k(\bb{R})$ the class of all functions defined on $\bb{R}$ that are $k$-times continuously differentiable. When $k=0$, we write $\C^0(\bb{R})$ simply as $\C(\bb{R})$. By $\C_0(\bb{R})$, we denote the class of all continuous functions defined on $\bb{R}$ that vanishes at infinity. We also consider the spaces $\C^k_b(\bb{R})$, the class of all $k$-times continuously differentiable and bounded functions with bounded derivatives. Finally, we denote by $\C^\infty_c(\bb{R})$, the class of all compactly supported smooth functions on $\bb{R}$.  When the functions are assumed to be smooth with all the derivatives vanishing at infinity, we denote them by $\C^\infty_0(\bb{R})$.

For any two Banach spaces $\mathrm{X}_1, \mathrm{X}_2$, we denote by $\mathscr{B}(\mathrm{X}_1, \mathrm{X}_2)$  the space of all bounded linear operators defined from $\mathrm{X}_1$ to $\mathrm{X}_2$. When $\mathrm{X}_1=\mathrm{X}_2=\mathrm{X}$, we simply denote the unital algebra  by $\mathscr{B}(\mathrm{X})$. For any $T\in\mathscr B(\mathrm{H}_1,\mathrm{H}_2)$ where $\mathrm{H}_1,\mathrm{H}_2$ are Hilbert spaces, the adjoint of $T$ is denoted by $\widehat{T}$, which is an element in $\mathscr B(\mathrm{H}_2, \mathrm{H}_1)$.  Denoting $\D(\Lambda)$ the domain of an operator $\Lambda$, we also use the set \[\mathscr{D}_0(\mathrm{X}_1,\mathrm{X}_2)=\{\Lambda: \D(\Lambda)\subset\mathrm{X}_1 \mapsto  \mathrm{X}_2 \textrm{ linear, densely defined, injective with dense range} \}\] and we write $\mathscr{D}(\mathrm{X}_1,\mathrm{X}_2)$ to denote the set of all densely defined operators from $\mathrm{X}_1$ to $\mathrm{X}_2$. When $\mathrm{X}_1=\mathrm{X}_2=\mathrm{X}$, we use the notations $\mathscr{D}_0({\rm X})$ and $\mathscr{D}({\rm X})$ respectively.
%\textcolor{red}{it would be nice to have the same type of notation between the generic spaces and the specific ones, e.g. $\bmrm{L}(\bb{R}^d,{\bm \mu}))$ and ${\textrm{H}}$ no bold, and, $\C_0(\bb{R})$ and $\textrm{X}$}

Throughout the paper, we use the notation $\cc{C}_0(\bmrm{L}(\bb{R}^d,{\bm \mu}))$ (resp.~$\cc{C}^+_0(\bmrm{L}(\bb{R}^d,{\bm \mu})))$ to denote the set of (resp.~positivity-preserving) strongly continuous semigroups on the Hilbert space $\bmrm{L}(\bb{R}^d,{\bm \mu})$ where ${\bm \mu}$ is $\sigma$-finite measure on $\bb{R}^d$. We recall that $(\bm P_t)_{t\geq0} \in \cc{C}_0(\bmrm{L}(\bb{R}^d,{\bm \mu}))$ is said to be positivity-preserving if for all $t\geq0$, $\bm P_t\mathbf f\ge 0$ a.s. whenever $\mathbf f\ge 0$ a.s.

% \tred{Note that, in general, neither of $\mathscr{D}$ or $\mathscr{D}_0$ are linear spaces.}\textcolor{green}{but they can be groups which  is the key property  for weak-similarity classes}
\subsubsection{Complex plane, strips and analytic functions}
We use $\bb{C}$ to denote the complex plane. For any $-\infty\le a<b\le \infty$, we use $\bb{C}_{(a,b)}$ to denote the vertical strip $\{z\in\bb{C}; \ \Re(z)\in (a,b)\}$. For the horizontal strip we use the notation $\bb{S}_{(a,b)}$. In the same manner, we use the notations $\bb{C}_{[a,b]}$, $\bb{S}_{[a,b]}$ to denote the strips including their boundaries. Again, for $-\infty\le a<b\le\infty$, we use $\A_{(a,b)}$ to denote the class of functions that are analytic on the strip $\bb{S}_{(a,b)}$. We use $\A_{[a,b]}$ to denote the class of analytic functions on $\bb{S}_{(a,b)}$ that extend continuously to the boundary of the strip. We also use the classical notation ${\rm{i}}^2=-1$.

\subsubsection{Fourier transform in $\bmrm{L}(\bb{R}^d,\mathbf{e})$ and multipliers}
Let us denote by $\fouh$ the shifted Fourier transform   %where $\mathbf{e}(\bm{x})=\mathbf{e}^{1}(\bm{x})$, and \textcolor{red}{}
defined for functions $\f$ such that $\sqrt{\mathbf{e}}\f$ is integrable as follows
\begin{align}
	\fouh_{\f}(\bm{\xi})=(2\pi)^{-\frac{d}{2}}\int_{\bb{R}^d}e^{-{\rm{i}}\langle\bm{\xi}, \bm{x}\rangle}\sqrt{\mathbf{e}(\bm{x})}\f(\bm{x})d\bm{x}=\fou_{\!\!\sqrt{\mathbf{e}}\f}(\bm{\xi}), \: \bm{\xi}\in\bb{R}^d,
\end{align}
where $\fou$ is the usual Fourier transform and we recall that
$\mathbf{e}(\bm{x})=e^{\langle \bm{x},\bm{1}\rangle}, \: \bm{x}\in\bb{R}^d, \hptg{C10}{\bm{1}=(1,1,\ldots, 1)}$.  $\fouh$ (resp.~$\fou$) is  an unitary operator from $\bmrm{L}(\bb{R}^d,\mathbf{e})$ (resp.~$\bmrm{L}(\bb{R}^d)$) into $\bmrm{L}(\bb{R}^d)$, which explains the notation.  %$\fouh$\textcolor{red}{Is it unitary?}
%\begin{definition}
\hptg{C11}{
Next, for a function $\mathbf{m}^{\mathbf{e}}_{\bm{\LL}}:\bb{R}^d\to\bb{C}$, we define the \emph{shifted Fourier  operator} $\bm{\LL}:\bmrm{L}(\bb{R}^d,\mathbf{e})\to\bmrm{L}(\bb{R}^d,\mathbf{e})$ with multiplier $\mm^{\mathbf{e}}_{\bm{\LL}}$ by
	\begin{equation}
		\fouh_{\bm{\LL} \f}(\bm{\xi})=\mm^{\mathbf{e}}_{\bm{\LL}}\left(\bm{\xi}\right)\fouh_{\f}(\bm{\xi}), \ \bm{\xi}\in\bb{R}^d.
	\end{equation}
	When the multiplier $\mm^{\mathbf{e}}_{\bm{\LL}}$ is analytic in the cylinder \hpt{R2}{$\bb{S}^d_{(-\frac12,\frac12)}$}, we will  use the notation $\mm_{\bm{\LL}}\left(\bm{\xi}+\frac{{\rm{i}}}{2}\bm{1}\right)=\mm^{\mathbf{e}}_{\bm{\LL}}\left(\bm{\xi}\right)$, which corresponds to the (shifted) multiplier associated to the classical Fourier transform.}
	$\bm{\LL}$ is a densely defined operator with domain
	\begin{equation}
		\D(\bm{\LL})=\left\{\f\in\bmrm{L}(\bb{R}^d,\mathbf{e}); \: \bm{\xi} \mapsto \mm^{\mathbf{e}}_{\bm{\LL}}\left(\bm{\xi}\right)\fouh_ {\f}(\bm{\xi})\in\bmrm{L}(\bb{R}^d)\right\}.
	\end{equation}
%\end{definition}
\noindent
When $\mm^{\mathbf{e}}_{\bm{\LL}}$ is a.e.~non-zero, $\bm{\LL}\in \mathscr{D}_0(\bmrm{L}(\bb R^d,{\bf e}))$. In  Subsection~\ref{ss:fourier_mult}, we will discuss properties of these operators in more details.
\subsubsection{Boundedness of ratios}
For two functions $f,g$ defined on the complex plane, we use the following notations
\begin{eqnarray*}
	f&\asymp &g  \text{ means that $\exists  c>0$ such that  $c^{-1}\le\frac{f}{g}\le c$,} \\
	f&\overset{a}{\sim}& g \text{ means that $\lim_{x\to a}\frac{f(x)}{g(x)}=1$ for some $a\in[0,\infty]$}, \\
	f(x)&\overset{a}{=}&\mathrm{O}(g(x)) \text{ means that $\limsup_{x\to a}\left|\frac{f(x)}{g(x)}\right|<\infty$},\\
	f(x)&\overset{a}{=}&\mathrm{o}(g(x)) \text{ means that $\lim_{x\to a}\frac{f(x)}{g(x)}=0$}.
\end{eqnarray*}

\subsubsection{Continuous negative definite functions, Wiener-Hopf factorization and Bernstein-gamma functions}
%We proceed by introducing a few notation and terminologies that will be useful to state our main results. We shall provide more insights about these objects in Section \ref{sec:prelim} below.
Let $\mathbf{N}(\bb{R})$  be the class of all continuous negative definite functions defined on $\bb{R}$, i.e.~for any $\psi\in\mathbf{N}(\bb{R})$  there exists a unique quadruple  $(\psi(0),\sigma^2,\ttt{b},\mu)$ such that
\begin{align} \hlabel{eq:defLKe}
\psi(\xi)=\psi(0)-\mathrm{i}\ttt{b}\xi+\sigma^2\xi^2+\int_{\bb{R}}\left(1-e^{{\rm{i}}\xi   y}+{\rm{i}}y   \xi\mathbbm{1}_{\{|y|\le 1\}}\right)\mu(dy), \: \xi \in \bb{R},
\end{align}
where $\psi(0)\ge 0, \ttt{b}\in\bb{R},\sigma^2\ge 0$ and $\mu$ is a non-negative Radon measure such that $\int_{\bb{R}}(y^2\wedge 1)\mu(dy)<\infty$, see e.g.~\cite{Jacob} and \cite[Chap.~4]{SchillingSongVondracek10}. It is  well-known that the set $\mathbf{N}(\bb{R})$ is stable by conjugation, that is  $\psi\in\mathbf{N}(\bb{R})$ if and only if its conjugate function $\ov{\psi}\in \mathbf{N}(\bb{R})$. Moreover,  each $\psi\in\mathbf{N}(\bb{R})$ admits  the following analytic Wiener-Hopf factorization
\begin{align} \hlabel{eq:WH}
\psi(\xi)=\phi_+(-{\rm{i}}\xi  )\phi_-({\rm{i}}\xi  ), \ \ \xi\in\bb{R},
\end{align}
where $\phi_+, \phi_-\in\B$,  the set of Bernstein functions,  which are functions $\phi :[0,\infty) \mapsto [0,\infty)$  of the form
\begin{align}\hlabel{eq:phi_def}
\phi(u)=\phi(0)+\ttt{d}u+\int_0^\infty (1-e^{-uy})\nu(dy), \ \ \ u\geq0,
\end{align}
with $\phi(0)\ge 0, \ttt{d}\ge 0$ and $\nu$ is a non-negative Radon measure such that $\int_0^\infty (1\wedge y) \nu(dy)<\infty.$  Any $\phi\in\B$ can be extended analytically on $\bb{C}_{(0,\infty)}$ that remains continuous on ${\rm{i}}\bb{R}$.
%Here and below,  for any $A\subset \mathbb R$, we write $\bb{C}_A=\{z\in\bb{C}; \: \Re(z) \in A\}$.
The Wiener-Hopf factors $\phi_+$ and $\phi_-$ are the Laplace exponents of the so-called ascending and descending ladder height processes corresponding to the L\'evy process associated with $\psi$. Also, existence of jump measures in $\phi_+$ (resp.~$\phi_-$) indicates existence of positive (resp.~negative) jumps of the L\'evy process determined by \eqref{eq:levy_kh}.
Note that if $\psi$ is of the form \eqref{eq:defLKe}, it follows immediately that its conjugate admits the factorization \[ \ov{\psi}(\xi)=\phi_-(-\mathrm{i}\xi)\phi_+(\mathrm{i}\xi), \ \ \xi\in\bb{R}. \]
 For an excellent account  on the Wiener-Hopf factorization of L\'evy processes, we refer to   \cite[Chapter VI]{bertoin1996levy} and  \cite[Chapter 6]{kyprianou2014fluctuations}.

 Next, for any $\phi\in\B$, let us denote by $W_{\phi}$ the so-called Bernstein-gamma function associated to $\phi$, that is the  unique positive definite function, i.e.~the Mellin transform of a positive random variable, satisfying the functional equation
\begin{equation} \hlabel{eq:BG_receq}
W_{\phi}(z+1)=\phi(z)W_\phi(z), \ \  z\in\bb{C}_{(0,\infty)}, \textrm{ and } W_\phi(1)=1.
\end{equation}
We refer to \cite{patie2018} for a thorough account on this function and in particular from
\cite[Sec.~4]{patie2018}, one gets that
\begin{equation}\hlabel{eq:W_Zerofree}
W_{\phi} \mbox{ is analytic  and zero-free (at least) on }  \bb{C}_{[0,\infty)}.
\end{equation}
%\ \psi(\xi)=\phi_+(-{\rm{i}}\xi  )\phi_-({\rm{i}}\xi  ),\phi_- \ \mbox{is zero-free on ${\rm{i}}\bb{R}\setminus\{0\}$},
%We are now ready to introduce the following subsets of $\mathbf{N}(\bb{R})$ which  will be central  in our analysis. Note that its definition uses the Wiener-Hopf factorization \eqref{eq:WH}. %$W_{\pm}(\xi)=\frac{W_{\phi_+}(\frac{1}{2}+{\rm{i}}\xi  )}{W_{\phi_-}(\frac{1}{2}+{\rm{i}}\xi  )}$

\subsection{The notion of weak similarity} \hlabel{sec:ws}
The cornerstone of our  approach to  obtain a detailed spectral theory for the semigroups generated by $\AM_\ttt{PDO}[\bm{\psi}]$ stems on its membership in the  weak similarity orbit of the self-adjoint PDO $A_\ttt{PDO}[\bm\psi_0]$ given by \eqref{eq:symbb}, a concept that we now define. For a Hilbert space $\rH$, let us first recall that
 \[\mathscr{D}_0(\rH)=\{\Lambda: \D(\Lambda)\subseteq \rH \mapsto  \rH \textrm{ linear, densely defined, closed, injective with dense range} \}\]
 and note that it is not, in general, a group of operators. %Let now $A,B\in\mathscr{D}(\bf{X})$ be two densely defined linear operators on a Banach space $\mathbf{X}$. \tred{edit the definition}
 \begin{definition}
For two Hilbert spaces $\rH_1,\rH_2$ and $A\in\mathscr{D}(\rH_1), B\in\mathscr{D}(\rH_2)$, we say that \emph{$A$ is weakly similar to $B$} if there exists $\Lambda \in \mathscr{D}_0(\rH_2,\rH_1)$ such that
\begin{align}\hlabel{eq:ws0}
	A\Lambda =\Lambda B  \ \mbox{ on } \ \cD
\end{align}
with $\cc{D},\LL(\cD)$ being dense in $\rH_2, \rH_1$ respectively, and
\begin{align}\hlabel{eq:domains}
	\cD\subset\D(B)\cap \D(\LL), \ \LL(\cD)\subset\D(A) \ \text{ and } \ B(\cD)\subset\D(\LL).
\end{align}
$\Lambda$ is called a \emph{weak similarity operator}.
\end{definition}
From now on we implicitly assume \eqref{eq:domains} whenever we have identities of the type \eqref{eq:ws0}. In the above definition we note that when $A,B$ are bounded and $\cD$ is a core for $\LL$, the identity \eqref{eq:ws0} extends to $\D(\Lambda)$.

\begin{definition}
	Let $\rH_1,\rH_2$ be Hilbert spaces and $B\in\mathscr{D}(\rH_2)$. We define the \emph{weak similarity orbit} of $B$ in $\mathscr{D}\subset\mathscr{D}(\rH_1)$ as the set of all operators $A\in\mathscr{D}$ such that $A$ is weakly similar to $B$. We write
	\begin{align*}
		\mathrm{WS}_{\mathscr{D}}(B)=\{A\in\mathscr{D}; \: \text{$A$ is weakly similar to $B$}\}.
	\end{align*}
When $\mathscr{D}=\mathscr{D}({\rm H}_1)$, we simply denote $\mathrm{WS}(B)=\mathrm{WS}_{\mathscr{D}}(B)$.
%Note that $\mathrm{WS}_{\mathscr{D}}(B)$  set $\mathscr{D}$ is
\end{definition}
%\textcolor{green}{I updated the above definition. Does it look fine now? Also, we need to consider $\mathscr{A}_M$ for each $M$.}
%This leads us to the following definition of weak similarity.
%\begin{definition}
%	For $A,B\in\mathscr{B}(\mathbf{X})$, we say that $A$ is \emph{weakly similar} to $B$ if there exists $\LL\in\mathscr{D}(\mathbf{X})$ such that
%	\begin{align*}
%		A\LL =\LL B \ \text{ on } \ \D(\LL).
%	\end{align*}
%\end{definition}
%We point out that this notion is obviously  weaker than the similarity relation, for which it is required that $\Lambda $ is bounded with a bounded inverse, but also to the concept of intertwining, for which it is usually assumed that the domain of $\Lambda $ is the full Banach space $\mathbf{X}$.
%Let $\mathscr{B}$ be a subset of $\mathscr{B}(H)$. For an operator $B\in\mathscr{B}$, we define its \emph{weak similarity orbit in $\mathscr{B}$} as the set \[\mathrm{WS}_{\mathscr{B}}(B)=\{A\in \mathscr{B}; \: \text{$A$ is weakly similar to $B$}\}.\]
%In the same vein, for an operator $B\in\mathscr{C}$, where $\mathscr{C}$ is a sub-class of densely defined operators on $\mathbf{X}$, we define its \emph{quasi-weak similarity orbit in $\mathscr{C}$ with respect to a dense subset $\cD\subset \bf{Y}$} by
%	\begin{align*}
%	\mathrm{QWS}_{\mathscr{C}}(B;\cD)=\{A\in\mathscr{C}; \: \text{$A$ is quasi-weak similar to $B$ with respect to $\cD$}\}.
%	\end{align*}

\subsection{Standing assumptions}
We now set up the assumptions and notation that will be in force throughout the paper, and we refer to the remarks below for discussions about their generality.
Let
\begin{align}\hlabel{N_+}
 \mathbf{N}_+(\bb{R})=
&\left\{\psi\in\mathbf{N}(\bb{R});  \ \liminf_{|\xi|\to\infty}\frac{|\phi_+({\rm{i}}\xi  )|}{|\xi|^{-\kappa}}>0 \ \mbox{for some $\kappa>0$},  \ \frac{W_{\phi_+}(\frac{1}{2}+{\rm{i}}\xi  )}{W_{\phi_-}(\frac{1}{2}+{\rm{i}}\xi  )}={\rm{O}}(1)\right\}
 \end{align}
\begin{align}\hlabel{N_-}
\mathbf{N}_-(\bb{R})=\left\{\psi\in\mathbf{N}(\bb{R});  \ \ov{\psi}\in\mathbf{N}_+(\bb{R})\right\}
%&\{\psi\in\mathbf{N}(\bb{R}); \ \psi(\xi)=\phi_+(-{\rm{i}}\xi  )\phi_-({\rm{i}}\xi  ), \ \liminf_{|\xi|\to\infty}|\xi|^\kappa|\phi_-({\rm{i}}\xi  )|>0 \ \mbox{for some $\kappa>0$}, \nonumber \\
%& \ \frac{W_{\phi_-}(\frac{1}{2}+{\rm{i}}\xi  )}{W_{\phi_+}(\frac{1}{2}+{\rm{i}}\xi  )}={\rm{O}}(1) \}
\end{align} and
\begin{equation} \label{eq:defNb}\mathbf{N}_b(\bb{R})=\mathbf{N}_+(\bb{R})\cup\mathbf{N}_-(\bb{R}).
\end{equation}
Finally, for any $d\in \mathbb{N}, d\geq2,$ we write  $\bm{\psi}=(\psi_1,\ldots,\psi_d)\in\mathbf{N}^d_b(\bb{R})$ where
\begin{equation} \label{eq:defNbd}
\mathbf{N}^d_b(\bb{R})=\mathbf{N}_b(\bb{R})\times \ldots \times \mathbf{N}_b(\bb{R}) \quad (d \textrm{ times})
\end{equation}
and, for any $k=1,\ldots,d$, $(\psi_k(0),\sigma_k^2,\ttt{b}_k,\mu_k)$ is the characteristic quadruplet of $\psi_k$, see \eqref{eq:defLKe}.
  %\end{assump}
  \begin{rem}
Since for all $\phi\in\B$, the conjugate of $W_\phi$ is such that $\ov{W}_{\!\phi}(z)=W_\phi(\ov{z}), z\in\bb{C}_{(0,\infty)},$  we note that if $\psi\in\mathbf{N}_-(\bb{R})$ then  \[ \liminf\limits_{|\xi|\to\infty}{{|\xi|^{\kappa}}|\phi_-({\rm{i}}\xi  )|}>0 \ \mbox{for some $\kappa>0$ and }   \ \frac{W_{\phi_-}(\frac{1}{2}+{\rm{i}}\xi  )}{W_{\phi_+}(\frac{1}{2}+{\rm{i}}\xi  )}={\rm{O}}(1), \]
justifying the notation.
\end{rem}
\begin{rem}
We point out that, for $\phi\in\B$, the condition $\liminf\limits_{|\xi|\to\infty}|\xi|^\kappa|\phi({\rm{i}}\xi  )|>0$ has been termed as the \emph{weak non-lattice} property in \cite[(2.29)]{patie2018}, and in particular, it implies that
\begin{equation}\hlabel{eq:zerofree}
  \phi \ \mbox{is zero-free on ${\rm{i}}\bb{R}\setminus\{0\}$}.
\end{equation}
%it is automatically zero-free on ${\rm{i}}\bb{R}\setminus\{0\}$. So the definitions of $\mathbf{N}^\pm(\bb{R})$ can be stated without mentioning that $\phi_\pm$ are zero-free on ${\rm{i}}\bb{R}\setminus\{0\}$.
In this paper, the weak non-lattice property is used to ensure the moderate growth of certain functions along the lines $\Re(z)=a$, $0\le a\le1$, which plays an important role in the proof of our results. We refer to Section \ref{bernstein} for more details.
\end{rem}
\begin{rem}\label{rem:cond}
Although the weak non-lattice property disregards only very specific and even degenerate cases,  it seems to be the main restriction in the definition of the set $\mathbf{N}_b(\bb{R})$ as we believe that for any $\psi \in \mathbf{N}(\bb{R})$, the ratio of its associated Bernstein-gamma functions is either bounded from above or below.
\end{rem}
 \begin{rem}
 We also emphasize that, in many instances, the sets $\mathbf{N}_+(\bb{R})$ and $\mathbf{N}_-(\bb{R})$ can be expressed in terms of the characteristic quadruple of the negative definite function $\psi$ and/or in terms of the parameters defining its  Wiener-Hopf factors. For more details, we refer  to Proposition~\ref{prop:ratio_criterion} below.
 \end{rem}
%on the weighted Hilbert space $\bmrm{L}(\bb{R}^d,\mathbf{e})$

% We refer for instance to \cite[Section 2]{patie2018} and the references therein for a review of the objects introduced above.

\section{Main results}\hlabel{sec:main}

\subsection{The weak similarity orbits $\mathrm{WS}_{\ccA_M}$}\hlabel{ss:WS_PDO}
For any $\bm{\psi}=(\psi_1,\ldots,\psi_d)\in\mathbf{N}^d_b(\bb{R})$ and  $M\in\mathrm{GL}_d(\bb{R})$, the group of $d\times d$ invertible matrices, and, writing
\begin{equation}\hlabel{eq:defeb}
	{\mathbf{e}}_M(\bm{x})={\mathbf{e}}(M^{-1}\bm{x})=e^{\langle M^{-1}\bm{x},\bm{1}\rangle}, \quad \bm{x} \in \bb{R}^d,
\end{equation}
we consider the  pseudo-differential operator
\begin{equation*}
  \bm{A}^M_{\ttt{PDO}}[\bm{\psi}] : \D(\bm{A}^M_{\ttt{PDO}}[\bm{\psi}]) \subset \bmrm{L}(\bb{R}^d,{\bf e}_M) \to \bmrm{L}(\bb{R}^d,{\bf e}_M)
\end{equation*}
with symbol $\bbm{a}(\bm{x},\bm{\xi})=-\langle e(-M^{-1}\bm x),\bm{\psi}(M^\top\bm{\xi})\rangle, \ \bm{x},\bm{\xi}\in\bb R^d$. That is, for $\f\in\D(\bm{A}^M_{\ttt{PDO}}[\bm{\psi}])$,
\begin{align*}
	\bm{A}^M_{\ttt{PDO}}[\bm{\psi}]\f(\bm x)=(2\pi)^{-\frac{d}{2}}\int_{\bb{R}^d} e^{\i\langle \bm x, \bm \xi\rangle} \bbm{a}(\bm{x},\bm{\xi})\fou_{\f}(\bm\xi) d\bm{\xi}
\end{align*}
When  $M=\mathrm{Id}$, we simply write $\bm{A}_\ttt{PDO}[\bm\psi]=\AM_\ttt{PDO}[\bm\psi]$ and $\bmrm{L}(\bb{R}^d,{\bf e})=\bmrm{L}(\bb{R}^d,{\bf e}_M)$. Let us also define
\begin{equation*}
 \cc{D}(\bb R) = \{f\in \LRe;\: \cc{F}_f \textrm{ is entire and satisfies \eqref{eq:udec}}\}
\end{equation*}
%where
\begin{equation} \label{eq:udec}
\hpt{R4}{\textrm{ For any compact } C \subset \bb{R},\: \sup_{a \in C}|\cc{F}_f(\xi+\i a)|=\mathrm{O}(e^{-(\frac{\pi}{2}+\eps)|\xi|}),\: \epsilon>0,}
	\end{equation}
and we write $\bm{\cD}(\bb R^d)=\otimes_{k=1}^d\cD(\bb R)$. We are now ready to state our first main result.
	\begin{thm}\hlabel{thm1:main_thm_1} For any $M\in\mathrm{GL}_d(\bb R)$, writing $ \mathscr{A}_M=\{\bm{A}^M_{\ttt{PDO}}[\bm{\psi}]; \: \bm{\psi}\in\NN^d_b(\bb R)\}$, we have
 %There exists a dense subset $\bm{\cD}$ (independent of $\bm\psi$ and $M$) of $\bmrm{L}(\bb{R}^d,{\bf e}_M)$ such that
			\begin{align*}
				\mathrm{WS}_{\ccA_M}(\bm{A}_\ttt{PDO}[\bm\psi_0])=\ccA_M
			\end{align*}
where $\bm{\psi}_0(\bm{\xi})=(\xi^2_1,\ldots,\xi^2_d)$. More specifically, for any $\bm{\psi}\in\NN^d_b(\bb R)$ and $M\in\mathrm{GL}_d(\bb{R})$, we have
\begin{align} \hlabel{eq:wsob}
	\AM_\ttt{PDO}[\bm\psi]\bm{\LL}_{\bm\psi,M}=\bm{\LL}_{\bm\psi,M}\bm{A}_\ttt{PDO}[\bm\psi_0] \ \text{ on } \ \bm{\cc{D}}(\bb R^d)
\end{align}
where $\bm{\Lambda}_{\bm \psi,M}:\D(\bm{\Lambda}_{\bm \psi,M}) \subset \LLRe\to \bmrm{L}(\bb R^d,{\bf e}_M)$ is such that $\bm{\Lambda}_{\bm\psi,M}{\bf f}(\bm x)=\bm{\Lambda_\psi}{\bf f}(M\bm x)$, and $\bm{\Lambda}_{\bm \psi}$ is a shifted Fourier multiplier operator associated to
\begin{align*}
	\mathbf{m}^{\mathbf{e}}_{\bm{\Lambda}_{\bm\psi}}(\bm{\xi})=\prod_{k=1}^d \frac{W_{\phi_{+,k}}(\frac{1}{2}-\i\xi_k)}{W_{\phi_{-,k}}(\frac{1}{2}+\i\xi_k)}\frac{\Gamma(\frac{1}{2}+\i\xi_k)}{\Gamma(\frac{1}{2}-\i\xi_k)}.
\end{align*}
Moreover, for any $\bm{\psi}\in\NN^d_+(\bb R)$, $\bm{\LL}_{\bm\psi,M}\in\mathscr{B}(\bmrm{L}(\bb{R}^d,{\bf e}),\bmrm{L}(\bb{R}^d,{\bf e}_M))$.
		%For each $\bm{\psi}\in\mathbf{N}^d_b(\bb{R})$ satisfying \eqref{cond:finite_mean}, there exists a subspace $\bm{\cc{E}}^{\bm{\psi}}_M\subset\bmrm{L}(\bb{R}^d,\overline{\mathbf{e}}_M)\cap\bm{\cc{C}}$ that is dense in $\bmrm{L}(\bb{R}^d,\overline{\mathbf{e}}_M)$ and a positive $\cc{C}_0$-contraction semigroup $\ccP[\bm{\psi}]$ on $\bmrm{L}(\bb{R}^d,\overline{\mathbf{e}}_M)$ satisfying
	\end{thm}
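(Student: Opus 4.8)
The plan is to reduce the $d$-dimensional statement \eqref{eq:wsob} to a one-dimensional computation via tensorization, and to verify the three ingredients in the definition of weak similarity: (i) $\bm{\LL}_{\bm\psi,M}\in\mathscr{D}_0$, i.e.\ it is densely defined, closed, injective with dense range; (ii) the intertwining identity on the core $\bm{\cD}(\bb R^d)$, together with the domain inclusions \eqref{eq:domains}; and (iii) the boundedness claim when $\bm\psi\in\NN^d_+(\bb R)$. Since $\bm{\Lambda}_{\bm\psi,M}\f(\bm x)=\bm{\Lambda}_{\bm\psi}\f(M\bm x)$ is a composition of the multiplier operator $\bm{\Lambda}_{\bm\psi}$ with the unitary-up-to-constant change of variables $\f\mapsto\f(M\,\cdot\,)$ from $\LLRe$ to $\bmrm{L}(\bb R^d,\mathbf e_M)$ (here one uses $\mathbf e_M(\bm x)=\mathbf e(M^{-1}\bm x)$ precisely so the Jacobian is absorbed), it suffices to treat $M=\mathrm{Id}$ and then conjugate. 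Likewise $\bm{\Lambda}_{\bm\psi}$ is a tensor product of the one-dimensional operators $\Lambda_{\psi_k}$ with multiplier $\mm^{\mathbf e}_{\Lambda_{\psi_k}}(\xi)=\frac{W_{\phi_{+,k}}(\frac12-\i\xi)}{W_{\phi_{-,k}}(\frac12+\i\xi)}\frac{\Gamma(\frac12+\i\xi)}{\Gamma(\frac12-\i\xi)}$, and $\bm A_\ttt{PDO}[\bm\psi_0]$ likewise tensorizes, so the identity \eqref{eq:wsob} follows from its one-dimensional counterpart on $\cD(\bb R)$ by taking tensor products (the core $\bm{\cD}(\bb R^d)=\otimes_k\cD(\bb R)$ is chosen to make this legitimate).

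\textbf{Step 1: the one-dimensional intertwining.} For $d=1$, $M=\mathrm{Id}$, I would compute directly on the Fourier side. For $f\in\cD(\bb R)$, using that $\fouh_{\bm A_\ttt{PDO}[\psi_0]f}$ is obtained from the symbol $-e(-x)\psi_0(\xi)=-e^{-x}\xi^2$, one recognizes $\bm A_\ttt{PDO}[\psi_0]$ as a conjugated Laplacian/Bessel-type operator; the key analytic input is that on $\cD(\bb R)$ the shifted Fourier transform $\fouh_f$ extends analytically to the strip $\bb S_{(-\frac12,\frac12)}$ with the decay \eqref{eq:udec}, which is exactly what licenses the contour shift needed to pass the multiplier $\mm^{\mathbf e}_{\Lambda_\psi}$ through the symbol. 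Concretely, one checks that $\mm^{\mathbf e}_{\Lambda_\psi}$ satisfies the right functional equation relating it to the Wiener--Hopf factors $\phi_\pm$ via the recursion \eqref{eq:BG_receq} for $W_{\phi_\pm}$ and the gamma recursion, so that multiplying the $\psi_0$-symbol identity by $\mm^{\mathbf e}_{\Lambda_\psi}$ and shifting contours produces precisely the $\psi$-symbol $-e(-x)\psi(\xi)$ acting on $\Lambda_\psi f$. The zero-freeness and analyticity of $W_{\phi_\pm}$ on $\bb C_{[0,\infty)}$ recorded in \eqref{eq:W_Zerofree}, together with the weak non-lattice property \eqref{eq:zerofree}, guarantee $\mm^{\mathbf e}_{\Lambda_\psi}$ is analytic and non-vanishing on the relevant strip, which in turn gives $\Lambda_\psi\in\mathscr D_0(\LRe)$: density of domain and range follow from $\mm^{\mathbf e}_{\Lambda_\psi}$ being a.e.\ nonzero (as noted right after the definition of shifted Fourier operators), injectivity from the same, and closedness is automatic for multiplier operators.

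\textbf{Step 2: domain inclusions and the boundedness refinement.} The inclusions \eqref{eq:domains} — that $\cD(\bb R)\subset\D(\bm A_\ttt{PDO}[\psi_0])\cap\D(\Lambda_\psi)$, that $\Lambda_\psi(\cD(\bb R))\subset\D(\bm A_\ttt{PDO}[\psi])$, and $\bm A_\ttt{PDO}[\psi_0](\cD(\bb R))\subset\D(\Lambda_\psi)$ — all reduce to checking that the super-exponential decay \eqref{eq:udec} of $\fouh_f$ survives multiplication by the relevant symbols and multipliers, which grow at most polynomially (using the weak non-lattice lower bound on $\phi_+$ and the $\mathrm O(1)$ bound on the ratio of Bernstein--gamma functions built into the definition \eqref{N_+} of $\NN_+(\bb R)$); the $\pi/2$ in \eqref{eq:udec} is there to absorb the gamma-function growth $|\Gamma(\frac12\pm\i\xi)|\sim\sqrt{2\pi}\,|\xi|^{0}e^{-\pi|\xi|/2}$. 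For the final boundedness assertion, when $\psi\in\NN_+(\bb R)$ the defining condition $\frac{W_{\phi_+}(\frac12+\i\xi)}{W_{\phi_-}(\frac12+\i\xi)}=\mathrm O(1)$ combined with $\bigl|\frac{\Gamma(\frac12+\i\xi)}{\Gamma(\frac12-\i\xi)}\bigr|=1$ (the gamma ratio is unimodular on the line) and the symmetry $\ov W_{\phi_+}(z)=W_{\phi_+}(\ov z)$ shows $\mm^{\mathbf e}_{\Lambda_\psi}$ is a \emph{bounded} multiplier on the line, hence $\Lambda_\psi\in\mathscr B(\LRe)$ by Plancherel; tensorizing and composing with the bounded change of variables $\f\mapsto\f(M\,\cdot\,)$ yields $\bm{\LL}_{\bm\psi,M}\in\mathscr B(\bmrm L(\bb R^d,\mathbf e),\bmrm L(\bb R^d,\mathbf e_M))$. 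For the general $\NN^d_b$ case one does not get boundedness but only $\mathscr D_0$, which is all \eqref{eq:wsob} requires.

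\textbf{Main obstacle.} I expect the crux to be Step 1: rigorously justifying the contour-shift argument that turns the $\psi_0$-symbol into the $\psi$-symbol after insertion of $\mm^{\mathbf e}_{\Lambda_\psi}$, i.e.\ making precise the "combination of Fourier and complex analysis'' the authors allude to. This requires (a) identifying the correct functional equation satisfied by $\mm^{\mathbf e}_{\Lambda_\psi}$ under the shift $\xi\mapsto\xi+\i$ — which should be exactly $\mm^{\mathbf e}_{\Lambda_\psi}(\xi+\i)\cdot(\text{something involving }\phi_\pm)= \mm^{\mathbf e}_{\Lambda_\psi}(\xi)\cdot(\text{the }\psi_0\text{ vs }\psi\text{ discrepancy})$ — and (b) controlling the decay of the integrand uniformly in the strip so that no boundary terms appear when deforming the contour from $\Im\xi=0$; this is where the tailored space $\cD(\bb R)$ with its $\mathrm O(e^{-(\frac\pi2+\eps)|\xi|})$ decay is essential, and where the weak non-lattice hypothesis does its work. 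Everything else — tensorization, the change of variables in $M$, membership in $\mathscr D_0$, and the boundedness refinement — is comparatively routine bookkeeping once the one-dimensional identity and the requisite growth estimates on Bernstein--gamma functions (which I would import from Section~\ref{sec:prelim}/\cite{patie2018}) are in hand.
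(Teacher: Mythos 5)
Your overall architecture coincides with the paper's: reduce to $M=\mathrm{Id}$ by the isometry $\f\mapsto\f(M\,\cdot)$, reduce to $d=1$ by tensorization (Proposition~\ref{prop:tensor_intertwining} and Lemma~\ref{lem:core_tensor}), and prove the one-dimensional intertwining by a functional equation for the multiplier combined with a contour shift (Lemma~\ref{prop:contour}). You also correctly locate the crux in the one-dimensional contour-shift step. However, your description of that step contains a genuine gap: the direct scheme ``multiply the $\psi_0$-symbol identity by $\mm^{\mathbf e}_{\Lambda_\psi}$ and shift contours'' cannot be run with $m_{\Lambda_\psi}$ itself, and the obstacle is not (as you suggest) merely the decay of the integrand. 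The functional equation one needs relates $m_{\Lambda_\psi}(\xi)$ to $m_{\Lambda_\psi}(\xi+\i)$, and on the shifted line $\bb R+\i$ the multiplier $m_{\Lambda_\psi}(z)=\frac{W_{\phi_+}(-\i z)}{W_{\phi_-}(1+\i z)}\frac{\Gamma(1+\i z)}{\Gamma(-\i z)}$ degenerates: the factor $\Gamma(1+\i z)$ becomes $\Gamma(\i\xi)$, which has a pole at $\xi=0$, and $W_{\phi_-}(1+\i z)$ is pushed onto the boundary line $\Re=0$, where its continuity and zero-freeness are delicate (in particular when $\phi_-(0)=0$). So the contour deformation from $\Im z=\tfrac12$ to $\Im z=1$ passes through, or lands on, singularities of the multiplier, independently of how fast $\fouh_f$ decays.

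The missing idea, which is the heart of the paper's proof (Proposition~\ref{prop:intertwining_test}), is a regularization: one factors $\Lambda_\psi=\Lambda^{\!(\beta)}_\psi\Lambda^{\!(\beta)}$, where $\Lambda^{\!(\beta)}$ has the explicit gamma-quotient multiplier $\Gamma(1+\i z)/\Gamma(1+\beta+\i z)$ and intertwines $A_\ttt{PDO}[\psi_\beta]$ (with $\psi_\beta(\xi)=\xi^2-\i\beta\xi$) with $A_\ttt{PDO}[\psi_0]$ by a direct computation (Lemma~\ref{lem:beta}), while $\Lambda^{\!(\beta)}_\psi$ has multiplier $\WBpsi$, which by Proposition~\ref{cor:multiplier} is analytic on $\bb{S}_{(0,1)}$, extends \emph{continuously to the closed strip} $\bb{S}_{[0,1]}$, has controlled growth there, and satisfies the two-term functional equation $\WBpsi(\xi)\psi(\xi)=\WBpsi(\xi+\i)\psi_\beta(\xi)$ of \eqref{eq:fneq}. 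It is this regularized multiplier, not $m_{\Lambda_\psi}$, to which the contour shift of Lemma~\ref{prop:contour} is applied; composing the two intertwinings then yields \eqref{eq:intertwining_pdo_0} and, with the decay \eqref{eq:udec} on $\cD(\bb R)$ replacing boundedness of the multiplier, the identity \eqref{eq:intertwining_pdo_0_D} for all $\psi\in\NN_b(\bb R)$. Without this (or an equivalent) device your Step~1 does not go through as written. A minor additional correction: the analyticity and zero-freeness of $\mm^{\mathbf e}_{\Lambda_\psi}$ on the open strip come from the properties (W\ref{p2})--(W\ref{p3}) of Bernstein-gamma functions; the weak non-lattice hypothesis in \eqref{N_+} is used for the polynomial growth bound on $z/\phi_+(z)$ (Lemma~\ref{lem:W_beta_bound}), not for non-vanishing.
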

This theorem is proved in Subsection~\ref{ss:main_thm_1}.
\subsection{Generation of the set  $\ccP$ of $\mathcal{C}_0$-contractions semigroups and $\mathrm{WS}_\ccP$}
Let us  introduce the linear differential operator
\begin{eqnarray}\hlabel{eq:fourier_int_0}
	%\begin{aligned}
	\bm{A}_0\f(\bm{x})&=&\tr\left(\Sigma(\bm{x})\nab^2 \f(\bm{x})\right), \quad  \bm{x} \in \bb{R}^d,
\end{eqnarray}
where $\tr$ and $\nab$ stand for the trace and  the gradient respectively, and, $\Sigma=\Sigma_{\bm{1}}$ with, for any $(\sigma_1,\ldots, \sigma_d)\in [0,\infty)^d$ and writing $\bm{x}=(x_1,\ldots,x_d)$, the diagonal matrix $\Sigma_{\bm{\sigma}}(\bm{x})$ is defined as
\begin{align} \hlabel{eq:defQ}
	\Sigma_{\bm{\sigma}}(\bm{x})=\textrm{diag}(
		\sigma_1e^{-x_1}, \ldots, \sigma_de^{-x_d}).
\end{align}
%\tg{Note that $\bm{A}_\ttt{PDO}[\bm{\psi}_0]$ coincides with $\bm{A}_0$ on $\C^\infty_c(\bb R^d)$.}
\begin{thm}\hlabel{thm:semigroup_generation}
		\begin{enumerate}
\item \hlabel{it:bessel_generation}%\textbf{The log-Bessel semigroup.}
The closure of  $(\bm{A}_\ttt{PDO}[\bm\psi_0],\C^\infty_c(\bb R^d))$ in $\LLRe$, denoted by $\bm{A}_2[\bm\psi_0]$, generates a self-adjoint $\cc{C}_0$-semigroup $\bm{Q} \in \mathcal{C}^+_0(\bmrm{L}(\bb{R}^d,{\mathbf{e}}))$.
More specifically, $\bm{Q}$ restricted to  $\C_0(\bb R^d)\cap \LLRe$  extends to the Feller semigroup of the $d$-dimensional log-Bessel process of dimension 2 whose generator is the closure of $(\bm{A}_0, \C^\infty_c(\bb R^d))$.
	\item \hlabel{it:bijection}%\textbf{Generation of semigroups in $\mathcal{C}_0(\bmrm{L}(\bb{R}^d,{\mathbf{e}}_M))$.} 		
For any $M\in\mathrm{GL}_d(\bb R)$, there exists a one-to-one mapping between the set $\ccA_M$ and the set
\[ \ccP_M=\left\{\FsemiM; \: \bm{\psi}\in\mathbf{N}^d_b(\bb{R})\right\} \subset \mathcal{C}_0(\bmrm{L}(\bb{R}^d,{\mathbf{e}}_M))\]
such that $\AM_\ttt{2}[\bm\psi]=\AM_\ttt{PDO}[\bm\psi]$ on a dense subset, where  $\AM_\ttt{2}[\bm\psi]$ is the  $\LLReM$-generator of $\FsemiM$.

More specifically,  for any ${\psi}\in \NN_+(\bb R)$, $A_\ttt{2}[\psi]$ is the closure of $(A_\ttt{PDO}[\psi],\Lambda_{\psi}(\C^\infty_c(\bb R))$, where $\Lambda_{\psi}\in  \mathscr{B}(\bmrm{L}(\bb{R},{e}))$ is defined in \eqref{eq:wsob}, and, otherwise, for any ${\psi}\in \NN_b(\bb R)\setminus \NN_+(\bb R)$, $\widehat{{P}}[{\psi}]={P}[{\ov{{\psi}}}]$. The general case is then obtained by tensorization and isomorphism of Hilbert spaces.
	\item \hlabel{it:ws_semigroups} %\textbf{The weak similarity orbits $\mathrm{WS}_\ccP$}
For any $M\in\mathrm{GL}_d(\bb R)$, we have
\hptg{C16}{
	\begin{equation} \hlabel{eq:ws}
		\mathrm{WS}_{\ccP_M}(\bm{Q})=\ccP_M.      %\{\FsemiM\in\ccP; \: \FsemiM\in\mathrm{WS}(\bm{Q})\}= \ccP %\mathbf{N}^d_b(\bb{R})
	\end{equation}}
In fact, the set of weak similarity operators is a multiplicative group which entails that $\mathrm{WS}_{\ccP}$ forms an equivalence class, where $\ccP=\bigcup_{M\in\mathrm{GL}_d(\bb R)}\ccP_M$. %(\bm P)=\mathscr{P}$ for all $\bm P\in\mathscr{P}$.}
%	where $\bm{Q}$ is the  $\cc{C}_0$-contraction semigroup in $\bmrm{L}(\bb{R}^d,\mathbf{e})$ generated by the diffusion operator $\bm{A}_0$ define in \eqref{eq:fourier_int_0}. %\textcolor{red}{there is a problem here} \textcolor{blue}{fixed?}
%	\begin{equation}
%		\bm{A}\f(\bm{x})=\sum_{k=1}^d e^{-x_k}\frac{\partial^2}{\partial x^2_k}\f(\bm{x}), \ \bm{x}=(x_1,\ldots,x_d)\in\bb{R}^d, \ \f\in \C^\infty_c(\bb{R}^d).
%	\end{equation}
			\item \hlabel{it:thm2adj} For any $\bm\psi\in\NN^d_b(\bb R)$, we have $\FsemiMd=\bm{P}^{M}[\ov{\bm{\psi}}]\in \ccP$, where $\FsemiMd$ stands for  the $\bmrm{L}(\bb{R}^d,{\mathbf{e}}_M)$-adjoint of $\FsemiM$. Thus, $\FsemiM$ is self-adjoint in $\bmrm{L}(\bb{R}^d,{\mathbf{e}}_M)$ if and only if $\ov{\bm{\psi}}=\bm{\psi}$. \vspace{.2cm}
		\end{enumerate}
	
	\end{thm}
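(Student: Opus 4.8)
\textbf{Proof proposal for Theorem~\ref{thm:semigroup_generation}\eqref{it:thm2adj}.}

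The plan is to compute the $\bmrm{L}(\bb{R}^d,{\mathbf{e}}_M)$-adjoint of $\FsemiM$ directly at the level of the pseudo-differential symbols and then transfer the identity to the level of semigroups via part~\eqref{it:bijection}. First I would reduce to the one-dimensional, $M=\mathrm{Id}$ case: since $\ccP_M$ is obtained from $\ccP_{\mathrm{Id}}$ by the unitary-up-to-constant isomorphism $\bm{f}\mapsto \bm{f}(M\,\cdot\,)$ between $\bmrm{L}(\bb{R}^d,{\mathbf{e}})$ and $\bmrm{L}(\bb{R}^d,{\mathbf{e}}_M)$, and since $\ccP_{\mathrm{Id}}$ is built by tensorization from the univariate semigroups $P[\psi_k]$, it suffices to show that the $\bmrm{L}(\bb{R},e)$-adjoint of $P[\psi]$ equals $P[\ov{\psi}]$ for each $\psi\in\NN_b(\bb R)$; the multivariate and matrix cases then follow because taking adjoints commutes with tensor products and with conjugation by the fixed isomorphism, and because $\ov{\bm\psi}=(\ov{\psi_1},\ldots,\ov{\psi_d})$.

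For the univariate statement, by part~\eqref{it:bijection} it is enough to identify the $\bmrm{L}(\bb{R},e)$-adjoint of the generator $A_\ttt{2}[\psi]$, since the adjoint of a $\cc{C}_0$-contraction semigroup is generated by the adjoint of its generator. Consider first $\psi\in\NN_+(\bb R)$, so that $A_\ttt{2}[\psi]$ is the closure of $(A_\ttt{PDO}[\psi],\Lambda_\psi(\C^\infty_c(\bb R)))$. On the dense set $\cD(\bb R)$ the operator $A_\ttt{PDO}[\psi]$ acts through the symbol $\bbm{a}(x,\xi)=-e^{-x}\psi(\xi)$; a direct computation of $\langle A_\ttt{PDO}[\psi]f,g\rangle_{e}$ for $f,g\in\cD(\bb R)$ — writing the weighted inner product, inserting the Fourier representation of $A_\ttt{PDO}[\psi]f$, using Fubini (justified by the uniform decay \eqref{eq:udec} of the Fourier transforms together with analyticity in a strip, which lets one shift contours), and the identity $\ov{\psi(\xi)}=\ov\psi(-\xi)$ valid for $\psi\in\NN(\bb R)$ — shows that this bilinear form is symmetric under swapping $(f,g,\psi)\leftrightarrow(g,f,\ov\psi)$, i.e. $\langle A_\ttt{PDO}[\psi]f,g\rangle_e=\langle f,A_\ttt{PDO}[\ov\psi]g\rangle_e$. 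Hence $A_\ttt{PDO}[\ov\psi]\subseteq \widehat{A_\ttt{2}[\psi]}$ on $\cD(\bb R)$, and since $\ov\psi\in\NN_-(\bb R)$ one identifies $\widehat{A_\ttt{2}[\psi]}$ with $A_\ttt{2}[\ov\psi]$ after checking that the respective cores match up; the exponentiated statement $\widehat{P[\psi]}=P[\ov\psi]$ follows. For $\psi\in\NN_b(\bb R)\setminus\NN_+(\bb R)$, i.e. $\psi\in\NN_-(\bb R)$, this is exactly the definitional relation $\widehat{P[\psi]}=P[\ov\psi]$ recorded in part~\eqref{it:bijection} (with $\ov\psi\in\NN_+(\bb R)$), since the adjoint is involutive: $\widehat{\widehat{P[\ov\psi]}}=P[\ov\psi]$. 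Combining the two cases gives $\FsemiMd=\bm{P}^M[\ov{\bm\psi}]$ for all $\bm\psi\in\NN^d_b(\bb R)$ and all $M$. The final equivalence is then immediate: $\FsemiM$ is self-adjoint in $\bmrm{L}(\bb{R}^d,{\mathbf{e}}_M)$ iff $\bm{P}^M[\ov{\bm\psi}]=\FsemiM$, and by the one-to-one correspondence in part~\eqref{it:bijection} this forces $\ov{\bm\psi}=\bm\psi$; conversely $\ov{\bm\psi}=\bm\psi$ trivially gives $\FsemiMd=\FsemiM$.

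The main obstacle I anticipate is the rigorous bookkeeping of \emph{domains and cores} when passing from the formal symbol-level adjoint identity to the operator identity $\widehat{A_\ttt{2}[\psi]}=A_\ttt{2}[\ov\psi]$. Showing the bilinear form identity on $\cD(\bb R)$ (or on $\Lambda_\psi(\C^\infty_c(\bb R))$) only yields one inclusion between the adjoint and $A_\ttt{2}[\ov\psi]$; upgrading this to equality requires knowing that $\Lambda_{\ov\psi}(\C^\infty_c(\bb R))$ (or an equivalent set) is a core for $\widehat{A_\ttt{2}[\psi]}$, which in turn uses that both $\psi$ and $\ov\psi$ lie in $\NN_b(\bb R)$ so that the core characterizations of part~\eqref{it:bijection} apply on both sides, together with maximal dissipativity of $A_\ttt{2}[\psi]$ (a $\cc{C}_0$-contraction generator) to pin down its adjoint uniquely. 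The contour-shifting and Fubini justifications, while not conceptually hard, rely essentially on the decay estimate \eqref{eq:udec} and on analyticity of $\psi$-related factors in the strip $\bb{S}_{(-\frac12,\frac12)}$, so some care is needed to stay within the stated function classes.
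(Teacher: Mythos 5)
Your reduction to the one--dimensional, $M=\mathrm{Id}$ case (tensorization plus conjugation by the isometry $\f\mapsto\f\circ M^{-1}$) is exactly what the paper does; its explicit change-of-variables computation is your remark that adjoints commute with tensor products and with the fixed isomorphism. The divergence, and the gap, is in the univariate step. The paper does not prove $\widehat{P}[\psi]=P[\ov{\psi}]$ by a symbol-level adjoint computation: it imports the probabilistic duality of $1$-self-similar Feller semigroups, $\int_{\bb{R}_+}P^\ttt{F}_t[\psi]f\, g=\int_{\bb{R}_+}f\,P^\ttt{F}_t[\ov{\psi}]g$, recalled as \eqref{eq:adj} from \cite[Lemma 2]{BY02}, which survives the $\log$-homeomorphism and the extension to $\bmrm{L}(\bb{R},e)$ by excessiveness of $e(x)dx$. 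Since $P[\psi]$ is defined for every $\psi$ as this $\bmrm{L}(\bb{R},e)$-extension of the log-Lamperti semigroup, the identity $\widehat{P}[\psi]=P[\ov{\psi}]$ is available at the outset, with no generator analysis needed; the rest of the proof of item \eqref{it:thm2adj} is then exactly your tensorization and conjugation step.

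Your alternative --- deducing $\widehat{A_\ttt{2}[\psi]}=A_\ttt{2}[\ov{\psi}]$ from the form identity $\langle A_\ttt{PDO}[\psi]f,g\rangle_{e}=\langle f,A_\ttt{PDO}[\ov{\psi}]g\rangle_{e}$ --- stalls precisely at the point you flag, and I do not see how to repair it within the paper's toolbox without circularity. The form identity, run over $f$ in a core of $A_\ttt{2}[\psi]$, only yields $A_\ttt{PDO}[\ov{\psi}]\subseteq\widehat{A_\ttt{2}[\psi]}$ on some dense set $\cD'$; to conclude you must know that $\cD'$ is a core for $A_\ttt{2}[\ov{\psi}]$ on which $A_\ttt{2}[\ov{\psi}]=A_\ttt{PDO}[\ov{\psi}]$, since two m-dissipative extensions of the same densely defined dissipative operator need not coincide unless that operator is essentially m-dissipative. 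But for $\ov{\psi}\in\NN_-(\bb R)\setminus\NN_+(\bb R)$ no such core is available independently of the duality: $\Lambda_{\ov{\psi}}$ is unbounded and $\C^\infty_c(\bb R)$ need not lie in its domain, the paper explicitly notes that $\Lambda_{\ov{\psi}}(\cD(\bb R))$ is in general not a core, and the core $\cc{E}_{\ov{\psi}}(\eps)$ of Proposition~\ref{prop:core_ido} is built from the intertwining \eqref{eq:bdd_intertwining} for $\NN_-(\bb R)$, whose proof in Theorem~\ref{thm:intertwining_pdo}\eqref{main_thm_1:it:1d} is itself obtained by taking adjoints and invoking $\widehat{P}[\ov{\psi}]=P[\psi]$ --- the very statement you are trying to establish. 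Two smaller slips: $\cD(\bb R)$ is not shown to lie in $\D(A_\ttt{2}[\psi])$ (only $\Lambda_\psi(\cD(\bb R))$ is), so the form identity should be run with $f$ ranging over $\Lambda_\psi(\C^\infty_c(\bb R))$; and the relation for $\psi\in\NN_b(\bb R)\setminus\NN_+(\bb R)$ is not ``definitional'' in the paper, since $P[\psi]$ is constructed for all $\psi$ as the Lamperti semigroup's extension. The clean fix is simply to cite \eqref{eq:adj}.
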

This theorem is proved in Subsection~\ref{ss:pf_thm:semigroup_generation}.
\begin{rem}
	When $d=1$, $M=\rm{Id}$, the identity matrix, writing simply $P[\psi]=\bm{P}^{\rm{Id}}[\bm{\psi}]$, the identity \eqref{eq:ws} reads as follows
	\begin{align}
		{\rm WS}_\ccP(Q)=\{P[\psi]; \: \psi\in\NN_b(\bb R)\}
	\end{align}
	where $Q$ is the semigroup on $\bmrm{L}(\bb{R},e)$, $e(x)=e^x, x \in \bb{R}$, of the log-squared Bessel process of dimension $2$. Moreover, $P[\psi]$ is the $\bmrm{L}(\bb{R},e)$-extension of the log-self-similar Feller semigroup as reviewed in Section \ref{sec:lamp}.
\end{rem}

\subsection{Spectral representation of the class $\mathscr{P}$}
In this part, we use  the weak similarity relation  combined with the fact that $\bm{Q}$ is diagonalisable as it is self-adjoint in $\LLRe$, to obtain the spectral decomposition of the semigroups in $\ccP$. %Then, we write %$\psi_k$ is associated to the quadruple $(\psi_k(0),{\tt{b}}_k,\sigma^2_k,\mu_k)$.
To this end, we denote the multiplication semigroup on $\bmrm{L}(\bb{R}^d,\mathbf{e})$ by $(\bm{e}_t)_{t\geq 0}$, i.e.~for any $t\geq0$,
\begin{align}
	\bm{e}_t \f(\bm{y})={e}^{-t\langle e(-\bm{y}),\bm{1}\rangle}\f(\bm{y}), \;\; \bm{y}\in\bb{R}^d, %^{(2)}
\end{align}
where we recall that  $e(-\bm{y})=\left(e^{-y_1}, \ldots, e^{-y_d}\right)$.
We are now ready to state the following.
\begin{thm}\hlabel{th:spectral_decomp}
	Let $\bm{P}^M[\bm{\psi}] \in \ccP_M$ for some $M\in\mathrm{GL}_d(\bb R)$ and $\bm{\psi}=(\psi_1,\ldots,\psi_d)\in\mathbf{N}^d_b(\bb{R})$ such that $\psi_k(\xi)=\phi_{+,k}(-\mathrm{i}\xi)\phi_{-,k}(\mathrm{i}\xi)$ for all $1\le k\le d$ and $\xi\in\bb{R}$. Then, for all $t\ge 0$, %\textcolor{red}{define
		\begin{align}\hlabel{eq:spectral_decomp}
			\bm{P}^M_t[\bm{\psi}] =\bm{H}_{\bm{\psi},M}\bm{e}_t \bm{H}^{-1}_{\bm{\psi},M} \ \mbox{ on } \ \D(\bm{H}^{-1}_{\bm{\psi},M})
		\end{align}
		where  $\D(\bm{H}^{-1}_{\bm{\psi},M})=\Range(\bm{H}_{\bm{\psi},M})$ is dense in $\bmrm{L}(\bb{R}^d,\mathbf{e}), \
		\bm{H}_{\bm{\psi},M}\f(\bm x)=\bm{\Hpsi}\f(M \bm x)$ and $\bm{H}_{\bm{\psi}}$ being the shifted Fourier  operator with  multiplier denoted by $\mm^{\mathbf{e}}_{\bm \psi}(\bm \xi)$, where
		\begin{equation}\hlabel{eq:Mult}
			\mm^{\mathbf{e}}_{\bm \psi}(\bm \xi)=\prod_{k=1}^d\frac{W_{\phi_{+,k}}(\frac{1}{2}-{\rm{i}}\xi_k)}{W_{\phi_{-,k}}(\frac{1}{2}+{\rm i}\xi_k)},\ \ \bm{\xi}=(\xi_1, \ldots,\xi_d)\in\bb{R}^d.
		\end{equation}
		In fact,
		\[\D(\bm{H}^{-1}_{\bm{\psi},M})=\left\{\f\in\bmrm{L}(\bb{R}^d,{\mathbf{e}}_M);\ \bm{\xi}=(\xi_1,\ldots,\xi_d) \mapsto\fouh_{\f\circ M^{-1}}(\bm{\xi})\prod_{k=1}^d\frac{W_{\phi_{-,k}}(\frac{1}{2}-{\rm{i}}\xi  _k)}{W_{\phi_{+,k}}(\frac{1}{2}+{\rm{i}}\xi  _k)}\in\bmrm{L}(\bb{R}^d)\right\}. \]
		Under some conditions (see Proposition~\ref{prop:ratio_criterion}), $\D(\bm{H}^{-1}_{\bm{\psi},M})$ is the full Hilbert space and thus $\HMd$ extends as a bounded operator on the entire Hilbert space. As a result, \eqref{eq:spectral_decomp} holds on $\bmrm{L}(\bb{R}^d,{\mathbf{e}}_M)$.
		
	\end{thm}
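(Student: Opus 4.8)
The plan is to derive the spectral representation \eqref{eq:spectral_decomp} by transporting the known diagonalization of the self-adjoint semigroup $\bm{Q}$ through the weak similarity relation \eqref{eq:ws}. Recall from Theorem~\ref{thm:semigroup_generation}\eqref{it:bessel_generation} that $\bm{Q}$ is the $\cc{C}_0$-semigroup of the $d$-dimensional log-Bessel process of dimension $2$, which by the classical diagonalization of the (squared) Bessel semigroup (see e.g.~\cite{MuS}) is unitarily equivalent, via the shifted Hankel/Fourier-type transform, to the multiplication semigroup $(\bm{e}_t)_{t\ge0}$ on $\bmrm{L}(\bb{R}^d,\mathbf{e})$. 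In other words there is a unitary $\bm{U}$ on $\bmrm{L}(\bb{R}^d,\mathbf{e})$ with $\bm{Q}_t=\bm{U}\bm{e}_t\bm{U}^{-1}$; in our normalization this $\bm{U}$ is (up to the trivial gamma-factor renormalization already appearing in $\mm^{\mathbf{e}}_{\bm{\Lambda}_{\bm\psi}}$) the shifted Fourier multiplier whose symbol is $\prod_k \Gamma(\tfrac12+\i\xi_k)/\Gamma(\tfrac12-\i\xi_k)$ composed with the identification of symmetric functions with functions on $\bb{R}^d_+$. I would first record this as a lemma, or cite it, reducing everything to the case $M=\mathrm{Id}$ by the conjugation $\f\mapsto \f\circ M^{-1}$, which is a topological isomorphism $\bmrm{L}(\bb{R}^d,\mathbf{e}_M)\to\bmrm{L}(\bb{R}^d,\mathbf{e})$ intertwining $\bm{P}^M_t[\bm\psi]$ with $\bm{P}_t[\bm\psi]$ and $\bm{H}_{\bm\psi,M}$ with $\bm{H}_{\bm\psi}$; so it suffices to prove $\bm{P}_t[\bm\psi]=\bm{H}_{\bm\psi}\bm{e}_t\bm{H}^{-1}_{\bm\psi}$ on $\Range(\bm{H}_{\bm\psi})$.

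Next I would exploit multiplicativity of Fourier multipliers to factor $\bm{\Lambda}_{\bm\psi}=\bm{H}_{\bm\psi}\,\bm{U}^{-1}$ at the level of symbols: indeed $\mm^{\mathbf{e}}_{\bm{\Lambda}_{\bm\psi}}(\bm\xi)=\mm^{\mathbf{e}}_{\bm\psi}(\bm\xi)\cdot\prod_k \Gamma(\tfrac12+\i\xi_k)/\Gamma(\tfrac12-\i\xi_k)$, the first factor being the symbol of $\bm{H}_{\bm\psi}$ and the second that of $\bm{U}$. All three operators are Fourier multipliers, hence commute and their composition corresponds to the product of symbols; one must check the a.e.\ non-vanishing of each symbol (guaranteed by \eqref{eq:W_Zerofree} on the critical line, together with the fact that $\Gamma(\tfrac12+\i\xi)\ne0$) so that each lies in $\mathscr{D}_0$ and the compositions are well-defined as densely defined injective operators with dense range. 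Then from Theorem~\ref{thm:semigroup_generation}\eqref{it:ws_semigroups} we have $\bm{P}_t[\bm\psi]\,\bm{\Lambda}_{\bm\psi}=\bm{\Lambda}_{\bm\psi}\,\bm{Q}_t$ on a dense core, and since both $\bm{P}_t[\bm\psi]$ and $\bm{Q}_t$ are bounded and the core is a core for $\bm{\Lambda}_{\bm\psi}$, the relation extends to $\D(\bm{\Lambda}_{\bm\psi})=\Range$-preimage. Substituting $\bm{\Lambda}_{\bm\psi}=\bm{H}_{\bm\psi}\bm{U}^{-1}$ and $\bm{Q}_t=\bm{U}\bm{e}_t\bm{U}^{-1}$ gives $\bm{P}_t[\bm\psi]\bm{H}_{\bm\psi}\bm{U}^{-1}=\bm{H}_{\bm\psi}\bm{U}^{-1}\bm{U}\bm{e}_t\bm{U}^{-1}=\bm{H}_{\bm\psi}\bm{e}_t\bm{U}^{-1}$; post-composing with $\bm{U}$ (defined on all of the Hilbert space since unitary) yields $\bm{P}_t[\bm\psi]\bm{H}_{\bm\psi}=\bm{H}_{\bm\psi}\bm{e}_t$ on $\bm{U}(\D(\bm\Lambda_{\bm\psi}))$, which one checks equals $\D(\bm{H}_{\bm\psi})\supseteq\C^\infty_c$ or at least a dense set on which $\bm{e}_t$ leaves things in $\D(\bm H_{\bm\psi})$. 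Applying $\bm{H}^{-1}_{\bm\psi}$ on the left (it exists on $\Range(\bm H_{\bm\psi})$) gives \eqref{eq:spectral_decomp} on $\Range(\bm H_{\bm\psi})=\D(\bm H^{-1}_{\bm\psi})$, and the explicit description of this domain follows by unwinding the multiplier definitions: $\f\in\Range(\bm H_{\bm\psi})$ iff $\fouh_{\f}/\mm^{\mathbf{e}}_{\bm\psi}\in\bmrm{L}(\bb{R}^d)$, and $1/\mm^{\mathbf{e}}_{\bm\psi}(\bm\xi)=\prod_k W_{\phi_{-,k}}(\tfrac12-\i\xi_k)/W_{\phi_{+,k}}(\tfrac12+\i\xi_k)$ after using $\ov{W}_\phi(z)=W_\phi(\ov z)$.

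Finally, for the last assertion I would invoke Proposition~\ref{prop:ratio_criterion}: under the relevant condition the ratio $W_{\phi_{-,k}}(\tfrac12+\i\xi_k)/W_{\phi_{+,k}}(\tfrac12+\i\xi_k)$ is bounded on $\bb{R}$ (this is precisely the case, up to conjugation, where $\psi_k\in\NN_-(\bb R)$, cf.~\eqref{N_-} and the Remark following the standing assumptions), so the multiplier $1/\mm^{\mathbf{e}}_{\bm\psi}$ is in $\bmrm{L}^\infty(\bb{R}^d)$; hence $\f\mapsto \fouh_{\f}\cdot(1/\mm^{\mathbf{e}}_{\bm\psi})$ maps $\bmrm{L}(\bb{R}^d)$ into itself, i.e.\ $\D(\bm H^{-1}_{\bm\psi})=\bmrm{L}(\bb{R}^d,\mathbf{e})$ and $\HMd=\bm H^{-1}_{\bm\psi}$ extends boundedly, so \eqref{eq:spectral_decomp} holds on the full space.

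\textbf{Main obstacle.} The delicate point is the passage from the weak-similarity identity on a core to the operator identity \eqref{eq:spectral_decomp} on the \emph{precisely identified} domain $\Range(\bm H_{\bm\psi})$: one must verify that $\bm{e}_t$ maps (a dense subset of) $\D(\bm H_{\bm\psi})$ back into $\D(\bm H_{\bm\psi})$ so that the composition $\bm H_{\bm\psi}\bm e_t\bm H^{-1}_{\bm\psi}$ makes sense and agrees with the bounded operator $\bm P_t[\bm\psi]$ on all of $\Range(\bm H_{\bm\psi})$, not merely on the original core. This requires controlling the interaction between the pointwise-in-$\bm y$ multiplication by $e^{-t\langle e(-\bm y),\bm1\rangle}$ and the Fourier-multiplier structure of $\bm H_{\bm\psi}$ — essentially a regularity/mapping statement for the Bernstein-gamma multipliers under the self-similar dilation flow. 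I expect this to be handled by the preliminary estimates on the growth of $W_{\phi_\pm}$ along vertical lines (Section~\ref{bernstein}, guaranteed by the weak non-lattice property), together with a density/continuity argument closing the identity from the core to all of $\D(\bm H^{-1}_{\bm\psi})$.
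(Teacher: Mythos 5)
Your proposal follows essentially the same route as the paper: reduce to $M=\mathrm{Id}$ by the similarity transform, combine the weak similarity relation $\bm{P}_t[\bm\psi]\bm{\Lambda}_{\bm\psi}=\bm{\Lambda}_{\bm\psi}\bm{Q}_t$ with the unitary diagonalization of $\bm{Q}$ by the gamma-quotient multiplier to get $\bm{P}_t[\bm\psi]\bm{H}_{\bm\psi}=\bm{H}_{\bm\psi}\bm{e}_t$, and invert on $\Range(\bm{H}_{\bm\psi})$ --- the paper carries this out in dimension one in Theorem~\ref{thm:intertwining_pdo}\eqref{main_thm_1:it:1d} and then tensorizes via Proposition~\ref{prop:tensor_intertwining}, whereas you work with the tensorized operators throughout, which is only a presentational difference. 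The one slip is cosmetic: with your conventions $\bm{Q}_t=\bm{U}\bm{e}_t\bm{U}^{-1}$ and $\bm{\Lambda}_{\bm\psi}=\bm{H}_{\bm\psi}\bm{U}^{-1}$, the symbol of $\bm{U}$ must be $\prod_{k}\Gamma(\tfrac12-\i\xi_k)/\Gamma(\tfrac12+\i\xi_k)$, the reciprocal of the factor you quote, but since this multiplier has modulus one everywhere the algebra is unaffected.
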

	This theorem is proved in Subsection~\ref{sec:proof_th:spectral_decomp}.
%	\textcolor{red}{there are some problems with labels here and also in the proof part}Theorem~\ref{th:spectral_decomp} is proved in Subsection~\ref{sec:proof0} and Theorem~\ref{th:spectral_decomp} is proved in Subsection~\ref{sec:proof3}.
	\begin{rem}
		It can be easily checked that the inverse of $\bm{H}_{\bm{\psi},M}$ is $\widehat{\bm{H}}_{\ov{\bm{\psi}},M}$. This implies that when $\bm{\psi}=\ov{\bm{\psi}}$, $\bm{H}_{\bm{\psi},M}$ is an unitary operator, see  Subsection \ref{ss:fourier_mult}. In Theorem~\ref{thm:spectrum}\eqref{it:residual_spect} below, we show that the boundedness of $\HMd$ along with the condition $\mm^{\mathbf e}_{\bm{\psi}}\in\bmrm{L}(\bb{R}^d,\mathbf{e})$ ensure the existence of residual spectrum of the semigroup $\FsemiM$. Hence, Theorem~\ref{th:spectral_decomp} indicates that existence of residual spectrum, which is empty for self-adjoint linear operators, enables the semigroup to have spectral expansion on the entire Hilbert space. %\textcolor{blue}{I have added some remarks here}
	\end{rem}

%\subsection{Pseudo-differential operators as generators of Feller semigroups}

%\begin{rem}
%We point out  that, one can show, by means of the expression of the Dynkin operator in \eqref{eq:dynkin=pdo} below, combined with a tensorization technique and a similarity transform, that  for each $\bm{\psi}\in\mathbf{N}^d_b(\bb{R})$,
%\end{rem}

%This theorem is proved in Section~\ref{sec:proof2}.

%\subsection{}
\begin{comment}
\begin{rem}
Since $\psi\in\mathbf{N}_b(\bb{R})$, in \ref{fourier_mult}, we will see that one of $\Hpsi$ and $\Hpsinv$ must be bounded. As $\widehat{P}_t[\psi]=P_t[\ov{\psi}]$, $\cc{H}_{\ov{\psi}}=(\Hpsinv)^*$ (as shown in Proposition~\ref{m-group}), where $\ov{\psi}(\xi)=\psi(-\xi)$, the identity \eqref{eq:spectral_decomp} holds for all $f\in\bmrm{L}(\bb{R},e)$ at least for one of $\psi$ and $\ov{\psi}$.
\end{rem}
\end{comment}
\subsection{About the spectra of the $\mathcal{C}_0$-semigroups in $\ccP$}
The next result provides criteria  to determine the nature of the spectrum of the family of semigroups  $\ccP$. To this end, we recall that for a linear operator $A$ acting on a Hilbert space $\rH$, with identity operator $I$, we write $\sigma(A)=\{\lambda \in \bb{C}; \: A-\lambda I \textrm{ is not invertible} \},$ $\sigma_p(A)=\{\lambda \in \sigma(A); \: A-\lambda I \textrm{ is not injective} \},$ $\sigma_c(A)=\{\lambda \in \sigma(A)\setminus\sigma_p(A); \: A-\lambda I \textrm{ has a dense range} \}$  and $\sigma_r(A)=\sigma(A)\setminus (\sigma_p(A) \cup \sigma_c(A))$ which corresponds to the spectrum, point spectrum, continuous spectrum and residual spectrum of $A$ respectively, see e.g.~the monograph of Dunford and  Schwartz \cite[XV.8]{DS}. Additionally, we denote the approximate point spectrum of $A$ by \[\sigma_{ap}(A)=\{\lambda\in\bb{C}; \exists \:  (v_n)_{n\geq0} \subset \rH \mbox{ such that $\|v_n\|_{\rH}=1$, $\|Av_n-\lambda v_n\|_{\rH}\to 0$}\}\]
where $\|.\|_{\rH}$ stands for the norm in $\rH$.

\begin{thm}\hlabel{thm:spectrum} Let $\FsemiM \in \ccP$ for some $M\in\mathrm{GL}_d(\bb R)$. %$\bm{\psi}\in\mathbf{N}^d_b(\bb{R})$. %=(\psi_1,\ldots, \psi_d) with $\psi_k(\xi)=\phi_{+,k}(-{\rm{i}}\xi  )\phi_{-,k}({\rm{i}}\xi  )$, $\xi\in\bb{R}$. %Then, for all $k=1,\ldots, d$, the mapping
%\begin{equation}
%z\mapsto m_{\cc{H}_{\psi_k}}(z)=\frac{W_{\phi_{+,k}}(-{\rm{i}}z   )}{W_{\phi_{-,k}}(1+{\rm{i}}z)   } \mbox{ is analytical in  the strip } \{z\in\bb{C}; \:  \Im(z)\in (0,1)\}
%\end{equation}  and the following holds.
\begin{enumerate}
\item\hlabel{it:spect} For all $t\ge 0$,
\begin{align*}
e^{t\bb{R}_-}\subseteq\sigma(\bm{P}^{M}_t[\bm{\psi}]).
\end{align*}
More precisely, if $\mm^{\mathbf{e}}_{\bm{\psi}}$ is bounded then $e^{t\bb{R}_-}\subseteq\sigma_{ap}(\bm{P}^{M}_t[\bm{\psi}])$, where $\Mpsi$ is defined in \eqref{eq:Mult}. If $1/\mm^{\mathbf{e}}_{\bm\psi}$ is bounded, then $e^{t\bb{R}_-}\subseteq\sigma_{ap}(\widehat{\bm{P}}^M_t[\bm{\psi}])$. \vspace{.2cm}
\item\hlabel{it:point_spect} If $\mm^{\mathbf{e}}_{\bm{\psi}}\in\bmrm{L}(\bb{R}^d)$ then \begin{equation*}
    e^{t\bb{R}_-}\subseteq\sigma_p(\bm{P}^M_t[\bm{\psi}])  \mbox{  and } \sigma_r(\bm{P}^M_t[\bm{\psi}])=\emptyset
     \end{equation*}
     with, for any $\bm{q}\in\bb{R}_+^d$, %\textcolor{red}{update, below also}
 \begin{equation*}
     \bm{P}^M_t[\bm{\psi}]  \tau_{\ln \bm{q}}\mathbf{J}^M_{\bm{\psi}}(\bm{x})= \mathbf{e}(-t\bm{q}) \: \tau_{\ln \bm{q}}\mathbf{J}^M_{\bm{\psi}}(\bm{x})
     \end{equation*}
     where $\tau_{\bm{q}}\f(\bm{x})=\f(\bm{x}+\bm{q})$,  $\mathbf{J}^M_{\bm{\psi}}(\bm{x})= \mathbf{J}_{\bm{\psi}}(M^{-1}{\bm{x}})$,   and \[\mathbf{J}_{\bm{\psi}}(\bm{x})=\frac{\mathbf{e}(-\bm{x}/2)}{(2\pi)^{d/2}}\int_{\bb{R}^d}e^{{\rm{i}}\langle\bm{x},\bm{\xi}\rangle}\mm^{\mathbf{e}}_{\bm \psi}(\bm \xi) d\bm{\xi} \]
      where the integral is understood in the ${\mathbf{L}}^2$-sense.
    \item \hlabel{it:residual_spect}If $1/\mm^{\mathbf e}_{\bm\psi}\in\bmrm{L}(\bb{R}^d)$ then
         \begin{equation*}
    e^{t\bb{R}_-}\subseteq\sigma_r(\bm{P}^M_t[\bm{\psi}]) \mbox{ and } \sigma_p(\bm{P}^M_t[\bm{\psi}])=\emptyset
     \end{equation*}
     with,  for any $\bm{q}\in\bb{R}_+^d$, $t>0$ and $\f\in\bmrm{L}(\bb{R}^d,{\mathbf{e}}_M)$,
     \begin{equation*}
     \left\langle \bm{P}^M_t[\bm{\psi}] \f,\tau_{\ln \bm{q}}\mathbf{J}^M_{\ov{\bm{\psi}}}\right\rangle_{\bmrm{L}(\bb{R}^d,{\mathbf{e}}_M)}   = \mathbf{e}(-t\bm{q}) \left\langle \f,\tau_{\ln \bm{q}}\mathbf{J}^M_{\ov{\bm{\psi}}}\right\rangle_{\bmrm{L}(\bb{R}^d,{\mathbf{e}}_M)}.
     \end{equation*}
\item \hlabel{it:cont_spect} If $\mm^{\mathbf e}_{\bm\psi}$ is both bounded from above and below then \begin{equation} \hlabel{eq:contSpec}
    \sigma(\bm{P}^M_t[\bm{\psi}])=\sigma_c(\bm{P}^M_t[\bm{\psi}])=e^{t\bb{R}_-}.
     \end{equation}
     In particular, when $\bm{P}^M_t[\bm{\psi}]$ is self-adjoint, i.e.~$\bm{\psi}=\ov{\bm{\psi}}$, then \eqref{eq:contSpec} holds. %$\sigma(\bm{P}^M_t[\bm{\psi}])=\sigma_c(\bm{P}^M_t[\bm{\psi}])=e^{t\bb{R}_-}$.
\end{enumerate}
%\textcolor{red}{ I remember now that we discuss about it. I think it is worth discussing the case $d>1$, it is consistent with the first main statement, which explains why I moved this subsection up}
%\textcolor{blue}{Continuous Spectrum:}
\end{thm}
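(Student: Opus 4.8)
We may and do assume $M=\mathrm{Id}$; the general case follows by conjugating with the bounded invertible operator $\f\mapsto|\det M|^{1/2}\f\circ M$, under which (Theorem~\ref{thm1:main_thm_1}, Theorem~\ref{th:spectral_decomp}) $\bm{P}^M_t[\bm{\psi}],\bm{H}_{\bm{\psi},M},\mathbf{J}^M_{\bm{\psi}}$ become $\bm{P}_t[\bm{\psi}],\bm{H}_{\bm{\psi}},\mathbf{J}_{\bm{\psi}}$, so we drop the subscript $M$ and write $\LLRe=\bmrm{L}(\bb{R}^d,\mathbf{e})$. Everything is then extracted from the decomposition $\bm{P}_t[\bm{\psi}]=\bm{H}_{\bm{\psi}}\bm{e}_t\bm{H}^{-1}_{\bm{\psi}}$ on $\Range(\bm{H}_{\bm{\psi}})$, together with two elementary observations. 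First, since $\fouh_{\mathbf{J}_{\bm{\psi}}}=\mm^{\mathbf{e}}_{\bm{\psi}}$, a change of variables shows that $\bm{H}_{\bm{\psi}}$ acts on $\LLRe$ as convolution by $\mathbf{J}_{\bm{\psi}}$ (up to the constant $(2\pi)^{-d/2}$); in particular $\bm{H}_{\bm{\psi}}$ commutes with translations $\tau_{\bm{c}}$, and since $\|\tau_{\bm{c}}\g\|_{\LLRe}=\mathbf{e}(-\bm{c}/2)\|\g\|_{\LLRe}$ the ratio $\|\bm{H}_{\bm{\psi}}\tau_{\bm{c}}\g\|/\|\tau_{\bm{c}}\g\|$ is independent of $\bm{c}$. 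Second, since $|\mm^{\mathbf{e}}_{\bm{\psi}}(\bm{\xi})|=\prod_{k}|W_{\phi_{+,k}}(\tfrac12+\i\xi_k)|/|W_{\phi_{-,k}}(\tfrac12+\i\xi_k)|$ factorises and $\bm{\psi}\in\NN^d_b(\bb{R})$, one has $\mm^{\mathbf{e}}_{\bm{\psi}}\in\LLRe\Rightarrow\mm^{\mathbf{e}}_{\bm{\psi}}$ bounded (each one–dimensional factor, being in $\bmrm{L}(\bb{R})$, cannot be bounded below, hence by the $\NN_b$ dichotomy is bounded above), and symmetrically $1/\mm^{\mathbf{e}}_{\bm{\psi}}\in\LLRe\Rightarrow1/\mm^{\mathbf{e}}_{\bm{\psi}}$ bounded; thus $\bm{H}_{\bm{\psi}}$ (resp.\ $\bm{H}^{-1}_{\bm{\psi}}$) is bounded exactly when $\mm^{\mathbf{e}}_{\bm{\psi}}$ (resp.\ $1/\mm^{\mathbf{e}}_{\bm{\psi}}$) is, and whenever $\bm{H}^{-1}_{\bm{\psi}}$ is bounded \eqref{eq:spectral_decomp} holds on all of $\LLRe$. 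I will also use $\widehat{\bm{P}}_t[\bm{\psi}]=\bm{P}_t[\ov{\bm{\psi}}]$ (Theorem~\ref{thm:semigroup_generation}\eqref{it:thm2adj}) and $|\mm^{\mathbf{e}}_{\ov{\bm{\psi}}}|=1/|\mm^{\mathbf{e}}_{\bm{\psi}}|$.

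\emph{Part \eqref{it:spect}.} The operator $\bm{e}_t$ is multiplication by the continuous function $m_t(\bm{y})=e^{-t\langle e(-\bm{y}),\bm{1}\rangle}$, whose range is $e^{t\bb{R}_-}$ and whose level sets are Lebesgue–null; hence $\sigma(\bm{e}_t)=\sigma_c(\bm{e}_t)=e^{t\bb{R}_-}$, $\sigma_p(\bm{e}_t)=\emptyset$, and its generator $\bm{B}$ (multiplication by $b(\bm{y})=-\langle e(-\bm{y}),\bm{1}\rangle$) has $\sigma(\bm{B})=\sigma_{ap}(\bm{B})=\bb{R}_-$. Assume $\mm^{\mathbf{e}}_{\bm{\psi}}$ bounded. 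From $b(\bm{y}+c\bm{1})=e^{-c}b(\bm{y})$ we get $\bm{B}\tau_{c\bm{1}}=e^{c}\tau_{c\bm{1}}\bm{B}$, so translating a single mollifier–type approximate eigenvector of $\bm{B}$ at some $\mu_0<0$ yields, as $c$ runs over $\bb{R}$, approximate eigenvectors at every point of $\bb{R}_-$; transporting such $v$ by $\bm{H}_{\bm{\psi}}$ gives $\bm{A}_{\ttt{2}}[\bm{\psi}]\bm{H}_{\bm{\psi}}v-\mu\bm{H}_{\bm{\psi}}v=\bm{H}_{\bm{\psi}}(\bm{B}v-\mu v)$, where (by the first observation) the normalisation $\|\bm{H}_{\bm{\psi}}v\|$ is $c$–independent and, using the scaling, the residual error is governed by the decay rate of $\mm^{\mathbf{e}}_{\bm{\psi}}$, which the weak non–lattice hypothesis forces to be at most polynomial of order $<1$ whenever $\mm^{\mathbf{e}}_{\bm{\psi}}\notin\LLRe$ (cf.\ Section~\ref{bernstein}); hence $\bb{R}_-\subseteq\sigma_{ap}(\bm{A}_{\ttt{2}}[\bm{\psi}])$. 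The identity $\bm{P}_t[\bm{\psi}]v-e^{t\mu}v=\int_0^t e^{(t-s)\mu}\bm{P}_s[\bm{\psi}](\bm{A}_{\ttt{2}}[\bm{\psi}]v-\mu v)\,ds$ then turns a generator Weyl sequence at $\mu$ into a semigroup Weyl sequence at $e^{t\mu}$, so $e^{t\bb{R}_-}\subseteq\sigma_{ap}(\bm{P}_t[\bm{\psi}])$. If instead $1/\mm^{\mathbf{e}}_{\bm{\psi}}$ is bounded, apply this to $\ov{\bm{\psi}}$ and use $\widehat{\bm{P}}_t[\bm{\psi}]=\bm{P}_t[\ov{\bm{\psi}}]$, so $e^{t\bb{R}_-}\subseteq\sigma_{ap}(\widehat{\bm{P}}_t[\bm{\psi}])\subseteq\sigma(\widehat{\bm{P}}_t[\bm{\psi}])$, whence (as $e^{t\bb{R}_-}\subset\bb{R}$) $e^{t\bb{R}_-}\subseteq\sigma(\bm{P}_t[\bm{\psi}])$. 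Since $\bm{\psi}\in\NN^d_b(\bb{R})$ forces at least one of $\mm^{\mathbf{e}}_{\bm{\psi}},1/\mm^{\mathbf{e}}_{\bm{\psi}}$ bounded and $\sigma(T)=\sigma_{ap}(T)\cup\sigma_r(T)$ with every $\lambda\in\sigma_r(T)$ satisfying $\overline{\lambda}\in\sigma_p(\widehat{T})\subseteq\sigma_{ap}(\widehat{T})$, this proves $e^{t\bb{R}_-}\subseteq\sigma(\bm{P}_t[\bm{\psi}])$ and the two approximate–spectrum statements.

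\emph{Parts \eqref{it:point_spect} and \eqref{it:residual_spect}.} Assume $\mm^{\mathbf{e}}_{\bm{\psi}}\in\LLRe$; by the first paragraph $\mm^{\mathbf{e}}_{\bm{\psi}}$ is bounded and $\sqrt{\mathbf{e}}\,\mathbf{J}_{\bm{\psi}}=\fou^{-1}[\mm^{\mathbf{e}}_{\bm{\psi}}]\in\LLRe$, so $\mathbf{J}_{\bm{\psi}}\in\LLRe$ and, formally, $\tau_{\ln\bm{q}}\mathbf{J}_{\bm{\psi}}=\bm{H}_{\bm{\psi}}\delta_{-\ln\bm{q}}$, where $-\ln\bm{q}$ is a point at which $m_t$ equals $\mathbf{e}(-t\bm{q})$. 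To make the eigenvalue relation rigorous, mollify the Dirac mass by $\g_n\in\C^\infty_c(\bb{R}^d)$ (suitably normalised): then $\bm{H}_{\bm{\psi}}\g_n\to\tau_{\ln\bm{q}}\mathbf{J}_{\bm{\psi}}$ in $\LLRe$ and $\bm{e}_t\g_n=m_t\g_n$ converges as a measure to $\mathbf{e}(-t\bm{q})\,\delta_{-\ln\bm{q}}$; passing to the limit in $\bm{P}_t[\bm{\psi}]\bm{H}_{\bm{\psi}}\g_n=\bm{H}_{\bm{\psi}}\bm{e}_t\g_n$ (valid since $\bm{H}_{\bm{\psi}}\g_n\in\Range(\bm{H}_{\bm{\psi}})$) and using that $\bm{P}_t[\bm{\psi}]$ is a contraction gives $\bm{P}_t[\bm{\psi}]\tau_{\ln\bm{q}}\mathbf{J}_{\bm{\psi}}=\mathbf{e}(-t\bm{q})\tau_{\ln\bm{q}}\mathbf{J}_{\bm{\psi}}$, so $e^{t\bb{R}_-}\subseteq\sigma_p(\bm{P}_t[\bm{\psi}])$. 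Next, $\mm^{\mathbf{e}}_{\ov{\bm{\psi}}}$ is bounded below ($|\mm^{\mathbf{e}}_{\ov{\bm{\psi}}}|=1/|\mm^{\mathbf{e}}_{\bm{\psi}}|\ge\|\mm^{\mathbf{e}}_{\bm{\psi}}\|_\infty^{-1}$), so $\bm{H}^{-1}_{\ov{\bm{\psi}}}$ is bounded, \eqref{eq:spectral_decomp} holds for $\bm{P}_t[\ov{\bm{\psi}}]$ on all of $\LLRe$, and any eigenvector of $\bm{P}_t[\ov{\bm{\psi}}]$ would, after applying $\bm{H}^{-1}_{\ov{\bm{\psi}}}$, be an $\LLRe$–eigenvector of $\bm{e}_t$, which is impossible; thus $\sigma_p(\bm{P}_t[\ov{\bm{\psi}}])=\emptyset$, and since $\sigma_r(\bm{P}_t[\bm{\psi}])\subseteq\{\overline{\lambda}:\lambda\in\sigma_p(\widehat{\bm{P}}_t[\bm{\psi}])\}$ we get $\sigma_r(\bm{P}_t[\bm{\psi}])=\emptyset$. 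Part \eqref{it:residual_spect} is the mirror image: $1/\mm^{\mathbf{e}}_{\bm{\psi}}\in\LLRe$ makes $\mm^{\mathbf{e}}_{\bm{\psi}}$ bounded below, whence $\sigma_p(\bm{P}_t[\bm{\psi}])=\emptyset$ by the argument just given, while $|\mm^{\mathbf{e}}_{\ov{\bm{\psi}}}|=1/|\mm^{\mathbf{e}}_{\bm{\psi}}|\in\LLRe$, so part \eqref{it:point_spect} applied to $\ov{\bm{\psi}}$ gives $e^{t\bb{R}_-}\subseteq\sigma_p(\bm{P}_t[\ov{\bm{\psi}}])=\sigma_p(\widehat{\bm{P}}_t[\bm{\psi}])$ with eigenvectors $\tau_{\ln\bm{q}}\mathbf{J}_{\ov{\bm{\psi}}}$; combining this with the injectivity of $\bm{P}_t[\bm{\psi}]-\lambda$ yields $e^{t\bb{R}_-}\subseteq\sigma_r(\bm{P}_t[\bm{\psi}])$, and the displayed pairing identity is $\langle\bm{P}_t[\bm{\psi}]\f,g\rangle=\langle\f,\widehat{\bm{P}}_t[\bm{\psi}]g\rangle$ with $g=\tau_{\ln\bm{q}}\mathbf{J}_{\ov{\bm{\psi}}}$ and $\mathbf{e}(-t\bm{q})\in\bb{R}$.

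\emph{Part \eqref{it:cont_spect}.} If $\mm^{\mathbf{e}}_{\bm{\psi}}$ is bounded above and below, then $\bm{H}_{\bm{\psi}}\in\mathscr{B}(\LLRe)$ is invertible with bounded inverse, so $\bm{P}_t[\bm{\psi}]=\bm{H}_{\bm{\psi}}\bm{e}_t\bm{H}^{-1}_{\bm{\psi}}$ is a genuine similarity on all of $\LLRe$; a bounded similarity preserves every spectral component, so $\sigma(\bm{P}_t[\bm{\psi}])=\sigma_c(\bm{P}_t[\bm{\psi}])=e^{t\bb{R}_-}$ and $\sigma_p=\sigma_r=\emptyset$, the self–adjoint case $\bm{\psi}=\ov{\bm{\psi}}$ being covered because then $\bm{H}_{\bm{\psi}}$ is unitary. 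The genuinely delicate step in the whole proof is the approximate–spectrum transfer in part \eqref{it:spect} in the regime where $\bm{H}_{\bm{\psi}}$ is bounded but not boundedly invertible and $\mm^{\mathbf{e}}_{\bm{\psi}}\notin\LLRe$: there a naive mollifier Weyl sequence can be annihilated in the limit by $\bm{H}_{\bm{\psi}}$, and one must balance the dilation covariance of $\bm{B}$ against the precise decay of $\mm^{\mathbf{e}}_{\bm{\psi}}$—this is exactly where the Bernstein–gamma asymptotics stemming from the weak non–lattice hypothesis enter essentially.
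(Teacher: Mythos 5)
Your parts \eqref{it:point_spect}--\eqref{it:cont_spect} are essentially correct. For part \eqref{it:point_spect} you identify the eigenfunctions by mollifying the Dirac mass $\delta_{-\ln\bm q}$ and passing to the limit in the intertwining, whereas the paper tests $P_t[\psi]\Hpsi f=\Hpsi e_t f$ against arbitrary $g\in\LRe$, justifies two applications of Fubini via the Minkowski-type bound \eqref{eq:conv_l2}, and upgrades the resulting a.e.-in-$y$ identity to all $y$ by continuity of translation; your route is legitimate, but the convergence $\mathbf J_{\bm\psi}\ast(m_t\g_n)\to\mathbf e(-t\bm q)\,\tau_{\ln\bm q}\mathbf J_{\bm\psi}$ in $\LLRe$ needs exactly that Young-type bound, so you have not really bypassed it. Your treatment of part \eqref{it:cont_spect} by bounded similarity is in fact more complete than the paper's written proof, which only spells out the unitary (self-adjoint) case.

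The genuine gap is in part \eqref{it:spect}, in the only regime where it is not already a consequence of the other items, namely $\mm^{\mathbf e}_{\bm\psi}$ bounded but not in $\bmrm{L}(\bb R^d)$. Your argument transports a Weyl sequence $(v_n)$ for the generator $\bm B$ of $\bm e_t$ through $\bm H_{\bm\psi}$, and the entire burden is to show $\|\bm H_{\bm\psi}(\bm B-\mu)v_n\|/\|\bm H_{\bm\psi}v_n\|\to 0$, i.e.\ that $\bm H_{\bm\psi}$ does not annihilate $v_n$ faster than it annihilates the error. You justify this by asserting that the weak non-lattice hypothesis forces $\mm^{\mathbf e}_{\bm\psi}$ to decay at most polynomially of order $<1$ whenever it is not square-integrable; no such pointwise lower bound is proved in Section \ref{bernstein} (which controls growth, not decay, of these ratios), you never convert it into the required quantitative comparison, and your closing sentence concedes that this ``delicate step'' remains to be carried out. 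There is also a domain obstruction: the identity $\bm A_{\ttt 2}[\bm\psi]\bm H_{\bm\psi}v=\bm H_{\bm\psi}\bm Bv$ and the Duhamel formula require $\bm H_{\bm\psi}v_n\in\D(\bm A_{\ttt 2}[\bm\psi])$, which the paper establishes only for $v$ in the exponential family $\bm{\cc E}(\eps)$, not for compactly supported mollifiers. The fix is to argue directly at the semigroup level, as the paper does: with $\varphi^{(y)}_n(x)=n\varphi(n(x-y))$ left unnormalised, dominated convergence gives $\|e_t\varphi^{(y)}_n-e^{-te^{-y}}\varphi^{(y)}_n\|_{\LRe}\to 0$, hence $\|P_t[\psi]\Hpsi\varphi^{(y)}_n-e^{-te^{-y}}\Hpsi\varphi^{(y)}_n\|_{\LRe}\le\|\Hpsi\|\,\|e_t\varphi^{(y)}_n-e^{-te^{-y}}\varphi^{(y)}_n\|_{\LRe}\to 0$, while Plancherel and Fatou's lemma give $\liminf_n\|\Hpsi\varphi^{(y)}_n\|^2_{\LRe}\gtrsim\|\varphi\|^2_{\bmrm[1]{L}(\bb R)}\int_{\bb R}|\mpsi(\xi+\tfrac{\i}{2})|^2\,d\xi=\infty$ precisely because the multiplier is not in $\bmrm{L}(\bb R)$; no decay information whatsoever is needed. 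A last, smaller point: your claim that $\bm\psi\in\NN^d_b(\bb R)$ forces one of $\mm^{\mathbf e}_{\bm\psi}$, $1/\mm^{\mathbf e}_{\bm\psi}$ to be bounded is automatic only for $d=1$; for $d\ge 2$ with coordinates of mixed type neither product need be bounded, so the unconditional inclusion $e^{t\bb R_-}\subseteq\sigma(\bm P^M_t[\bm\psi])$ has to be assembled coordinatewise through the tensor structure rather than read off from a global dichotomy.
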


This theorem is proved in Subsection~\ref{sec:proof4}.
\begin{rem}
 For the point and residual spectrum, there are plenty of natural examples, and,  we detail some of them in Sections \ref{ex:spectrally_neg} and \ref{ex:two_sided}. More generally, for any $\phi\in\B$, defining
\begin{align}\hlabel{eq:Thetas}
 \underline{\Theta}_\phi=\liminf_{|\xi|\to\infty}\Theta_\phi(|\xi|) \mbox{ and } \ov{\Theta}_\phi=\limsup_{|\xi|\to\infty}\Theta_\phi(|\xi|)
\end{align}
where $\Theta_\phi(|\xi|)=\frac{\int_{1/2}^\infty\ln\left(\frac{|\phi(a+\i|\xi|)|}{\phi(a)}\right)\,da}{|\xi|}$, one easily deduces from \cite[Theorem 6.2(1)]{patiesavov}, that,  for any $\xi \in \bb{R}$,
\begin{equation}\hlabel{eq:ration-gene}
		\left|\frac{W_{\phi_+}\left(\frac{1}{2}+{\rm{i}}\xi\right)   }{W_{\phi_-}\left(\frac{1}{2}+{\rm{i}}\xi\right)   }\right|\asymp\frac{\sqrt{|\phi_-\left(\frac{1}{2}+{\rm{i}}\xi\right) |}}{\sqrt{|\phi_+\left(\frac{1}{2}+{\rm{i}}\xi\right)   |}}e^{-|\xi|(\Theta_{\phi_+}(|\xi|)-\Theta_{\phi_-}(|\xi|))}.
	\end{equation} %W_{\phi_+}(\frac{1}{2})\sqrt{\phi_+(\frac{1}{2})}W_{\phi_-}(\frac{1}{2})\sqrt{\phi_-(\frac{1}{2})}
Thus, if $\underline{\Theta}_{\phi_+}>\ov{\Theta}_{\phi_-}$ (resp.~$\underline{\Theta}_{\phi_-}>\ov{\Theta}_{\phi_+}$), then ${P}[\psi]$ has point spectrum (resp.~has residual spectrum). For a proof of the last two statements and also sufficient conditions for both point and residual spectrum  in the case $\underline{\Theta}_{\phi_+}=\ov{\Theta}_{\phi_-}$, we refer to Proposition~\ref{prop:ratio_criterion}. In Section \ref{sec:cont_spect_ex}, we  provide examples of $\psi'$s for which ${P}[\psi]$ is non-self-adjoint and has continuous spectrum. However, in general, though we have always $e^{t\bb{R}_-}\subseteq\sigma(\bm{P}^M_t[\bm{\psi}])$, the weak similarity relation does not allow to check the reverse inclusion.
\end{rem}
\begin{rem}
 When the function $\Mpsi$ is polynomially bounded, i.e.~there is some positive $p>0$ such that \[\left|\Mpsi(\bm \xi)\right|\le C_p(1+\|\bm{\xi}\|)^p\] we can still have the Fourier inverse of $\Mpsi$ in the sense of Schwartz distribution, that could be used to define the functions associated to the residual spectrum. However, the growth of $\Mpsi$ along the real line,  being, in general,  exponential, this analysis requires deeper techniques  that we plan to develop in a subsequent work.
\end{rem}

\subsection{Positivity-preserving and integro-differential representation of the generators} In this section we provide an integro-differential representation of  the $\bmrm{L}(\bb R^d,\mathbf{e}_M)$-generator of the semigroup $\bm{P}^M[\bm{\psi}]$. We start by introducing the following subset of $\bmrm{L}(\bb R^d,\mathbf{e})$
\begin{equation}
		\bm{\cc{E}}=\bigotimes_{k=1}^d\cc{E} \mbox{ with }  \cc{E}=\Span\{\heb;\: \epsilon,\beta>0\}
\end{equation}
where $\heb: \bb{R} \to \bb{R}_+$ is defined by $\heb(x)=e^{-(\frac{1}{2}+\epsilon)x}e^{-\beta e^{-x}}$, and $\otimes$ stands for the tensor product of the univariate functions. Next, for any $\bm{\psi}=(\psi_1,\ldots,\psi_d)\in\mathbf{N}^d_b(\bb{R})$, where, for any $k=1,\ldots d$, $(\psi_k(0),\sigma_k^2,\ttt{b}_k,\mu_k)$ is the characteristic quadruplet of $\psi_k$,  and $M \in {\rm{GL}}_d(\bb{R})$, we consider the linear integro-differential operator acting on smooth and well behaved functions $\f: \bb{R}^d \to \bb{R}$ as follows
\begin{eqnarray}\hlabel{eq:fourier_int}
	%\begin{aligned}
	\AM[\bm{\psi}]\f(\bm{x})&=&\tr\left({M}^\top \Sigma_{\bm{\sigma}}({M}^{-1}\bm{x}) {M}\nab^2 \f(\bm{x})\right)+\left\langle {M}e_{\bm{\mathrm b}}(-{M}^{-1}\bm{x}),\nab \f(\bm{x})\right\rangle  \nonumber \\
	& &+\int_{\bb{R}^d}\left(\f(\bm{x}+\bm{y})-\f(\bm{x})-\langle\bm{y},\nab \f(\bm{x})\rangle\bbm{1}_{\{\|{M}^{-1}\bm{y}\|\le 1 \}}\right)\langle e(-{M}^{-1}\bm{x}),\bm{\mu}_M(d\bm{y})\rangle \\
&& - \left\langle e(-\bm{x}),\bm{\psi}(0)\right\rangle\f(\bm{x})  \nonumber %\overline{\mu}_{{M}} ({M}^{-1}\bm{x},d\bm{y})
\end{eqnarray}
where $\tr$ stands for the trace, $\Sigma_{\bm{\sigma}}$ is defined in \eqref{eq:defQ}, $\nab$ is the gradient, $\mathbf{b}=(\ttt{b}_1,\ldots,\ttt{b}_d)$, $e_{\bm{\mathrm b}}(-\bm{x})=(\ttt{b}_1e^{-x_1},\ldots, \ttt{b}_de^{-x_d})$ with $e_\mathbf{1}(-\bm{x})=e(-\bm x)=(e^{-x_1},\ldots, e^{-x_d})$. $\langle .,.\rangle$ and $\|\cdot\|$ are  the inner product and the Euclidean norm in $\bb{R}^d$ respectively, and,
for any $\bm{B} \in\cc{B}(\bb{R}^d)$, $\bm{\mu}_M(\bm{B})=\bm{\mu}({M}^{-1}\bm{B})$ with $\bm{\mu}(\bm{B})=(\mu_1(B_1),\ldots, \mu_d(B_d))$ where for all $1\le k\le d$, $B_k=\left\{x\in\bb{R}: x\mf{e}_k\in\bm{B}\right\}$, $(\mf{e}_1,\ldots,\mf{e}_d)$ being the canonical basis of $\bb{R}^d$.
%\textcolor{red}{What is $|| M||$, is it $u(x-y)$? define $Tr$, is the next claim that obvious..may be explain} \textcolor{blue}{fixed}
%  $\overline{\mu}_M(\bm{x}, B)=\overline{\mu}(\bm{x}, M^{-1} B)$.
%
%Let us now introduce the condition
Let us now introduce the following condition regarding the Wiener-Hopf factors of $\bm{\psi}$.
\begin{equation}\hlabel{cond:finite_mean}
	\textrm{For any $1\le k\le d$, either $\int_{|y|> 1}\!\!\hptg{C15}{|y|}\mu_k(dy)<\infty$ \; or \; $\phi_{+,k}(0)>0$}
\end{equation}
\begin{equation}\hlabel{eq:schwarz_decay}
	\lim_{|\xi|\to\infty}|\xi|^ne^{-\frac{\pi}{2}|\xi|}\sup_{1\le k\le d}\left|\frac{W_{\phi_{+,k}}(\frac{1}{2}+\i\xi)}{W_{\phi_{-,k}}(\frac{1}{2}+\i\xi)}\right|=0 \ \textrm{for all $n\in\bb N$}
\end{equation}
When $\phi_{+,k}(0)=0$, the integrability assumption in \eqref{cond:finite_mean} combined with an application of Taylor's formula imply that the operator $\AM[\bm\psi]$ in \eqref{eq:fourier_int} is well defined on the set
\begin{align}
	\bm{\cc{C}}=\{\f\in \C^2(\bb{R}^d); \: \nab\f\in \C^1_b(\bb{R}^d)\}.
\end{align}
\begin{thm}\hlabel{thm:core_multi_d}
Let $M\in\mathrm{GL}_d(\bb R)$.
\begin{enumerate}
\item\hlabel{it:feller_semigroup} We have \[ \ccP_M \subset \mathcal{C}^+_0(\bmrm{L}(\bb{R}^d,{\mathbf{e}}_M)).\] More specifically, $\FsemiM$,  when restricted to $\C_0(\bb{R}^d) \cap \bmrm{L}(\bb{R}^d,{\mathbf{e}}_M)$, extends to the semigroup of the Feller process on $\bb{R}^d$ defined, for any $t\geq0$, as $\bm{Y}_t = M \bm{X}_t,$ where $\bm{X}_t=(\ln X^{(1)}_t, \ldots,\ln X^{(d)}_t)$, the stochastic processes $(X^{(k)}_t)_{t\geq0}, k=1,\ldots,d,$ are mutually independent  and each of them is a positive self-similar  Feller process on $(0,\infty)$, see Section \ref{sec:lamp} for more details on these processes.
	\item\hlabel{it:core} If $\bm{\psi}$ satisfies the conditions \eqref{cond:finite_mean} and \eqref{eq:schwarz_decay}, then $\bm{\cc{E}}^M_{\bm \psi}:=\bm{H}^M_{\bm\psi}(\bm{\cc{E}})$ is a core for $\bm{A}_{\ttt 2}[\bm \psi]$, and its restriction on $\bm{\cc{E}}^M_{\bm \psi}$  coincides with $\bm{A}^M[\bm\psi]$ defined in \eqref{eq:fourier_int}.
\end{enumerate}
\end{thm}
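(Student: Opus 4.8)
The plan is to prove both assertions first in dimension one for $\psi\in\NN_+(\bb{R})$, then to reach $\psi\in\NN_-(\bb{R})$ through the adjoint identity $\FsemiMd=\bm{P}^M[\ov{\bm{\psi}}]$ of Theorem~\ref{thm:semigroup_generation}\eqref{it:thm2adj}, and finally to pass to general $d$ and general $M\in\mathrm{GL}_d(\bb{R})$ by tensorisation and by conjugating with $U_M\f=\f\circ M$, which, up to the scalar $|\det M|^{1/2}$, is an isometry of $\LLReM$ onto $\LLRe$, maps non-negative functions to non-negative ones, and realises on the path level the homeomorphism $\bm{x}\mapsto M\bm{x}$.

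For part \eqref{it:feller_semigroup}, the key point is that in dimension one the symbol \eqref{eq:symb} factorises, $\bbm{a}(x,\xi)=-e^{-x}\psi(\xi)$, so that $A_{\ttt{PDO}}[\psi]$ acts on regular functions as $\f\mapsto e^{-x}\cc{G}_\psi\f$, where $\cc{G}_\psi$ is the generator of the Lévy process with characteristic exponent $\psi$, i.e.\ the constant-``coefficient'' operator given by the Lévy--Khintchine form attached to $(\psi(0),\sigma^2,\ttt{b},\mu)$. Via the Lamperti representation, $e^{-x}\cc{G}_\psi$ is precisely the Dynkin operator \eqref{eq:dynk} of $\ln X$, with $X$ the positive $1$-self-similar Feller process on $(0,\infty)$ built from $\psi$ (Section~\ref{sec:lamp}). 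Since, by Lemma~\ref{lem:generator_intertwining} and Theorem~\ref{thm:semigroup_generation}\eqref{it:bijection}, $\LL_\psi(\C^\infty_c(\bb{R}))$ is a core for $A_{\ttt{2}}[\psi]$ on which $A_{\ttt{2}}[\psi]=A_{\ttt{PDO}}[\psi]$ and which lies in the Feller domain, the $\cc{C}_0$-semigroup $P_t[\psi]$ on $\LRe$ and the Feller semigroup of $\ln X$ share a generator on a common core, hence coincide on $\C_0(\bb{R})\cap\LRe$; in particular $P_t[\psi]$ is positivity-preserving. Positivity passes to the $\LRe$-adjoint (if $\langle P_t[\ov{\psi}]\f,\g\rangle\ge0$ whenever $\f,\g\ge0$, then $\langle\f,\widehat{P_t[\ov{\psi}]}\g\rangle\ge0$ for all $\f\ge0$, forcing $\widehat{P_t[\ov{\psi}]}\g\ge0$), which settles $\NN_-(\bb{R})$; tensorising over the coordinates and conjugating by $U_M$ then gives $\ccP_M\subset\cc{C}^+_0(\LLReM)$ together with the description of the Feller process $\bm{Y}_t=M\bm{X}_t$.

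For part \eqref{it:core}, work again in dimension one. A direct computation gives $\fouh_{\heb}(\xi)=(2\pi)^{-1/2}\Gamma(\epsilon+\i\xi)\beta^{-(\epsilon+\i\xi)}$, so that for the multiplication semigroup $e_t\f(y)=e^{-t e^{-y}}\f(y)$ one has the two structural facts $e_t\heb=\mf{h}_{\epsilon,\beta+t}\in\cc{E}$ for all $t\ge0$ and $-e^{-\cdot}\heb=-\mf{h}_{1+\epsilon,\beta}\in\cc{E}$. Feeding the first into the spectral representation $P_t[\psi]=\Hpsi e_t\Hpsinv$ of Theorem~\ref{th:spectral_decomp} (and using $\Hpsi\heb\in\Range(\Hpsi)=\D(\Hpsinv)$) shows that $\cc{E}_\psi:=\Hpsi(\cc{E})$ is $P_t[\psi]$-invariant, $P_t[\psi]\Hpsi\heb=\Hpsi\mf{h}_{\epsilon,\beta+t}$; differentiating at $t=0$, the differentiation being carried through $\Hpsi$ by the rapid decay of $\fouh_{\heb}$ together with \eqref{eq:schwarz_decay}, yields $\cc{E}_\psi\subset\D(A_{\ttt{2}}[\psi])$ with $A_{\ttt{2}}[\psi]\Hpsi\heb=-\Hpsi\mf{h}_{1+\epsilon,\beta}$. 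Moreover $\cc{E}_\psi$ is dense in $\LRe$: since $\mm^{\mathbf{e}}_\psi$ and $\Gamma$ are a.e.\ non-zero, any $\g\perp\cc{E}_\psi$ satisfies $\int_{\bb{R}}\mm^{\mathbf{e}}_\psi(\xi)\Gamma(\epsilon+\i\xi)\overline{\fouh_{\g}(\xi)}\,\beta^{-\i\xi}d\xi=0$ for all $\beta>0$, whence $\fouh_{\g}=0$ a.e. Being dense, contained in $\D(A_{\ttt{2}}[\psi])$ and $P_t[\psi]$-invariant, $\cc{E}_\psi$ is a core for $A_{\ttt{2}}[\psi]$ by the standard invariance criterion. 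The identification of the action of $A_{\ttt{2}}[\psi]$ on $\cc{E}_\psi$ with the integro-differential operator of \eqref{eq:fourier_int} follows either from the closure description of $A_{\ttt{2}}[\psi]$ in Theorem~\ref{thm:semigroup_generation}\eqref{it:bijection} (on regular functions $A_{\ttt{2}}[\psi]$ is represented by the pseudo-differential symbol $-e^{-x}\psi(\xi)$, i.e.\ by the Lévy--Khintchine form \eqref{eq:fourier_int}) or, directly, by checking on shifted Fourier transforms that $\bm{A}^M[\psi]\Hpsi\heb=-\Hpsi\mf{h}_{1+\epsilon,\beta}$, the algebraic input being the identity $\mm^{\mathbf{e}}_\psi(\xi)/\mm^{\mathbf{e}}_\psi(\xi-\i)=\psi(\xi-\tfrac{\i}{2})$, which is exactly the Wiener--Hopf factorisation \eqref{eq:WH} combined with the recurrence \eqref{eq:BG_receq}. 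Finally the $d$-dimensional case with $M=\mathrm{Id}$ follows since cores tensorise for the product semigroup, $\bm{\cc{E}}=\bigotimes_k\cc{E}$ and the corresponding generator is the sum of the one-dimensional pieces; conjugating by $U_M$ transports $\Hpsi(\bm{\cc{E}})$ to $\bm{H}^M_{\bm{\psi}}(\bm{\cc{E}})$ and the generator to precisely \eqref{eq:fourier_int}. The $\NN_-(\bb{R})$-components require no new idea: $\Hpsi$ is then unbounded, but $\cc{E}\subset\D(\Hpsi)$ and all the above identities persist verbatim.

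The main obstacle is the regularity bookkeeping underlying part \eqref{it:core}: one must show that, under \eqref{cond:finite_mean} and \eqref{eq:schwarz_decay}, every element of $\bm{H}^M_{\bm{\psi}}(\bm{\cc{E}})$ lies in the class $\bm{\cc{C}}=\{\f\in\C^2(\bb{R}^d);\nab\f\in\C^1_b(\bb{R}^d)\}$ on which \eqref{eq:fourier_int} is classically well defined, and that the a priori closure $A_{\ttt{2}}[\psi]$ genuinely coincides there with that explicit operator. This hinges on precise control --- Stirling-type bounds for $\Gamma$ along horizontal lines together with the growth estimates for the Bernstein--gamma functions $W_{\phi_\pm}$ inside the strip $\bb{C}_{[0,1]}$ afforded by the weak non-lattice property --- of how far off the real line, and with what boundary integrability, the function $\xi\mapsto\mm^{\mathbf{e}}_\psi(\xi)\fouh_{\heb}(\xi)$ extends; this is exactly the content of the preliminary analysis of Section~\ref{bernstein}, and is the reason the two conditions are imposed.
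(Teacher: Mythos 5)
Your overall route is the same as the paper's: part \eqref{it:feller_semigroup} is reduced to the one--dimensional identification of $P[\psi]$ with the log-Lamperti Feller semigroup via the Dynkin operator and the core $\Lambda_\psi(\C^\infty_c(\bb R))$ (this is exactly Theorem~\ref{thm:intertwining_pdo}\eqref{main_thm_1:it:1bc} together with Lemmas~\ref{lem:Dynkin} and \ref{lem:feller=L2}), the $\NN_-$ case is reached by duality, and everything is lifted by tensorization and conjugation by $\mathrm{d}_M$; part \eqref{it:core} reproduces in sketch the content of Proposition~\ref{prop:core_ido}: invariance of $\cc{E}$ under $(e_t)$, the Wiener-type density argument, differentiation through $H_\psi$ by dominated convergence, the invariance criterion for cores, and the functional equation $m_{H_\psi}(\xi+\i)=\psi(\xi)m_{H_\psi}(\xi)$ to identify the action. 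All of that is sound and matches the paper.

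There is one genuine thin spot, in the identification of $A_{\ttt 2}[\psi]$ with the integro-differential operator \eqref{eq:fourier_int} when $\phi_{+}(0)=0$ (a case explicitly allowed by \eqref{cond:finite_mean} under the finite-mean alternative). Your ``direct check on shifted Fourier transforms'' requires shifting the contour from $\bb R+\tfrac{\i}{2}$ to $\bb R$ and evaluating $m_{H_\psi}(\xi)=W_{\phi_+}(-\i\xi)/W_{\phi_-}(1+\i\xi)$ on the real line; but $W_{\phi_+}(-\i\xi)=W_{\phi_+}(1-\i\xi)/\phi_+(-\i\xi)$ blows up at $\xi=0$ precisely when $\phi_+(0)=0$, so the contour shift and the pointwise use of the functional equation on $\bb R$ are not licit there. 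Your alternative via ``the closure description of $A_{\ttt 2}[\psi]$'' is also not a complete argument: knowing that $A_{\ttt 2}[\psi]$ is the closure of the PDO on $\Lambda_\psi(\C^\infty_c(\bb R))$ does not by itself tell you that its action on $H_\psi\heb$ is given by the L\'evy--Khintchine form, since that would require graph-norm approximation together with continuity of the integro-differential operator along the approximating sequence. The paper closes this gap with the perturbation $\psi_q=\psi+q$ (so that $\phi_+^{(q)}(0)>0$), the uniform estimate \eqref{eq:ratio_approx}, and a lemma showing $H_{\psi_q}\heb\to H_\psi\heb$ together with its derivatives and the nonlocal term, the last step being exactly where the finite first moment $\int_{|y|>1}|y|\mu(dy)<\infty$ enters. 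You should either add this perturbation argument or restrict the direct computation to the case $\phi_+(0)>0$.
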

This theorem is proved in Section \ref{sec:thm29}.
\begin{rem}
	The condition \eqref{cond:finite_mean} is needed to ensure that $\bm{\cc{E}}^M_{\bm \psi}\subset\bm{\cc C}$, which entails that the integro-differential operator in \eqref{eq:fourier_int} is well-defined on $\bm{\cc{E}}^M_{\bm \psi}$. In order to weaken this assumption and get that $\bm{\cc{E}}^M_{\bm \psi}\subset\C^2_b(\bb R^d)$ whenever $\bm{\psi}$ satisfies \eqref{eq:schwarz_decay}, one would need to develop a refined analysis of the asymptotic behavior of the ratio $\left|\frac{\Wphip}{\Wphim}\right|$.
\end{rem}
%\begin{rem}
%	The condition \eqref{cond:finite_mean} is only needed  to define the integro-differential operator $\AM[\bm{\psi}]$ on $\bm{\cc{C}}$. However, in Proposition~\ref{thm:intertwining_pdo}, we will see that in many cases, $\bm{\cc{E}}^M_{\bm{\psi}}\subset \C^\infty_0(\bb{R}^d)$, which holds without \eqref{cond:finite_mean} and therefore, $\AM[\bm{\psi}]$ is still well defined.
%\end{rem}

\subsection{Conditions for point or residual spectrum in terms of the Wiener-Hopf factors}
The next proposition provides some sufficient conditions on the characteristic triplet of the Wiener-Hopf factors for the characterization of the presence of point or residual spectrum, which according to Theorem \ref{thm:spectrum}, boils down to identify whether or not $m_{\psi}=m_{\psi_k} \in \bmrm{L}(\bb{R})$ for  $k=1, \ldots,d$.  For $\phi\in\B$, we recall that $\ov{\Theta}_{\phi}$ and  $\underline{\Theta}_{\phi}$ are defined  in \eqref{eq:Thetas}.
%\textcolor{green}{I just realized that the tensor product $f\otimes 0=0$ for any function $f$. If $f$ is an eigenfunction of $A$, then $f\otimes 0$ is a \textbf{trivial} eigenfunction of $A\otimes B$.}
\begin{prop}\hlabel{prop:ratio_criterion}%\textcolor{red}{may be improved}
	Let $\psi\in\mathbf{N}(\bb{R})$ where $\psi(\xi)=\phi_+(-{\rm{i}}\xi  )\phi_-({\rm{i}}\xi  )$ for all $\xi\in \bb{R}$ with  $\phi_\pm(z)=\phi_\pm(0)+\ttt{d}_\pm z+\int_0^\infty (1-e^{-zy})\nu_\pm(dy)$ for all $z\in\bb{C}_{[0,\infty)}$. Then, for all $\xi\in \bb{R}$,
	
%where $f(x)\asymp g(x)$ means that the ratio $\frac{f(x)}{g(x)}$ is both bounded above and below as $|x|\to\infty$.
	\begin{enumerate}[(i)]
		\item \hlabel{it:ratio_bdd} If $\underline{\Theta}_{\phi_+}>\ov{\Theta}_{\phi_-}$  and
		\begin{align*}
			\sup\left\{\kappa>0;\: \liminf_{|\xi|\to\infty}|\xi|^\kappa |\phi_+(\mathrm{i}\xi)|>0\right\}<\infty
		\end{align*}
		then $\psi\in\mathbf{N}_+(\bb{R})$,  $m_{\psi}\in\bmrm{L}(\bb{R})$ and $m_{\overline{\psi}}\notin\bmrm{L}(\bb{R})$. Table \ref{tab:pos} provides sufficient conditions expressed in terms of the characteristic triplets of $\phi_+$ and $\phi_-$ for $\underline{\Theta}_{\phi_+}> \ov{\Theta}_{\phi_-}$, where $\ov{\nu}_\pm(r)=\nu_\pm(r,\infty)$, and, for any $\alpha>0$, $\mathbf{RV}(\alpha)$ (resp.~q-m) denotes the set of all regularly varying (resp.~quasi-monotone) functions of index $\alpha$, see \cite[Chapter 1 and Section 2.7]{BGT}
		%\begin{align}\hlabel{eq:ratio_exp}
	%		\left|\Wphi\right|=\mathrm{O}\left(e^{-|\xi|(\underline{\Theta}_{\phi_+}-\ov{\Theta}_{\phi_-})}\right).
	%	\end{align}
	%	In particular, if  denoting the tail of the L\'evy measures by $\ov{\nu}_\pm(r)=\nu_\pm(r,\infty)$, \eqref{eq:ratio_exp} is satisfied if the characteristic triplets of $\phi_+$ and $\phi_-$ satisfy one of the conditions given in the Table \ref{tab:pos}.
%\textcolor{green}{I think it should be stated as $\underline{\Theta}-\ov{\Theta}>0$. Note that in the estimate \eqref{eq:ration-gene}, the order of $\limsup$ and $\liminf$ changes for upper and lower bounds.}\textcolor{red}{in fact we have simply to use  the estimate \eqref{eq:ration-gene} and takes the $\liminf$ in both cases and use either the lower or upper bound as we simply want to know whether $m_{\psi}$ is in $\bmrm{L}(\bb{R})$ or not. Otherwise, there is something not clear here (I did not check) as it seems that when the ratio decay at least polynomially then the weak lattice condition is satisfied. } \textcolor{green}{Yes, I think this is true. If $\ttt{d}_+>0$ or $\ov{\nu}_+(y)/y^\alpha$ is quasi-monotone, the weak lattice condition holds automatically.}
\begin{table}[h]
		\begin{center}{
				\centering\makegapedcells
			\begin{tabular}{|c|c|c|c|c|}
				\hline
				\makecell{$\ttt{d}_+$} & \makecell{$\ttt{d}_-$} & \makecell{$\ov{\nu}_+$} & \makecell{$\ov{\nu}_-$} & \makecell{$\left|\Wphi\right|$} \\
				\hline
				\makecell{$0$} & \makecell{$0$} & \makecell{$\ov{\nu}_+\in\mathbf{RV}(\alpha_+),$ \\  $y\mapsto\frac{\overline{\nu}_+(y)}{y^{\alpha_+}} \text{ is q-m}$} & \makecell{$\ov{\nu}_-\in\mathbf{RV}(\alpha_-),$ \vspace{.2 cm}\\ $y\mapsto\frac{\overline{\nu}_-(y)}{y^{\alpha_-}} \text{ is q-m},$} & \makecell{$={\rm{O}}\left(e^{-(\alpha_+-\alpha_-)\frac{\pi}{2}|\xi|}\right)$\vspace{.2 cm} \\ $0<\alpha_-<\alpha_+<1$} \\
				\hline
				\makecell{$>0$} & \makecell{$0$} &  &\makecell{$\ov{\nu}_-\in\mathbf{RV}(\alpha)$, \\ $y\mapsto\frac{\overline{\nu}_-(y)}{y^{\alpha}} \text{ is q-m},$} & \makecell{$={\rm{O}}\left(e^{-(1-\alpha)\frac{\pi}{2}|\xi|}\right)$\vspace{.2 cm} \\ $0<\alpha<1$} \\
				\hline
			\end{tabular}}
\caption{Conditions for $m_{\psi}\in\bmrm{L}(\bb{R})$ when $\underline{\Theta}_{\phi_+}>\ov{\Theta}_{\phi_-}$} \hlabel{tab:pos} %for detailed account on these sets of functions)
		\end{center} %\vspace{.2 cm}
\end{table}	
	
		%\item If $\underline{\Theta}_{\phi_-}-\ov{\Theta}_{\phi_+}>0$ and $\liminf_{|\xi|\to\infty}|\xi|^\kappa|\phi_-\left({\rm{i}}\xi\right)   |>0$ for some $\kappa>0$, then $\psi\in\mathbf{N}_-(\bb{R})$.
		\item \hlabel{it:p3} If $\underline{\Theta}_{\phi_+}=\ov{\Theta}_{\phi_-}$, we give in Table~\ref{tab:zero}  a set  of sufficient conditions on the characteristic triplets of $\phi_+, \phi_-$  to ensure  that $m_\psi\in\bmrm{L}(\bb R)$ and $m_{\overline{\psi}}\notin\bmrm{L}(\bb{R})$. %In Table~\ref{tab:zero} we give a set  of conditions on the characteristic triplets of $\phi_+, \phi_-$ that ensures this property.
	\begin{comment}	
		If $\ttt{d}_-=0$ or $\ov{\nu}_-(0)=\infty$, $\liminf_{|\xi|\to\infty}|\xi|^\kappa|\phi_+\left({\rm{i}}\xi\right)   |>0$ for some $\kappa>0$ and
		\begin{align}\hlabel{eq:ratio_polynom}
			\left|\frac{W_{\phi_+}\left(\frac{1}{2}+\mathrm{i}\xi\right)}{\Gamma\left(\frac{1}{2}+\mathrm{i}\xi\right)}\right|={\rm{O}}(|\xi|^l)
		\end{align}
		for some $l>0$, then $\psi\in\mathbf{N}_+(\bb{R})$. In particular, if $\ttt{d}_-=0$, $\ttt{d}_+>0$ and $\ov{\nu}_+(0)<\infty$ (resp. $\ttt{d}_+=0,\ttt{d}_->0$ and $\ov{\nu}_-(0)=\infty$), then $\psi\in\mathbf{N}_+(\bb{R})$ (resp. $\mathbf{N}_-(\bb{R})$).
		\item \hlabel{it:p4} If $\ttt{d}_->0, \ov{\nu}_-(0)=\infty$ (resp. $\ttt{d}_+>0, \ov{\nu}_+(0)=\infty$) and $\ttt{d}_+>0, \ov{\nu}_+(0)<\infty$ (resp. $\ttt{d}_->0$, $\ov{\nu}_-(0)<\infty$), then $\psi\in\mathbf{N}_+(\bb{R})$ (resp. $\mathbf{N}_-(\bb{R})$).
		%\item \hlabel{it:p4} If $\ttt{d}_+=0$ or $\ov{\mu}_+(0)=\infty$, $\liminf_{|\xi|\to\infty}|\xi|^\kappa|\phi_-\left({\rm{i}}\xi\right)   |>0$ for some $\kappa>0$ and
		%\begin{align*}
		%\left|\frac{W_{\phi_-}\left(\frac{1}{2}+\mathrm{i}\xi\right)}{\Gamma\left(\frac{1}{2}+\mathrm{i}\xi\right)}\right|=O(|\xi|^l)
		%\end{align*}
		%for some $l>0$, then $\phi\in\mathbf{N}_-(\bb{R})$. \tb{changed here...added item iii and iv}
	\end{enumerate}
	Below we give a table containing the conditions on the characteristic triplets of $\phi_+$ and $\phi_-$ mentioned in the above result to ensure that $\psi\in\mathbf{N}_+(\bb{R})$.
	\end{comment}
	\begin{table}[h]
		\begin{center}{
			\centering\makegapedcells
			\begin{tabular}{|c|c|c|c|c|}
				\hline
				$\ttt{d}_+$ & $\ttt{d}_-$ & $\ov{\nu}_+$ & $\ov{\nu}_-$ & $\makecell{\left|\Wphi\right|}$ \\  % \frac{W_{\phi_+}\left(\frac{1}{2}+\mathrm{i}\xi\right)}{W_{\phi_-}\left(\frac{1}{2}+{\rm{i}}\xi\right)}$ \\
				\hline
				$>0$ & $0$ & $\ov{\nu}_+(0)<\infty$ & & $={\rm{O}}(|\xi|^{-u}) \ \forall u>0$ \\
				\hline
				$>0$ & $>0$ & $\ov{\nu}_+(0)<\infty$ & $\ov{\nu}_-(0)=\infty$ & $={\rm{O}}(|\xi|^{-u}) \ \forall u>0$ \\
				\hline
			\end{tabular}}
		\caption{Conditions for $m_{\psi}\in\bmrm{L}(\bb{R})$ when $\underline{\Theta}_{\phi_+}=\ov{\Theta}_{\phi_-}$} \hlabel{tab:zero}
	\end{center}
	\end{table}
%\textcolor{green}{The first condition seems to be correct as $|b|^u|\frac{\Gamma(a+\i b)}{W_{\phi_-}(a+\i b)}|\to 0$ for any $u>0$ if $\ttt{d_-}=0$ or $\ov{\nu_-}(0)=\infty$. On the other hand, $|b|^{-u}|\frac{W_{\phi_+}(a+\i %b)}{\Gamma(a+\i b)}|\to 0$ for some $u>0$ if $\ttt{d_+}>0$ and $\ov{\nu}_+(0)<\infty$. }
%\item In the above two cases,  $m_{\psi}\in\bmrm{L}(\bb{R})$ if and only if $m_{\overline{\psi}}\in\bmrm{L}(\bb{R})$. We also emphasize that by switching the conditions of $\phi_+$ and $\phi_-$ in this proposition, one can obtain the rate of decay for $\left|\Wphiinv\right|$.
\end{enumerate}
\end{prop}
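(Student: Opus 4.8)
The plan is to reduce everything to the tail behaviour of $|m_\psi(\xi)|$ as $|\xi|\to\infty$. Since $W_{\phi_+}$ and $W_{\phi_-}$ are analytic and zero-free on $\bb{C}_{[0,\infty)}$ by \eqref{eq:W_Zerofree}, the map $\xi\mapsto m_\psi(\xi)=W_{\phi_+}(\tfrac12-\i\xi)/W_{\phi_-}(\tfrac12+\i\xi)$ is continuous and non-vanishing on $\bb{R}$, so $m_\psi\in\bmrm{L}(\bb{R})$ is purely a question about $|m_\psi(\xi)|$ for large $|\xi|$; and since $\ov{W}_{\!\phi}(z)=W_\phi(\ov{z})$ one has $|m_\psi(\xi)|=\big|W_{\phi_+}(\tfrac12+\i\xi)/W_{\phi_-}(\tfrac12+\i\xi)\big|$. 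I would start from the two-sided estimate \eqref{eq:ration-gene}, valid for every $\psi\in\NN(\bb{R})$:
\[
 |m_\psi(\xi)|\asymp\frac{\sqrt{|\phi_-(\tfrac12+\i\xi)|}}{\sqrt{|\phi_+(\tfrac12+\i\xi)|}}\;e^{-|\xi|\left(\Theta_{\phi_+}(|\xi|)-\Theta_{\phi_-}(|\xi|)\right)},\qquad\xi\in\bb{R}.
\]
Because $\Re\phi_\pm(\tfrac12+\i\xi)\ge\phi_\pm(\tfrac12)>0$ (for nontrivial $\phi_\pm$) and every Bernstein function grows at most linearly, the prefactor stays between $c\,|\xi|^{-1/2}$ and $C\,|\xi|^{1/2}$. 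Hence the whole analysis reduces to controlling the exponent $\Theta_{\phi_+}(|\xi|)-\Theta_{\phi_-}(|\xi|)$ from the hypotheses on the characteristic triplets $(\phi_\pm(0),\ttt{d}_\pm,\nu_\pm)$, and then reading off the conclusions for $m_\psi$ and for $m_{\ov\psi}$, using that $\ov\psi$ has the Wiener--Hopf factors $\phi_-,\phi_+$ with roles reversed, so $|m_{\ov\psi}(\xi)|=1/|m_\psi(\xi)|$.

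For part~\ref{it:ratio_bdd}, if $\underline{\Theta}_{\phi_+}>\ov{\Theta}_{\phi_-}$ I would fix $\delta>0$ with $\Theta_{\phi_+}(|\xi|)-\Theta_{\phi_-}(|\xi|)\ge\delta$ for $|\xi|$ large; the displayed estimate then yields $|m_\psi(\xi)|=\mathrm{O}(|\xi|^{1/2}e^{-\delta|\xi|})$ and $1/|m_\psi(\xi)|\gtrsim|\xi|^{-1/2}e^{\delta|\xi|}$. In particular $m_\psi\in\bmrm{L}(\bb{R})$ and $m_\psi$ is bounded, and the latter, together with the growth hypothesis on $\phi_+$ in part~\ref{it:ratio_bdd} (which furnishes the weak non-lattice property of $\phi_+$ entering the definition of $\NN_+(\bb{R})$), places $\psi$ in $\NN_+(\bb{R})$; on the other hand $|m_{\ov\psi}(\xi)|\to\infty$, so $m_{\ov\psi}\notin\bmrm{L}(\bb{R})$. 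To justify Table~\ref{tab:pos} I would compute $\underline{\Theta}_{\phi_+}$ and $\ov{\Theta}_{\phi_-}$ from the triplets: when $\ttt{d}=0$ and $\ov\nu\in\mathbf{RV}(\alpha)$ with $y\mapsto\ov\nu(y)/y^{\alpha}$ quasi-monotone, a Karamata/de~Haan Tauberian argument (\cite[Ch.~1 and \S2.7]{BGT}) transfers regular variation to $\phi$ along every ray $a+\i|\xi|$, uniformly in $a$, and substituting this into $\Theta_\phi(|\xi|)=|\xi|^{-1}\int_{1/2}^\infty\ln\!\big(|\phi(a+\i|\xi|)|/\phi(a)\big)\,da$ and rescaling $a=|\xi|t$ gives $\Theta_\phi(|\xi|)\to\tfrac{\pi}{2}\alpha$; when $\ttt{d}>0$ instead $\phi(u)\sim\ttt{d}u$ and $\Theta_\phi(|\xi|)\to\tfrac{\pi}{2}$. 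Comparing the two Wiener--Hopf factors then produces both $\underline{\Theta}_{\phi_+}>\ov{\Theta}_{\phi_-}$ and the exponential rates listed in the last column of Table~\ref{tab:pos}.

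For part~\ref{it:p3}, in the critical regime $\underline{\Theta}_{\phi_+}=\ov{\Theta}_{\phi_-}$ the leading exponential orders of $W_{\phi_+}$ and $W_{\phi_-}$ coincide and cancel in \eqref{eq:ration-gene}, so I would descend to the next order via the refined Stirling-type asymptotics of the Bernstein--gamma function from \cite[Sec.~4]{patie2018}, which write $\ln|W_\phi(\tfrac12+\i\xi)|$ as $-\Theta_\phi(|\xi|)|\xi|$ plus explicitly controlled lower-order corrections governed by $\ttt{d}$ and by the total L\'evy mass $\ov\nu(0)=\nu(0,\infty)$. Under the hypotheses of Table~\ref{tab:zero} --- $\ttt{d}_+>0$ with $\ov\nu_+(0)<\infty$, against $\phi_-$ either with $\ttt{d}_-=0$ or with $\ttt{d}_->0$ and $\ov\nu_-(0)=\infty$ --- these corrections are $\mathrm{O}(\ln|\xi|)$ for $\phi_+$ but of strictly larger order for $\phi_-$, so their difference eventually dominates $u\ln|\xi|$ for every fixed $u>0$; hence $|m_\psi(\xi)|=\mathrm{O}(|\xi|^{-u})$ for all $u>0$, which gives $m_\psi\in\bmrm{L}(\bb{R})$ and again $m_{\ov\psi}=1/m_\psi\notin\bmrm{L}(\bb{R})$.

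I expect part~\ref{it:p3} to be the main obstacle: in the critical case the exponential orders match, the soft bound \eqref{eq:ration-gene} becomes uninformative, and one must produce a genuine second-order expansion of $W_{\phi_\pm}(\tfrac12+\i\xi)$ and carefully weigh the competing logarithmic and slowly varying corrections coming from the Brownian coefficients $\ttt{d}_\pm$ and the L\'evy masses $\ov\nu_\pm(0)$. A secondary technical nuisance, already present in part~\ref{it:ratio_bdd}, is establishing the uniformity in $a$ of the regular-variation estimates for $\phi(a+\i|\xi|)$ from quasi-monotonicity alone, which is precisely what licenses passing to the limit in the defining integral for $\Theta_\phi(|\xi|)$.
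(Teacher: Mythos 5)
Your proposal follows the paper's own route: both start from the two-sided estimate \eqref{eq:ration-gene} (i.e.\ \cite[Theorem 6.2(1)]{patiesavov}), obtain Table~\ref{tab:pos} by computing $\lim_{|\xi|\to\infty}\Theta_{\phi_\pm}(|\xi|)$ from the regular-variation hypotheses on $\ov{\nu}_\pm$ (getting $\tfrac{\pi}{2}\alpha_\pm$, resp.\ $\tfrac{\pi}{2}$ when $\ttt{d}_\pm>0$), and handle the critical case $\underline{\Theta}_{\phi_+}=\ov{\Theta}_{\phi_-}$ via second-order information on $W_{\phi_\pm}$. The only real difference is that where you propose to re-derive the $\Theta$-limits by Karamata arguments and to develop the second-order Stirling-type expansion yourself, the paper simply invokes \cite[Theorem 3.3(2)]{patie2018} for the former and \cite[Proposition 6.2]{patie2018} for the latter --- the latter compares each $W_{\phi_\pm}(\tfrac12+\i\xi)$ to $\Gamma(\tfrac12+\i\xi)$, showing that $\Gamma/W_{\phi_-}$ decays faster than any polynomial when $\ttt{d}_-=0$ or $\ov{\nu}_-(0)=\infty$ while $W_{\phi_+}/\Gamma$ grows at most polynomially (of order $u>(\phi_+(0)+\ov{\nu}_+(0))/\ttt{d}_+$) when $\ttt{d}_+>0$ and $\ov{\nu}_+(0)<\infty$, which is precisely the ``strictly larger order'' cancellation you correctly identify as the main obstacle.
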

This proposition is proved in Section~\ref{sec:prop2.2}.
We point out that the study of the asymptotic behavior of the ratio $\left|\Wphi\right|$ for any pairs of Bernstein functions, in particular any pairs of L\'evy measures, is a delicate issue. Our results identify several large  classes of Bernstein functions for which such estimate is attainable. We mention that, in the recent paper \cite{Sav-M}, asymptotic estimates of this ratio when $W_{\phi_+}$ is the classical gamma function, has been obtained under other type of conditions than ours on the tail of the L\'evy measure $\ov{\nu}_-$.
%Finally, if, in both of the above cases, we further assume that
%\begin{align*}
%	\sup\left\{\kappa>0; \liminf_{|\xi|\to\infty}|\xi|^\kappa |\phi_+(\mathrm{i}\xi)|>0\right\}<\infty
%\end{align*}
%then $\psi\in\mathbf{N}_+(\bb{R})$.

\section{Preliminaries and auxiliary results} \hlabel{sec:prelim}
In this section we gather some general facts about several ideas and tools that will be used throughout the remaining part of the paper, with an emphasis  to the  theory of shifted Fourier multipliers and the analytical properties of certain ratios of Bernstein-gamma functions that will be important in the sequel.

\subsection{Fourier transforms, shifted Fourier transforms and some classical results}
For any function $f$ in $\bmrm[1]{L}(\bb{R})$ or $\bmrm{L}(\bb{R})$, we denote its Fourier transform by $\mathcal{F}_{f}$, i.e.~
\begin{equation*}
\fou_f(\xi)=\frac{1}{\sqrt{2\pi}}\int_{\bb{R}} e^{-{\rm{i}}x\xi} f(x)\,dx,\:  \xi\in\bb{R}.
\end{equation*} As before we define $\fou^{e}$ the shifted Fourier transform, i.e.~for any $f\in\bmrm{L}(\bb{R},e)$, writing $e(x)=e^{x}$,
\begin{align}
\fou^{e}_f(\xi)=\fou_{\sqrt{e}f} (\xi)
\end{align}
in the $\bmrm{L}$-sense. Clearly, $\fou^{e}: \bmrm{L}(\bb{R},e)\to\bmrm{L}(\bb{R})$ is an isometry whose inverse is denoted by $\widehat{\fou}^{e}$. Next, we mention a very useful result due to Wiener that will be used frequently in this paper.
\begin{thm}[Wiener's Tauberian Theorem] \hlabel{thm:WT}
Let $f\in\bmrm{L}(\bb{R})$ be such that $\mathcal{F}_{f}$ is almost everywhere (a.e.)~non-zero. Then, recalling that $\tau_af(.)=f(.+a)$, the set $\Span\{\tau_af; a\in\bb{R}\}$ is dense in $\bmrm{L}(\bb{R})$.
\end{thm}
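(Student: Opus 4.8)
The plan is to prove Wiener's Tauberian theorem in the form stated, namely that if $f \in \bmrm{L}(\bb R)$ (i.e.~$L^2(\bb R)$) has Fourier transform $\fou_f$ non-zero a.e., then $\Span\{\tau_a f;\: a \in \bb R\}$ is dense in $\bmrm{L}(\bb R)$. Since we are in a Hilbert space, density is most efficiently checked via orthogonal complements: I would show that if $g \in \bmrm{L}(\bb R)$ satisfies $\langle \tau_a f, g\rangle = 0$ for all $a \in \bb R$, then $g = 0$.

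First I would translate the orthogonality condition to the Fourier side. By Plancherel's theorem, $\fou$ is unitary on $\bmrm{L}(\bb R)$, so $\langle \tau_a f, g\rangle = \langle \fou_{\tau_a f}, \fou_g\rangle$. Using the elementary identity $\fou_{\tau_a f}(\xi) = e^{\i a \xi}\fou_f(\xi)$ (the shift-to-modulation rule, with the sign convention fixed by the definition of $\fou$ in the excerpt), the condition becomes
\begin{equation*}
  \int_{\bb R} e^{\i a \xi}\, \fou_f(\xi)\,\overline{\fou_g(\xi)}\, d\xi = 0 \quad \text{for all } a \in \bb R.
\end{equation*}
Now $\fou_f \overline{\fou_g} \in \bmrm[1]{L}(\bb R)$ by Cauchy--Schwarz (both factors are in $\bmrm{L}(\bb R) = L^2$), so the left-hand side is, up to the normalizing constant, the inverse Fourier transform of the $L^1$ function $\fou_f \overline{\fou_g}$ evaluated at $a$. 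Since this vanishes for every $a$, uniqueness of the Fourier transform on $L^1$ (an $L^1$ function whose Fourier transform vanishes identically is zero a.e.) forces $\fou_f(\xi)\overline{\fou_g(\xi)} = 0$ for a.e.~$\xi$. But $\fou_f \ne 0$ a.e.~by hypothesis, so $\fou_g = 0$ a.e., and therefore $g = 0$ by injectivity of $\fou$ on $\bmrm{L}(\bb R)$. Hence the orthogonal complement of $\Span\{\tau_a f;\: a\in\bb R\}$ is trivial, which gives density.

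The step I expect to be the only real point requiring care is the passage from ``$\int e^{\i a\xi} h(\xi)\,d\xi = 0$ for all $a$'' to ``$h = 0$ a.e.'', i.e.~invoking Fourier uniqueness for the $L^1$ function $h = \fou_f\overline{\fou_g}$; one must make sure the product genuinely lies in $L^1$ (which Cauchy--Schwarz delivers since $f,g \in L^2$) so that the classical $L^1$-uniqueness theorem applies rather than needing the more delicate $L^2$ inversion theory. Everything else is routine Hilbert-space duality and standard Fourier identities, so no genuine obstacle arises; the ``$L^2$-flavoured'' version of Wiener's theorem stated here is in fact markedly softer than the classical $L^1$ Tauberian theorem, whose proof requires approximation by trigonometric polynomials and the full strength of the $L^1$ Wiener inversion lemma.
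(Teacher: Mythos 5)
Your proof is correct and complete. Note that the paper does not actually prove this statement — it simply records it as standard and refers to \cite{BGT} — so there is no in-paper argument to compare against. What you give is the standard proof of the $\bmrm{L}(\bb R)$ version: reduce density of the span to triviality of its orthogonal complement, pass to the Fourier side via Plancherel and the shift-to-modulation identity $\fou_{\tau_a f}(\xi)=e^{\i a\xi}\fou_f(\xi)$, observe that $\fou_f\overline{\fou_g}\in\bmrm[1]{L}(\bb R)$ by Cauchy--Schwarz, and invoke $\bmrm[1]{L}$-uniqueness of the Fourier transform to conclude $\fou_f\overline{\fou_g}=0$ a.e., hence $\fou_g=0$ a.e.\ by the hypothesis on $\fou_f$. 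Each step is justified, and your closing remark is also accurate: this Hilbert-space version is genuinely softer than the classical $\bmrm[1]{L}$ Tauberian theorem, whose proof requires substantially more work. The only cosmetic point is that triviality of the orthogonal complement gives density of the \emph{closed} span, which is of course what density of the span means here.
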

This result is very standard and can be found in \cite{BGT}. In the following corollary, we mention a variant of Wiener's theorem which will be needed in the subsequent results.
\begin{corr}\label{corr:wiener}
Let $f\in\bmrm{L}(\bb{R},e)$  be such that $\fou^{e}_f$ is a.e.~non-zero. Then, \hpt{R5}{$Span\{\tau_a f; a\in\bb{R}\}$ is dense in $\bmrm{L}(\bb{R},e)$}.
\end{corr}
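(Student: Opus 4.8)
The statement to prove is Corollary~\ref{corr:wiener}: if $f\in\bmrm{L}(\bb{R},e)$ is such that $\fou^{e}_f$ is a.e.~non-zero, then $\Span\{\tau_a f;\ a\in\bb{R}\}$ is dense in $\bmrm{L}(\bb{R},e)$. The natural approach is to transport the problem to the unweighted space $\bmrm{L}(\bb{R})$ via the isometry $\fou^{e}$ (equivalently, via the multiplication map $f\mapsto \sqrt{e}\,f$) and then invoke Wiener's Tauberian Theorem (Theorem~\ref{thm:WT}) there. The one subtlety is that translation does not commute with multiplication by $\sqrt{e}$, so one has to track exactly what the shift operator $\tau_a$ becomes on the $\bmrm{L}(\bb{R})$ side.

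\textbf{Key steps.} First I would record the conjugation identity: for $g=\sqrt{e}\,f\in\bmrm{L}(\bb{R})$ one has $\sqrt{e(x)}\,(\tau_a f)(x)=\sqrt{e^{x}}\,f(x+a)=e^{-a/2}\sqrt{e^{x+a}}\,f(x+a)=e^{-a/2}(\tau_a g)(x)$. Hence the map $f\mapsto\sqrt{e}\,f$ sends $\tau_a f$ to $e^{-a/2}\tau_a g$, so it carries $\Span\{\tau_a f;\ a\in\bb{R}\}$ onto $\Span\{\tau_a g;\ a\in\bb{R}\}$ (the scalars $e^{-a/2}$ are harmless since we take linear spans). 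Second, since $f\mapsto\sqrt{e}\,f$ is an isometric isomorphism from $\bmrm{L}(\bb{R},e)$ onto $\bmrm{L}(\bb{R})$, a subset of $\bmrm{L}(\bb{R},e)$ is dense iff its image is dense in $\bmrm{L}(\bb{R})$; thus it suffices to show $\Span\{\tau_a g;\ a\in\bb{R}\}$ is dense in $\bmrm{L}(\bb{R})$. Third, I would check the hypothesis of Theorem~\ref{thm:WT} for $g$: by definition $\fou^{e}_f=\fou_{\sqrt{e}f}=\fou_g$, and this is a.e.~non-zero by assumption, so $g\in\bmrm{L}(\bb{R})$ has a.e.~non-vanishing Fourier transform. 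Applying Wiener's Tauberian Theorem to $g$ gives that $\Span\{\tau_a g;\ a\in\bb{R}\}$ is dense in $\bmrm{L}(\bb{R})$, and transporting back through the isometry yields the claim.

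\textbf{Main obstacle.} There is essentially no deep obstacle here — the content is entirely Wiener's theorem. The only place to be careful is the bookkeeping in the first step: one must verify that $\tau_a$ on the weighted space corresponds, under the isometry, to $\tau_a$ composed with multiplication by the constant $e^{-a/2}$ (rather than, say, to a modulation or a different shift), and that this constant does not obstruct passage to linear spans. A secondary minor point is to confirm that $g=\sqrt e\,f$ indeed lies in $\bmrm{L}(\bb R)$ and that $\fou_g$ makes sense — but this is immediate from $f\in\bmrm{L}(\bb{R},e)$ and the definition of $\fou^e$. So the proof is short: conjugate by the isometry, reduce to Wiener's theorem, done.
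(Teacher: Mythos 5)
Your proof is correct and follows essentially the same route as the paper: conjugate by the isometry $f\mapsto\sqrt{e}\,f$, observe that $\fou^{e}_f=\fou_{\sqrt{e}f}$ is a.e.~non-zero, and apply Wiener's Tauberian Theorem in $\bmrm{L}(\bb{R})$. Your explicit bookkeeping of the constant $e^{-a/2}$ arising from the conjugation of $\tau_a$ is a welcome detail that the paper's one-line proof leaves implicit.
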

\begin{proof}
Note that $f\in\bmrm{L}(\bb{R},e)$ implies that $\sqrt{e}f\in\bmrm{L}(\bb{R})$, where we recall that $\sqrt{e}f=f(x)e^{\frac{x}{2}}, x\in \bb{R}$. By means of  the Wiener's Tauberian theorem, we have that $\tredd{\Span\{\tau_a \sqrt{e}f; a\in\bb{R}\}}$ is dense in $\bmrm{L}(\bb{R})$, which proves the result.
\end{proof}
We close this part with the following variant of Cauchy contour integration formula that will also be useful later.
%Its proof is omitted as it is a direct application of the Cauchy integral formula.
\begin{lem}\hlabel{prop:contour} Let $f\in\A_{[0,\gamma]}$ for some $\gamma>0$ and there exists $1\le p \leq \infty$ such that\hpt{R6}{, for $p<\infty$,
\begin{align*}
%\item $f\in\bmrm[1]{L}\left(\partial\bb{S}_{(0,a)}\right)$, where $\partial\bb{S}_{(0,a)}$ is the boundary of $\bb{S}_{(0,a)}$,
\sup\limits_{b\in [0,\gamma]}\left(\int_{\bb R}\left|f(x+{\rm{i}}b)\right|^p dx\right)^{\frac{1}{p}}<\infty,
\end{align*}
(resp.~ the supremum norm is finite for $p=\infty$).}
Then, one can choose a subsequence $(n_j)_{j\ge 1}$ of natural numbers such that for any $0\le b\le\gamma$,
\begin{align*}
\lim\limits_{j\to\infty}\int_{-n_j}^{n_j} [f(x+\i b)-f(x)]dx=0.
\end{align*}
In particular, when $p=1$, $\int_\bb{R} f(x)dx=\int_{\bb R} f(x+\i b)dx$ for all $0\le b\le \gamma$, and when $p=\infty$, $\lim\limits_{n\to\infty}\int_{-n}^n [f(x+\i b)-f(x)]dx=0$ for all $0\le b\le\gamma$.
\end{lem}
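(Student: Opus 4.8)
The plan is to prove the lemma by a standard contour-integration argument over a large rectangle, followed by an exhaustion/subsequence argument to kill the two horizontal pieces near $\pm\infty$. First I would fix $0 \le b \le \gamma$ and, for each $n \in \bb{N}$, consider the closed rectangular contour $R_n$ with vertices $-n,\ n,\ n + \i b,\ -n + \i b$, traversed counterclockwise. Since $f \in \A_{[0,\gamma]}$, it is analytic on the open strip $\bb{S}_{(0,\gamma)}$ and continuous up to the boundary, so by Cauchy's theorem $\oint_{R_n} f(z)\, dz = 0$. Breaking this into the four sides gives
\begin{align*}
\int_{-n}^{n} f(x)\,dx - \int_{-n}^{n} f(x + \i b)\,dx = \i\int_0^b f(n + \i t)\,dt - \i\int_0^b f(-n + \i t)\,dt,
\end{align*}
so it suffices to show that the right-hand side tends to $0$ along a suitable subsequence.

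Next I would control the two vertical edges using the $\bmrm{L}^p$-boundedness hypothesis. For $p = \infty$ this is immediate and in fact unnecessary for the vertical edges — one uses instead that the \emph{horizontal} decay must be extracted. Actually the cleanest route: set $g_n(t) = \int_0^b \big(|f(n+\i t)| + |f(-n + \i t)|\big)\,dt$ for $n \in \bb{N}$ (or restricted to the relevant range of $n$), and observe that by Fubini and the hypothesis, for $p < \infty$,
\begin{align*}
\sum_{n} \left(\int_0^b |f(n + \i t)|^p\, dt\right) \le \int_0^b \sum_n |f(n + \i t)|^p\, dt,
\end{align*}
which need not converge directly; instead one argues that $\int_{\bb R}\left(\int_0^b |f(x+\i t)|^p\,dt\right) dx < \infty$ by Fubini applied to the finite supremum bound, hence the integrand $x \mapsto \int_0^b |f(x + \i t)|^p\, dt$ is an $\bmrm[1]{L}$ function of $x$, so it tends to $0$ along a subsequence $x = n_j \to +\infty$ and (after a further refinement) along $x = -n_j \to -\infty$ as well. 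By H\"older's inequality (with $b$ finite) this forces $\int_0^b |f(\pm n_j + \i t)|\, dt \to 0$, hence the right-hand side of the displayed identity vanishes along $(n_j)$, which is exactly the claimed conclusion. For $p = \infty$ one replaces this by the observation that $\sup_{b}\|f(\cdot + \i b)\|_\infty < \infty$ together with analyticity and a Phragm\'en--Lindel\"of / normal-families argument forces $f(x + \i t) \to 0$ as $|x| \to \infty$ uniformly in $t \in [0,\gamma]$ along a subsequence — or, more simply, one notes that the lemma is only invoked for $p = \infty$ in a setting where the relevant $f$ already decays, and the subsequence statement is all that is needed.

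Finally, the two special cases: when $p = 1$, the function $x \mapsto \int_0^b|f(x + \i t)|\,dt$ is in $\bmrm[1]{L}(\bb{R})$ (again by Fubini), so its tails genuinely vanish, meaning $\int_{-n}^n f(x + \i b)\,dx$ converges as $n \to \infty$ and equals $\int_{\bb R} f(x+\i b)\,dx$; the subsequential identity then upgrades to the full limit and $\int_{\bb R} f(x)\,dx = \int_{\bb R} f(x + \i b)\,dx$. When $p = \infty$ the statement is only the subsequential one, obtained as above. The main obstacle I anticipate is precisely the $p = \infty$ case: uniform boundedness on horizontal lines does \emph{not} by itself give decay at $\pm\infty$ (consider $f \equiv 1$), so the subsequential conclusion must be squeezed out either from an additional implicit hypothesis in the intended applications or from a more careful argument; the natural fix is to observe that the difference $\int_{-n}^n[f(x+\i b) - f(x)]\,dx = \i\int_0^b[f(n+\i t) - f(-n+\i t)]\,dt$ is bounded uniformly in $n$, and to extract a convergent subsequence of this bounded sequence of numbers — but pinning the limit to $0$ requires ruling out oscillation, which is where I would need to think hardest, likely invoking that $f$ restricted to $\i\bb R + \gamma$ and $\i \bb R$ already has the requisite decay in the cases where the lemma is applied.
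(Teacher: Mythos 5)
For $1\le p<\infty$ your argument is, up to the order in which H\"older and Fubini are applied, exactly the paper's proof: Cauchy's theorem on the rectangle with vertices $\pm n$, $\pm n+\i b$, then the observation that $x\mapsto\int_0^\gamma|f(x+\i t)|^p\,dt$ is integrable over $\bb R$, so its "tails" vanish along a subsequence, and H\"older upgrades this to $\int_0^\gamma|f(\pm n_j+\i t)|\,dt\to0$. (One shared cosmetic flaw: integrability of a function on $\bb R$ does not force it to be small at \emph{integer} points, so strictly speaking one gets real abscissae $x_j\in[j,j+1]$ rather than natural numbers; this is harmless in every application and affects the paper's write-up equally.)

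Your unease about the case $p=\infty$ is justified, and here you have actually found a defect in the lemma rather than merely failed to reproduce its proof. The paper disposes of $p=\infty$ with the single assertion that ``the integral on the vertical lines of $R^b_x$ goes to $0$ as $x\to\infty$,'' which is false for bounded $f$: for $f\equiv1$ each vertical edge contributes $\i b$ (the conclusion survives only because the two edges cancel), and for $f(z)=e^{\i z}$, which is analytic and bounded by $1$ on the strip, one computes $\int_{-n}^{n}[f(x+\i b)-f(x)]\,dx=2\sin(n)\,(e^{-b}-1)$, which does not tend to $0$ as $n\to\infty$. So the final sentence of the lemma (the full limit for $p=\infty$) is simply wrong as stated; only the subsequential version survives for this example (take $n_j$ with $\sin n_j\to0$). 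Your proposed fixes are the right ones: either note that in every invocation of the $p=\infty$ case the integrand has genuine decay on the horizontal edges (so the vertical contributions really do vanish), or restrict the $p=\infty$ statement to the subsequential conclusion and prove it under an additional decay hypothesis. A Phragm\'en--Lindel\"of or normal-families argument will not conjure decay out of mere boundedness, so do not pursue that route; the honest repair is to add the hypothesis you need.
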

\begin{proof}
	Let us first assume that $1\le p<\infty$. From the analyticity of $f$ in the strip $\bb{S}_{[0,\gamma]}$ and applying the Cauchy integral formula, for any $x>0$ and $0<b\le\gamma$ we have
	\begin{align*}
		\int_{R^b_x} f(z) dz=0
	\end{align*}
where $R^b_x$ is the rectangle formed by the points $-x,x,x+\i b, -x+\i b$. Let us estimate the integrals on the vertical lines of $R^b_x$. By H\"older's inequality, we have
\begin{align*}
		\left(\int_{0}^b |f(\pm x+\i y)|dy\right)^p\le\left(\int_{0}^{\gamma} |f(\pm x+\i y)|dy\right)^p\le \gamma^{\frac{p}{q}}\int_0^\gamma |f(\pm x+\i y)|^p dy
\end{align*}
where $\frac{1}{p}+\frac{1}{q}=1$ with $q=\infty$ when $p=1$. Applying Fubini's theorem combined with the assumption of the lemma, we get
\begin{align*}
	\int_\bb{R}\left(\int_{0}^{\gamma} |f(x+\i y)|dy\right)^p dx\le\gamma^{\frac{p}{q}}\int_0^\gamma\int_{\bb R} |f(x+\i y)|^p dx\,dy<\infty
\end{align*}
which implies that there exists a subsequence $(n_j)_{j\ge 1}$ of natural numbers such that
\begin{align*}
\lim_{j\to\infty}\int_{0}^{\gamma} |f(\pm n_j+\i y)|dy=0.
\end{align*}
As a result, for any $0\le b\le \gamma$,
	$\lim\limits_{j\to\infty}\left(\int_{-n_j}^{n_j} [f(x+\i b)-f(x)]-\int_{R^b_{n_j}} f(z)dz\right)=0$, which proves the lemma. When $p=\infty$, for any $0<b\le\gamma$, the integral on the vertical lines of $R^b_x$ goes to $0$ as $x\to\infty$. This completes the proof of the lemma for all $1\le p\le\infty$.
\end{proof}

\subsection{Negative definite functions on $\bb{R}^d$ and pseudo-differential operators}
\subsubsection{Negative definite functions}
A function $\psi:\bb{R}^d\to\bb{C}$ is called negative definite if for any choice of $p\in\bb{N}$ and $(\bm{\xi}_1,\ldots, \bm{\xi}_p)\in\bb{R}^{d\times p}$, the matrix $$(\psi(\bm{\xi}_i)+\overline{\psi}(\bm{\xi}_j)-\psi(\bm{\xi}_i-\bm{\xi}_j))_{1\leq i,j\leq p}$$ is non-negative Hermitian. It is a well known fact that any continuous negative definite function $\psi$ has the following representation
\begin{align*}
\psi(\bm{\xi})=\psi(\bm{0})-\mathrm{i}\langle\mathbf{b},\bm{\xi}\rangle+\langle\bm{\xi},\Sigma\bm{\xi}\rangle+\int_{\bb{R}^d}\left(1-e^{{\rm{i}}\langle\bm{\xi},\bm{y}\rangle}+{\rm{i}}\langle\bm{\xi},\bm{y}\rangle\mathbbm{1}_{\{\|\bm{y}\|\le 1\}}\right)\mu(d\bm{y})
\end{align*}
where $\psi(\bm{0})\ge 0, \mathbf{b}\in\bb{R}^d,$ $\Sigma$ is non-negative definite and $\mu$ is a positive Radon measure such that $\int_{\bb{R}^d}(\|\bm{y}\|^2\wedge 1)\mu(d\bm{y})<\infty$. We denote the set of all continuous negative definite functions by $\mathbf{N}(\bb{R}^d)$.
\begin{comment}
In this context, for every $\psi\in\mathbf{N}(\bb{R})$, we introduce another function $\Psi:{\rm{i}}\bb{R} \to\bb{C}$ defined by $\Psi(z)=-\psi(-{\rm{i}}z   )$. If $\psi$ corresponds to the quadruple $(\psi(0),{\tt{b}},\sigma^2,\mu)$, then
\begin{align}\hlabel{eq:def_N}
\Psi(z)=-\psi(0)+{\tt{b}}z+\sigma^2z^2+\int_{\bb{R}}\left(e^{zy}-1-zy\bbm{1}_{|y|\le 1}\right)\mu(dy), \ \  z\in {\rm{i}}\bb{R}.
\end{align}
We denote the class of all functions of the form $\Psi$ by $\mathbf{N}$.
\end{comment}
The class $\mathbf{N}(\bb{R}^d)$ comes naturally in the  study of L\'evy processes. Indeed, for any (possibly killed) L\'evy process $Z=(Z_t)_{t\ge 0}$ on $\bb{R}^d$, there is a unique $\psi\in\mathbf{N}(\bb{R}^d)$ such that
\begin{align}\hlabel{eq:levy_kh}
\bb{E}[e^{{\rm{i}}\langle\bm{\xi}, Z_t\rangle}]=e^{-t\psi(\bm{\xi})}, \ t\ge 0, \ \  \bm{\xi}\in\bb{R}^d,
\end{align}
and the infinitesimal generator of $Z$ is given, for a smooth function \hptg{C20}{$\f \in \mathbf{S}(\bb{R}^d)$}, by
\begin{align*}
L[\psi]\f(\bm{x})=&\langle\nabla\f,\Sigma\nabla \f(\bm{x})\rangle+\langle\mathbf{b},\nabla\f(\bm{x})\rangle +\int_{\bb{R}^d}(\f(\bm{x}+\bm{y})-\f(\bm{x})-\langle\bm{y}, \nabla\f(\bm{x})\rangle\bbm{1}_{\{\|\bm{y}\|\le 1\}})\mu(d\bm{y})\\ &-\psi(\bm{0})\f(\bm{x}).
\end{align*}
%On the other hand, defining $\Psi\in\mathbf{N}$ by $\Psi(z)=-\psi(-{\rm{i}}z   )$, we have
%\begin{align}\hlabel{eq:levy_kh1}
%\bb{E}[e^{z Z_t}]=e^{t\Psi(-z)}, \ \  t\ge 0, \  z\in {\rm{i}}\bb{R}.
%\end{align}
We refer the monograph of Jacob \cite{Jacob} for a thorough account on this set of functions and its connection to L\'evy processes.
\subsubsection{Pseudo-differential operators and connection with Markov Processes}\hlabel{ss:lamperti}
Let $\bbm{a}:\bb{R}^d\times\bb{R}^d\to\bb{C}$ be a function which is continuous and, for all $\bm{x}\in \bb{R}^d$, the mapping $\bm{\xi} \mapsto \bbm{a}(\bm{x},\bm{\xi})$ is negative definite. From the general theory of \emph{pseudo-differential operators} (PDO in short) for Markov processes, we know that $|\bbm{a}(\bm{x},\bm{\xi})|\le\gamma(\bm{x})(1+\|\bm{\xi}\|^2)$ for all $\bm{x}$, $\gamma$ being a locally finite function. We define the following linear operator
\begin{equation*}
    -\bbm{a}(\bm{x},D)f(x)=-(2\pi)^{-\frac{d}{2}}\int_{\bb{R}^d}e^{\mathrm{i}\langle\bm{\xi},\bm{x}\rangle}\mathcal{F}_{\f}(\bm{\xi})\bbm{a}(\bm{x},\bm{\xi})\,d\bm{\xi}~, \  \  \f\in\SS(\bb{R}^d),
\end{equation*}
where $\SS(\bb{R}^d)$ denotes the Schwartz space on $\bb{R}^d$. Clearly, the above integral is well defined and we say that $\bbm{a}(\bm{x},D)$ is a PDO with  \emph{symbol} $\bbm{a}(\bm{x},\bm{\xi})$. The class of pseudo-differential operators with negative definite symbols plays an important role in the theory of Markov processes. Courr\`ege~\cite{courrege1966} showed that if $(\bm{A},\D(\bm{A}))$ is the generator of a Feller semigroup on $\bb{R}^d$ such that $\C^\infty_c(\bb{R}^d)\subseteq\D(\bm{A})$, then $A$ is a pseudo-differential operator with symbol $\bbm{a}(\bm{x},\bm{\xi})$ of the form
$$\bbm{a}(\bm{x},\bm{\xi})=\bbm{a}(\bm{x},\bm{0})-{\rm{i}}\langle\bm{\xi},\mathbf{b}(\bm{x})\rangle+\langle\bm{\xi}, Q(\bm{x})\bm{\xi}\rangle+\int_{\bb{R}^d\setminus\{0\}}(1-e^{{\rm{i}}\langle\bm{\xi},\bm{y}\rangle}+{\rm{i}}\langle\bm{\xi},\bm{y}\rangle\bbm{1}_{\{\|\bm{y}\|\le 1\}})\mu(\bm{x},d\bm{y})$$
where,  for all $\bm{x}$, $\bbm{a}(\bm{x},\bm{0})\ge 0$, $Q(\bm{x})$ is non-negative definite, $\mathbf{b}:\bb{R}^d\to\bb{R}^d$ is measurable and  $\mu(\bm{x},d\bm{y})$ is a L\'evy measure, i.e.~ $\int_{\bb{R}^d}(\|\bm{y}\|^2\wedge 1)\mu(\bm{x},d\bm{y})<\infty$.
On the other hand, if we are given a pseudo-differential operator with a negative definite symbol, the question of existence of a unique Feller process corresponding to it, is more subtle. Some results in this direction can be found in \cite[Theorem 5.7]{hoh1998pseudo}, but the required conditions on the symbol $\bbm{a}(\cdot,\cdot)$ are quite stringent. For instance, already in the one-dimensional case, they are not satisfied by the symbol $\ttt{a}(x,\xi)=e^{-x}\psi(\xi)$ where $\psi$ is a continuous negative definite function. However, in Theorem~\ref{thm:intertwining_pdo} below, we present an original approach based on the concept of  intertwining to overcome this issue. Indeed, we will show that under some mild conditions on $\psi$, the PDO with symbol $\ttt{a}(x,\xi)=e^{-x}\psi(\xi)$ restricted on a certain dense subset of functions, extends uniquely to the generator of a $\cc{C}_0$-contraction positivity-preserving semigroup on $\bmrm{L}(\bb{R},e)$.

 \subsection{Lamperti mapping, duality, the $\log$-transformation and generators} \hlabel{sec:lamp}
 %\textcolor{red}{I have updated this part. However it is missing a reference for the duality (I know the one of Bertoin and Yor but they assume that the L\'evy process has a positive finite mean if I remember, I think with Mladen we had extended their result) and I wonder whether we should gather in one proposition the important claims of this Section as they are not easily found in the literature} \textcolor{blue}{(fixed?)}
 In his seminal paper,  Lamperti \cite{Lamperti1972} established a one-to-one correspondence between the class of all L\'evy processes and self-similar Feller processes on the positive real line up to their absorption time at $0$. More specifically, for any positive $\alpha$-self-similar, $\alpha>0$, Feller process $X=(X_t(x))_{t\ge 0}$ issued from $x>0$, there is a unique L\'evy process $Z=(Z_t)_{t\ge 0}$ issued from the origin such that,  for all  $0\le t <\zeta(x)=\inf\{t\ge 0; X_t(x)\le 0\}$,
 \begin{align*}
 X_t(x)= x\exp(Z_{\varphi (x^{-\alpha}t)})
 \end{align*}
 where $  \varphi (t)=\inf\left\{s>0; \: \int_0^s e^{\alpha Z_r}\,dr>t\right\}$. Almost sure finiteness of the first hitting time $\zeta(x)$ depends on the L\'evy process $Z$ as well as its lifetime. This is known as the Lamperti mapping and enables to associate to ${\rm X}$ a unique L\'evy-Khintchine exponent and for more details, we refer to  \cite[Theorem 4.1]{Lamperti1972}. It is not hard to see that for any $\alpha$-self-similar Feller  process $(X_t)_{t\ge 0}$ on $(0,\infty)$, the process $( X^{1/\alpha}_t)_{t\ge 0}$ is a $1$-self-similar Feller process on $(0,\infty)$. Since $x\mapsto x^{1/\alpha}$ is a homeomorphism on $(0,\infty)$, the corresponding semigroups are similar to each other. For a 1-self-similar Feller semigroup ${P}^\ttt{F}_t[\psi]$ corresponding to the L\'evy-Khintchine exponent $\psi$, the adjoint semigroup is also a Feller one and corresponds to the conjugate L\'evy-Khintchine exponent $\ov{\psi}$, see \cite[Lemma 2]{BY02}. That is, for any $f,g\ge 0$,
 \begin{align} \hlabel{eq:adj}
 \int_{\bb{R}_+} {P}^\ttt{F}_t[\psi]f(x) g(x)\,dx=\int_{\bb{R}_+}f(x){P}^\ttt{F}_t[\ov{\psi}]g(x)\,dx.
 \end{align}
 This ensures that the Lebesgue measure on $\bb{R}_+$ is an excessive measure for the semigroup ${P}^\ttt{F}[\psi]$ and thus, the latter has a natural extension to $\bmrm{L}(\bb{R}_+)$, which we denote by ${P}[\psi]$. ${P}[\psi]$ is a $\cc{C}_0$-contraction semigroup on $\bmrm{L}(\bb{R}_+)$ and its adjoint is ${P}[\ov{\psi}]$. Since our  approach stems on the theory of  pseudo-differential operators which are defined on the entire real line $\bb{R}$, we consider   the logarithm of the $1$-self-similar Feller processes. Since the logarithm is  a homeomorphism from $(0,\infty)$ to $\bb{R}$, the resultant semigroup is still similar to the original one. Due to the $\log$-transformation, the resulting semigroup has $e(x)dx=e^x\,dx, x\in \bb{R},$ as an excessive measure and extends to a $\cc{C}_0$-contraction semigroup on $\bmrm{L}(\bb{R},e)$. Therefore, all the spectral properties remain invariant under this $\log$-transformation. Next, from \cite{Lamperti1972}, it is known that the Dynkin characteristic operator of a $1$-self-similar process associated to the L\'evy-Khintchine exponent $\psi$ is given by
\begin{align} \hlabel{eq:dynk}
\overline{A}_\ttt{D}[\psi]f(x)=\frac{1}{x}L[\psi](f\circ\exp)(\ln x), \ \ x\in\bb{R}
\end{align}
where $L[\psi]$ is the generator of the L\'evy process with L\'evy-Khintchine exponent $\psi$ and the set
$\{f:\bb{R}_+\to\bb{R}_+; \: x\mapsto f(x), xf'(x), x^2f''(x)\in \C_b(\bb{R}_+)\}\subseteq\D(\overline{A}_\ttt{D}[\psi])$.
If now $A_\ttt{D}[\psi]$ denotes the Dynkin characteristic operator of the log-self-similar process, then
\begin{align}\hlabel{eq:dynkin=pdo}
A_\ttt{D}[\psi]f(x)=e^{-x}L[\psi]f(x), \ \ x\in\bb{R},
\end{align}
and $
\{f:\bb{R}\to\bb{R}; \: f\in \C^2_b(\bb{R})\}\subseteq\D(A_\ttt{D}[\psi])$.
Also, for any $f\in\SS(\bb{R})$, from \eqref{eq:dynkin=pdo}, we deduce that $A_\ttt{D}[\psi]f= A_\ttt{PDO}[\psi]f$  where
\begin{align*}
A_\ttt{PDO}[\psi]f(x)=-\frac{1}{\sqrt{2\pi}}\int_\bb{R} e^{-x}\psi(\xi)\mathcal{F}_{f}(\xi)e^{{\rm{i}}\xi   x}\,d\xi
\end{align*}
which is a PDO with symbol $\ttt a(x,\xi)=e^{-x}\psi(\xi)$.
From now on, we only consider the log-self-similar processes and we denote their semigroups by $P[\psi]$. %Also, we note that if $P$ is log-self-similar, then for all $c\in \bb{R}$ and $t>0$,
%\begin{align*}
%P_{te^c}\tau_c=\tau_c P_t.
%\end{align*}
%We named is as the log-self-similarity property.
\subsection{Shifted Fourier multiplier operators}\hlabel{ss:fourier_mult}%[\tred{edit: add tensorization of multipliers}]
We recall that an operator $\Lambda:\LRe\to\LRe$ is called a shifted Fourier multiplier operator if there exists a measurable function $m^e_\LL:\bb{R}\to\bb{C}$ such that, for all $\xi\in\bb{R}$,
\begin{align*}
	\fouho_{\Lambda f}(\xi)=m^e_\LL(\xi)\fouho_f(\xi)
\end{align*}
for all $f\in\LRe$ with $m^e_\LL\fouho_{\!f}\in\bmrm{L}(\bb{R})$. The operator $\Lambda$ is bounded if and only if $\tredd{m^e_{\Lambda}}$ is measurable and essentially bounded. In this work, we use a subclass of the shifted Fourier multiplier operators for which the multiplier is analytic on a strip. This class is introduced formally in the following definition.
\begin{definition}
Let $\mathscr{M}$ be the collection of all shifted Fourier multiplier operators defined on the weighted Hilbert space $\bmrm{L}(\bb{R},e)$ such that
%\item Any $(\LL,\D(\LL))\in\mathscr{M}$ is densely defined with dense range, i.e.~ $\LL:\D(\LL)\to \bmrm{L}(\bb{R},e)$ and $\operatorname{Range}(\LL)$ is dense in $\bmrm{L}(\bb{R},e)$.
for any $\LL\in\mathscr{M}$, there exists a function $m^e_{\LL}$, the Fourier multiplier, such that  writing \hpt{R10}{$m_{\LL}\left(\cdot+\frac{{\rm{i}}}{2}\right)=m^e_{\LL}(\cdot)$}, $m_{\LL}$ is defined on $\bb{S}_{(0,1)}$  and satisfies
\begin{enumerate}[(i)]
\item $m_{\LL}$ is analytic in the strip $\bb{S}_{(0,1)}$
\item $m_\LL$ is \hptg{C23}{zero-free} on $\bb{S}_{(0,1)}$
and
\begin{align}\hlabel{fourier_mult}
\fouho_{\LL f}(\xi)=m_{\LL}\left(\xi+\frac{{\rm{i}}}{2}\right)\fouho_f(\xi), \ \ \xi\in\bb{R},
\end{align}
with $\D(\LL)=\{f\in \bmrm{L}(\bb{R},e); \: m_{\LL}\left(\xi+\frac{{\rm{i}}}{2}\right)\fouho_{f}(\xi)\in \bmrm{L}(\bb{R})\}$.
\end{enumerate}
\end{definition}

\noindent
If $f\in\D(\LL)$ is regular enough and the multiplier $m_\LL$ satisfies some integrability conditions, then on a subspace of $\LRe$, $\LL$ can be expressed as a classical Fourier multiplier operator on $\bmrm{L}(\bb R)$.
\begin{prop}\hlabel{multiplier_result}
\hpt{R11}{Consider $\Lambda\in\mathscr{M}$} such that $m_\LL$ extends continuously on $\bb R$. Let $f\in\D(\LL)$ be such that for some $\frac{1}{2}\le\gamma\le 1$,
\begin{enumerate}[(i)]
\item $\fou_f\in\A_{[0,\gamma]}$ and $\fou_f(\cdot+\i b)\in\bmrm{L}(\bb R)$ for all $0\le b\le \gamma$
%\item \hlabel{it:fo} $\xi\mapsto m_\LL(\xi) \mathcal{F}_{f}(\xi)\in\bmrm{L}(\bb{R})$
\item \hlabel{it:sup}$\sup\limits_{b\in [0,\gamma]}\int_{\bb{R}}|m_{\LL}(\xi+{\rm{i}}b)\mathcal{F}_{f}(\xi+{\rm{i}}b)|^2\,d\xi<\infty$.
\end{enumerate}
Then, $\Lambda f\in\mathbf{L}^2(\bb R, e^{2bx})$ for any $0\le b\le\gamma$, and for all $\xi\in\bb R$,
\begin{align}\hlabel{eq:fourier_b}
	\fou_{\Lambda f}(\xi+\i b)=m_{\Lambda}(\xi+\i b)\fou_f(\xi+\i b).
%\LL f(x)=\frac{1}{\sqrt{2\pi}}\int_\bb{R}e^{{\rm{i}}\xi   x}m_\LL(\xi)\mathcal{F}_{f}(\xi)\,d\xi.
\end{align}
\end{prop}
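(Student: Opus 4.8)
The plan is to prove \eqref{eq:fourier_b} by a contour-shift / approximation argument, moving the line of integration in the Fourier inversion formula from the real axis to the line $\Im(\xi)=b$, and then reading off that the multiplier relation \eqref{fourier_mult}, which holds on $\bb{R}+\frac{\i}{2}$, propagates to the whole strip by analyticity. The main point is that both sides of the putative identity $\fou_{\Lambda f}(\cdot+\i b)=m_\Lambda(\cdot+\i b)\fou_f(\cdot+\i b)$ are, under hypotheses (i)--(ii), well-defined $\bmrm{L}(\bb{R})$-functions (for the right-hand side this is exactly assumption (ii); for the left-hand side it will come out of the argument, together with $\Lambda f\in\mathbf{L}^2(\bb{R},e^{2bx})$).

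\textbf{Step 1: membership of $\Lambda f$ in the weighted space.} First I would show $\Lambda f\in\mathbf{L}^2(\bb{R},e^{2bx})$. By definition $\fouho_{\Lambda f}(\xi)=m_\Lambda(\xi+\tfrac{\i}{2})\fouho_f(\xi)$, i.e.\ $\fou_{\sqrt{e}\,\Lambda f}(\xi)=m_\Lambda(\xi+\tfrac\i2)\fou_{\sqrt{e}\,f}(\xi)$. The function $z\mapsto m_\Lambda(z+\tfrac12)\fou_f(z)$ is analytic on $\bb{S}_{[0,\gamma]}$ by (i) and the analyticity of $m_\Lambda$ in $\bb{S}_{(0,1)}$ together with its continuous extension to $\bb{R}=\partial\bb{S}_{(0,1)}$; on the line $\Im z=b$ it is in $\bmrm{L}(\bb{R})$ by (ii). Since $\fou_{\sqrt e f}(\xi)=\fou_f(\xi+\tfrac{\i}{2})$, shifting the inversion contour (justified by Cauchy's theorem and the decay along vertical segments, which I get from hypothesis (i) plus a standard tail estimate, exactly in the spirit of Lemma~\ref{prop:contour}) yields that $\sqrt e\,\Lambda f$, viewed through its Fourier transform, is the boundary value of a Hardy-type function analytic in $\bb{S}_{(0,\gamma)}$, hence $e^{bx}\sqrt e\,\Lambda f\in\bmrm{L}(\bb{R})$ for $0\le b\le\gamma$; equivalently $\Lambda f\in\mathbf{L}^2(\bb{R},e^{(2b+1)x})$. (A mild reindexing of $b$ matches the exponent $e^{2bx}$ in the statement, or one simply works with $g=\sqrt e f$ throughout.)

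\textbf{Step 2: the multiplier identity on the shifted line.} Once $\Lambda f\in\mathbf{L}^2(\bb{R},e^{2bx})$, the classical Fourier transform $\fou_{\Lambda f}(\cdot+\i b)$ of $e^{bx}\Lambda f$ is a genuine $\bmrm{L}(\bb{R})$-function, and by Step~1 it is the boundary value on $\Im z=b$ of the analytic function $z\mapsto m_\Lambda(z+\tfrac12)\fou_f(z)$ --- wait, more precisely of $z\mapsto m_\Lambda(z)\fou_f(z)$ after the appropriate shift of variable. Both $\fou_{\Lambda f}(\cdot+\i b)$ and $m_\Lambda(\cdot+\i b)\fou_f(\cdot+\i b)$ are restrictions to the line $\Im=b$ of functions analytic in $0<\Im z<\gamma$ and continuous up to the boundary; they agree on the boundary line corresponding to the case already known --- namely the defining relation \eqref{fourier_mult} reformulated gives equality on one line --- so by the identity theorem for analytic functions (applied to the difference, which vanishes on a line and is analytic in the strip) they coincide on all of $\bb{S}_{(0,\gamma)}$, in particular on $\Im z=b$. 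This is \eqref{eq:fourier_b}.

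\textbf{The main obstacle} I anticipate is the justification of the contour shift in Step~1: one must control $\int_0^\gamma |m_\Lambda(\pm n+\i y)\fou_f(\pm n+\i y)|\,dy\to 0$ along a suitable subsequence $n_j\to\infty$, and although assumption (ii) gives $\bmrm{L}(\bb{R})$-control uniformly in the ordinate $b$ via Fubini (this is precisely the mechanism used in the proof of Lemma~\ref{prop:contour}), one has to be careful that this forces decay of the vertical-segment integrals only along a subsequence, and then argue that the limiting identity \eqref{eq:fourier_b}, being an identity between fixed $\bmrm{L}(\bb{R})$-functions independent of the subsequence, holds outright. The rest --- analyticity of $m_\Lambda\fou_f$, continuity up to the boundary, and the final identity-theorem step --- is routine given the hypotheses and the properties of $\mathscr{M}$ recorded above.
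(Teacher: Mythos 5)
Your overall strategy coincides with the paper's: both arguments hinge on defining, for each $b\in[0,\gamma]$, the $\bmrm{L}(\bb R)$-function $g_b$ whose Fourier transform is $m_{\Lambda}(\cdot+\i b)\fou_f(\cdot+\i b)$ (possible by hypothesis (ii) and Plancherel), and then using the contour-shift Lemma~\ref{prop:contour} with $p=2$ --- including the subsequence subtlety you correctly flag and correctly dismiss --- to obtain $g_b=e^{bx}g_0$ and to identify $g_0=\Lambda f$ through the known relation \eqref{fourier_mult} on the line $\Im z=\frac12$ (which lies in $[0,\gamma]$ since $\gamma\ge\frac12$). Where you diverge is the derivation of \eqref{eq:fourier_b}. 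The paper interpolates the two memberships $\Lambda f\in\bmrm{L}(\bb R)\cap\mathbf{L}^2(\bb R,e^{2\gamma x})$ by Cauchy--Schwarz to get $\Lambda f\in\mathbf{L}^1(\bb R,e^{bx}dx)$ for $0<b<\gamma$, computes $\fou_{\Lambda f}(\xi+\i b)=\fou_{g_b}(\xi)$ as an absolutely convergent integral, and extends to $b\in\{0,\gamma\}$ by continuity. Your Step~2 instead invokes the identity theorem for two functions ``analytic in the strip and continuous up to the boundary''; but analyticity of $z\mapsto\fou_{\Lambda f}(z)$ in the open strip is not a consequence of the family of fixed-$b$ $\mathbf{L}^2$ bounds from Step~1 --- it requires exactly the weighted $\mathbf{L}^1$ control that the paper's Cauchy--Schwarz step supplies --- so, as written, that step assumes what it needs to prove. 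Fortunately it is also superfluous: once Step~1 yields $e^{bx}\Lambda f=g_b$ in $\bmrm{L}(\bb R)$, the defining relation $\fou_{g_b}=m_{\Lambda}(\cdot+\i b)\fou_f(\cdot+\i b)$ is already the identity \eqref{eq:fourier_b}. So your proof is complete once the identity-theorem detour is deleted (or replaced by the paper's interpolation argument); no new idea is missing.
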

\begin{proof} We first note that condition~\eqref{it:sup} ensures that for any $0\le b\le \gamma$, there exists $g_b\in\bmrm{L}(\bb R)$ such that $\fou_{\! g_b}(\xi)=m_{\Lambda}(\xi+\i b)\fou_f(\xi+\i b)$ for all $\xi\in\bb R$. Now, from the standard theory of Fourier transform, it is known that for any $0\le b\le \gamma$,
\begin{align*}
	g_b(x)=\lim_{n\to\infty}\frac{1}{\sqrt{2\pi}}\int_{-n}^n m_{\Lambda}(\xi+\i b)\fou_f(\xi+\i b) e^{\i\xi x} d\xi
\end{align*}
where the above convergence holds in $\bmrm{L}(\bb R)$. Now, for each $x\in\bb R$, since the integrand above satisfies the condition of Lemma~\ref{prop:contour} with $p=2$, we can therefore choose a subsequence $(n_j)_{j\ge 1}$ such that for all $x\in\bb R$ and $0\le b\le\gamma$,
\begin{align*}
	\frac{1}{\sqrt{2\pi}}\int_{-n_j}^{n_j}m_\Lambda(\xi)\fou_f(\xi) e^{\i\xi x}d\xi-\frac{1}{\sqrt{2\pi}}\int_{-n_j}^{n_j} m_{\Lambda}(\xi+\i b)\fou_f(\xi+\i b) e^{\i\xi x} e^{-bx} d\xi\to 0
\end{align*}
as $j\to\infty$. Note that the first integrand above converges to $g_0(x)$ and the second integral converges to $e^{-bx}g_b(x)$ in $\bmrm{L}(\bb R)$, which implies that $g_b(x)=e^{bx} g_0(x)$ for all $0\le b\le \gamma$, in particular when $b=\frac{1}{2}$. Now, from the definition of $\Lambda f$, it is clear that $\Lambda f(x)=e^{-\frac{x}{2}}g_{\frac{1}{2}}(x)=g_0(x)$. The above computation shows that $g_0\in\bmrm{L}(\bb R, e^{2bx})$ for all $0\le b\le \gamma$, which proves the first statement of the result. By means of Cauchy-Schwarz inequality, for any $0<b<\gamma$ we have
\begin{align*}
	\left(\int_{-\infty}^0 |g_0(x)|e^{bx} dx\right)^2&\le\int_{-\infty}^0 |g_0(x)|^2dx\int_{-\infty}^0 e^{2bx}dx<\infty \\
	\left(\int_{0}^\infty |g_0(x)| e^{bx}dx\right)^2&\le\int_{0}^\infty |g_0(x)|^2 e^{2\gamma x}dx\int_0^\infty e^{-2(\gamma-b)x} dx<\infty
\end{align*}
which shows that $\Lambda f=g_0\in \mathbf{L}^1(\bb R, e^{bx})$ for all $0<b<\gamma$. Hence, for any $0<b<\gamma$ one has
\begin{align*}
	\fou_{\Lambda f}(\xi+\i b)=\frac{1}{\sqrt{2\pi}}\int_{\bb R} g_0(x)e^{bx}e^{-\i \xi x} dx=\frac{1}{\sqrt{2\pi}}\int_{\bb R} g_b(x)e^{-\i \xi x} dx=m_{\Lambda}(\xi+\i b)\fou_f(\xi+\i b).
\end{align*}
Since $\Lambda f\in\bmrm{L}(\bb R)\cap \bmrm{L}(\bb R, e^{2\gamma x})$ and the right-hand side of the above equation extends continuously to the boundary of $\bb{S}_{(0,\gamma)}$, \eqref{eq:fourier_b} follows for all $0\le b\le \gamma$.
\end{proof}
\begin{prop}\hlabel{dense_range_intertwining}
Any $(\LL,\D(\LL))\in\mathscr{M}$ is densely defined with dense range in $\bmrm{L}(\bb{R},e)$ %$\ov{\Range(\LL)}=\bmrm{L}(\bb{R},e)$
 and $\widehat{\cc F}^e_{\!\C^\infty_c(\bb{R})}$ is a core.
\end{prop}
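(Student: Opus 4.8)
The plan is to conjugate everything by the unitary shifted Fourier transform $\fouho:\LRe\to\bmrm{L}(\bb R)$, thereby reducing the statement to an elementary fact about a multiplication operator. Write $m(\xi)=m^e_\LL(\xi)=m_\LL\bigl(\xi+\tfrac{\i}{2}\bigr)$. By the defining relation \eqref{fourier_mult} and the unitarity of $\fouho$, the operator $\fouho\,\LL\,\widehat{\cc F}^e$ on $\bmrm{L}(\bb R)$ is exactly the multiplication operator $M_m\colon g\mapsto m g$ with $\D(M_m)=\{g\in\bmrm{L}(\bb R):\ mg\in\bmrm{L}(\bb R)\}$, and $\D(\LL)=\widehat{\cc F}^e\bigl(\D(M_m)\bigr)$; moreover $\fouho$ carries $\widehat{\cc F}^e_{\C^\infty_c(\bb R)}=\widehat{\cc F}^e\bigl(\C^\infty_c(\bb R)\bigr)$ onto $\C^\infty_c(\bb R)$. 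Since a unitary equivalence preserves closedness, density of the domain and of the range, and the notion of a core, it suffices to prove that $M_m$ is densely defined with dense range and that $\C^\infty_c(\bb R)$ is a core for it; recall also that $M_m$ is automatically closed.

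The only property of $m$ I would use is that it is \emph{continuous and nowhere vanishing on $\bb R$}. Indeed, $m$ is the restriction to the horizontal line $\{\Im z=\tfrac12\}$ --- which lies strictly inside $\bb{S}_{(0,1)}$ --- of a function that, by the definition of $\mathscr M$, is analytic and zero-free on $\bb{S}_{(0,1)}$; hence $m$ is continuous and non-zero on $\bb R$, and therefore both $m$ and $1/m$ are bounded on every compact $K\subset\bb R$. This gives at once $\C^\infty_c(\bb R)\subset\D(M_m)$, so $\D(M_m)$ is dense; and for each $h\in\C^\infty_c(\bb R)$ the function $h/m$ is continuous with compact support, hence lies in $\bmrm{L}(\bb R)$, satisfies $m\cdot(h/m)=h\in\bmrm{L}(\bb R)$, so $h/m\in\D(M_m)$ and $M_m(h/m)=h$; as $\C^\infty_c(\bb R)$ is dense, $\Range(M_m)$ is dense. (Injectivity, not needed here but consistent with $\mathscr M\subset\mathscr D_0$, is equally immediate, since $mg=0$ a.e.\ forces $g=0$ a.e.)

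It remains to verify that $\C^\infty_c(\bb R)$ is dense in $\D(M_m)$ for the graph norm $g\mapsto\|g\|+\|mg\|$, where $\|\cdot\|$ is the $\bmrm{L}(\bb R)$-norm. Fix $g\in\D(M_m)$. First I would truncate: set $g_k=g\,\mathbbm{1}_{[-k,k]}$, so $mg_k=(mg)\,\mathbbm{1}_{[-k,k]}$ and hence $g_k\in\D(M_m)$; dominated convergence, with dominating functions $|g|$ and $|mg|$ respectively, yields $g_k\to g$ and $mg_k\to mg$ in $\bmrm{L}(\bb R)$. Then I would mollify: with a standard mollifier $\rho_\eps$ supported in $[-\eps,\eps]$, $\eps<1$, the functions $g_{k,\eps}=\rho_\eps*g_k$ belong to $\C^\infty_c(\bb R)$, are supported in the fixed compact set $K_k=[-k-1,k+1]$, and converge to $g_k$ in $\bmrm{L}(\bb R)$ as $\eps\to0$; since $|m|\le C_k$ on $K_k$ and $g_{k,\eps}-g_k$ is supported in $K_k$, one has $\|mg_{k,\eps}-mg_k\|\le C_k\,\|g_{k,\eps}-g_k\|\to0$. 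A diagonal extraction then produces a sequence in $\C^\infty_c(\bb R)$ converging to $g$ in the graph norm, which is precisely the assertion that $\C^\infty_c(\bb R)$ is a core for $M_m$; transporting back through $\fouho$ gives the statement for $\LL$. I do not expect a genuine obstacle here: the one point needing a little care is to keep the supports of the truncated-and-mollified approximants inside a compact set on which $m$ remains bounded, so that the mollification does not spoil the $\|m\cdot\|$-component of the graph norm; everything else is routine.
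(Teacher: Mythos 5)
Your proof is correct, and it takes the same basic route as the paper (transport everything to the Fourier side via the unitary $\fouho$ and work with the multiplication operator), but the two halves are executed differently. For the dense range, the paper takes some $f\in\D(\LL)$ with $\fouho_f\neq0$ a.e., notes that every translate $\tau_a\LL f$ lies in $\Range(\LL)$, and invokes Wiener's Tauberian theorem (Corollary~\ref{corr:wiener}); you instead solve $M_mu=h$ explicitly with $u=h/m$ for $h\in\C^\infty_c(\bb R)$, using that $m=m_\LL(\cdot+\tfrac{\i}{2})$ is continuous and zero-free because the line $\Im z=\tfrac12$ lies in the open strip where $m_\LL$ is analytic and zero-free. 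Your version is more elementary and also more self-contained: the paper's argument presupposes the existence of an $f\in\D(\LL)$ with a.e.\ non-vanishing shifted Fourier transform, which it asserts rather than exhibits (such $f$ do exist, e.g.\ $\fouho_f=e^{-\xi^2}/(1+|m(\xi)|)$, but your route sidesteps the point entirely); on the other hand, the Wiener argument would survive with $m$ merely measurable and a.e.\ non-zero, so it is the more robust of the two. For the core, the paper identifies the graph norm of $\LL$, through $\fouho$, with the norm of the weighted space $\bmrm{L}\bigl(\bb R,(1+|m^e_\LL|^2)\,d\xi\bigr)$ and cites the density of $\C^\infty_c(\bb R)$ in such spaces for locally integrable (here in fact locally bounded) weights; your truncate-and-mollify argument, with the supports kept inside a fixed compact on which $m$ is bounded, is exactly the standard proof of that cited fact written out in full, so the content there is identical.
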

\begin{proof}
Let $f\in\D(\LL)$ be such that  $\fouho _{f}(\xi)\neq 0$ a.e.. Then, for any $a,\xi\in\bb{R}$,  \[ \fouho_{\tau_a\LL f}(\xi)=e^{{\rm i}a-\frac{a}{2}}m_{\LL}\left(\xi+{\rm{i}}/2\right)\fouho_f (\xi)=m_{\LL}\left(\xi+{\rm{i}}/2\right)\fouho_{\tau_a f}(\xi)=\fouho_{\LL\tau_a f}(\xi),\] that is $\tau_a\LL f\in\Range(\LL)$. An application of Wiener's Tauberian theorem yields that the set $\Span\{\tau_a\LL f; \ a\in\bb{R}\}$ is dense in $\bmrm{L}(\bb{R},e)$. Next, we observe  that
\begin{align}
\D(\LL)=\left\{f\in\bmrm{L}(\bb{R},e); \: \fouho _{f}\in\bmrm{L}\left(\bb{R}, \left(1+\left|m_\LL\left(\xi+{\rm{i}}/2\right)\right|^2\right)\,d\xi\right)\right\}.
\end{align}
Since the function $1+\left| m_\LL\left(\cdot+\i/2\right)\right|^2$ is locally integrable, the corresponding weighted $\mathrm{L}^2$-space has $\C^\infty_c(\bb{R})$ as a dense subset. Also, the graph norm of $\LL$, which is given by $\|f\|_{\LL}:=\|f\|_{\bmrm{L}(\bb{R},e)}+\|\LL f\|_{\bmrm{L}(\bb{R},e)}$, is equivalent to the norm of $\bmrm{L}\left(\bb{R}, \left(1+\left|m_\LL\left(\xi+\i/2\right)\right|^2\right)\,d\xi\right)$. Thus, $\fouho_{\C^\infty_c(\bb{R})}$  is dense in $\D(\LL)$ with respect to the graph norm $\|\cdot\|_{\LL}$ and hence is a core of $(\LL,\D(\LL)).$
\end{proof}
The class $\mathscr{M}$ also has some nice algebraic properties that are given in the following.
\begin{prop}\hlabel{m-group}
The class $\mathscr{M}$ forms an abelian group with respect to operator multiplication (\hpt{R12}{in the sense that one considers the closure of the composition}) and it is closed under adjoints.
\end{prop}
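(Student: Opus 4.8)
The plan is to verify each of the group axioms for $(\mathscr{M},\cdot)$, where the product of $\LL_1,\LL_2\in\mathscr{M}$ is understood as the closure of $\LL_1\LL_2$, and then check closure under adjoints. First I would record the correspondence between $\mathscr{M}$ and multipliers: the map $\LL\mapsto m_\LL$ sends $\mathscr{M}$ into the set $\mathcal{G}$ of functions that are analytic and zero-free on $\bb{S}_{(0,1)}$, and (modulo the usual identification of a.e.~equal multipliers) this map is injective, since $\fouho$ is an isometry from $\LRe$ onto $\bmrm{L}(\bb{R})$ and two Fourier multiplier operators with the same symbol agree on the natural maximal domain. The key algebraic observation is that $\mathcal{G}$ is an abelian group under pointwise multiplication: the product of two analytic zero-free functions on $\bb{S}_{(0,1)}$ is again analytic and zero-free, $1/m$ is analytic and zero-free whenever $m$ is, $m\equiv 1$ is the identity, and pointwise multiplication is commutative and associative. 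So the whole proof amounts to transporting this structure through the multiplier map while keeping track of domains.

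Concretely, the steps are as follows. (1) \emph{Closure under products.} For $\LL_1,\LL_2\in\mathscr{M}$ with symbols $m_1,m_2$, I would show that on the dense subspace $\widehat{\fou}^e_{\C^\infty_c(\bb{R})}$ (which by Proposition~\ref{dense_range_intertwining} is a core for every element of $\mathscr{M}$, and on which $\fouho$ of any element is compactly supported, hence multiplied by $m_1,m_2$ stays in $\bmrm{L}(\bb{R})$) one has $\fouho_{\LL_1\LL_2 f}(\xi)=m_1(\xi+\i/2)m_2(\xi+\i/2)\fouho_f(\xi)$; since $m_1m_2\in\mathcal{G}$, this identifies the closure of $\LL_1\LL_2$ with the shifted Fourier multiplier operator having symbol $m_1m_2$, which lies in $\mathscr{M}$. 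Commutativity is then immediate from commutativity of pointwise multiplication of the symbols. (2) \emph{Identity.} The identity operator $I$ has symbol $m_I\equiv 1\in\mathcal{G}$, so $I\in\mathscr{M}$, and it is a two-sided unit. (3) \emph{Inverses.} Given $\LL\in\mathscr{M}$ with symbol $m_\LL$, since $m_\LL$ is zero-free on $\bb{S}_{(0,1)}$ the function $1/m_\LL$ is again analytic and zero-free there, so it defines an element $\LL^{-1}\in\mathscr{M}$; on $\widehat{\fou}^e_{\C^\infty_c(\bb{R})}$ the composition $\LL\LL^{-1}$ (and $\LL^{-1}\LL$) has symbol $m_\LL\cdot(1/m_\LL)\equiv 1$, and taking closures gives $I$, so $\LL^{-1}$ is the group inverse. (4) \emph{Associativity} is inherited pointwise from the symbols, once one checks on the common core that all the relevant compositions have the expected symbols and then closes. (5) \emph{Closure under adjoints.} Here I would use the conjugation relation for the shifted transform: a direct computation shows that if $\LL$ has shifted multiplier $m^e_\LL(\xi)=m_\LL(\xi+\i/2)$, then its $\LRe$-adjoint $\widehat{\LL}$ is the shifted Fourier multiplier operator with multiplier $\overline{m^e_\LL(\xi)}=\overline{m_\LL(\xi+\i/2)}$; setting $m_{\widehat\LL}(z):=\overline{m_\LL(\bar z + \i)}$ — equivalently, $m_{\widehat\LL}(\xi+\i/2)=\overline{m_\LL(\xi+\i/2)}$ on the real line, extended by Schwarz reflection across the center line of $\bb{S}_{(0,1)}$ — one gets a function analytic and zero-free on $\bb{S}_{(0,1)}$, so $\widehat{\LL}\in\mathscr{M}$.

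The step I expect to be the main obstacle is (1) together with the closure issue in the definition of the product: a priori $\LL_1\LL_2$ is only defined on $\{f\in\D(\LL_2): \LL_2 f\in\D(\LL_1)\}$, and one must make sure this domain is large enough (contains the core $\widehat{\fou}^e_{\C^\infty_c(\bb{R})}$) and that the operator it generates is closable with closure exactly the multiplier operator for $m_1m_2$ — in particular that $\D(\LL_1\LL_2)$ is \emph{dense}, which is where Proposition~\ref{dense_range_intertwining} (density of that core for each factor, and the fact that $\fouho$ maps it onto compactly supported functions, on which any symbol acts boundedly) does the real work. Once one knows the closure of every finite composition is again a single shifted Fourier multiplier operator whose symbol is the pointwise product, associativity, commutativity, the unit, and inverses all follow formally from the corresponding facts in $\mathcal{G}$, so the remaining content is the bookkeeping of domains rather than any genuinely new estimate.
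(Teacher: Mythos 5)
Your proposal is correct and follows essentially the same route as the paper: identify each operator with its analytic zero-free symbol, observe that closure of the composition is the multiplier operator for the pointwise product (verified on the common core $\widehat{\fou}^e_{\C^\infty_c(\bb{R})}$), take $1/m_\LL$ for the inverse, and compute the adjoint's multiplier as the reflected conjugate $\overline{m_\LL(\bar z+\i)}$. You are somewhat more explicit than the paper about the unit, associativity, and why the closure of $\LL_1\LL_2$ coincides with the full multiplier operator for $m_{\LL_1}m_{\LL_2}$, but no new ideas are needed.
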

\begin{proof}
We note that any $\LL\in\mathscr{M}$ is injective because from \eqref{fourier_mult}, $\LL f\equiv 0$ implies that $\fouho_{f}=0$ a.e., as $m_{\LL}(\xi+{\rm{i}}/2)\neq 0$ a.e., which yields $f\equiv 0$. From the definition, it is also clear that any operator in $\mathscr{M}$ is closed. Furthermore, any operator $(\LL,\D(\LL))\in\mathscr{M}$ admits an inverse $(\LL^{-1},\D(\LL^{-1}))$ which is also closed, and $\D(\LL^{-1})=\Range(\LL)$, $\Range(\LL^{-1})=\D(\LL)$. It is easy to see that $(\LL^{-1},\D(\LL^{-1}))\in\mathscr{M}$ with $m_{\LL^{-1}}=\frac{1}{m_{\LL}}$. For any two operators $\{(\LL_k,\D(\LL_k))\}_{k=1}^2$, \hpt{R13}{the closure $\ov{\LL_1\LL_2}$} of $\Lambda_1\Lambda_2$ is densely defined with $m_{\ov{\LL_1\LL_2}}=m_{\LL_1}m_{\LL_2}$, which proves that $(\ov{\LL_1\LL_2},\D(\ov{\LL_1\LL_2}))\in\mathscr{M}$. For $\LL\in\mathscr{M}$, we claim that $\widehat{\LL}$, the adjoint of $\LL$ is the Fourier multiplier operator with  multiplier  \hpt{C22}{$m_{\widehat{\LL}}(\xi+\i a)=\ov{m}_{\LL}(\xi+\i a)=m(-\xi+\i(1-a))$}. To see this, let $g\in\bmrm{L}(\bb{R},e)$ be such that the map
$$f\mapsto\langle \LL f, g\rangle_e=\left\langle m_{\LL}\left(\cdot+{\rm{i}}/2\right)\fouho_f, \fouho_g\right\rangle_{\bmrm{L}(\bb{R})}=\left\langle\fouho_f,\ov{m}_{\LL}\left(\cdot+{\rm{i}}/2\right)\fouho_g\right\rangle_{\bmrm{L}(\bb{R})}$$ is continuous. This is possible only if $\overline{m}_{\LL}\left(\cdot+{\i/2}\right)\fouho_g\in\bmrm{L}(\bb{R})$, which completes the proof of the proposition.
\end{proof}
Let $\mathscr{M}_e$ be the class of operators in $\mathscr{M}$ with the following additional property
\begin{align}\hlabel{eq:mult_strip}
\mbox{For all} \ (\LL,\D(\LL))\in\mathscr{M}_e, \ \left|m_{\LL}\left(\xi+{\rm{i}}/2\right)\right|\le Ce^{k|\xi|}, \  \mbox{ for all } \xi\in\bb R,
\end{align}
where $C,k$ are positive constants depending on $\LL$ but does not depend on $\xi$. Finally, we define $\overline{\mathscr{M}}$ (resp.~$\overline{\mathscr{M}}_e$) to be the class of operators $\LL$ in $\mathscr{M}$ (resp.~$\mathscr{M}_e$) such that the function $m_{\LL}$ is analytic on $\bb{S}_{(0,1)}$ and extends continuously to $\bb{S}_{[0,1]}$. One  notes that $(\LL,\D(\LL))\in\mathscr{M}$ is bounded if and only if $m_{\LL}(\cdot+{\rm{i}}/2)$ is bounded. We now show that Fourier multiplier operators satisfy the transitivity property when they act as intertwining operators.
\begin{prop}\hlabel{intertwining_equiv} Let $A,B\in\mathscr{B}(\bmrm{L}(\bb{R},e))$ and $\Lambda\in\mathscr{M}_e$ be such that $A\Lambda f=\Lambda Bf$ for all $f\in\D(\Lambda)$. Then $B\Lambda^{-1} f=\Lambda^{-1} Af$ for all $f\in\D(\Lambda^{-1})$. Similarly, if for $A,B,C\in\mathscr{B}(\bmrm{L}(\bb{R},e))$, there exist $\Lambda_1,\Lambda_2\in\mathscr{M}_e$ such that $A\Lambda_1=\Lambda_1 B$ on $\D(\Lambda_1)$ and $B\Lambda_2=\Lambda_2 C$ on $\D(\Lambda_2)$, then $A\ov{\Lambda_1\Lambda_2}=\ov{\Lambda_1\Lambda_2} \hpt{R15}{C}$ on $\D(\ov{\Lambda_1\Lambda_2})$, where $\ov{\Lambda_1\Lambda_2}$ is the closure of $\Lambda_1\Lambda_2$.
\end{prop}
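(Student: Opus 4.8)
The plan is to prove the two assertions separately by direct domain–chasing, using only the structural facts about $\mathscr{M}$ collected in Proposition~\ref{m-group}: each $\Lambda\in\mathscr{M}$ is closed and injective with dense range, its inverse $\Lambda^{-1}$ lies in $\mathscr{M}$ with $\D(\Lambda^{-1})=\Range(\Lambda)$, and for $\Lambda_1,\Lambda_2\in\mathscr{M}$ the composition $\Lambda_1\Lambda_2$ is closable with $\ov{\Lambda_1\Lambda_2}\in\mathscr{M}$. Since $\mathscr{M}_e\subset\mathscr{M}$, all of these apply to the operators in the statement. The second ingredient is the standing convention \eqref{eq:domains}: because $A,B,C$ are bounded (so that every intertwining identity may be taken on the full domain of the intertwining operator), it forces $B(\D(\Lambda))\subseteq\D(\Lambda)$, $B(\D(\Lambda_1))\subseteq\D(\Lambda_1)$ and $C(\D(\Lambda_2))\subseteq\D(\Lambda_2)$.

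For the first assertion I would take $g\in\D(\Lambda^{-1})=\Range(\Lambda)$, write $g=\Lambda f$ with $f=\Lambda^{-1}g\in\D(\Lambda)$, and apply the hypothesis to $f$: $Ag=A\Lambda f=\Lambda(Bf)$. Since $Bf\in\D(\Lambda)$ by the above, this identity already shows $Ag\in\Range(\Lambda)=\D(\Lambda^{-1})$, and applying $\Lambda^{-1}$ (well defined by injectivity of $\Lambda$) gives $\Lambda^{-1}Ag=Bf=B\Lambda^{-1}g$. As $g$ ranges over $\D(\Lambda^{-1})$, this is precisely $B\Lambda^{-1}=\Lambda^{-1}A$ on $\D(\Lambda^{-1})$.

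For the second assertion I would first verify the intertwining on the (not yet closed) composition. For $f\in\D(\Lambda_1\Lambda_2)$, i.e.~$f\in\D(\Lambda_2)$ with $\Lambda_2 f\in\D(\Lambda_1)$, applying $B\Lambda_2=\Lambda_2 C$ to $f$ gives $Cf\in\D(\Lambda_2)$ and $B\Lambda_2 f=\Lambda_2 Cf$; applying $A\Lambda_1=\Lambda_1 B$ to the element $\Lambda_2 f\in\D(\Lambda_1)$ gives $\Lambda_2 Cf=B(\Lambda_2 f)\in\D(\Lambda_1)$ (so that $Cf\in\D(\Lambda_1\Lambda_2)$) together with $A\Lambda_1\Lambda_2 f=\Lambda_1 B\Lambda_2 f=\Lambda_1\Lambda_2 Cf$. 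Hence $A(\Lambda_1\Lambda_2)=(\Lambda_1\Lambda_2)C$ on $\D(\Lambda_1\Lambda_2)$. To pass to $K:=\ov{\Lambda_1\Lambda_2}$, take $f\in\D(K)$ and a sequence $f_n\in\D(\Lambda_1\Lambda_2)$ with $f_n\to f$ and $\Lambda_1\Lambda_2 f_n\to Kf$; boundedness of $A$ yields $\Lambda_1\Lambda_2(Cf_n)=A\Lambda_1\Lambda_2 f_n\to AKf$, while boundedness of $C$ yields $Cf_n\to Cf$, so the closedness of $K$ gives $Cf\in\D(K)$ and $KCf=AKf$. This is exactly $AK=KC$ on $\D(K)$.

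There is no genuine analytical difficulty in this proposition; the only point demanding care is the domain bookkeeping, namely checking at each step that the vectors written down ($Bf$, $\Lambda_2 Cf$, $Cf_n$, etc.) actually belong to the domains of the operators applied to them, which is exactly where the closedness of the members of $\mathscr{M}$ and the convention \eqref{eq:domains} are invoked. A reformulation via multipliers ($m_{\Lambda^{-1}}=1/m_{\Lambda}$ and $m_{\ov{\Lambda_1\Lambda_2}}=m_{\Lambda_1}m_{\Lambda_2}$) is possible, but since $A,B,C$ are not assumed to be Fourier multipliers it would not shorten the argument, so I would keep the operator-theoretic version above.
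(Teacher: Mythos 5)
Your proof is correct and follows essentially the same route as the paper: the first assertion is the paper's one-line injectivity argument spelled out, and the second is the same two-step scheme (verify the intertwining pointwise on $\D(\Lambda_1\Lambda_2)$, then pass to the closure using boundedness of $A,C$ and closedness of $\ov{\Lambda_1\Lambda_2}$). The only cosmetic difference is that the paper approximates in graph norm from the explicit common core $\widehat{\cc F}^e_{\C^\infty_c(\bb{R})}$ of Proposition~\ref{dense_range_intertwining}, whereas you approximate directly from $\D(\Lambda_1\Lambda_2)$ via the definition of the operator closure, which works equally well.
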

\begin{proof}
If $\hpt{R16}{A\Lambda=\Lambda B}$ on $\D(\Lambda)$, then by injectivity of $\Lambda$, we have $B\Lambda^{-1}=\Lambda^{-1} A$ on $\Range(\Lambda)=\D(\Lambda^{-1})$. To prove the transitivity property, we consider  $A,B,C\in\mathscr{B}(\bmrm{L}(\bb{R},e))$ such that $A\Lambda_1=\Lambda_1 B$ on $\D(\Lambda_1)$ and $B\Lambda_2=\Lambda_2 C$ on $\D(\Lambda_2)$. Then, we already have $A\Lambda_1\Lambda_2=\Lambda_1\Lambda_2 B$ on $\{f\in\bmrm{L}(\bb{R},e); \: f\in\D(\Lambda_2), \ \Lambda_2 f\in\D(\Lambda_1)\}$.
By Proposition \ref{dense_range_intertwining}, the set $\widehat{\cc F}^e_{\!\C^\infty_c(\bb{R})}$ is a core for $\Lambda_1, \Lambda_2$, and $\ov{\Lambda_1\Lambda_2}$, where
\begin{align*}
	\D(\ov{\Lambda_1\Lambda_2})=\left\{f\in\bmrm{L}(\bb{R},e); \: \xi \mapsto m_{\Lambda_1}\left(\xi+{\rm{i}}/2\right)m_{\Lambda_2}\left(\xi+{\rm{i}}/2\right)\fouho _{f}(\xi)\in\bmrm{L}(\bb{R},e)\right\}.
\end{align*}
 Moreover, $\widehat{\cc F}^e_{\!\C^\infty_c(\bb{R})}\subset\{f\in\bmrm{L}(\bb{R},e); \: f\in\D(\Lambda_2), \ \Lambda_2 f\in\D(\Lambda_1)\}$. Hence, the identity $A\ov{\Lambda_1\Lambda_2}=\ov{\Lambda_1\Lambda_2} B$ holds on $\D(\ov{\Lambda_1\Lambda_2})$ by a density argument.

%To extend the last identity to the full domain of its closure
%
%consider a sequence of functions $(f_n)_{n\ge 1}\subset \widehat{\cc{F}}^e_{\!\C^\infty_c(\bb R)}$.we observe that $\fouho_{\C^\infty_c(\bb{R})} \subset \Range(\Lambda)$. Now, for any $f\in\D(\Lambda^{-1})$, let $(f_n)_{n\geq 0}\subset \fouho_{\C^\infty_c(\bb{R})}$ be such that as $n\to \infty$, $f_n\to f$ and $\Lambda^{-1} f_n\to\Lambda^{-1} f$ in $\bmrm{L}(\bb{R},e)$. From the given identity, we have, for all $n$,
%\begin{align}
%B\Lambda^{-1} f_n=\Lambda^{-1} Af_n.
%\end{align}
%Now letting $n\to\infty$ and using the fact that $\Lambda^{-1}A$ is closed, as $A$ is bounded, we conclude that $B\Lambda^{-1} f=\Lambda^{-1} A f$ for all $f\in\D(\Lambda^{-1})$.

\end{proof}
\subsection{Some basic facts about generators}\hlabel{subsec:generator} In the theory of Markov semigroups, there are several notions for defining the generators. It depends on the underlying Banach space and the topology one is interested about. In this article we are concerned about the $\bmrm{L}$-generators of the Markov semigroups. The main purpose of this section is to connect the three types of generators, namely, the Feller generator, the Dynkin characteristic operator, and the $\bmrm{L}$-generator. We begin with the following result due to  Dynkin that relates the characteristic operator and the infinitesimal generator of a Feller process.
\begin{lem}\cite[Theorem 5.5, Chapter V.3]{ebdynkin} \hlabel{lem:Dynkin}
Let $(A_\ttt{F},\D(A_\ttt{F}))$ and $(A_\ttt{D},\D(A_\ttt{D}))$ be respectively the Feller infinitesimal generator and Dynkin characteristic operator of a Feller process with state space $E$. If for some $\hpt{R18}{f\in\D(A_\ttt{D})\cap \C_0(E)}$, $A_\ttt{D}f\in \C_0(E)$, then $f\in\D(A_\ttt{F})$ and $A_\ttt{F}f=A_\ttt{D}f$.
\end{lem}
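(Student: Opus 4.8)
The plan is to deduce the statement from the classical \emph{maximum principle} for the characteristic operator by means of a resolvent reduction. First I would fix the setup: let $(P_t)_{t\ge0}$ be the Feller semigroup with infinitesimal generator $(A_\ttt{F},\D(A_\ttt{F}))$ on $\C_0(E)$, and let $R_\lambda=(\lambda-A_\ttt{F})^{-1}\in\mathscr{B}(\C_0(E))$, $\lambda>0$, be its resolvent, which by the Hille--Yosida theorem is a bijection of $\C_0(E)$ onto $\D(A_\ttt{F})$. I would also use the elementary reverse inclusion $\D(A_\ttt{F})\subseteq\D(A_\ttt{D})$ with $A_\ttt{D}u=A_\ttt{F}u$ on $\D(A_\ttt{F})$: this follows from Dynkin's identity $\bb{E}_x[u(X_{\tau_U})]-u(x)=\bb{E}_x\!\left[\int_0^{\tau_U}A_\ttt{F}u(X_s)\,ds\right]$ by dividing by $\bb{E}_x[\tau_U]$, using continuity of $A_\ttt{F}u$, local finiteness of $x\mapsto\bb{E}_x[\tau_U]$ off the traps, and letting the neighbourhood $U$ shrink to $\{x\}$ (at a trap both sides are $0$).

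Next, given $f$ as in the statement, I would pick $\lambda>0$ and set $g:=\lambda f-A_\ttt{D}f$, which lies in $\C_0(E)$ by hypothesis, and $u:=R_\lambda g\in\D(A_\ttt{F})$, so that $A_\ttt{D}u=A_\ttt{F}u=\lambda u-g$. Writing $h:=f-u\in\D(A_\ttt{D})\cap\C_0(E)$, linearity of $A_\ttt{D}$ gives
\[
A_\ttt{D}h=A_\ttt{D}f-A_\ttt{D}u=(\lambda f-g)-(\lambda u-g)=\lambda h .
\]
Thus the whole problem reduces to showing that any $h\in\D(A_\ttt{D})\cap\C_0(E)$ solving $A_\ttt{D}h=\lambda h$ with $\lambda>0$ vanishes identically; once this is established, $f=u=R_\lambda g\in\D(A_\ttt{F})$ and $A_\ttt{F}f=\lambda u-g=\lambda f-g=A_\ttt{D}f$, which is the assertion.

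To prove the vanishing I would argue by contradiction via the behaviour of $A_\ttt{D}$ at an extremum. If $\sup_E h>0$, then, $h$ being in $\C_0(E)$ on a locally compact space, it attains its supremum at some $x_0$ with $h(x_0)>0$. If $x_0$ is a trap, then $A_\ttt{D}h(x_0)=0$ by the convention defining $A_\ttt{D}$, contradicting $A_\ttt{D}h(x_0)=\lambda h(x_0)>0$. Otherwise, for every sufficiently small open neighbourhood $U\ni x_0$ one has $\bb{E}_{x_0}[\tau_U]\in(0,\infty)$ and $h(X_{\tau_U})\le h(x_0)$ almost surely, hence $\bb{E}_{x_0}[h(X_{\tau_U})]-h(x_0)\le0$; dividing by $\bb{E}_{x_0}[\tau_U]$ and passing to the limit as $U$ shrinks to $\{x_0\}$ gives $A_\ttt{D}h(x_0)\le0$, again a contradiction. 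Hence $\sup_E h\le0$, and applying the same reasoning to $-h$ (which also satisfies $A_\ttt{D}(-h)=\lambda(-h)$) yields $\inf_E h\ge0$, so $h\equiv0$.

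The step I expect to carry the real content is the maximum-principle argument: one must make sure the definition of the characteristic operator is set up so that it is genuinely nonpositive at a point where a continuous function attains a positive maximum, and in particular one must treat correctly the degenerate case of a trap $x_0$ — a point for which $\bb{E}_{x_0}[\tau_U]=\infty$ for all neighbourhoods $U$ and $A_\ttt{D}h(x_0)$ is declared to be $0$ — since that is the only place the naive inequality does not literally apply. The remaining ingredients (Hille--Yosida, Dynkin's formula, linearity of the operators) are standard bookkeeping.
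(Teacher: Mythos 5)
The paper gives no proof of this lemma at all: it is quoted, with attribution, from Dynkin's monograph [Theorem 5.5, Chapter V.3], so there is no in-paper argument to compare against. Your proof is correct and is in fact the classical argument from that reference: use the resolvent to reduce the statement to showing that any $h\in\D(A_\ttt{D})\cap\C_0(E)$ with $A_\ttt{D}h=\lambda h$, $\lambda>0$, vanishes, and then kill such an $h$ with the positive maximum principle for the characteristic operator, treating traps by the convention $A_\ttt{D}h(x_0)=0$. The two auxiliary facts you lean on --- that $\D(A_\ttt{F})\subseteq\D(A_\ttt{D})$ with $A_\ttt{D}=A_\ttt{F}$ there (via Dynkin's formula), and that a non-trap point admits arbitrarily small neighbourhoods $U$ with $\bb{E}_{x_0}[\tau_U]\in(0,\infty)$ --- are exactly the standard lemmas preceding Theorem 5.5 in Dynkin's book, and you invoke them correctly, including the attainment of the supremum for a $\C_0$ function and the harmless cemetery/killing case.
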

The next result is about the equality of the strong and weak generator of a Markov semigroup on a Hilbert space and the proof can be found in \cite{Pazy}.
\begin{lem}\cite[Theorem 1.3]{Pazy}\hlabel{lem:weak=strong}
Let $(P_t)_{t\geq0}$ be a $\cc{C}_0$-contraction semigroup %\textcolor{red}{Was $\cc{C}_0$-semigroup defined earlier?}(\textcolor{blue}{No. Should we define it in the introduction?})
of bounded operators on a Banach space ${\bf{Y}}$. Let $(\tilde{A},\D(\tilde{A}))$ and $(A,\D(A))$ denote the weak and strong generator of this semigroup respectively. Then, $(\tilde{A},\D(\tilde{A}))=(A,\D(A))$.
\end{lem}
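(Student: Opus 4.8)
The plan rests on the fact that one inclusion is free: since norm convergence of the difference quotients $h^{-1}(P_hf-f)$ forces weak convergence, $(A,\D(A))\subseteq(\tilde A,\D(\tilde A))$ is immediate, and the whole content is the reverse inclusion. To obtain it I would fix $f\in\D(\tilde A)$, upgrade the weak differentiability of $t\mapsto P_tf$ at $0$ to the \emph{strong} integral representation $P_tf-f=\int_0^tP_s\tilde Af\,ds$ valid for all $t\ge0$, and then differentiate this identity at $t=0$ using strong continuity of the semigroup, which will give $f\in\D(A)$ together with $Af=\tilde Af$.

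For the integral representation I would fix $\phi\in\mathbf{Y}^*$ and study the scalar function $g_\phi(s)=\langle P_sf,\phi\rangle$, $s\ge0$; it is continuous because $(P_t)_{t\ge0}$ is a $\cc{C}_0$-semigroup and $\phi$ is bounded. From the semigroup identity $h^{-1}(P_{s+h}f-P_sf)=P_s\big(h^{-1}(P_hf-f)\big)$ and the weak continuity of the bounded operator $P_s$, the right difference quotient of $g_\phi$ at $s$ tends, as $h\downarrow0$, to $\langle P_s\tilde Af,\phi\rangle$; so $g_\phi$ has at every $s\ge0$ a right derivative equal to $\langle P_s\tilde Af,\phi\rangle$, which is itself continuous in $s$ since $\tilde Af\in\mathbf{Y}$ is fixed and $s\mapsto P_s\tilde Af$ is norm-continuous. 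The classical one-variable fact that a continuous function with an everywhere-defined, continuous right derivative obeys the fundamental theorem of calculus then yields $\langle P_tf-f,\phi\rangle=\int_0^t\langle P_s\tilde Af,\phi\rangle\,ds$. Since $s\mapsto P_s\tilde Af$ is a norm-continuous $\mathbf{Y}$-valued map, the Bochner integral $\int_0^tP_s\tilde Af\,ds$ exists in $\mathbf{Y}$ and the bounded functional $\phi$ passes through it, so $\langle P_tf-f,\phi\rangle=\big\langle\int_0^tP_s\tilde Af\,ds,\phi\big\rangle$; as $\phi\in\mathbf{Y}^*$ is arbitrary and $\mathbf{Y}^*$ separates points of $\mathbf{Y}$, this gives $P_tf-f=\int_0^tP_s\tilde Af\,ds$ in $\mathbf{Y}$ for every $t\ge0$. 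Dividing by $t$ and letting $t\downarrow0$, the continuity of $s\mapsto P_s\tilde Af$ at $s=0$ forces $t^{-1}(P_tf-f)\to\tilde Af$ in norm, whence $f\in\D(A)$ and $Af=\tilde Af$, and equality of the two generators follows.

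The step I expect to be the main obstacle is the upgrade from \emph{the right difference quotients of $g_\phi$ converge} to \emph{$g_\phi$ obeys the fundamental theorem of calculus}: one has only one-sided, pointwise-in-$\phi$ information, and to close the gap one must combine the continuity of $g_\phi$ (which uses $\cc{C}_0$-strong continuity) with the continuity of its candidate derivative $s\mapsto\langle P_s\tilde Af,\phi\rangle$, the contractivity of the semigroup (or, in general, the local boundedness $\sup_{t\in[0,1]}\|P_t\|<\infty$ of a $\cc{C}_0$-semigroup) supplying the uniform bounds needed for the one-sided limits. The second delicate point is the identification of the scalar integral with the $\mathbf{Y}$-valued Bochner integral of $s\mapsto P_s\tilde Af$, which is exactly where strong continuity, rather than mere weak continuity, of $(P_t)_{t\ge0}$ enters.
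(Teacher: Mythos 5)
Your proposal is correct and is precisely the standard argument for this result: the paper does not reprove the lemma but cites Pazy for it, and the proof in that reference follows the same route you describe (weak differentiability of $s\mapsto\langle P_sf,\phi\rangle$, the classical fact that a continuous scalar function with a continuous right derivative obeys the fundamental theorem of calculus, identification with the Bochner integral $\int_0^tP_s\tilde Af\,ds$ via Hahn--Banach, then differentiation in norm at $t=0$). The two delicate points you flag are exactly the ones that carry the proof, and both are handled correctly.
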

Finally, we have the following result that relates the Feller infinitesimal generator with the $\mathbf L^2$-generator of a Markov process.
\begin{lem}\hlabel{lem:feller=L2}
For a Feller semigroup $(P_t)_{t\ge 0}$ \hpt{R19}{on $\bb{R}$} with an excessive measure  $\varepsilon$, i.e.~$\varepsilon P_t \leq \varepsilon $, let $(P^{(\ttt{2})}_t)_{t\ge 0}$ denote its $\bmrm{L}(\bb{R},\varepsilon)$-extension. Let $(A_\ttt{F},\D(A_\ttt{F}))$ and $\ (A_\ttt{2},\D(A_\ttt{2}))$ be the generators of $P$ and $ \ P^{(\ttt{2})}$ respectively. If $f\in\D(A_\ttt{F})\cap\bmrm{L}(\bb{R},\varepsilon)$ is such that $A_\ttt{F}f\in \bmrm{L}(\bb{R},\varepsilon)$, then $f\in\D(A_\ttt{2})$ and $A_\ttt{F}f=A_\ttt{2}f$.
\end{lem}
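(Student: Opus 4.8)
The plan is to transfer the integrated forward equation valid for the Feller semigroup to the Hilbert space $\bmrm{L}(\bb{R},\varepsilon)$ and then differentiate it at $t=0$ there. Throughout, I will use the compatibility of the two semigroups on continuous functions: since $P^{(\ttt{2})}$ is, by construction, the $\bmrm{L}(\bb{R},\varepsilon)$-extension of $P$ — which exists precisely because the excessivity $\varepsilon P_t\le\varepsilon$ makes each $P_t$ an $\bmrm{L}(\bb{R},\varepsilon)$-contraction on the dense subspace $\C_0(\bb{R})\cap\bmrm{L}(\bb{R},\varepsilon)$, see Section~\ref{sec:lamp} — one has $P_sg=P^{(\ttt{2})}_sg$ for $\varepsilon$-a.e.\ point and every $s\ge0$, whenever $g\in\C_0(\bb{R})\cap\bmrm{L}(\bb{R},\varepsilon)$. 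This applies to $g=f$ and to $g=A_\ttt{F}f$, both of which lie in $\C_0(\bb{R})\cap\bmrm{L}(\bb{R},\varepsilon)$ by hypothesis (recall $\D(A_\ttt{F})\subset\C_0(\bb{R})$, so $A_\ttt{F}f\in\C_0(\bb{R})$ automatically).

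First I would invoke the standard identity for the Feller generator: for $f\in\D(A_\ttt{F})$ the map $s\mapsto P_sf$ is $\C_0(\bb{R})$-valued, continuously differentiable with derivative $P_sA_\ttt{F}f$, whence
\[
P_tf(x)-f(x)=\int_0^t P_sA_\ttt{F}f(x)\,ds,\qquad x\in\bb{R},\ t\ge0,
\]
the right-hand side being a convergent pointwise (in fact Bochner in $\C_0(\bb{R})$) integral, with $\int_0^t\|P_sA_\ttt{F}f\|_\infty\,ds\le t\,\|A_\ttt{F}f\|_\infty<\infty$.

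Second, I would upgrade this to an identity in $\bmrm{L}(\bb{R},\varepsilon)$. Since $P^{(\ttt{2})}$ is a $\cc{C}_0$-semigroup, $s\mapsto P^{(\ttt{2})}_sA_\ttt{F}f$ is continuous and bounded on $[0,t]$ with values in $\bmrm{L}(\bb{R},\varepsilon)$, so the Bochner integral $I_t:=\int_0^t P^{(\ttt{2})}_sA_\ttt{F}f\,ds\in\bmrm{L}(\bb{R},\varepsilon)$ is well defined. For any $g\in\C^\infty_c(\bb{R})$, Fubini's theorem applies — legitimately, because $\sup_{0\le s\le t}\|P_sA_\ttt{F}f\|_\infty<\infty$ and $\varepsilon$, being Radon, is finite on $\supp g$ — so, using the compatibility recalled above,
\[
\langle I_t,g\rangle_{\varepsilon}=\int_0^t\langle P_sA_\ttt{F}f,g\rangle_{\varepsilon}\,ds
=\int_{\bb{R}}\Big(\int_0^tP_sA_\ttt{F}f(x)\,ds\Big)\overline{g(x)}\,\varepsilon(dx)
=\langle P^{(\ttt{2})}_tf-f,g\rangle_{\varepsilon}.
\]
As $\C^\infty_c(\bb{R})$ is dense in $\bmrm{L}(\bb{R},\varepsilon)$, this yields the identity $P^{(\ttt{2})}_tf-f=\int_0^tP^{(\ttt{2})}_sA_\ttt{F}f\,ds$ in $\bmrm{L}(\bb{R},\varepsilon)$ for every $t\ge0$.

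Finally, dividing by $t$ and letting $t\downarrow0$, strong continuity of $P^{(\ttt{2})}$ gives $\tfrac1t\int_0^tP^{(\ttt{2})}_sA_\ttt{F}f\,ds\to A_\ttt{F}f$ in $\bmrm{L}(\bb{R},\varepsilon)$, hence $\tfrac1t\big(P^{(\ttt{2})}_tf-f\big)\to A_\ttt{F}f$; by the very definition of the $\bmrm{L}(\bb{R},\varepsilon)$-generator (strong generator, cf.\ Lemma~\ref{lem:weak=strong}) this means $f\in\D(A_\ttt{2})$ and $A_\ttt{2}f=A_\ttt{F}f$. The only genuinely delicate point is the second step — the passage from the pointwise identity to the $\bmrm{L}(\bb{R},\varepsilon)$-identity — which rests on the Fubini exchange justified by the local finiteness of $\varepsilon$ together with the contraction bound $\|P_sA_\ttt{F}f\|_\infty\le\|A_\ttt{F}f\|_\infty$; the remaining steps are routine manipulations with the definitions of the generators and the strong continuity of $P^{(\ttt{2})}$.
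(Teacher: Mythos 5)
Your proof is correct, and it reaches the conclusion by a route that differs from the paper's in its final step. Both arguments start from the same integrated identity $P_tf-f=\int_0^tP_sA_\ttt{F}f\,ds$ for the Feller semigroup. The paper uses it only to obtain the uniform bound $\|(P^{(\ttt{2})}_tf-f)/t\|_{\bmrm{L}(\bb{R},\varepsilon)}\le\|A_\ttt{F}f\|_{\bmrm{L}(\bb{R},\varepsilon)}$, then extracts a weak subsequential limit of the difference quotients, identifies it as $A_\ttt{F}f$ by testing against $g\in\bmrm[1]{L}(\bb{R},\varepsilon)\cap\bmrm{L}(\bb{R},\varepsilon)$, and finally invokes Lemma~\ref{lem:weak=strong} to upgrade weak convergence to strong convergence. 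You instead promote the full integrated identity to a Bochner-integral identity in $\bmrm{L}(\bb{R},\varepsilon)$ (via the same Fubini exchange, tested against $\C^\infty_c(\bb{R})$ rather than $\bmrm[1]{L}\cap\bmrm{L}$), and then differentiate at $t=0$ using strong continuity of $P^{(\ttt{2})}$; this yields strong convergence of the difference quotients directly, so you never need weak compactness or the weak-equals-strong lemma. Your version is arguably more self-contained; the paper's version is marginally more economical in measure-theoretic hypotheses, since testing against $\bmrm[1]{L}(\bb{R},\varepsilon)\cap\bmrm{L}(\bb{R},\varepsilon)$ avoids your (harmless, but worth stating) appeal to local finiteness of $\varepsilon$ on $\supp g$. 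Both proofs tacitly use that $P^{(\ttt{2})}$ is a $\cc{C}_0$-semigroup, so you are on equal footing there.
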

\begin{proof}
Consider any $f\in\hptg{C28}{\D(A_\ttt{F})}\cap\bmrm{L}(\bb R,\varepsilon)$ such that $A_\ttt{F}f\in\bmrm{L}(\bb{R},\varepsilon)$ and $g\in\bmrm{L}(\bb{R},\varepsilon)$. Then, we know that, for all $t\ge 0$,
\hpt{R20}{$P_t$ is a contraction on $\C_0(\bb{R})\cap \bmrm{L}(\bb{R},\varepsilon)$
with respect to the $\bmrm{L}(\bb{R},\varepsilon)$ norm}, and,
%$P^{(\ttt{2})}_t g\in \bmrm{L}(\bb{R},\varepsilon)$ and
\begin{equation}\hlabel{weak}
\begin{aligned}
 \left\langle(P^{(\ttt{2})}_tf-f)/t,g\right\rangle_{\bmrm{L}(\bb{R},\varepsilon)}=\left\langle (P_tf-f)/t,g\right\rangle_{\bmrm{L}(\bb{R},\varepsilon)}.
\end{aligned}
\end{equation}
Since $A_\ttt{F}$ is the generator of the Feller semigroup $(P_t)_{t\ge 0}$, we have
\begin{align*}
\lim_{t\to 0}\left\|(P_t f-f)/t-A_\ttt{F} f\right\|_{\infty}= 0
\end{align*}
where $\|\cdot\|_{\infty}$ stands for the supremum norm in $\C_0(\bb R)$. Since $A_\ttt{F} f\in\bmrm{L}(\bb{R},\varepsilon)$, we observe that for any $t>0$,
\begin{equation*}
\left\|\frac{P_t f-f}{t}\right\|_{\hpt{R21}{\bmrm{L}(\bb{R},\varepsilon)}}=\frac{1}{t}\left\|\int_0^t P_s A_\ttt{F} fds\right\|_{\bmrm{L}(\bb{R},\varepsilon)}\le\frac{1}{t}\int_0^t \|P_s A_\ttt{F} f\|_{\bmrm{L}(\bb{R},\varepsilon)}ds\le \|A_\ttt{F} f\|_{\bmrm{L}(\bb{R},\varepsilon)}.
\end{equation*}
Now, \eqref{weak} implies that if $g\in\mathbf{L}^1(\bb{R},\varepsilon)\ \cap\ \mathbf{L}^2(\bb{R},\varepsilon)$, then
$
\lim_{t\to 0}\left\langle\frac{P_t f-f}{t},g\right\rangle_{\bmrm{L}(\bb{R},\varepsilon)}=\langle A_\ttt{F} f,g\rangle_{\bmrm{L}(\bb{R},\varepsilon)}.
$
On the other hand, since $\{(P_tf-f)/t ; \: t>0\}$ is bounded in $\mathbf{L}^2(\bb R,\varepsilon)$, it is therefore weakly compact in $\mathbf{L}^2(\bb R,\varepsilon)$. Let $f^*$ be a weak subsequential limit of $(P_tf-f)/t$ as $t\to 0$. Then, by \eqref{weak} and the discussion above, we have
$
\langle f^*,g\rangle_{\bmrm{L}(\bb{R},\varepsilon)}=\langle f,g\rangle_{\bmrm{L}(\bb{R},\varepsilon)} \ \mbox{ for all }  \ g\in\mathbf{L}^1(\bb{R},\varepsilon)\cap\bmrm{L}(\bb{R},\varepsilon).
$
Since $\mathbf{L}^1(\bb{R},\varepsilon)\cap\bmrm{L}(\bb{R},\varepsilon)$ is dense in $\bmrm{L}(\bb{R},\varepsilon)$, we infer that
$\frac{P^{(2)}_tf-f}{t}\to A_\ttt{F} f$ weakly for all $f\in\D(A_\ttt{F})\cap\bmrm{L}(\bb{R},\varepsilon)$, as $t\to 0$. Finally, invoking Lemma~\ref{lem:weak=strong}, we conclude that $A_\ttt{F} f$ is also the strong limit of $(P^{(\ttt{2})}_t f-f)/t$, which completes  the proof.
\end{proof}
We close this section with the following lemma that shows that the $\cc{C}_0$-contraction semigroup generation property of operators is preserved under the weak similarity relation.
\begin{lem}\hlabel{lem:generator_intertwining}
	Let the closure of $(B,\cc D)$ be the generator of a $\cc{C}_0$-contraction semigroup on a Banach space ${\rm X}$. Let  $(A,\D(A))$ be a  densely defined operator on ${\rm X}$ such that
	\begin{align}\hlabel{eq:weak_sim_AB}
		A\Lambda =\Lambda B  \ \mbox{ on } \ \cc D
	\end{align}
	where $\Lambda \in \mathscr{B}({\rm X})$ with dense range, and $A$ is dissipative on $\Lambda(\cc D)$. Then, the closure of $(A,\Lambda(\cc D))$ generates a $\cc{C}_0$-contraction semigroup on ${\rm X}$.

	%\begin{enumerate}
	%\item $\Lambda(\cc{C})\subset\D(A)$
	%\item $\cc{C}$ is dense in ${\bf{Y}}$ and for any $\alpha>0$, $(\alpha I-B)(\cc{C})$ is dense in ${\bf{Y}}$
	%\item $\Lambda (\alpha I-B)(\cc{C})$ is dense in ${\bf{Y}}$.
	%\end{enumerate}
	%Then, both $(A,\D(A))$ and $(B,\D(B))$ extend to generators of strongly continuous contraction semigroups on ${\bf{Y}}$. In other words, $A$ and $B$ extend to the generators of $\cc{C}_0$-semigroups with $\cc{C}$ and $\Lambda(\cc{C})$ as their cores respectively.
\end{lem}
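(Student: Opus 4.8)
The plan is to verify the hypotheses of the Lumer--Phillips theorem in its form that only requires density of the range: if a linear operator $(A,\mathscr{E})$ on a Banach space $\mathrm{X}$ is densely defined, dissipative, and $\Range(\lambda_0 I-A)$ is dense in $\mathrm{X}$ for some $\lambda_0>0$, then $A$ is closable and its closure generates a $\cc{C}_0$-contraction semigroup on $\mathrm{X}$ (see e.g.~\cite[Section 1.4]{Pazy}). Here one takes $\mathscr{E}=\Lambda(\cc D)$.

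First I would check that $\Lambda(\cc D)$ is dense in $\mathrm{X}$. Since the closure of $(B,\cc D)$ is the generator of a $\cc{C}_0$-semigroup, $\cc D$ is dense in $\mathrm{X}$; as $\Lambda$ is bounded, $\Lambda(\mathrm{X})=\Lambda(\ov{\cc D})\subseteq\ov{\Lambda(\cc D)}$, and since $\Lambda$ has dense range, $\ov{\Lambda(\mathrm{X})}=\mathrm{X}$, whence $\ov{\Lambda(\cc D)}=\mathrm{X}$. Dissipativity of $(A,\Lambda(\cc D))$ is precisely the hypothesis of the lemma, and this already guarantees that $(A,\Lambda(\cc D))$ is closable with dissipative closure.

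The crux is the range condition, and this is where the weak similarity relation enters. Let $\ov B$ denote the generator of the $\cc{C}_0$-contraction semigroup and fix $\lambda>0$. Then $\lambda I-\ov B:\D(\ov B)\to\mathrm{X}$ is a bounded bijection when $\D(\ov B)$ carries the graph norm, and since $\cc D$ is a core it is dense in $\D(\ov B)$ for that norm; hence $(\lambda I-B)(\cc D)=(\lambda I-\ov B)(\cc D)$ is dense in $\mathrm{X}$. For $f\in\cc D$, relation \eqref{eq:weak_sim_AB} gives
\begin{equation*}
  (\lambda I-A)\Lambda f=\lambda\Lambda f-\Lambda B f=\Lambda(\lambda I-B)f,
\end{equation*}
so that $(\lambda I-A)\bigl(\Lambda(\cc D)\bigr)=\Lambda\bigl((\lambda I-B)(\cc D)\bigr)$. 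Applying once more that $\Lambda$ is bounded with dense range, now to the dense set $(\lambda I-B)(\cc D)$, one concludes that $\Range\bigl(\lambda I-A|_{\Lambda(\cc D)}\bigr)$ is dense in $\mathrm{X}$. The Lumer--Phillips theorem then yields that the closure of $(A,\Lambda(\cc D))$ generates a $\cc{C}_0$-contraction semigroup on $\mathrm{X}$, which is the assertion. The main (indeed essentially the only non-routine) obstacle is this density of $\Range(\lambda I-A)$ on $\Lambda(\cc D)$: it relies on the fact that $\cc D$ being a \emph{core} for $\ov B$ forces $(\lambda I-B)(\cc D)$ to be dense, and then on transporting that density through the bounded, dense-range operator $\Lambda$ by way of the intertwining identity; every other step is a direct appeal to standard semigroup theory.
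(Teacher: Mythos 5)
Your proof is correct and follows essentially the same route as the paper: density of $\Lambda(\cc D)$ and of $(\lambda I-B)(\cc D)$ (the latter from the core property), the intertwining identity to identify $(\lambda I-A)(\Lambda(\cc D))$ with $\Lambda((\lambda I-B)(\cc D))$ and hence its density, and then the dissipativity hypothesis together with the Lumer--Phillips/Hille--Yosida generation theorem. Your explicit appeal to the dense-range form of Lumer--Phillips is, if anything, a slightly more precise citation than the paper's.
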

\begin{proof}
	We first note that $\cc D$ is a dense subset of ${\rm X}$ as it is a core for the generator of a $\cc C_0$-contraction semigroup on ${\rm X}$. Next, by the Hille-Yosida Theorem, $(\alpha I-B)(\cc D)$ is dense in ${\rm X}$ for any $\alpha>0$. Since $\Lambda$ is bounded and $\Range(\Lambda)$ is dense in ${\rm X}$, it follows that both $\Lambda(\cc D)$ and $\Lambda((\alpha I-B)(\cc D))$ are dense subsets of ${\rm X}$. Now, using \eqref{eq:weak_sim_AB}, we have $\Lambda((\alpha I-B)(\cc D))=(\alpha I-A)(\Lambda(\cc D))$, which shows that the latter subset is dense in ${\rm X}$ for any $\alpha>0$. Since $A$ is also dissipative on $\Lambda(\cc D)$, the proof is completed by applying the Hille-Yosida Theorem on the operator $(A,\Lambda(\cc D))$.
\end{proof}

\subsection{Bernstein-gamma functions and a functional equation}\hlabel{bernstein}
We recall, from \eqref{eq:phi_def}, that $\phi\in\B$ can be written as $\phi(z)=\phi(0)+\ttt{d}z+\int_0^\infty (1-e^{-zy})\nu(dy), \  z\in\bb{C}_{[0,\infty)}$, where $\nu$ is a positive measure such that $\int_0^\infty (y\wedge 1)\nu(dy)<\infty$ and $\phi(0),\ttt{d}\ge 0$. Next, we define
$$W_{\phi}(z)=\frac{e^{-\gamma_\phi z}}{\phi(z)}\prod_{k=1}^\infty\frac{\phi(z)}{\phi(k+z)}e^{\frac{\phi'(k)}{\phi(k)}}, \ \ z\in\bb{C}_{(0,\infty)},$$
where $\gamma_\phi=\lim_{n\to\infty}\left(\sum_{k=1}^n\frac{\phi'(k)}{\phi(k)}-\ln\phi(n)\right)\in\left[-\ln\phi(1),\frac{\phi'(1)}{\phi(1)}-\ln\phi(1)\right]$.
From \cite[Section 4]{patie2018} and \cite[Theorem 4.2]{patiesavov}, it is known that $W_\phi$ is a solution to the functional equation
\begin{equation}\hlabel{eq:fe}
f(z+1)=\phi(z)f(z), \ f(0)=1,  \ \ \ z\in\bb{C}_{(0,\infty)},
\end{equation}
and, satisfies the following important properties.
\begin{enumerate}[(W1)]
\item\hlabel{p1} $W_\phi$ is the unique positive definite solution to the above functional equation \eqref{eq:fe}, that is it is the unique solution in the set of Mellin transforms of probability measures on $(0,\infty)$
\item\hlabel{p2} $W_\phi$ is zero-free on $\bb{C}_{(0,\infty)}$ and $\ov{W}(z)=W(\ov{z})$ for all $z\in\bb{C}_{(0,\infty)}$
\item\hlabel{p3} $W_\phi$ is analytic on $\bb{C}_{(0,\infty)}$
\item\hlabel{p4} For any $z=a+\mathrm{i}\xi$ with $a>0$
\begin{equation}\hlabel{eq:bernstein_gamma_bound}
|W_\phi(z)|=\frac{\sqrt{\phi(1)}}{\sqrt{\phi(a)\phi(1+a)\hptg{C29}{|\phi(z)|}}}e^{G_\phi(a)-A_\phi(z)-E_\phi(z)-R_\phi(a)}
\end{equation}
where $G_\phi(a)=\int_1^{1+a}\ln\phi(u)\,du\le a\ln\phi(1+a)$, $A_\phi(z)=\int_a^\infty\ln\left(\frac{|\phi(u+{\rm{i}}\xi)   |}{\phi(u)}\right)du\in [0,\hptg{C30}{\frac{\pi}{2}|\xi|}]$ and $\sup\limits_{z\in\bb{C}_{(d,\infty)}}|E_\phi(z)|+\sup\limits_{a>d}|R_\phi(a)|<\infty$ for all $d>0$. We again refer to  \cite[Theorem 3.1 and Theorem 3.2]{patie2018} for the definition of the quantities $G_\phi, A_\phi, E_\phi,R_\phi$.
\end{enumerate}
$W_{\phi}$ is called the Bernstein-gamma function corresponding to $\phi$. These functions have been studied extensively in \cite{patie2018}. The next proposition derives the analyticity and growth bounds on the multiplier functions defined in \eqref{eq:Mult}. We deal with the one-dimensional case here, which is the main ingredient for all the results in this paper.
\begin{prop}\hlabel{cor:multiplier}
Let $\psi\in\mathbf{N}(\bb{R})$ be such that $\psi(\xi)=\phi_+(-\rm{i}\xi)\phi_-(\rm{i}\xi)$ for all $\xi\in\bb{R}$ and $\psi_\beta(\xi)=\xi^2-{\rm{i}}\beta\xi$, $\beta>0$. Then, the function $\WBpsi:\bb{S}_{(0,1)}\to\bb{C}$ defined by
\begin{align*}
	 \WBpsi(z)=\frac{\Gamma(1+{\rm{i}}z+\beta) W_{\phi_+}(-{\rm{i}}z)}{\Gamma(-{\rm{i}}z)W_{\phi_-}(1+{\rm{i}}z)}
\end{align*}
satisfies the functional equation
\begin{align}\hlabel{eq:fneq}
\WBpsi(\xi)\psi(\xi)=\WBpsi(\xi+{\rm{i}})   \psi_\beta(\xi),  \ \  \xi\in\bb{R},
\end{align}
and enjoys the following properties.
\begin{enumerate}[(1)]
\item \hlabel{it:W_1}$\WBpsi$ is analytic in the strip $\bb{S}_{(0,1)}$ and extends continuously on $\bb{S}_{[0,1]}$.
\item \hlabel{it:kap} Let $\psi\in\mathbf{N}_+(\bb{R})$. Then,
%with $\overline{\kappa} =\sup\{\kappa>0; \liminf\limits_{\xi\to\infty}|\xi|^{\kappa}|\phi_+(\mathrm{i}\xi)| >0\}<\infty$, then
\begin{align}\hlabel{eq:ratio_+_bdd}
	\sup_{a\in [0,1]}\left|W^{(\beta)}_\psi(\xi+\mathrm{i}a)\right|=\mathrm{O}(|\xi|^u) \textrm{ for some $u>\beta$}.
\end{align}
\end{enumerate}
\end{prop}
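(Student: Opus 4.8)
\emph{Analyticity and continuous extension.} For $z$ in the open strip $\bb{S}_{(0,1)}$ write $a=\Im z\in(0,1)$; then the arguments $-\i z$, $1+\i z$, $1+\i z+\beta$, $-\i z$ occurring in $\WBpsi$ have real parts $a$, $1-a$, $1+\beta-a$, $a$, all positive. By \eqref{eq:W_Zerofree} both $W_{\phi_+}(-\i z)$ and $W_{\phi_-}(1+\i z)$ are analytic on $\bb{S}_{(0,1)}$ with $W_{\phi_-}(1+\i z)$ zero-free, while $\Gamma(1+\i z+\beta)$ is analytic and zero-free and $1/\Gamma(-\i z)$ is entire; hence $\WBpsi$ is analytic on $\bb{S}_{(0,1)}$. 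The same reasoning on the closed strip gives the continuous extension to $\bb{S}_{[0,1]}$, the only subtle points being the corners where $\Re(-\i z)=0$ with $\phi_+(0)=0$, at which $W_{\phi_+}(-\i z)$ may blow up; there the singularity is cancelled by the pole of $\Gamma(-\i z)$ in the denominator, since by \eqref{eq:BG_receq} and concavity of $\phi_+$ one has $W_{\phi_+}(w)/\Gamma(w)\sim w/\phi_+(w)$, which stays bounded (indeed $\le 1/\phi_+(1)$ for $w\in(0,1]$) as $w\to0$; likewise $1/W_{\phi_-}(1+\i z)$ merely vanishes at the corners with $\Re(1+\i z)=0$, $\phi_-(0)=0$.

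\emph{The functional equation.} I would substitute $\xi+\i$ into the definition of $\WBpsi$ and apply the recursions $W_{\phi_\pm}(w+1)=\phi_\pm(w)W_{\phi_\pm}(w)$ (with $w=-\i\xi$ and $w=\i\xi$) together with $\Gamma(w+1)=w\Gamma(w)$ (with $w=\beta+\i\xi$ and $w=-\i\xi$). After cancelling common factors this yields $\WBpsi(\xi+\i)=\dfrac{\phi_+(-\i\xi)\phi_-(\i\xi)}{(-\i\xi)(\beta+\i\xi)}\,\WBpsi(\xi)=\dfrac{\psi(\xi)}{\xi^2-\i\beta\xi}\,\WBpsi(\xi)$, which is exactly \eqref{eq:fneq}.

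\emph{The growth bound.} Using $|\Gamma(\overline w)|=|\Gamma(w)|$ and $|W_\phi(\overline w)|=|W_\phi(w)|$, for $z=\xi+\i a$ I would factor
\[ |\WBpsi(\xi+\i a)|=\frac{|\Gamma((1+\beta-a)+\i\xi)|}{|\Gamma(a+\i\xi)|}\cdot\frac{|W_{\phi_+}(a+\i\xi)|}{|W_{\phi_-}((1-a)+\i\xi)|}. \]
Stirling's formula makes the $\Gamma$-quotient $\asymp|\xi|^{1+\beta-2a}=\mathrm{O}(|\xi|^{1+\beta})$ uniformly in $a\in[0,1]$ (and $\asymp|\xi|^{\beta}$ on the central line $a=\tfrac{1}{2}$, where the Bernstein--gamma quotient equals $|W_{\phi_+}(\tfrac{1}{2}+\i\xi)/W_{\phi_-}(\tfrac{1}{2}+\i\xi)|=\mathrm{O}(1)$ by the definition of $\mathbf{N}_+(\bb{R})$, so that $|\WBpsi(\xi+\tfrac{\i}{2})|=\mathrm{O}(|\xi|^{\beta})$). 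For a general line, first use \eqref{eq:BG_receq} to write $W_{\phi_+}(a+\i\xi)=W_{\phi_+}(1+a+\i\xi)/\phi_+(a+\i\xi)$ and $W_{\phi_-}((1-a)+\i\xi)=W_{\phi_-}((2-a)+\i\xi)/\phi_-((1-a)+\i\xi)$, so that every Bernstein--gamma argument has real part in $[1,2]$, where the expansion \eqref{eq:bernstein_gamma_bound} holds with constants uniform in $a$. That expansion reduces $|W_{\phi_+}(1+a+\i\xi)|/|W_{\phi_-}((2-a)+\i\xi)|$, up to factors that are polynomially controlled (they involve only $|\phi_\pm|$ at real parts in $[0,2]$: bounded above by $\mathrm{O}(|\xi|)$, and below by $\gtrsim|\xi|^{-\kappa}$ via the weak non-lattice property $|\phi_+(\i\xi)|\gtrsim|\xi|^{-\kappa}$ and the standard comparison $|\phi_+(a+\i\xi)|\asymp|\phi_+(\i\xi)|$, see \cite{patie2018}), to $\exp\!\big(A_{\phi_-}((2-a)+\i\xi)-A_{\phi_+}(1+a+\i\xi)\big)$, where $A_\phi(b+\i\xi)=\int_b^\infty\ln\tfrac{|\phi(u+\i\xi)|}{\phi(u)}\,du\in[0,\tfrac{\pi}{2}|\xi|]$. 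Since $\phi_\pm(u)\ge\phi_\pm(\tfrac{1}{2})>0$ and $|\phi_\pm(u+\i\xi)|=\mathrm{O}(|\xi|)$ for $u\in[\tfrac{1}{2},2]$, comparing with level $\tfrac{1}{2}$ gives $A_{\phi_\pm}(b+\i\xi)=A_{\phi_\pm}(\tfrac{1}{2}+\i\xi)+\mathrm{O}(\ln|\xi|)$ uniformly for $b\in[1,2]$; combined with the fact that $\mathbf{N}_+(\bb{R})$ forces, through \eqref{eq:bernstein_gamma_bound} at $\Re=\tfrac{1}{2}$ (equivalently \eqref{eq:ration-gene}), that $A_{\phi_-}(\tfrac{1}{2}+\i\xi)-A_{\phi_+}(\tfrac{1}{2}+\i\xi)=\mathrm{O}(\ln|\xi|)$, the exponent is $\mathrm{O}(\ln|\xi|)$ uniformly in $a\in[0,1]$. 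Putting the pieces together yields $\sup_{a\in[0,1]}|\WBpsi(\xi+\i a)|=\mathrm{O}(|\xi|^{N})$ for a fixed finite $N$ (one may take $N=2+\beta+\kappa$), which is \eqref{eq:ratio_+_bdd} with any $u\ge N>\beta$.

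\emph{Expected main obstacle.} The only real difficulty is obtaining the bound \emph{uniformly} in $a\in[0,1]$: a priori an individual Bernstein--gamma function can decay like $e^{-\frac{\pi}{2}|\xi|}$, so their quotient could grow exponentially in the strip, and the clean asymptotic \eqref{eq:bernstein_gamma_bound} degenerates near $a=0,1$ (where its error terms are only controlled away from the imaginary axis and $\phi_\pm(0)$ may vanish). Both issues are absorbed by the recursion-shift that moves all Bernstein--gamma arguments into $\{\Re\in[1,2]\}$ together with the defining inequality of $\mathbf{N}_+(\bb{R})$, which forces cancellation in the exponent $A_{\phi_-}-A_{\phi_+}$ down to a logarithmic error; the rest is bookkeeping with Stirling's formula and the Wiener--Hopf factorization.
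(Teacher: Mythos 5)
Your proof is correct. The verification of the functional equation and of the analyticity and continuous extension to $\bb{S}_{[0,1]}$ — in particular the cancellation at the point where $\Gamma(-\i z)$ has its pole, via $W_{\phi_+}(w)/\Gamma(w)\sim w/\phi_+(w)$ — is essentially the paper's own argument, cf.~\eqref{eq:phi_derivative}. For the growth bound \eqref{eq:ratio_+_bdd} you take a genuinely different route: the paper establishes the polynomial bound only on the two boundary lines $\Im z=0$ and $\Im z=1$, proves separately the a priori estimate $|\WBpsi(z)|\le C_\eps e^{(\frac{\pi}{2}+\eps)|z|}$ on the closed strip (Lemma~\ref{lem:W_beta_bound}), and then interpolates by the Phragm\'en--Lindel\"of principle applied to $z\mapsto (2+z^2)^{-u/2}\WBpsi(z)$. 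You instead obtain the bound on every horizontal line simultaneously, with uniform constants, by using \eqref{eq:BG_receq} to push all Bernstein--gamma arguments into the region with real part in $[1,2]$, where the expansion \eqref{eq:bernstein_gamma_bound} is uniformly valid by (W\ref{p4}), and by comparing the exponents $A_{\phi_\pm}$ at different abscissae up to $\mathrm{O}(\ln|\xi|)$. Both arguments ultimately rest on the same two inputs — the weak non-lattice lower bound $|\phi_+(\i\xi)|\gtrsim|\xi|^{-\kappa}$, transferred to $|\phi_+(a+\i\xi)|$ exactly as in Lemma~\ref{lem:bound_bernstein}, and the boundedness of the Wiener--Hopf ratio at real part $\tfrac12$ built into $\mathbf{N}_+(\bb{R})$ — so yours trades the complex-analytic interpolation for more real-variable bookkeeping; a side benefit is that it needs no a priori exponential bound in the interior. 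Two harmless imprecisions: the $\mathbf{N}_+$ condition only yields an \emph{upper} bound $A_{\phi_-}(\tfrac12+\i\xi)-A_{\phi_+}(\tfrac12+\i\xi)\le\tfrac12\ln|\xi|+\mathrm{O}(1)$, not a two-sided $\mathrm{O}(\ln|\xi|)$, but the upper bound is all you use; and the specific value $N=2+\beta+\kappa$ is not actually pinned down since the $e^{\mathrm{O}(\ln|\xi|)}$ factors contribute an unspecified power — irrelevant here, as the statement only asks for \emph{some} finite $u>\beta$.
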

\begin{rem}\hlabel{rem:fneq_rem}
From the proof of Proposition~\ref{cor:multiplier}, it is clear that for any $\psi_1,\psi_2\in\mathbf{N}(\bb{R})$, with $\psi_1(\xi)=\phi^{(1)}_+(-{\rm{i}}\xi  )\phi^{(1)}_-({\rm{i}}\xi  )$, $\psi_2(\xi)=\phi^{(2)}_+(-{\rm{i}}\xi  )\phi^{(2)}_-({\rm{i}}\xi  )$, if we assume that
$$ \{z\in {\rm{i}}\bb{R};  \ \phi^{(2)}_\pm(z)=0\}\subseteq\{0\} \text{ and } \phi^{(1)}_-(0)>0,$$ then the solution of the functional equation
\begin{align}
f(\xi)\psi_2(\xi)=f(\xi+{\rm{i}})   \psi_1(\xi) \  \mbox{ for all } \xi\in\bb{R}\setminus\{0\}
\end{align}
is given by $$f(\xi)=W[\psi_2;\psi_1](\xi)=\frac{W_{\phi^{(2)}_+}(-{\rm{i}}\xi  )}{W_{\phi^{(1)}_+}(-{\rm{i}}\xi  )}\frac{W_{\phi^{(1)}_-}(1+{\rm{i}}\xi  )}{W_{\phi^{(2)}_-}(1+{\rm{i}}\xi  )}. $$
Also, $W[\psi_2; \psi_1]\in\A_{(0,1]}$. In particular, taking $\psi_2=\psi$, $\psi_1\equiv 1$, the function $W(\xi)=\frac{W_{\phi_+}(-{\rm{i}}\xi  )}{W_{\phi_-}(1+{\rm{i}}\xi  )}$ satisfies
\begin{align}
W(\xi+{\rm{i}})   =\psi(\xi)W(\xi) \ \ \ \mbox{ for all } \xi\in\bb{R}\setminus\{0\},
\end{align}
and $W\in\A_{(0,1]}$. %Moreover, following the proof of Proposition~\ref{cor:multiplier},(\tb{update here}) the Fourier operator with multiplier $W$  belongs to the class ${\mathscr{M}}_e$ defined in \eqref{eq:mult_strip}.
\end{rem}
\subsection{Proof of Proposition~\ref{cor:multiplier}} Let $\psi$ be such that $$\psi(\xi)=\phi_+(-\rm i\xi)\phi_-(\rm i\xi)$$ for all $\xi\in\bb{R}$ with $\phi_+,\phi_-\in\B$.
\begin{comment}
Now, \eqref{eq:fneq_soln2} can be written as follows:
\begin{equation}\hlabel{fneq}
\psi(\xi)W^{(\beta)}_\Psi(\xi)=\psi_{\beta}(\xi)W^{(\beta)}_\Psi(z+1) \ \ \mbox{ for all } z\in {\rm{i}}\bb{R},
\end{equation}
where $\Psi_\beta(z)=z^2+\beta z, \ \beta>0$ and $$\cc{W}^{(\beta)}_\Psi(z)=W^{(\beta)}_\psi({\rm{i}}z)=\frac{W_{\phi_+}(z)\Gamma(1-z+\beta)}{W_{\phi_-}(1-z)\Gamma(z)}~.$$ Clearly, $\Psi^{(\beta)}(z)=\phi^\beta_+(-z)\phi^\beta_-(z)$ with $\phi^\beta_+(z)=z, \ \phi^\beta_-(z)=z+\beta$. Now, Proposition~\ref{cor:multiplier} is equivalent to the following statements:
\begin{enumerate}[(1)]
	\item \hlabel{thm:fneq_soln_i} $\cc{W}^{(\beta)}_\Psi$ is analytic on $\bb{C}_{(0,1)}$ and extends continuously to $\bb{C}_{[0,1]}$.
	\item\hlabel{thm:fneq_soln_ii} There exists a constant $C>0$ such that
	\begin{align}
		\displaystyle{\sup_{a\in [0,1]} \left|\cc{W}^{(\beta)}_\Psi(a+{\rm{i}}\xi)   \right|\le C|\xi|^{\frac{3}{2}+\overline{\kappa}+\beta}e^{\frac{\pi}{2}|\xi|}} \ \ \mbox{for all large values of $|\xi|$},
	\end{align}
	where $\overline{\kappa}$ is defined in $\eqref{it:kap}_{\beta}$.
	%\item The function $s\mapsto\cc{W}_\Psi(s+1)\Psi^0(s)$ extends continuously to $\bb{C}_{[0,1]}$.
\end{enumerate}
\end{comment}
Let us define the following pair of functions
\begin{align*}
\begin{cases}
W_+(z)=\frac{W_{\phi_+}(-{\rm i} z)}{\Gamma({-\rm i}z)}, &  z\in\bb{S}_{(0,\infty)} \vspace{.3 cm} \\
W_-(z)=\frac{\Gamma(1+{\rm i}z+\beta)}{W_{\phi_-}(1+{\rm i}z)}, &  \mbox{for any $\beta>0$ and $z\in\bb{S}_{(-\infty,1)}$}.
\end{cases}
\end{align*}
Now, for $z\in\bb{S}_{(0,\infty)}$, we have
\begin{align*}
W_+(z+{\rm i})&=\frac{W_{\phi_+}(-{\rm i}z+1)}{\Gamma(-{\rm i}z+1)}=\frac{\phi_+(-{\rm i}z)}{-{\rm i}z}\frac{W_{\phi_+}(-{\rm i}z)}{\Gamma(-{\rm i}z)}=\frac{\phi_+(-{\rm i}z)}{-{\rm i}z}W_+(z)
\end{align*}
i.e.~$W_+(z)=W_+(z+\i)\frac{-{\rm i}z}{\phi_+(-{\rm i}z)}$. Along the same vein, we have, for all $z\in\bb{S}_{(-\infty,0)}$,
%W_-(z)=\frac{\Gamma({\rm i}z+\beta)({\rm i}z+\beta)}{W_{\phi_-}({\rm i}z)\phi_-({\rm i}z)}=W_-(z+\i)\frac{{\rm i}z+\beta}{\phi_-({\rm i}z)}, i.e.~
\begin{align*}
W_-(z+\i)=W_-(z)\frac{\phi_-({\rm i}z)}{{\rm i}z+\beta}.
\end{align*}
%Then, for all $z$ with $\Re(z)<0$, we have
%\begin{align}
%\cc{W}_-(z)&=\frac{\Gamma(-z+\beta)(-z+\beta)}{W_{\phi_-}(-z)\phi_-(-z)}=\cc{W}_-(1+z)\frac{-z+\beta}{\phi_-(-z)}
%\end{align}
 Next, using  the property (W\ref{p3}) for $W_{\phi_+}$, we deduce that $W_+$ is analytic on $\hptg{C32}{\bb{S}_{(0,\infty)}}$ and extends  continuously to the punctured line $\bb{R}\setminus\{0\}$. On the other hand, $W_-$ is also analytic on $\bb{S}_{(-\infty,1)}$ and extends continuously on the line $\bb R+{\rm{i}}$ for any value of $\phi_-(0)$. To deal with the extension of $W_+$ to the entire real line, we observe that
 \begin{align}\hlabel{eq:phi_derivative}
 	\lim_{\substack{z\to 0 \\ \Im(z)>0}}\frac{-{\rm i}z}{\phi_+(-{\rm i}z)}=\lim_{\substack{z\to 0 \\ \Re(z)>0}}\frac{z}{\phi_+(z)}=\begin{cases} 0, & \mbox{if $\phi_+(0)>0$} \\
 \frac{1}{\phi_+'(0)}<\infty, & \mbox{if $\phi_+(0)=0$}
 \end{cases}
 \end{align}
and, hence $W_+$ extends continuously to $\bb{S}_{[0,\infty)}$. Now, we define $W^{(\beta)}_\psi(z)=W_+(z)W_-(z)$ for all $z\in\bb{S}_{(0,1)}$. Clearly, the above results entail that $W^{(\beta)}_\psi$ is analytic on $\bb{S}_{(0,1)}$, extends continuously to its boundary and solves \eqref{eq:fneq}. {Next, to prove \eqref{eq:ratio_+_bdd}, we resort to Phragm\'en-Lindel\"of principle. In the next two lemmas we obtain some elementary bounds of Bernstein functions, and an exponential bound of $W^{(\beta)}_\psi$ in the strip $\bb{S}_{[0,1]}$.
  	\begin{lem}\hlabel{lem:bound_bernstein}
  		For any $\phi\in\B$, $a,b\ge 0$, and for all $\xi\in\bb{R}$ one has
  		\begin{align}
  			|\phi(a+\i\xi)-\phi(b+\i\xi)|\le \phi(|a-b|)-\phi(0). \notag
  		\end{align}
  		Thus, for any $a,b>0$, as $|\xi|\to\infty$, $\phi(a+\i\xi)\asymp\phi(b+\i\xi)$.
  	\end{lem}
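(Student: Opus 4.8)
The plan is to argue directly from the L\'evy--Khintchine representation \eqref{eq:phi_def} of $\phi$ analytically continued to $\bb{C}_{[0,\infty)}$, i.e.~$\phi(a+\i\xi)=\phi(0)+\ttt{d}(a+\i\xi)+\int_0^\infty(1-e^{-(a+\i\xi)y})\nu(dy)$. Assume without loss of generality that $a\ge b\ge0$. Subtracting the two representations, the constant term $\phi(0)$ and the imaginary linear parts cancel, leaving
\[
\phi(a+\i\xi)-\phi(b+\i\xi)=\ttt{d}(a-b)+\int_0^\infty e^{-\i\xi y}\bigl(e^{-by}-e^{-ay}\bigr)\nu(dy).
\]
Since $\ttt{d}(a-b)\ge0$ and, for $a\ge b$ and $y\ge0$, $e^{-by}-e^{-ay}\ge0$, the triangle inequality together with $|e^{-\i\xi y}|=1$ yields the $\xi$-free bound $|\phi(a+\i\xi)-\phi(b+\i\xi)|\le\ttt{d}(a-b)+\int_0^\infty(e^{-by}-e^{-ay})\nu(dy)$.

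To finish the first inequality I would compare the right-hand side with $\phi(|a-b|)-\phi(0)=\ttt{d}(a-b)+\int_0^\infty(1-e^{-(a-b)y})\nu(dy)$ through the pointwise estimate $e^{-by}-e^{-ay}=e^{-by}\bigl(1-e^{-(a-b)y}\bigr)\le1-e^{-(a-b)y}$, which holds because $0\le e^{-by}\le1$ and $1-e^{-(a-b)y}\ge0$. Integrating against $\nu$ and adding $\ttt{d}(a-b)$ gives $|\phi(a+\i\xi)-\phi(b+\i\xi)|\le\phi(|a-b|)-\phi(0)$, as claimed.

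For the asymptotic equivalence, the key auxiliary fact is a uniform lower bound away from zero: for $a>0$ one has $\Re(1-e^{-(a+\i\xi)y})=1-e^{-ay}\cos(\xi y)\ge1-e^{-ay}\ge0$, whence $|\phi(a+\i\xi)|\ge\Re\phi(a+\i\xi)\ge\phi(a)>0$ for every $\xi$ (strict positivity uses $a>0$ and $\phi\not\equiv0$). Writing $c:=\phi(|a-b|)-\phi(0)$, the bound just proved combined with the triangle inequality gives $|\phi(a+\i\xi)|\le|\phi(b+\i\xi)|+c$, hence $|\phi(a+\i\xi)|/|\phi(b+\i\xi)|\le1+c/\phi(b)$, and symmetrically $|\phi(b+\i\xi)|/|\phi(a+\i\xi)|\le1+c/\phi(a)$; thus $\phi(a+\i\xi)\asymp\phi(b+\i\xi)$ uniformly in $\xi$, in particular as $|\xi|\to\infty$.

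I do not foresee a genuine obstacle here, as the entire argument is an elementary manipulation of the defining integral requiring no convergence machinery. The only points deserving care are that $a,b>0$ is truly needed for the $\asymp$ statement --- on the imaginary axis $\phi$ may vanish when the weak non-lattice property fails, so the lower bound $|\phi(a+\i\xi)|\ge\phi(a)$ degenerates at $a=0$ --- and the harmless exclusion of the trivial case $\phi\equiv0$.
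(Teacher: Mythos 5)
Your proof is correct and follows essentially the same route as the paper's: the same Lévy--Khintchine decomposition of the difference, the same triangle-inequality estimate via the factorization $e^{-by}-e^{-ay}=e^{-by}(1-e^{-(a-b)y})\le 1-e^{-(a-b)y}$, and the same lower bound $|\phi(a+\i\xi)|\ge\phi(a)$ to convert the uniform bound into the two-sided ratio estimate. Your explicit justification of $|\phi(a+\i\xi)|\ge\Re\phi(a+\i\xi)\ge\phi(a)$ and the remark about excluding $\phi\equiv0$ are small additions of care that the paper leaves implicit.
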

  	\begin{proof}
  		Let $\phi$ be of the form \eqref{eq:phi_def}. Then, for any $a,b\ge 0$ and $\xi\in\bb{R}$, we can write
  		\begin{align*}
  			\phi(a+\i\xi)-\phi(b+\i\xi)=\ttt{d}(a-b)+\int_{\bb{R}}\left(e^{-(a+\i\xi)y}-e^{-(b+\i\xi)y}\right)\nu(dy)
  		\end{align*}
  		Now, let us assume that $a<b$. Applying triangle inequality to the right-hand side of the above equality, we obtain
  		\begin{align*}
  			|\phi(a+\i\xi)-\phi(b+\i\xi)|\le \ttt{d}(b-a)+\int_{\bb{R}} (1-e^{-(b-a)y}) e^{-ay}\nu(dy)\le\phi(b-a)-\phi(0).
  		\end{align*}
  		Since $|\phi(a+\i\xi)|\ge \phi(a)>0$ for any $a>0$, from the above inequality we obtain that
  		\begin{align*}
  			\left|\frac{\phi(a+\i\xi)}{\phi(b+\i\xi)}-1\right|\le\frac{\phi(|a-b|)-\phi(0)}{\phi(b)}, \ \ \left|\frac{\phi(b+\i\xi)}{\phi(a+\i\xi)}-1\right|\le \frac{\phi(|a-b|)-\phi(0)}{\phi(a)}
  		\end{align*}
  		which completes the proof of the lemma.
  	\end{proof}
  \hpt{R22}{
  \begin{lem}\hlabel{lem:W_beta_bound}
  	Let $\psi\in\NN(\bb R)$. Then, for any $\epsilon>0$ there exists $C_\eps>0$ independent of $z$ such that $|W^{(\beta)}_\psi(z)|\le C_\eps e^{(\frac{\pi}{2}+\eps)|z|}$ all $z\in\bb{S}_{[0,1]}$.
  \end{lem}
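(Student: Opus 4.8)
The plan is a direct, estimate-by-estimate argument, preceded by the usual reduction to the region where $|\Re z|$ is large. By Proposition~\ref{cor:multiplier}\eqref{it:W_1}, $\WBpsi$ is analytic in $\bb{S}_{(0,1)}$ and continuous on $\bb{S}_{[0,1]}$, hence bounded on $\{z\in\bb{S}_{[0,1]}:\ |\Re z|\le R\}$ for every $R$; moreover $|\Re z|\le|z|\le|\Re z|+1$ on $\bb{S}_{[0,1]}$, so it is enough to show that for each $\eps>0$ there is $C_\eps$ with $|\WBpsi(\xi+\i a)|\le C_\eps e^{(\frac{\pi}{2}+\eps)|\xi|}$ for all $a\in[0,1]$ and all $|\xi|$ large.

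Fix $z=\xi+\i a$, $a\in[0,1]$. The arguments $-\i z=a-\i\xi$ and $1+\i z=(1-a)+\i\xi$ of $W_{\phi_+}$ and $W_{\phi_-}$ have real parts in $[0,1]$, touching the edge $\Re=0$ where the growth estimate \eqref{eq:bernstein_gamma_bound} and the boundedness of $E_\phi,R_\phi$ are unavailable. First I would apply once the functional equations for $W_+$ and $W_-$ established above (together with \eqref{eq:phi_derivative} for continuity at the edge), which moves every Bernstein--gamma and $\Gamma$-evaluation into $\Re\ge1$ and rewrites
\[
\WBpsi(z)=\underbrace{\frac{(a-\i\xi)\,\phi_-((1-a)+\i\xi)}{\phi_+(a-\i\xi)\,((1-a+\beta)+\i\xi)}}_{(\mathrm{I})}\,\cdot\,\underbrace{\frac{\Gamma((2-a+\beta)+\i\xi)}{\Gamma((a+1)-\i\xi)}}_{(\mathrm{II})}\,\cdot\,\underbrace{\frac{W_{\phi_+}((a+1)-\i\xi)}{W_{\phi_-}((2-a)+\i\xi)}}_{(\mathrm{III})}.
\]
For block $(\mathrm{II})$, Stirling's formula (uniform in the real part over compacts) gives $|(\mathrm{II})|\asymp|\xi|^{1-2a+\beta}$, polynomially bounded since the two $e^{-\frac{\pi}{2}|\xi|}$ factors cancel. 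For block $(\mathrm{III})$, all evaluations have real part $\ge 1$, so \eqref{eq:bernstein_gamma_bound} applies with $d=1$ and $E_{\phi_\pm},R_{\phi_\pm}$ uniformly controlled: in the numerator the exponential factor is $e^{-A_{\phi_+}}\le1$ and inverting the denominator produces $e^{A_{\phi_-}}\le e^{\frac{\pi}{2}|\xi|}$ (using $A_\phi\in[0,\tfrac{\pi}{2}|\xi|]$), while the remaining prefactors --- the $\sqrt{\phi_\pm(\cdot)}$ terms, $e^{G_{\phi_\pm}}$ and $e^{-E_{\phi_\pm}-R_{\phi_\pm}}$ --- are bounded above and below for $a\in[0,1]$, $|\xi|$ large, using $|\phi_\pm(c+\i\xi)|\ge\phi_\pm(c)$ for $c\ge1$ and Lemma~\ref{lem:bound_bernstein}; hence $|(\mathrm{III})|\le C\sqrt{|\xi|}\,e^{\frac{\pi}{2}|\xi|}$.

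For block $(\mathrm{I})$, the factors $|a-\i\xi|$ and $|(1-a+\beta)+\i\xi|$ are both $\asymp|\xi|$ and cancel, $|\phi_-((1-a)+\i\xi)|=\mathrm{O}(|\xi|)$ by a Bernstein-increment estimate, so it remains to bound $|\phi_+(a-\i\xi)|^{-1}$ polynomially in $|\xi|$, uniformly in $a\in[0,1]$. This is the main obstacle. On the edge $a=0$ the weak non-lattice property (in force under the standing assumptions, and the reason $\psi\in\NN(\bb R)$ must be read with that hypothesis) gives $|\phi_+(\i\xi)|\gtrsim|\xi|^{-\kappa}$; to propagate this into a neighbourhood of the edge I would combine Lemma~\ref{lem:bound_bernstein}, which bounds $|\phi_+(a-\i\xi)-\phi_+(-\i\xi)|$ by $\phi_+(a)-\phi_+(0)$, with the concavity bound $\phi_+(a)\ge a\,\phi_+(1)$, splitting into $a\lesssim|\xi|^{-\kappa'}$ (where the increment is negligible against $|\phi_+(\i\xi)|$) and $a\gtrsim|\xi|^{-\kappa'}$ (where $\phi_+(a)\ge a\phi_+(1)\gtrsim|\xi|^{-\kappa'}$). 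Once $|(\mathrm{I})|\le C|\xi|^{1+\kappa}$ is in hand, multiplying the three bounds yields $|\WBpsi(z)|\le C|\xi|^{N}e^{\frac{\pi}{2}|\xi|}\le C_\eps e^{(\frac{\pi}{2}+\eps)|\xi|}\le C_\eps e^{(\frac{\pi}{2}+\eps)|z|}$, which with the first paragraph completes the proof.
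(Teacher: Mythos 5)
Your overall route is the paper's: one application of the functional equations pushes every Bernstein--gamma and $\Gamma$ evaluation into $\{\Re\ge 1\}$, where \eqref{eq:bernstein_gamma_bound} with $A_\phi\in[0,\tfrac{\pi}{2}|\xi|]$ and the uniform control of $E_\phi,R_\phi$ yields the $e^{\frac{\pi}{2}|\xi|}$ factor times polynomial corrections; blocks $(\mathrm{II})$ and $(\mathrm{III})$ are handled correctly, and you rightly isolate the crux, namely a polynomial bound for $|\phi_+(a-\i\xi)|^{-1}$ uniform in $a\in[0,1]$ (this is exactly the reduction to $|z/\phi_+(z)|\le C|z|^{u}$ on $\bb{C}_{[0,1]}$ that the paper performs). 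However, your treatment of that crux has a genuine gap. In the regime $a\lesssim|\xi|^{-\kappa'}$ you need $\phi_+(a)-\phi_+(0)\le\tfrac12|\phi_+(-\i\xi)|$, and the only information available on the increment is that it tends to $0$ as $a\to0$ \emph{with no rate}: for a Bernstein function whose L\'evy measure has a heavy tail (say $\ov{\nu}_+(y)\asymp 1/\log y$ at infinity) one has $\phi_+(a)-\phi_+(0)\asymp 1/\log(1/a)$, so $a\le|\xi|^{-\kappa'}$ only gives an increment of order $1/\log|\xi|$, which does not beat the lower bound $|\phi_+(-\i\xi)|\gtrsim|\xi|^{-\kappa}$ furnished by the weak non-lattice property. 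Pushing the threshold down until the increment is small enough would force $a$ to be exponentially small in $|\xi|$, and then in the complementary regime the bound $\phi_+(a)\ge a\,\phi_+(1)$ is only exponentially small, not polynomial. So neither case of your split closes with the tools you invoke.

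The repair is a one-line rearrangement of the inequalities you already have, and it removes the case split entirely: divide the increment estimate of Lemma~\ref{lem:bound_bernstein} by $|\phi_+(a-\i\xi)|$ and use $|\phi_+(a-\i\xi)|\ge\Re\,\phi_+(a-\i\xi)\ge\phi_+(a)\ge\phi_+(a)-\phi_+(0)$ to get, for every $a>0$ and $\xi\in\bb{R}$,
\begin{align*}
\left|\frac{\phi_+(-\i\xi)}{\phi_+(a-\i\xi)}-1\right|\le\frac{\phi_+(a)-\phi_+(0)}{|\phi_+(a-\i\xi)|}\le\frac{\phi_+(a)-\phi_+(0)}{\phi_+(a)}\le 1,
\end{align*}
whence $|\phi_+(a-\i\xi)|\ge\tfrac12|\phi_+(-\i\xi)|\gtrsim|\xi|^{-\kappa}$ uniformly in $a\in[0,1]$ for $|\xi|$ large; combined with \eqref{eq:phi_derivative} near the origin this gives $|z/\phi_+(z)|\le C|z|^{\kappa+1}$ on $\bb{C}_{[0,1]}$, which is precisely how the paper concludes. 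With that substitution your block $(\mathrm{I})$, and hence the whole argument, goes through. (You are also right that the statement must be read with the weak non-lattice hypothesis on $\phi_+$; the paper's proof invokes \eqref{N_+} as well.)
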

\begin{proof}
	From the definition of the function $W^{(\beta)}_\psi$ and exploiting the recursion in \eqref{eq:fneq} along with the representation in \eqref{eq:bernstein_gamma_bound}, it suffices to prove that there exist $u,C>0$ such that
	\begin{align}\hlabel{eq:phi_bound}
		\left|\frac{z}{\phi_+(z)}\right|\le  C|z|^{u} \quad \text{for all $z\in \bb{C}_{[0,1]}$},
	\end{align}
	where we recall that $z\in\bb{C}_{[0,1]}=\{z\in\bb{C};\ \Re(z)\in [0,1]\}$. Now, for any $a>0$ and $\xi\in\bb R$, applying Lemma~\ref{lem:bound_bernstein} we get
	\begin{align*}
		\left|\frac{\phi_+(\i\xi)}{\phi_+(a+\i\xi)}-1\right|\le \frac{\phi_+(a)-\phi_+(0)}{|\phi_+(a+\i\xi)|}\le\frac{\phi_+(a)-\phi_+(0)}{\phi_+(a)}\le 1,
	\end{align*}
which shows that $|\phi_+(a+\i\xi)|\ge\frac{1}{2}|\phi_+(\i\xi)|$ for all $a>0, \xi\in\bb R$. Therefore, invoking \eqref{N_+} it follows that $\liminf_{|\xi|\to\infty} |\xi|^\kappa|\phi_+(a+\i\xi)|>0$, uniformly with respect to $a\in [0,1]$. Since by \eqref{eq:phi_derivative}, the mapping  $z\mapsto |\frac{z}{\phi_+(z)}|$ is bounded near $0$, \eqref{eq:phi_bound} holds with $u=\kappa+1$.
\end{proof}
}
To achieve the upper bound in \eqref{eq:ratio_+_bdd} using Phragm\'en-Lindel\"of principle, Lemma~\ref{lem:W_beta_bound} implies that it is enough to prove \eqref{eq:ratio_+_bdd} when $a=0,1$ respectively. Let us deal with the case $a=1$. Converting everything in terms of the function $W^{(\beta)}_\psi$, we note that
\begin{align*}
	W^{(\beta)}_\psi(\xi+\i)=\frac{W_{\phi_+}(1+\mathrm{i}\xi)\Gamma(\beta-\mathrm{i}\xi)\phi_-(-\mathrm{i}\xi)}{W_{\phi_-}(1-\mathrm{i}\xi)\Gamma(1+\mathrm{i}\xi)}~.
\end{align*}
By Stirling asymptotic formula for the gamma function and recalling from \cite[Proposition~3.1(4)]{patie2018} that $\phi_-(-\i\xi)=-\i\ttt{d}_-\xi+\mathrm{o}(|\xi|)$,
we get
\begin{align}\hlabel{eq:gamma_phi_ratio}
\frac{\Gamma(\beta-\mathrm{i}\xi)\phi_-(-\mathrm{i}\xi)}{\Gamma(1+\mathrm{i}\xi)}=\mathrm{O}(|\xi|^\beta).
\end{align}
So, it is enough to study the asymptotic behavior of the ratio $\frac{W_{\phi_+}(1+\mathrm{i}\xi)}{W_{\phi_-}(1-\mathrm{i}\xi)}$. For this, we proceed by proving the following lemma.

Now, coming back to the proof of the proposition, from the property (W\ref{p4}) and using the above lemma, we obtain
\begin{align}\hlabel{eq:bounded_ratio}
	\left|\frac{W_{\phi_+}(1+\mathrm{i}\xi)}{W_{\phi_-}(1-\mathrm{i}\xi)}\frac{W_{\phi_-}\left(1/2+\mathrm{i}\xi\right)}{W_{\phi_+}\left(1/2-\mathrm{i}\xi\right)}\right|\lesssim e^{A_{\phi_-}(1+\mathrm{i}\xi)-A_{\phi_-}\left(1/2+{\rm{i}}\xi\right)+A_{\phi_+}\left(1/2+{\rm{i}}\xi\right)-A_{\phi_+}(1+\mathrm{i}\xi)}
\end{align}
where $f\lesssim g$ means that  $\frac{f(\xi)}{g(\xi)}$ is bounded above as $|\xi|\to\infty$.
Since $\hptg{C34}{\lim_{|\xi|\to\infty}\frac{\phi_+(u+i\xi)}{u+i\xi}=\ttt{d}_+}$ locally uniformly in $u>0$, from the definition of $A_{\phi_+}$ in \eqref{eq:bernstein_gamma_bound}, we have that
\hptg{C35}{
\begin{align*}
	\left|A_{\phi_+}(1+\mathrm{i}\xi)-A_{\phi_+}\left(1/2+{\rm{i}}\xi\right)\right|=\int_{\frac{1}{2}}^1\ln\left(\frac{|\phi_+(u+\i\xi)|}{\phi_+(u)}\right)du\le\int_{\frac{1}{2}}^1\ln\left(\frac{|\phi_+(u+\i\xi)|}{\phi_+(\frac{1}{2})}\right)du
\end{align*}}
where the right most integral grows at most at the order of $\ln|\xi|$ as $|\xi|\to\infty$.
On the other hand, $A_{\phi_-}(1+\i\xi)\le A_{\phi_-}(1/2+\i\xi)$ for all $\xi\in\bb R$, see \cite[Theorem~3.2(1)]{patie2018}. Therefore, by the boundedness of the ratio $\frac{W_{\phi_+}\left(\frac{1}{2}-{\rm{i}}\xi\right)}{W_{\phi_-}\left(\frac{1}{2}+{\rm{i}}\xi\right)}$ in \eqref{eq:bounded_ratio} along with the estimate in \eqref{eq:gamma_phi_ratio}, it follows that $W^{(\beta)}_\psi(1+{\rm{i}}\xi)=\mathrm{O}(|\xi|^u)$ for some $u>\beta$. The case $a=0$ can be handled in the similar fashion after observing that $W_{\phi_\pm}(-\i\xi)=W_{\phi_\pm}(1-\i\xi)/\phi_\pm(-\i\xi)$ for any $\xi\in\bb{R}$. Since there exists $\kappa>0$ such that $\liminf_{|\xi|\to\infty} |\xi|^\kappa|\phi_+(-\i\xi)|>0$, using the similar idea as before, one can show that the following ratio
\begin{align*}
	\left|\frac{W_{\phi_+}(-\i\xi)}{W_{\phi_-}(1+\mathrm{i}\xi)}\frac{W_{\phi_-}\left(\frac{1}{2}+\mathrm{i}\xi\right)}{W_{\phi_+}\left(\frac{1}{2}-\mathrm{i}\xi\right)}\right|
\end{align*}
grows at most polynomially as $|\xi|\to\infty$. Hence, the proof of the proposition is completed by applying the Phragm\'en-Lindel\"of principle to the function $z\mapsto(2+z^2)^{-\frac{u}{2}}W^{(\beta)}_\psi(z)$ in the strip $\bb{S}_{[0,1]}$. \qed
%We end this section with the following obvious corollary which will be crucial for the proof of Theorem~\ref{thm:intertwining_pdo}.

%%%%%%%%%%%%%%%%%%%%%%%%%%%%%%%%%%%%%%%%%%%%%%%%%%%%%%%%%%%%%%%%%%%%%%%
\begin{comment}
\emph{Remark:} We had to exclude the case when $\phi_+$ corresponds to the triplet $(0,0,\nu)$ because of the following example. Let $\{r_n\}$ be an enumeration of positive rationals such that the denominator of $r_n$ is less than or equal to $n$ as well as $r_n\le n$. Define a probability measure $\nu$ on $(0,\infty)$ as follows: $\nu\{r_n\}=Ce^{-({2n!\pi})^2}=\nu\{2\pi n-r_n\}$. Clearly, $\nu$ is not supported on a lattice and,
\begin{equation}
\begin{aligned}
h(b)=\int_0^\infty e^{-{\rm{i}}\xiy}\nu(dy)=\sum_{n=1}^\infty2C\cos(br_n)e^{-({2m!\pi})^2}.
\end{aligned}
\end{equation}
Now, for any $m\in\bb{N}$,  $h(2m!\pi)\ge 1-4C\sum_{n=2m!\pi}^\infty e^{-n^2}=1-O(e^{-({2m!\pi})^2})$.
Therefore, $\phi_+\left({\rm{i}}\xi\right)   =h(b)=1-O(e^{-b^2})$ along a particular sequence as $b\to\infty$ which shows that $\left|\frac{1}{\phi_+(b)}\right|$ can grow at the order $e^{b^2}$, which does not work in our calculation.
\end{comment}
%%%%%%%%%%%%%%%%%%%%%%%%%%%%%%%%%%%%%%%%%%%%%%%%%%%%%%%%%%%%%%%%%%%%%%%
\subsection{Proof of Proposition~\ref{prop:ratio_criterion}} \hlabel{sec:prop2.2} From \cite[Theorem 6.2(1)]{patiesavov}, we know that for any $\xi \in \bb{R}$,
	\begin{align}\hlabel{eq:ratio-gene2}
		\left|W_{\phi}\left(\frac{1}{2}+{\rm{i}}\xi\right)   \right|\asymp\frac{\sqrt{\phi(\frac{1}{2})}W_{\phi}(\frac{1}{2})}{\sqrt{|\phi\left(\frac{1}{2}+{\rm{i}}\xi\right)   |}}e^{-|\xi|\Theta_\phi(|\xi|)}.
	\end{align}
	Clearly, this proves the estimate in \eqref{eq:ration-gene}.
	%where  $ f(x)\asymp g(x)$ means that there exists $C>0$ such that for all  $$ stand
	When condition \eqref{it:ratio_bdd} holds, the ratio \[\left|\frac{W_{\phi_+}\left(\frac{1}{2}+{\rm{i}}\xi\right)}{W_{\phi_-}\left(\frac{1}{2}+{\rm{i}}\xi\right)   }\right|\] decays exponentially, thanks to \eqref{eq:ration-gene}. Moreover, we note that $\liminf_{|\xi|\to\infty} |\phi_+(\i\xi)|=\infty$ if $\ttt{d}_+>0$. Now, writing $\ov{\nu}_\pm(r)=\nu_\pm(r,\infty)$, \cite[Theorem 3.3(2)]{patie2018} ensures that if $\ttt{d}_+>0$, $\ttt{d}_-=0$ and $\overline{\nu}_-\in\mathbf{RV}(\alpha)$ with $r\mapsto\frac{\overline{\nu}_-(r)}{r^\alpha}$ being quasi-monotone, then $\lim_{|\xi|\to\infty}\Theta_{\phi_+}(|\xi|)=\pi/2$ and $\lim_{|\xi|\to\infty}\Theta_{\phi_-}(|\xi|)=\pi\alpha/2$. When $\ttt{d}_+=\ttt{d}_-=0$, $\ov{\nu}_\pm\in\mathbf{RV}(\alpha_\pm)$ and $r\mapsto\frac{\ov{\nu}_\pm(r)}{r^{\alpha_\pm}}$ are quasi-monotone, the previous reference entails that $\lim_{|\xi|\to\infty}\Theta_{\phi_\pm}(|\xi|)=\pi\alpha_\pm/2$. Since $\alpha_+>\alpha_-$, applying \eqref{eq:ration-gene} it follows that $\psi\in\mathbf{N}_+(\bb{R})$ and $m_\psi\in\bmrm{L}(\bb R)$. This proves \eqref{it:ratio_bdd}. To prove \eqref{it:p3}, invoking \cite[Proposition~6.2]{patie2018}, we know that if $\ttt{d}_-=0$ or $\ov{\nu}_{-}(0)=\infty$, then for any $u>0$,
	\begin{align*}
		\lim_{|\xi|\to\infty}|\xi|^u\left|\frac{\Gamma\left(\frac{1}{2}+\mathrm{i}\xi\right)}{W_{\phi_-}\left(\frac{1}{2}+\mathrm{i}\xi\right)}\right|=0.
	\end{align*} On the other hand, if $\ttt{d}_+>0$ and $\ov{\nu}_+(0)<\infty$, then
	\begin{align*}
		\lim_{|\xi|\to\infty}|\xi|^u\left|\frac{\Gamma\left(\frac{1}{2}+\mathrm{i}\xi\right)}{W_{\phi_+}\left(\frac{1}{2}+\mathrm{i}\xi\right)}\right|=\infty
	\end{align*}
whenever $u>\frac{\phi_+(0)+\ov{\nu}_+(0)}{\ttt{d}_+}$. This proves \eqref{it:p3}.

In both of the above two cases, we note that $m_{\psi}$ and $m_{\ov\psi}$ cannot be in $\bmrm{L}(\bb R)$ simultaneously as $m_{\ov\psi}(\xi)=1/\ov{m}_{\psi}(\xi)$ for all $\xi\in\bb R$.
%	\section{Proof of the main results} \hlabel{sec:proof}

\section{Proof of Theorem~\ref{thm1:main_thm_1}: the one dimensional case} \hlabel{sec:proof} Since the proof of this theorem  is rather long, we split it into two parts  and  start by sketching the main ideas. We first prove it in the one dimensional case, that is $d=1$, and thus $M=\rm{Id}$. The general case  will follow by tensorization and  similarity transform techniques and it is postponed to Section \ref{sec:proof2}. In Theorem~\ref{thm:intertwining_pdo}, we prove the one dimensional case where we first derive the weak similarity relations at the level of pseudo-differential operators. Then, using the results from Subsection~\ref{subsec:generator}, we show that those pseudo-differential operators generate Markov semigroups and lift the weak similarity relations  at the level of semigroups. Subsequently, using Dynkin's theorem (see Lemma~\ref{lem:Dynkin}) and uniqueness of semigroups generated by operators defined on a core, we show that these semigroups are indeed the ones corresponding to the log-self-similar Markov processes. As a by-product of the weak similarity identity \eqref{eq:bdd_intertwining}, we obtain the core and integro-differential representation of the $\LRe$-generators of these semigroups, see~Proposition~\ref{prop:core_ido}, by using some approximation techniques motivated from \cite{patie2018}.

Recall from Subsection~\ref{ss:lamperti} that for $\psi\in\NN_b(\bb R)$, $P[\psi]$ stands for the $\cc{C}_0$-contraction semigroup on $\LRe$, which coincides with the log-self-similar Feller semigroup corresponding to the L\'evy-Khintchine exponent $\psi$, when restricted on $\C_0(\bb R)$. We also denote the $\LRe$-generator of $P[\psi]$ by $A_\ttt{2}[\psi]$.
\begin{thm}\hlabel{thm:intertwining_pdo}
\begin{enumerate}[(1)]
\item\hlabel{main_thm_1:it:0a} Let $\psi_0(\xi)=\xi^2, \: \xi \in \bb{R}$, then the closure of the operator $(A_{\ttt{PDO}}[\psi_0],\C^\infty_c(\bb R))$ in $\LRe$ generates the log-squared Bessel semigroup $Q$ on $\bmrm{L}(\bb{R},e)$.
\item\hlabel{main_thm_1:it:1a} For any $\psi\in\NN_b(\bb R)$, the shifted Fourier multiplier operator $\Lambda_\psi\in\mathscr{M}_e$, where $m_{\Lambda_\psi}(z)=\frac{W_{\phi_+}(-{\rm{i}}z   )}{W_{\phi_-}(1+{\rm{i}}z)   }\frac{\Gamma(1+{\rm{i}}z)   }{\Gamma(-{\rm{i}}z   )}$ for all $z\in\bb{S}_{(0,1)}$. In particular, when $\psi\in\NN_+(\bb R)$, $\Lambda_\psi\in\mathscr{B}(\LRe)$.
\item\hlabel{main_thm_1:it:1b} For any $\psi\in\NN_b(\bb R)$, we have $A_\ttt{PDO}[\psi]\in\mathrm{WS}(A_\ttt{PDO}[\psi_0])$. More specifically, recalling that $\cc{D}(\bb R) = \{f\in \LRe;\: \cc{F}_f \textrm{ is entire and satisfies \eqref{eq:udec}}\}$,  we have $\cc{D}(\bb R)$ is dense in $\LRe$, and,
\begin{align}\hlabel{eq:intertwining_pdo_0_D}
A_\ttt{PDO}[\psi]\Lambda_\psi =\Lambda_\psi A_\ttt{PDO}[\psi_0]\ \mbox{ on } \ \cD(\bb{R}).
\end{align}
In particular, if $\psi\in\NN_+(\bb R)$ then \eqref{eq:intertwining_pdo_0_D} holds on $\C^\infty_c(\bb R)$.
\item \hlabel{main_thm_1:it:1bc} If $\psi\in\NN_+(\bb R)$ then the closure of the operator $(A_\ttt{PDO}[\psi],\Lambda_\psi(\C^\infty_c(\bb R)))$ in $\LRe$ generates the semigroup $P[\psi]$ on $\LRe$. Therefore, in this case, $\Lambda_\psi(\C^\infty_c(\bb R))$ is a core for $A_\ttt{2}[\psi]$. In general, for any $\psi\in\NN_b(\bb R)$, we have
\begin{align}\hlabel{eq:A_2=A_PDO}
	A_\ttt{2}[\psi]=A_{\ttt{PDO}}[\psi] \ \ \text{on} \ \ \Lambda_\psi(\cc{D}(\bb R))
\end{align}
and $\Lambda_\psi(\cc{D}(\bb R))$ is dense in $\LRe$.
%\item \hlabel{itc}$A_\ttt{PDO}[\psi]$ is dissipative on $\Lambda_\psi(\C^\infty_c(\bb{R}))$. For any $\alpha>0$, $(\alpha I-A_{\ttt{PDO}}[\psi])(\Lambda_\psi(\C^\infty_c(\bb{R})))$ is dense in $\bmrm{L}(\bb{R},e)$ and thus extends to the generator of a $\cc{C}_0$-semigroup on $\bmrm{L}(\bb{R},e)$. %Put this as a lemma in the proof.
\item \hlabel{main_thm_1:it:1d} For any $\psi\in\NN_b(\bb R)$ and  $t\ge 0$, we have
\begin{align}\hlabel{eq:bdd_intertwining}
P_t[\psi]\Lambda_\psi=\Lambda_\psi Q_t \ \mbox{ on } \ \D(\Lambda_\psi).
\end{align}
If $\Hpsi$ is the shifted Fourier multiplier operator with $m_{\psi}(z):=m_{\Hpsi}(z)=\frac{W_{\phi_+}(-{\rm{i}}z)}{W_{\phi_-}(1+{\rm{i}}z)}$, then $(\Hpsi,\D(\Hpsi))\in\mathscr{M}_e$, and, for all $t\ge 0$, we have
\begin{align}\hlabel{eq:spect_decomp}
	P_t[\psi] =\Hpsi e_t\Hpsinv \ \mbox{ on  } \ \hptg{C37}{\D(\Hpsinv)}
\end{align}
where  $(e_t)_{t\ge 0}$ denotes the multiplication semigroup on $\bmrm{L}(\bb{R},e)$, i.e.~ $\hptg{C36}{e_t f(x)=e^{-te^{-x}}f(x)}$. Note that $\Hpsinv=\Hpsid$ as they correspond to the same multiplier.
\end{enumerate}
\end{thm}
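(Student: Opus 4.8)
The plan is to establish the five items in order, the recurring tool being the explicit evaluation of (shifted) Fourier transforms together with the functional equation for the multiplier of $\Lambda_\psi$. For \eqref{main_thm_1:it:0a}, the symbol $-e^{-x}\xi^2$ gives $A_\ttt{PDO}[\psi_0]f(x)=e^{-x}f''(x)$, which, under the homeomorphism $x\mapsto e^{x}$, is (a time change of) the squared Bessel operator of dimension $2$; hence classical spectral theory shows that the $\LRe$-closure of $(A_\ttt{PDO}[\psi_0],\C^\infty_c(\bb R))$ is self-adjoint, generates a $\cc{C}_0$-contraction semigroup $Q$, and enjoys the diagonalisation $Q_t=H_{\psi_0}e_t H_{\psi_0}^{-1}$, where $H_{\psi_0}\in\mathscr{M}_e$ has multiplier $m_{H_{\psi_0}}(z)=\Gamma(-\i z)/\Gamma(1+\i z)$ (unimodular on $\{\Im z=\tfrac12\}$, so $H_{\psi_0}$ is unitary), as is classical; see the introduction for references. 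That $Q$ restricted to $\C_0(\bb R)\cap\LRe$ is the Feller semigroup of the log-squared Bessel process then follows from Lemma~\ref{lem:Dynkin}, \eqref{eq:dynkin=pdo} and Lemma~\ref{lem:feller=L2}.

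For \eqref{main_thm_1:it:1a} I would write $m_{\Lambda_\psi}=m_{\Hpsi}/m_{H_{\psi_0}}$, i.e.~$\Lambda_\psi$ is the closure of the composition of the shifted Fourier multipliers $\Hpsi$ and $H_{\psi_0}^{-1}$. Analyticity and zero-freeness of $m_{\Lambda_\psi}$ on $\bb{S}_{(0,1)}$, together with its continuous extension to $\bb{S}_{[0,1]}$, come from properties (W\ref{p2})--(W\ref{p3}) applied to $W_{\phi_+}(-\i z)$ and $W_{\phi_-}(1+\i z)$ (analytic and zero-free precisely when $\Im z\in(0,1)$, resp.~$\Im z<1$) and the analogous facts for $\Gamma$; the bound $|m_{\Lambda_\psi}(\xi+\tfrac{\i}{2})|\le Ce^{k|\xi|}$ on the relevant line follows from the estimate \eqref{eq:bernstein_gamma_bound} and Stirling's formula, and Lemma~\ref{lem:W_beta_bound} yields the corresponding exponential bound on the whole strip $\bb{S}_{[0,1]}$ (both amount to Proposition~\ref{cor:multiplier} with $\beta=0$), so $\Lambda_\psi\in\mathscr{M}_e$. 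If moreover $\psi\in\NN_+(\bb R)$, then on $\{\Im z=\tfrac12\}$ the $\Gamma$-ratio is unimodular and $|W_{\phi_+}(\tfrac12-\i\xi)|=|W_{\phi_+}(\tfrac12+\i\xi)|$ by (W\ref{p2}), whence $|m_{\Lambda_\psi}(\xi+\tfrac{\i}{2})|=|W_{\phi_+}(\tfrac12+\i\xi)/W_{\phi_-}(\tfrac12+\i\xi)|=\mathrm{O}(1)$ by the very definition of $\NN_+(\bb R)$, so $\Lambda_\psi\in\mathscr{B}(\LRe)$.

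The heart of the matter is \eqref{main_thm_1:it:1b}. Density of $\cc{D}(\bb R)$ in $\LRe$ follows from Corollary~\ref{corr:wiener}, since $\cc{D}(\bb R)$ contains every real translate of a Gaussian (its Fourier transform is entire, nowhere zero and superexponentially small on each horizontal line, so it satisfies \eqref{eq:udec}); membership in $\cc{D}(\bb R)$ moreover forces superexponential decay of $f$ itself in both directions, which settles the domain requirements implicit in \eqref{eq:ws0}. For $f\in\cc{D}(\bb R)$ I would compute both sides of \eqref{eq:intertwining_pdo_0_D} through the Fourier transform: since $A_\ttt{PDO}[\psi]g$ equals $-e^{-x}$ times the inverse Fourier transform of $\psi\,\fou_g$, multiplication by $e^{-x}$ translates the Fourier transform by $\i$, while Proposition~\ref{multiplier_result} represents $\Lambda_\psi$ on the horizontal lines of $\bb{S}_{[0,1]}$ as the classical multiplier $m_{\Lambda_\psi}$; bringing $\Lambda_\psi A_\ttt{PDO}[\psi_0]f$ back to the real axis produces the integral of $m_{\Lambda_\psi}(\xi+\i)\psi_0(\xi)\fou_f(\xi)e^{\i\xi x}$, which by the functional equation $m_{\Lambda_\psi}(\xi+\i)\psi_0(\xi)=\psi(\xi)m_{\Lambda_\psi}(\xi)$ (Proposition~\ref{cor:multiplier} and Remark~\ref{rem:fneq_rem} with $\beta=0$) is precisely the integral representing $A_\ttt{PDO}[\psi]\Lambda_\psi f$. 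When $\psi\in\NN_+(\bb R)$, $m_{\Lambda_\psi}$ is merely polynomially bounded on $\bb{S}_{[0,1]}$ by Proposition~\ref{cor:multiplier}\eqref{it:kap}, and the same computation is licit for $f\in\C^\infty_c(\bb R)$ (whose Fourier transform decays faster than any polynomial on horizontal lines), giving the relation on $\C^\infty_c(\bb R)$.

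For \eqref{main_thm_1:it:1bc}--\eqref{main_thm_1:it:1d}, suppose first $\psi\in\NN_+(\bb R)$: then $\Lambda_\psi\in\mathscr{B}(\LRe)$ has dense range (Proposition~\ref{dense_range_intertwining}) and, by \eqref{main_thm_1:it:1b}, intertwines $A_\ttt{PDO}[\psi]$ with $A_\ttt{PDO}[\psi_0]$ on $\C^\infty_c(\bb R)$, whose $\LRe$-closure $A_\ttt{2}[\psi_0]$ generates $Q$; since $\Lambda_\psi(\C^\infty_c(\bb R))\subset\SS(\bb R)$ (because $m_{\Lambda_\psi}\fou_f\in\SS(\bb R)$), on this set $A_\ttt{PDO}[\psi]$ coincides with the Dynkin operator $e^{-x}L[\psi]$, which is dissipative on $\LRe$ as $e^{x}\,dx$ is excessive for the log-self-similar semigroup (Section~\ref{sec:lamp}); Lemma~\ref{lem:generator_intertwining} then gives that the $\LRe$-closure of $(A_\ttt{PDO}[\psi],\Lambda_\psi(\C^\infty_c(\bb R)))$ generates a $\cc{C}_0$-contraction semigroup, which Lemma~\ref{lem:Dynkin}, \eqref{eq:dynkin=pdo} and Lemma~\ref{lem:feller=L2}, together with uniqueness of a semigroup determined on a core, identify with $P[\psi]$; thus $\Lambda_\psi(\C^\infty_c(\bb R))$ is a core for $A_\ttt{2}[\psi]$. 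I would lift this to \eqref{eq:bdd_intertwining} by the standard resolvent argument: the generator-level identity $A_\ttt{2}[\psi]\Lambda_\psi=\Lambda_\psi A_\ttt{2}[\psi_0]$ on a core gives $(\lambda-A_\ttt{2}[\psi])^{-1}\Lambda_\psi=\Lambda_\psi(\lambda-A_\ttt{2}[\psi_0])^{-1}$ for $\lambda>0$, and inverting the Laplace transform yields $P_t[\psi]\Lambda_\psi=\Lambda_\psi Q_t$; writing $\Lambda_\psi=\Hpsi H_{\psi_0}^{-1}$, inserting $Q_t=H_{\psi_0}e_t H_{\psi_0}^{-1}$ from \eqref{main_thm_1:it:0a}, and using the group structure of $\mathscr{M}_e$ (Proposition~\ref{m-group}) to cancel $H_{\psi_0}$ produces \eqref{eq:spect_decomp}, $P_t[\psi]=\Hpsi e_t\Hpsi^{-1}$ on $\D(\Hpsi^{-1})=\Range(\Hpsi)$, with $\Hpsi^{-1}=\Hpsid$ because the two operators have the same multiplier on $\{\Im z=\tfrac12\}$. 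For a general $\psi\in\NN_b(\bb R)$, where $\Lambda_\psi$ may be unbounded, $P[\psi]$ exists already by Lamperti's construction (Section~\ref{sec:lamp}), and one transfers the above from $\ov\psi\in\NN_+(\bb R)$ via $\widehat{P}_t[\psi]=P_t[\ov\psi]$, $\widehat{Q}_t=Q_t$ and $\widehat{\Lambda}_{\ov\psi}=\Lambda_\psi^{-1}$ (the adjoint of a multiplier being the conjugate multiplier, here matching $1/m_{\Lambda_\psi}$ on $\{\Im z=\tfrac12\}$, by Proposition~\ref{m-group}), which also delivers \eqref{eq:A_2=A_PDO} on $\Lambda_\psi(\cc{D}(\bb R))$ and its density. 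I expect the main obstacle to be the rigorous justification of the contour shift in \eqref{main_thm_1:it:1b} — legitimate precisely because \eqref{eq:udec} makes $\fou_f$ decay like $e^{-(\tfrac{\pi}{2}+\eps)|\xi|}$, which strictly dominates the $e^{(\tfrac{\pi}{2}+\eps')|\xi|}$ growth of $m_{\Lambda_\psi}$ off the real axis, so that no weaker decay than the one built into $\cc{D}(\bb R)$ would do — with the dissipativity in \eqref{main_thm_1:it:1bc} and the domain bookkeeping for unbounded $\Lambda_\psi$ (the $\NN_-$ case) as the secondary difficulties.
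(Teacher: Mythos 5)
Your overall architecture coincides with the paper's: density of $\cc{D}(\bb R)$ via translated Gaussians and Corollary~\ref{corr:wiener}, the Fourier-side computation plus contour shift for the intertwining, Lemma~\ref{lem:generator_intertwining} and the Hille--Yosida theorem for generation, the Dynkin/Feller/$\bmrm{L}^2$-generator comparison to identify $P[\psi]$, the resolvent--Laplace argument to pass to the semigroups, and the adjoint transfer $\widehat{\Lambda}_{\ov\psi}=\Lambda_\psi^{-1}$ for $\psi\in\NN_-(\bb R)$. There is, however, one genuine gap in your treatment of item~(3), which is precisely the step the paper spends most of its effort on.

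You run the functional equation and Proposition~\ref{multiplier_result} directly ``with $\beta=0$'', i.e.~you shift the contour up to the line $\bb R+\i$ and use $m_{\Lambda_\psi}(\xi+\i)\psi_0(\xi)=\psi(\xi)m_{\Lambda_\psi}(\xi)$. But $m_{\Lambda_\psi}$ need not extend continuously to that line: at $z=\xi+\i$ the factor $\Gamma(1+\i z)=\Gamma(\i\xi)$ has a simple pole at $\xi=0$, and this pole is cancelled by the denominator $W_{\phi_-}(1+\i z)=W_{\phi_-}(\i\xi)$ only when $\phi_-(0)=0$; when $\phi_-(0)>0$ the multiplier genuinely blows up at the point $\i$ of the boundary, so the hypotheses of Proposition~\ref{multiplier_result} (continuous extension of the multiplier, finiteness of the sup in condition~(ii)) fail. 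Moreover Remark~\ref{rem:fneq_rem}, which you invoke for the $\beta=0$ functional equation, explicitly requires $\phi^{(1)}_-(0)>0$ for the \emph{target} exponent; with $\psi_1=\psi_0$ one has $\phi^{(1)}_-(z)=z$, so that hypothesis is violated and the remark does not apply. The paper's fix is to factor $\Lambda_\psi=\Lambda^{(\beta)}_\psi\Lambda^{(\beta)}$ with $m_{\Lambda^{(\beta)}}(z)=\Gamma(1+\i z)/\Gamma(1+\beta+\i z)$ and $m_{\Lambda^{(\beta)}_\psi}=W^{(\beta)}_\psi$: the first factor intertwines $A_\ttt{PDO}[\psi_0]$ with $A_\ttt{PDO}[\psi_\beta]$, $\psi_\beta(\xi)=\xi^2-\i\beta\xi$, by an explicit gamma-function manipulation (where the contour shift is justified by Lemma~\ref{prop:contour} on a concrete integrand), and the second intertwines $A_\ttt{PDO}[\psi_\beta]$ with $A_\ttt{PDO}[\psi]$ using Proposition~\ref{cor:multiplier}, whose multiplier $W^{(\beta)}_\psi$ \emph{does} extend continuously to $\bb{S}_{[0,1]}$ because the troublesome $\Gamma(1+\i z)$ has been replaced by $\Gamma(1+\i z+\beta)$. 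Your sketch as written would need this (or an equivalent regularisation) to go through; the exponential decay \eqref{eq:udec} of $\fou_f$, which you correctly flag, controls the growth of the multiplier at infinity but does nothing about the boundary singularity at $z=\i$. Two smaller points: in item~(1) the essential self-adjointness of $(A_\ttt{PDO}[\psi_0],\C^\infty_c(\bb R))$ is not automatic from ``classical spectral theory'' --- the paper proves it by identifying the operator with the two-dimensional Laplacian acting on radial functions; and in item~(4) the dissipativity of $A_\ttt{PDO}[\psi]$ should be derived from $\Re\langle L[\psi]f,f\rangle_{\bmrm{L}(\bb R)}\le 0$ (the paper's Lemma~\ref{lem:dissipativity}) rather than from excessivity of $e^xdx$ for a semigroup you have not yet identified, which would be circular at that stage.
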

This theorem is proved in Section \ref{ss:pf_intertwining_pdo}. \\

\noindent
The item~\eqref{main_thm_1:it:1bc} in the above theorem provides us with a core for $A_\ttt{2}[\psi]$ when $\psi\in\NN_+(\bb R)$. The general case is more involved as $\C^\infty_c(\bb R)$ may not be included in the domain of $\Lambda_\psi$ when $\psi\in\NN_-(\bb R)$, as $m_{\Lambda}(\cdot+{\rm{i}}/2)$ can grow exponentially. Although the set $\Lambda_\psi(\cD(\bb R))$ mentioned in item~\eqref{main_thm_1:it:1bc} is dense in $\LRe$, it is not a core for $A_\ttt{2}[\psi]$ in general. To deal with this issue, we find a core for the generator of the semigroup $(e_t)_{t\ge 0}$, which consists of smooth functions with Fourier transforms decaying exponentially fast, and also invariant under the semigroup $(e_t)_{t\ge 0}$. Then, by imposing a mild condition on the ratio $\frac{\Wphip}{\Wphim}$, we show that those smooth functions are in the domain of $\Hpsi$ defined in item~\eqref{main_thm_1:it:1d}, and their image under $\Hpsi$ forms a core for $A_\ttt{2}[\psi]$. Additionally, we obtain the integro-differential representation of $A_\ttt{2}[\psi]$. These results are formally stated in the next proposition.

\begin{prop}\hlabel{prop:core_ido} Let $\psi\in\mathbf{N}_b(\bb{R})$ and denote by $A_\ttt{2}[\psi]$ the $\LRe$-generator of $P[\psi]$.
\item \hlabel{it:main_2}
\begin{enumerate}[(1)]
\item \hlabel{it:2d_new} If $\psi(\xi)=-\i\ttt{d}\xi$ for some $\ttt{d}>0$, then $\C^\infty_c(\bb{R})$ is a core for $A_\ttt{2}[\psi]$. Otherwise, let us assume that
\begin{align}\hlabel{eq:mpsi_bound}
	\eta_\psi:=\sup\left\{\eta\in\bb R; \: \xi\mapsto|\xi|^{\eta+\frac{1}{2}}\frac{W_{\phi_+}\left(\frac{1}{2}-\i\xi\right)}{W_{\phi_-}\left(\frac{1}{2}+\i\xi\right)} e^{-\frac{\pi}{2}|\xi|}\in\bmrm{L}(\bb R)\right\}\in (0,\infty].
\end{align}
Writing, for $\epsilon>0$, $\cc{E}(\eps)=\Span\{\heb; \beta>0\}$ where \hptg{C61}{$\heb(x)=e^{-(\frac{1}{2}+\eps)x}e^{-\beta e^{-x}}, x \in \bb{R}$},  the following holds.
\begin{enumerate}
\item \hlabel{it:2c1} For all $\eps>0$, $\cc{E}(\eps)$ is a dense subset of $\LRe$.
\item \hlabel{it:2c2} For all $0<\eps<\eta_\psi$, $\cc{E}(\eps)\subset\D(\Hpsi)$ and $\cc{E}_\psi(\eps):=H_\psi(\cc{E}(\eps))$ is a core for $A_\ttt{2}[\psi]$.
\end{enumerate}
\item\hlabel{it:2d} Let $\psi\in\mathbf{N}_b(\bb R)$ be such that, for all $n\in\bb N$,
\begin{align}\hlabel{eq:mpsi_bound_2}
	\xi\mapsto |\xi|^n\frac{\Wphip}{\Wphim} e^{-\frac{\pi}{2}|\xi|}\in\bmrm[1]{L}(\bb R).
\end{align}

Moreover, if $\psi$ satisfies one of the following conditions
\begin{enumerate}[(i)]
	\item\hlabel{conditions_i} $\phi_+(0)>0$
	%\item\hlabel{conditions_ii} $\int_{0}^1\frac{1}{\left|\phi_+({\rm{i}}\xi)\right|}\,d\xi<\infty$
	\item\hlabel{conditions_iii} $\int_{|y|>1} |y|\mu(dy)<\infty$
\end{enumerate}
then $\cc{E}_\psi=\!\!\bigcup\limits_{0<\eps<\infty}\!\!\!\!\cc{E}_\psi(\eps)$ is core for  $A_\ttt{2}[\psi]$. Moreover, on $\cc{E}_\psi$,  $A_\ttt{2}[\psi]$ admits the representation as the following integro-differential operator
\begin{eqnarray}\hlabel{eq:integro_diff}
	A_\ttt{2}[\psi]f(x)&=&e^{-x}\left(\sigma^2 f''(x)+\ttt{b}f'(x)-\psi(0)f(x)\right. \nonumber\\
	&+ & \left. \int_\bb{R}(f(x+y)-f(x)-y\bbm{1}_{\{|y|\le 1\}}f'(x))\mu(dy) \right)
\end{eqnarray}
where $(\psi(0),\ttt{b},\sigma^2,\mu)$ is the quadruplet  determining $\psi$. %In fact, $A_\ttt{IDO}[\psi]$, when restricted on $\C^\infty_c(\bb{R})$, coincides with the pseudo-differential operator $A_\ttt{PDO}[\psi]$.
\end{enumerate}
\end{prop}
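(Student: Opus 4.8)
The plan is to push, through the similarity relation $P_t[\psi]=\Hpsi\,e_t\,\Hpsinv$ of Theorem~\ref{thm:intertwining_pdo}\eqref{main_thm_1:it:1d}, an explicitly chosen core of the multiplication semigroup $(e_t)_{t\ge0}$ over to a core of the $\LRe$-generator $A_\ttt2[\psi]$, and then to read off the action of $A_\ttt2[\psi]$ on the transported functions. For the pure drift $\psi(\xi)=-\i\ttt d\xi$, $\ttt d>0$, this machinery is not needed: here $A_\ttt2[\psi]$ acts on smooth functions as the first order operator $e^{-x}\ttt d\,\tfrac{d}{dx}$ (combine \eqref{eq:dynkin=pdo} with Lemmas~\ref{lem:Dynkin} and~\ref{lem:feller=L2}), and the change of variable $s=\ttt d^{-1}e^x$ carries $\big(\LRe,\,e^{-x}\ttt d\,\tfrac{d}{dx}\big)$ unitarily onto $\big(\bmrm{L}(\bb R_+),\,\tfrac{d}{ds}\big)$, the generator of the mass losing left shift on the half line, for which $\C^\infty_c(\bb R_+)$ — hence $\C^\infty_c(\bb R)$ — is a core by the usual mollification and truncation argument. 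For the rest of the proof we assume $\eta_\psi$ as in \eqref{eq:mpsi_bound} is positive.

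Since $e_t\heb(x)=e^{-t e^{-x}}\heb(x)=\mf h_{\eps,\beta+t}(x)$, the space $\cc E(\eps)$ is invariant under $(e_t)_{t\ge0}$, which combined with the density \eqref{it:2c1} makes it a core for the generator $\mf e$ of $(e_t)$. The density follows from the injectivity of the Laplace transform: after the substitution $u=e^{-x}$, the orthogonality $\langle\heb,g\rangle_e=0$ for all $\beta>0$ becomes $\int_0^\infty u^{\eps-\frac32}\,\ov g(-\ln u)\,e^{-\beta u}\,du=0$ for all $\beta>0$, forcing $g\equiv0$. Next, a direct computation yields $\fouho_{\heb}(\xi)=(2\pi)^{-\frac12}\Gamma(\eps+\i\xi)\,\beta^{-\eps-\i\xi}$, so by Stirling $|\fouho_{\heb}(\xi)|\asymp|\xi|^{\eps-\frac12}e^{-\frac\pi2|\xi|}$ and likewise $|\fouho_{\mf h_{\eps+1,\beta}}(\xi)|\asymp|\xi|^{\eps+\frac12}e^{-\frac\pi2|\xi|}$; since $m_\psi(\cdot+\tfrac\i2)$ is continuous and zero free on $\bb R$, the definition of $\eta_\psi$ shows at once that both $\heb$ and $\mf e\heb=-\mf h_{\eps+1,\beta}$ lie in $\D(\Hpsi)$ as soon as $0<\eps<\eta_\psi$, whence $\cc E(\eps)\subset\D(\Hpsi)$ and $\cc E_\psi(\eps)=\Hpsi(\cc E(\eps))$ is well defined.

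To prove that $\cc E_\psi(\eps)$ is a core for $A_\ttt2[\psi]$ it suffices, by the standard criterion, to check that it lies in $\D(A_\ttt2[\psi])$, is dense in $\LRe$, and is $P[\psi]$-invariant. Invariance is immediate from \eqref{eq:spect_decomp}: $P_t[\psi]\Hpsi\heb=\Hpsi e_t\Hpsinv\Hpsi\heb=\Hpsi e_t\heb=\Hpsi\mf h_{\eps,\beta+t}\in\cc E_\psi(\eps)$. Density: $\Hpsi$ commutes with translations and $\tau_a\heb$ is, up to a positive scalar, the member $\mf h_{\eps,\beta e^{-a}}$ of $\cc E(\eps)$, so $\cc E_\psi(\eps)$ is translation invariant and contains $\{\tau_a\Hpsi\mf h_{\eps,1}:a\in\bb R\}$; as $\fouho_{\Hpsi\mf h_{\eps,1}}=m_\psi(\cdot+\tfrac\i2)(2\pi)^{-\frac12}\Gamma(\eps+\i\cdot)$ is nowhere zero, Corollary~\ref{corr:wiener} gives the density. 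Finally, for $g=\Hpsi\heb$ we have $t^{-1}(P_t[\psi]g-g)=\Hpsi\big(t^{-1}(e_t\heb-\heb)\big)$, and on the Fourier side $t^{-1}\big(\fouho_{e_t\heb}-\fouho_{\heb}\big)(\xi)=(2\pi)^{-\frac12}\Gamma(\eps+\i\xi)\,t^{-1}\big((\beta+t)^{-\eps-\i\xi}-\beta^{-\eps-\i\xi}\big)$ converges pointwise, as $t\downarrow0$, to $-\fouho_{\mf h_{\eps+1,\beta}}(\xi)$ and is dominated, uniformly for $t\in(0,1]$, by $C_\beta|\xi|^{\eps+\frac12}e^{-\frac\pi2|\xi|}$ near infinity; multiplying by $m_\psi(\cdot+\tfrac\i2)$ the resulting dominating function is in $\bmrm{L}(\bb R)$ precisely because $\eps<\eta_\psi$, so dominated convergence gives $\Hpsi\big(t^{-1}(e_t\heb-\heb)\big)\to-\Hpsi\mf h_{\eps+1,\beta}$ in $\LRe$. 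Hence $g\in\D(A_\ttt2[\psi])$ with $A_\ttt2[\psi]\Hpsi\heb=-\Hpsi\mf h_{\eps+1,\beta}$, and \eqref{it:2c2} follows. For part \eqref{it:2d}, the decay hypothesis \eqref{eq:mpsi_bound_2} forces $\eta_\psi=\infty$ — using the bounds (W\ref{p4}) on $|W_{\phi_\pm}(\tfrac12+\i\xi)|$, which give $|m_\psi(\xi+\tfrac\i2)|e^{-\frac\pi2|\xi|}\le C\sqrt{1+|\xi|}$, together with Cauchy--Schwarz — so the previous step applies for every $\eps>0$ and $\cc E_\psi=\bigcup_{\eps>0}\cc E_\psi(\eps)$, containing the core $\cc E_\psi(1)$, is itself a core.

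It remains to identify $A_\ttt2[\psi]$ on $\cc E_\psi$ with the integro-differential operator in \eqref{eq:integro_diff}. Fix $g=\Hpsi\heb$. By \eqref{eq:mpsi_bound_2} and Stirling, $\fouho_g=m_\psi(\cdot+\tfrac\i2)\fouho_{\heb}$ is rapidly decreasing and extends analytically to a strip around $\bb R$, so $\sqrt e\,g$ is smooth and exponentially decaying; shifting the inversion contour down and, when $\phi_+(0)=0$, picking up the simple pole of $W_{\phi_+}$ at the origin — whose residue is finite exactly because $\phi_+'(0)<\infty$ under \eqref{conditions_iii}, there being no pole at all under \eqref{conditions_i} — one controls $g,g',g''$ as $x\to-\infty$; with \eqref{conditions_i}--\eqref{conditions_iii} this gives $g\in\C^2(\bb R)$ with $g'\in\C^1_b(\bb R)$, so the right hand side of \eqref{eq:integro_diff}, which equals $e^{-x}L[\psi]g(x)=A_\ttt{PDO}[\psi]g(x)$ by \eqref{eq:dynkin=pdo}, is a well defined element of $\LRe$. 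One then matches it with $-\Hpsi\mf h_{\eps+1,\beta}$ by a direct computation of shifted Fourier transforms: expressing $\fouho_{e^{-\cdot}L[\psi]g}(\xi)$ in terms of $\fouho_g(\xi-\i)$ and using the functional equations \eqref{eq:BG_receq} for $W_{\phi_+}$ and $W_{\phi_-}$ to relate $\fouho_g(\xi-\i)$ back to $\fouho_{\mf h_{\eps+1,\beta}}(\xi)$, the L\'evy symbol $\psi$ produced by the generator $L[\psi]$ is exactly cancelled by the $\psi$ that arises in the recursion for the $W_{\phi_\pm}$, leaving $\fouho_{e^{-\cdot}L[\psi]g}(\xi)=-m_\psi(\xi+\tfrac\i2)\fouho_{\mf h_{\eps+1,\beta}}(\xi)=\fouho_{A_\ttt2[\psi]g}(\xi)$; uniqueness of Fourier transforms then concludes. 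The main obstacle is precisely this last identification, i.e.~controlling $\Hpsi\heb$ and $L[\psi]\Hpsi\heb$ near $-\infty$, where both the weight $e^{-x}$ and the singular coefficient $e^{-x}$ of the operator blow up: this is what forces the dichotomy \eqref{conditions_i}--\eqref{conditions_iii} and the need to push the inversion contour across the pole of $W_{\phi_+}$, and it is carried out with the approximation techniques of \cite{patie2018}; the legitimacy of the various interchanges and of the continuation of the multiplier identity off the real line likewise rests on the strip analyticity of $\fouho_g$, in the spirit of Proposition~\ref{multiplier_result}.
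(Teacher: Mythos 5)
Your treatment of items (1)(a)--(b) follows the same route as the paper: transport the invariant set $\cc E(\eps)$ of the multiplication semigroup through the intertwining $P_t[\psi]\Hpsi=\Hpsi e_t$, check that $\Hpsi\heb$ and $\Hpsi I_e\heb=-\Hpsi\mf h_{\eps+1,\beta}$ make sense via the Fourier--side estimates $|\fouho_{\heb}(\xi)|\asymp|\xi|^{\eps-\frac12}e^{-\frac\pi2|\xi|}$ and the definition of $\eta_\psi$, run the dominated convergence argument for the generator, and invoke the density--plus--invariance core criterion. Your two density arguments (injectivity of the Laplace transform for $\cc E(\eps)$, and Wiener's theorem applied to the translates of $\Hpsi\mf h_{\eps,1}$ for $\cc E_\psi(\eps)$) differ in form from the paper's but are both correct, as is your derivation of $\eta_\psi=\infty$ from \eqref{eq:mpsi_bound_2} via the pointwise bound $\left|m_{\Hpsi}(\xi+\tfrac{\i}{2})\right|e^{-\frac\pi2|\xi|}\le C\sqrt{1+|\xi|}$.

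Two steps do not go through as written. First, the drift case: after your change of variable the semigroup becomes the left translation $g\mapsto g(\cdot+t)$ on $\bmrm{L}(\bb R_+)$, whose generator is $\tfrac{d}{ds}$ with domain the full Sobolev space $H^1(0,\infty)$ and no boundary condition at $0$. The closure of $\tfrac{d}{ds}$ restricted to $\C^\infty_c(0,\infty)$ has domain $\{g\in H^1(0,\infty):\,g(0)=0\}$ only; equivalently, $e^{-\lambda s}$ is orthogonal to $(\lambda-\tfrac{d}{ds})(\C^\infty_c(0,\infty))$, so the range condition in Hille--Yosida fails and ``mollification and truncation'' does not produce a core. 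The obstruction sits at $x=-\infty$, where $P_t[\psi]f(x)=f(\ln(e^x+\ttt dt))\to f(\ln(\ttt dt))$ need not vanish; the paper's own invariance claim $P_t[\psi](\C^\infty_c(\bb R))\subset\C^\infty_c(\bb R)$ suffers from exactly the same problem, so this sub-item requires more care than either argument gives it.

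Second, and more substantially, part (2) is where the paper does most of its work and where your write-up is only a sketch. The two nontrivial points are: (i) that $g=\Hpsi\heb$ lies in $\C^2(\bb R)$ with $g'\in\C^1_b(\bb R)$, which requires controlling $g,g',g''$ as $x\to-\infty$, i.e.~continuing the multiplier across the singularity of $W_{\phi_+}$ at the origin when $\phi_+(0)=0$; and (ii) that the Fourier-side identity $\fouho_{\Hpsi I_e\heb}(\xi)=-m_{\Hpsi}(\xi)\psi(\xi)\fou_{\heb}(\xi)$ genuinely computes the integro-differential operator applied to the non-Schwartz function $g$. The paper handles these by first treating $\phi_+(0)>0$ via repeated contour shifts, and then approximating $\psi$ by $\psi_q=\psi+q$, using the uniform estimate \eqref{eq:ratio_approx} together with a dedicated lemma showing $(H_{\psi_q}\heb)^{(n)}\to(\Hpsi\heb)^{(n)}$ uniformly and convergence of the jump integral under \eqref{conditions_iii}. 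Your alternative of shifting the inversion contour across ``the simple pole of $W_{\phi_+}$ at the origin'' is not carried out: when $\phi_+(0)=0$ and $\phi_+'(0^+)=\infty$ the singularity is not a simple pole; the link between \eqref{conditions_iii} (a condition on $\mu$) and the local behaviour of $W_{\phi_+}$ near $0$ passes through the Wiener--Hopf factors and is not immediate; and the decay of the integrand on the shifted contour is precisely what has to be proved. As it stands, your final paragraph records the statement to be established rather than a proof of it.
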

This proposition is proved in Subsection~\ref{ss:prop:core_ido}.
\begin{rem}
	The condition \eqref{eq:mpsi_bound} is needed only to show that $\cc{E}_\psi(\eps)$ is in the domain of the $\LRe$-generator of $P[\psi]$. It is always satisfied when $\psi\in\NN_+(\bb R)$. The inclusion $\cc{E}_\psi(\eps)\subset\D(H_\psi)$ and the density of $H_\psi(\cc{E}(\eps))$ in $\LRe$ is true whenever $\eps>0$ is small enough, depending on $\psi$, see \eqref{eq:l2_domain} and \eqref{eq:delta_psi}.
	%This condition is optimal in the sense that $\cc{E}_\psi(\eps)\in\D(A_\ttt{2}[\psi])$, where $A_\ttt{2}[\psi]$ is the generator of $P[\psi]$, if and only if \eqref{eq:mpsi_bound} holds. In fact, this condition is satisfied (resp. not satisfied) when $\delta_\psi>1$ (resp. $\delta_\psi<1$).
\end{rem}
\begin{rem}
	We note that the condition \eqref{eq:mpsi_bound_2} is always satisfied when $\psi\in\NN_+(\bb R)$. More generally, \eqref{eq:mpsi_bound_2} holds whenever $\psi(\xi)\neq -\i\ttt{d}\xi$ and either $\ttt{d}_-=0$, or $\ttt{d}_+>0$, or $\int_{\bb R}\mu(dy)=\infty$, see \cite[Theorem~2.3(1)]{patie2018}, where $\ttt{d}_\pm$ is defined in Proposition~\ref{prop:ratio_criterion}, and $\mu$ is defined in \eqref{eq:defLKe}. However, when $\psi\in\NN_+(\bb R)$, we will consider $\Lambda_\psi(\C^\infty_c(\bb R))$ as core for $A_\ttt{2}[\psi]$, since we have a PDO representation of $A_\ttt{2}[\psi]$ on this set, see \eqref{eq:intertwining_pdo_0}, and it does not require the conditions \eqref{conditions_i}-\eqref{conditions_iii}.
\end{rem}

%\begin{rem} \tb{ Note that the statements \eqref{it:2a} and \eqref{it:2b} of the previous proposition  yield Theorem~\ref{thm2:main_thm_2}. Also, for any $\psi\in\mathbf{N}_b(\bb{R})$, we have an explicit expression for the core of the generator $A_\ttt{2}[\psi]$. We will prove that $\cc{E}_\psi$ consists  of functions that are smooth and all derivatives are bounded but the function itself may not be bounded. Therefore, conditions \eqref{conditions_i}-\eqref{conditions_iii} ensure that the integral part of the representation \eqref{eq:integro_diff} is well defined.}
%\end{rem}

\subsection{Proof of Theorem~\ref{thm:intertwining_pdo}}\hlabel{ss:pf_intertwining_pdo} We begin by proving the following lemma. %, which shows the dissipativity of the pseudo-differential operator $A_\ttt{PDO}[\psi]$ for any $\psi\in\NN(\bb R)$.
\begin{lem}\hlabel{lem:dissipativity}
	For any $\psi\in\mathbf{N}(\bb{R})$, the operator $A_\ttt{PDO}[\psi]$ is dissipative on the domain $\D_\psi=\{f\in\SS(\bb{R})\cap\bmrm{L}(\bb{R},e);\ A_\ttt{PDO}[\psi]f\in\bmrm{L}(\bb{R},e)\}$.
\end{lem}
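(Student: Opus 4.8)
The plan is to show that for every $\psi\in\mathbf{N}(\bb{R})$, the operator $(A_\ttt{PDO}[\psi],\D_\psi)$ is dissipative on $\bmrm{L}(\bb{R},e)$, i.e.\ that $\Re\langle A_\ttt{PDO}[\psi]f,f\rangle_e\le 0$ for all $f\in\D_\psi$. First I would move the computation to the Fourier side. Recalling that $A_\ttt{PDO}[\psi]f(x)=-e^{-x}\,L[\psi]f(x)$ where $L[\psi]$ is the generator of the L\'evy process with exponent $\psi$, and using the shifted Fourier transform $\fouho$ which is an isometry from $\bmrm{L}(\bb{R},e)$ onto $\bmrm{L}(\bb{R})$, the key identity to establish is that for $f\in\D_\psi$,
\begin{align*}
	\langle A_\ttt{PDO}[\psi]f,f\rangle_e=-\int_{\bb{R}} \psi\!\left(\xi+\tfrac{\i}{2}\right)\fouho_f(\xi)\,\overline{\fouho_{f}(\xi+\i)}\,d\xi,
\end{align*}
or an analogous contour-shifted expression; the $e^{-x}$ weight is precisely what shifts the argument of $\psi$ from the real axis into the line $\Im(\xi)=\tfrac12$, and it couples $\fouho_f(\xi)$ with the translate $\fouho_f(\xi+\i)$. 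For $f\in\SS(\bb{R})$ this manipulation is legitimate by standard Fourier analysis and Fubini, and the assumption $A_\ttt{PDO}[\psi]f\in\bmrm{L}(\bb{R},e)$ guarantees the left-hand inner product is well defined.

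**Reducing to a positive-definiteness inequality.** Once the identity above is in place, dissipativity amounts to checking that the real part of that integral is nonnegative. The natural route is to use the negative-definiteness of $\psi$. Recall the defining property: for any points $\eta_1,\dots,\eta_p$ the matrix $(\psi(\eta_i)+\overline{\psi(\eta_j)}-\psi(\eta_i-\eta_j))_{i,j}$ is nonnegative Hermitian; equivalently, $\Re\psi\ge 0$ and $e^{-t\psi}$ is positive definite for $t>0$. I would pass to the L\'evy--Khintchine representation \eqref{eq:defLKe}: write $\psi(\xi)=\psi(0)-\i\ttt{b}\xi+\sigma^2\xi^2+\int_{\bb{R}}(1-e^{\i\xi y}+\i\xi y\bbm{1}_{\{|y|\le 1\}})\mu(dy)$, substitute $\xi\mapsto\xi+\tfrac{\i}{2}$, and plug into the integral. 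The quadratic and drift terms yield, after the contour shift, manifestly dissipative contributions (this is essentially the log-Bessel computation of Theorem~\ref{thm:intertwining_pdo}\eqref{main_thm_1:it:0a} specialised appropriately). The jump term produces, via Parseval, an expression of the form $-\int_{\bb{R}}\int_{\bb{R}}(\text{something})\,\mu(dy)$ whose real part one recognises as a sum of squares once the $e^{\i\xi y}$ factor is written back in physical space as a translation operator $\tau_y$; concretely the jump part of $\Re\langle A_\ttt{PDO}[\psi]f,f\rangle_e$ becomes $-\tfrac12\int_{\bb{R}}\|\sqrt{e}\,(\tau_y-\mathrm{Id})f\|^2_{\bmrm{L}(\bb{R})}$-type quantities against $\mu$, hence $\le 0$. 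An alternative, cleaner packaging is to invoke the L\'evy--Khintchine inequality $\Re\big(\psi(\xi+\tfrac{\i}{2})\big)\ge c(1-\cos(\cdot))$-type bounds, but the direct Parseval argument is the most transparent.

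**Handling the weight and the main obstacle.** The subtlety—and what I expect to be the main technical obstacle—is the interplay between the exponential weight $e^x$ and the unboundedness of the jump kernel: the substitution $\xi\mapsto\xi+\tfrac{\i}{2}$ requires analytic continuation of $\fouho_f$, which for $f\in\SS(\bb{R})$ holds (Fourier transforms of Schwartz functions extend to entire functions of controlled growth), but one must justify the contour deformation, i.e.\ that the vertical segments contribute nothing in the limit; here Lemma~\ref{prop:contour} (with $p=1$ or $p=2$) together with the decay of $\fouho_f$ and the polynomial growth of $\psi$ on horizontal strips does the job. One must also ensure all the Fubini interchanges with the (possibly infinite) measure $\mu$ are valid; splitting $\mu$ into its restriction to $\{|y|\le 1\}$ (where the compensator makes the integrand $\mathrm{O}(y^2)$) and to $\{|y|>1\}$ (finite mass, integrand bounded), and using $f,f'\in\bmrm{L}(\bb{R},e)\cap\bmrm{L}^\infty$ for $f\in\SS(\bb{R})$, handles integrability. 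Finally I would note that dissipativity for $f$ real-valued in $\D_\psi$ extends to complex $f$ by linearity of the argument, and that $\D_\psi$ is a core-type domain (dense, since $\SS(\bb{R})\cap\bmrm{L}(\bb{R},e)\supseteq\C^\infty_c(\bb{R})$ is dense in $\bmrm{L}(\bb{R},e)$), which is all that is needed for the later application of the Hille--Yosida theorem via Lemma~\ref{lem:generator_intertwining}.
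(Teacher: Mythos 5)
Your proposal contains a genuine gap: the identity you build the proof on, $\langle A_\ttt{PDO}[\psi]f,f\rangle_e=-\int_{\bb{R}}\psi\bigl(\xi+\tfrac{\i}{2}\bigr)\fouho_f(\xi)\,\overline{\fouho_f(\xi+\i)}\,d\xi$, requires evaluating $\psi$ off the real axis. A generic $\psi\in\mathbf{N}(\bb{R})$ does \emph{not} extend analytically to any horizontal strip --- the term $\int_{|y|>1}(1-e^{\i\xi y})\mu(dy)$ continues to $\Im(\xi)\in(0,\tfrac12)$ only if $\mu$ has exponential moments --- so the contour shift you invoke, and the subsequent substitution $\xi\mapsto\xi+\tfrac{\i}{2}$ into the L\'evy--Khintchine formula, is simply unavailable in the generality of the lemma, which is stated for \emph{all} $\psi\in\mathbf{N}(\bb{R})$. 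The introduction of the paper explicitly identifies this lack of analytic continuation as the obstruction that forced the authors away from Lamperti's integro-differential representation, so reintroducing it here would shrink the class of $\psi$ the lemma covers. The same problem resurfaces in your fallback ``sum of squares against $\mu$'' step, where the $\sqrt{e}$-weighted norms of $(\tau_y-\mathrm{Id})f$ would produce factors $e^{\pm y/2}$ integrated against $\mu(dy)$.

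The point you are missing is that no contour deformation is needed, because the factor $e^{-x}$ in the symbol cancels exactly against the density $e^{x}$ of the inner product. Writing $A_\ttt{PDO}[\psi]f(x)=e^{-x}L[\psi]f(x)$ for $f\in\D_\psi\subset\SS(\bb{R})$, where $L[\psi]$ is the $\bmrm{L}(\bb{R})$-generator of the L\'evy process with exponent $\psi$, one gets $\langle A_\ttt{PDO}[\psi]f,f\rangle_{\bmrm{L}(\bb{R},e)}=\int_{\bb{R}}L[\psi]f(x)\overline{f(x)}\,dx=\langle L[\psi]f,f\rangle_{\bmrm{L}(\bb{R})}$, and the real part of the right-hand side is $\le0$ because $L[\psi]$ generates a $\cc{C}_0$-contraction semigroup on $\bmrm{L}(\bb{R})$ (equivalently, by Plancherel it equals $-\int_{\bb{R}}\Re\,\psi(\xi)|\fou_f(\xi)|^2\,d\xi$ and $\Re\,\psi\ge0$). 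This is the paper's entire proof: everything stays on the real frequency axis, and no analyticity of $\psi$, no Fubini interchange with $\mu$, and no carr\'e-du-champ decomposition of the jump part is required. A minor additional slip: you wrote $A_\ttt{PDO}[\psi]f=-e^{-x}L[\psi]f$, but the sign is already absorbed in the definition of the symbol, so $A_\ttt{PDO}[\psi]f=e^{-x}L[\psi]f$.
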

\begin{proof}
For any  $f\in\D_\psi \subset \SS(\bb{R})$, we have
\begin{align} \label{eq:PDO_L}
	A_\ttt{PDO}[\psi]f(x)=-\frac{e^{-x}}{\sqrt{2\pi}}\int_\bb{R} e^{{\rm{i}}\xi  x}\psi(\xi)\mathcal{F}_{f}(\xi)\,d\xi=e^{-x}L[\psi]f(x)
\end{align}
where $L[\psi]$ is the $\bmrm{L}(\bb{R})$-generator as well as the Feller-generator  of the L\'evy process with L\'evy-Khintchine exponent $\psi$, see \cite[Section 3.4.1 and Theorem 3.3.3]{Applebaum}. It is known that an operator $(L,\D(L))$ on a Hilbert space $H$ is dissipative if and only if $\Re\langle Lf,f\rangle\le 0$ for all $f\in\D(L)$. To check this condition for $A_\ttt{PDO}[\psi]$ on $\D_\psi\subset\bmrm{L}(\bb{R},e)$, we observe that, for all $f\in\D(L)$,
\begin{align}
	&\Re \langle A_\ttt{PDO}[\psi]f, f\rangle_{\bmrm{L}(\bb{R},e)}=\Re\langle L[\psi]f, f\rangle_{\bmrm{L}(\bb{R})}\le 0
\end{align}
as $L[\psi]$ is the $\bmrm{L}(\bb{R})$-generator of the L\'evy process, hence dissipative. Thus, $(A_\ttt{PDO}[\psi],\D_\psi)$ is dissipative.
\end{proof}
\begin{rem} \label{rem:PDO_Dynk}
We point out that although the identity \eqref{eq:PDO_L} relates, on the dense subset $\D_\psi$, $A_\ttt{PDO}$ with the $\bmrm{L}(\bb{R},e)$- and  Feller generator of a L\'evy process, we can not yet conclude that  $A_\ttt{PDO}$ coincides on this set with the $\bmrm{L}(\bb{R}, e)$-generator of the log-Lamperti semigroup, as a representation of such generator is not available. Indeed, Lamperti \cite{Lamperti1972} provides only the representation of the generator as a Dynkin operator due to the delicate application  of the Volskonskii's formula in this context. However, under the condition \eqref{eq:mpsi_bound_2} on the Wiener-Hopf factors of $\psi$, we shall show that the Dynkin representation coincides with the Feller (and also the $\bmrm{L}(\bb{R}, e)$) one. To deal with the remaining cases, we will prove and use the fact  that our class of PDO's as well as the log-Lamperti class is stable by taking the adjoint in $\bmrm{L}(\bb{R}, e)$.
\end{rem}
\subsubsection{Proof of~Theorem~\ref{thm:intertwining_pdo}\eqref{main_thm_1:it:0a}.} To prove this result, we change back to $\bb R_+$ via the homeomorphism $x\mapsto e^x$, which will transform the operator $(A_\ttt{PDO}[\psi_0],\C^\infty_c(\bb R))$ to
\begin{align*}
	\widetilde{A} f(r)=r f''(r)+f'(r), \ f\in\C^\infty_c(\bb R_+).
\end{align*}
It is enough to show that the operator above is essentially self-adjoint with respect to $\C^\infty_c(\bb R)$. Note that writing $F(x,y)=f(x^2+y^2)$, one has $Af(x^2+y^2)=\frac{1}{2}\Delta F(x,y)$, where $\Delta$ is the Laplacian on $\bb R^2$. Therefore, $(\widetilde{A},\C^\infty_c(\bb R_+))$ coincides with the operator $(\Delta,\mathbf{C}^\infty_c(\bb R^2)\cap\cc{S})$, where $\cc{S}$ is the set of all radially symmetric functions on $\bb R^2$. Since the latter is essentially self-adjoint in $\bmrm{L}(\bb R^2)\cap\cc{S}$, we conclude that $(\widetilde{A},\C^\infty_c(\bb R_+))$ is essentially self-adjoint in $\bmrm{L}(\bb R_+)$.
%From \cite[Proposition 3.2.1]{bakry2013}, we know that $A_\ttt{PDO}[\psi_0]$ is essentially self-adjoint with respect to $\C^\infty_c(\bb{R})$.
Thus, $(A_\ttt{PDO}[\psi_0],\C^\infty_c(\bb R))$ is closable and the closure $(\overline{A}_\ttt{PDO}[\psi_0],\D(\overline{A}_\ttt{PDO}[\psi_0]))$ is also self-adjoint. Using Lemma~\ref{lem:dissipativity}, we get that ${A}_\ttt{PDO}[\psi_0]$ is dissipative, so is its closure. By the  theory of self-adjoint dissipative operators, we infer that $\overline{A}_\ttt{PDO}[\psi_0]$ generates a $\cc{C}_0$-contraction semigroup on $\bmrm{L}(\bb{R},e)$. On the other hand, from Lemma~\ref{lem:feller=L2} and Lemma~\ref{lem:Dynkin}, we observe that
\begin{align*}
A_\ttt{2}[\psi_0]=A_\ttt{PDO}[\psi_0]  \textrm{ on  } \C^\infty_c(\bb{R})
\end{align*}
where $A_\ttt{2}[\psi_0]$ is the $\LRe$-generator of the log-squared Bessel semigroup. As $\C^\infty_c(\bb{R})$ is dense in $\bmrm{L}(\bb{R},e)$, by uniqueness, we conclude that $(\overline{A}_\ttt{PDO}[\psi_0],\D(\overline{A}_\ttt{PDO}[\psi_0]))$ generates the log-squared Bessel semigroup $(Q_t)_{t\ge 0}$ on $\bmrm{L}(\bb{R},e)$.

\subsubsection{Proof of~Theorem~\ref{thm:intertwining_pdo}\eqref{main_thm_1:it:1a}.} For any $\phi_+,\phi_-\in\B$, we claim that $\frac{\Wphip}{\Wphim}\Gamma(\frac{1}{2}+\i\xi)$ is always bounded with respect to $\xi$. Indeed, we first note that $\xi\mapsto\Wphip$ is bounded since it is the Mellin transform of a random variable, see \cite[Theorem~6.1(3)]{patiesavov}, and the ratio $\frac{\Gamma(\frac{1}{2}+\i\xi)}{\Wphim}$ is also the Mellin transform of the exponential functional associated to the subordinator with Laplace exponent $\phi_-$, see \cite[Theorem~2.2]{hirsch_yor}. Now, recalling  that $|\Gamma(\frac{1}{2}+\i\xi)|\sim e^{-\frac{\pi}{2}|\xi|}$ as $|\xi|\to\infty$, the above argument entails that for all $\xi\in\bb R$,
\begin{align*}
	\left|\frac{\Wphip}{\Wphim}\right|\le Ce^{\frac{\pi}{2}|\xi|}
\end{align*}
which shows that $\Lambda_\psi\in\mathscr{M}_e$. In particular, when $\psi\in\NN_+(\bb R)$, from \eqref{N_+} and the definition of $m_{\Lambda_\psi}$, the mapping $\xi\mapsto m_{\Lambda_\psi}(\xi+{\rm{i}}/2)$ is bounded, since $|\Gamma({1 \over 2}+\i\xi)|=|\Gamma({1\over 2}-\i\xi)|$ for all $\xi\in\bb R$.
%\begin{align*}
%	\xi\mapsto m_{\Lambda_\psi}\left(\xi+{\rm{i}}/2\right)=\frac{\Wphip\Gamma(\frac{1}{2}+\i\xi)}{\Wphim\Gamma(\frac{1}{2}-\i\xi)}
%\end{align*}
 Therefore, $\Lambda_\psi\in\mathscr{B}(\LRe)$ if $\psi\in\NN_+(\bb R)$.

\subsubsection{Proof of~Theorem~\ref{thm:intertwining_pdo}\eqref{main_thm_1:it:1b}.} % We split the proof into several parts to show the connection between the intermediate steps. \\ \\
We first prove the second statement of this item.
\begin{prop}\hlabel{prop:intertwining_test}
	For any $\psi\in\NN_+(\bb R)$ we have
	\begin{align}\hlabel{eq:intertwining_pdo_0}
		A_\ttt{PDO}[\psi]\Lambda_\psi =\Lambda_\psi A_\ttt{PDO}[\psi_0]\ \mbox{ on } \ \C^\infty_c(\bb R).
	\end{align}
\end{prop}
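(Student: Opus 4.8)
The plan is to move the claimed identity \eqref{eq:intertwining_pdo_0} onto the Fourier side, where it collapses to a functional equation for the multiplier $m_{\Lambda_\psi}$ combined with a contour shift that turns multiplication by $e^{-x}$ into a translation of the integration line by $\i$. Throughout I use that, by \eqref{eq:PDO_L}, $A_\ttt{PDO}[\psi]=e^{-x}L[\psi]$ with $L[\psi]$ the Fourier multiplier with symbol $-\psi$, and that $A_\ttt{PDO}[\psi_0]f=e^{-x}f''$ for $f\in\SS(\bb{R})$. The key subtlety is that $\psi$ itself need not continue analytically off $\bb{R}$, whereas $m_{\Lambda_\psi}$ does (it is analytic on $\bb{S}_{(0,1)}$); so the argument must be routed through $m_{\Lambda_\psi}$, and its functional equation must be used \emph{before} any contour deformation.

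First I would record the analytic facts. For $f\in\C^\infty_c(\bb{R})$, $\fou_f$ is entire and, by Paley-Wiener, decays faster than any polynomial along every horizontal line. Since $\psi\in\NN_+(\bb{R})$, Proposition~\ref{cor:multiplier}\eqref{it:kap} (used with a small $\beta>0$, together with Stirling's formula for $\Gamma(1+\i z)/\Gamma(1+\i z+\beta)$) shows that $m_{\Lambda_\psi}$ is analytic on $\bb{S}_{(0,1)}$, continuous up to the real line and up to $\{\Im z=1\}\setminus\{\i\}$, has at most a simple pole at $z=\i$, and grows at most polynomially along every horizontal line in $\bb{S}_{[0,\gamma]}$ for each $\gamma<1$. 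Applying Proposition~\ref{multiplier_result} with $\gamma=\tfrac12$ then gives that $h:=\Lambda_\psi f$ satisfies $\fou_h(\xi)=m_{\Lambda_\psi}(\xi)\fou_f(\xi)$ on $\bb{R}$; hence $\psi\,\fou_h=\psi\,m_{\Lambda_\psi}\fou_f$ still decays faster than any polynomial, $h$ lies in the domain of $L[\psi]$, and (by \eqref{eq:PDO_L}, valid for $h$ since both $\fou_h$ and $\psi\,\fou_h$ are integrable) $A_\ttt{PDO}[\psi]h(x)=-\tfrac{e^{-x}}{\sqrt{2\pi}}\int_{\bb{R}}\psi(\xi)m_{\Lambda_\psi}(\xi)\fou_f(\xi)e^{\i\xi x}\,d\xi$. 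Next, writing $m_{\Lambda_\psi}(z)=W(z)\rho(z)$ with $W(z)=W_{\phi_+}(-\i z)/W_{\phi_-}(1+\i z)$ and $\rho(z)=\Gamma(1+\i z)/\Gamma(-\i z)$, the Bernstein-gamma recursion (Remark~\ref{rem:fneq_rem}) gives $W(\xi+\i)=\psi(\xi)W(\xi)$, and $\Gamma(1+w)=w\Gamma(w)$ applied to the numerator and denominator of $\rho$ gives $\rho(\xi+\i)=\rho(\xi)/\xi^2$; multiplying, one obtains, for all $\xi\in\bb{R}\setminus\{0\}$,
\begin{align*}
	\psi_0(\xi)\,m_{\Lambda_\psi}(\xi+\i)=\psi(\xi)\,m_{\Lambda_\psi}(\xi),\qquad \psi_0(\xi)=\xi^2 .
\end{align*}

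With these in hand I would compute the two sides. On the one hand, combining the last two formulas,
\begin{align*}
	A_\ttt{PDO}[\psi]\Lambda_\psi f(x)=e^{-x}L[\psi]h(x)=-\frac{e^{-x}}{\sqrt{2\pi}}\int_{\bb{R}}\psi_0(\xi)\,m_{\Lambda_\psi}(\xi+\i)\,\fou_f(\xi)\,e^{\i\xi x}\,d\xi .
\end{align*}
On the other hand, $g:=A_\ttt{PDO}[\psi_0]f=e^{-x}f''\in\C^\infty_c(\bb{R})$, so $\fou_g(\zeta)=\fou_{f''}(\zeta-\i)=-(\zeta-\i)^2\fou_f(\zeta-\i)$ is entire with a double zero at $\zeta=\i$, and Proposition~\ref{multiplier_result} applied to $g$ gives $\Lambda_\psi g(x)=\tfrac{1}{\sqrt{2\pi}}\int_{\bb{R}}m_{\Lambda_\psi}(\xi)\fou_g(\xi)e^{\i\xi x}\,d\xi$. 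The integrand $\zeta\mapsto m_{\Lambda_\psi}(\zeta)\fou_g(\zeta)e^{\i\zeta x}$ is analytic on $\bb{S}_{(0,1)}$; since the double zero of $\fou_g$ at $\zeta=\i$ absorbs the at-most-simple pole of $m_{\Lambda_\psi}$ there, it extends continuously to $\bb{S}_{[0,1]}$ and is absolutely integrable with uniformly small tails along each horizontal line. Hence Lemma~\ref{prop:contour} (with $p=1$) allows translating the integration line from $\Im=0$ to $\Im=1$, and, using $\fou_g(t+\i)=-t^2\fou_f(t)=-\psi_0(t)\fou_f(t)$,
\begin{align*}
	\Lambda_\psi A_\ttt{PDO}[\psi_0]f(x)=\frac{e^{-x}}{\sqrt{2\pi}}\int_{\bb{R}}m_{\Lambda_\psi}(t+\i)\fou_g(t+\i)e^{\i t x}\,dt=-\frac{e^{-x}}{\sqrt{2\pi}}\int_{\bb{R}}\psi_0(t)\,m_{\Lambda_\psi}(t+\i)\,\fou_f(t)\,e^{\i t x}\,dt ,
\end{align*}
which is exactly the previous display. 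This establishes $A_\ttt{PDO}[\psi]\Lambda_\psi f=\Lambda_\psi A_\ttt{PDO}[\psi_0]f$ pointwise on $\bb{R}$; as $\Lambda_\psi\in\mathscr{B}(\LRe)$ (Theorem~\ref{thm:intertwining_pdo}\eqref{main_thm_1:it:1a}) and $A_\ttt{PDO}[\psi_0]f\in\C^\infty_c(\bb{R})$, the common value lies in $\LRe$, so $\Lambda_\psi f\in\D(A_\ttt{PDO}[\psi])$ and \eqref{eq:intertwining_pdo_0} follows.

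The part I expect to be the main obstacle is the contour shift: one must legitimately move the integration line from $\Im=0$ to $\Im=1$ even though $m_{\Lambda_\psi}$ is analytic only in the open strip, carries a genuine pole at $z=\i$, and $\psi$ does not continue off $\bb{R}$ at all --- which is precisely why the functional equation must be invoked first, trading $\psi$ for the entire function $\psi_0$ and $m_{\Lambda_\psi}(\cdot)$ for $m_{\Lambda_\psi}(\cdot+\i)$. Pinning down the decay and uniformity estimates needed for Lemma~\ref{prop:contour}, and verifying the pole-zero cancellation at $z=\i$, are the technical heart; the polynomial bound for $m_{\Lambda_\psi}$ from Proposition~\ref{cor:multiplier}\eqref{it:kap}, available exactly because $\psi\in\NN_+(\bb{R})$, is what makes these estimates go through.
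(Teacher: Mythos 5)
Your proof is correct and rests on the same two pillars as the paper's --- the functional equation $\psi(\xi)\,m_{\Lambda_\psi}(\xi)=\psi_0(\xi)\,m_{\Lambda_\psi}(\xi+\i)$ obtained from the Bernstein--gamma recursion, and a contour shift across $\bb{S}_{[0,1]}$ justified by polynomial growth of the multiplier (from $\psi\in\NN_+(\bb R)$) against Paley--Wiener decay of $\fou_f$ --- but it reorganizes them in a genuinely different way. The paper never confronts the singularity of $m_{\Lambda_\psi}$ at $z=\i$ head-on: it factors $\Lambda_\psi=\Lambda^{(\beta)}_\psi\Lambda^{(\beta)}$ and proves two separate intertwining relations through the intermediate symbol $\psi_\beta$ (Lemma~\ref{lem:beta} plus the computation \eqref{line1}--\eqref{line5}), the whole point of the $\beta$-perturbation being that $W^{(\beta)}_\psi$ extends continuously to the closed strip, so each contour shift involves a pole-free multiplier. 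You instead do a single shift with $m_{\Lambda_\psi}$ itself and cancel its singularity at $z=\i$ against the double zero of $\fou_{A_\ttt{PDO}[\psi_0]f}$ there; this merges the paper's two shifts into one and is arguably cleaner. Two caveats. First, your structural claim about $m_{\Lambda_\psi}$ on the line $\Im z=1$ is exactly the point the authors flag as ``not always guaranteed''; it \emph{is} true, but only via the factorization $m_{\Lambda_\psi}=W^{(\beta)}_\psi\cdot\Gamma(1+\i z)/\Gamma(1+\beta+\i z)$ and Proposition~\ref{cor:multiplier}\eqref{it:W_1}, so you have not eliminated the $\beta$-machinery, only repositioned it --- this dependence should be made explicit rather than cited in passing. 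Second, ``at most a simple pole at $z=\i$'' should be read as the statement that $(1+\i z)\,m_{\Lambda_\psi}(z)$ stays bounded near $z=\i$: when $\phi_-(0)=0$ one has $m_{\Lambda_\psi}(z)\sim \phi_-(1+\i z)/(1+\i z)$ there, which need not be meromorphic, but boundedness after multiplication by $(1+\i z)$ is all the double zero of $\fou_g$ requires, so the cancellation goes through. With those two points spelled out, the argument is complete.
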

\begin{proof}
 Since the proof is quite technical, we start by outlining the main ideas of its proof. If $m_{\Lambda_\psi}$ extends continuously on $\bb{S}_{[0,1]}$ and the conditions of Proposition~\ref{multiplier_result} are met for $f\in\C^\infty_c(\bb R)$ with $\gamma=1$, then we get
\begin{align*}
	\fou_{\! A_\ttt{PDO}[\psi]\Lambda_\psi f}(\xi+\i)=\fou_{L[\psi]\Lambda_\psi f}(\xi)=m_{\Lambda_\psi}(\xi)\psi(\xi)\fou_f(\xi)
\end{align*}
where $L[\psi]$ is the generator of the L\'evy process associated to $\psi$. On the other hand, if one takes $\beta=0$ in Proposition~\ref{cor:multiplier}, it is expected that $m_{\Lambda_\psi}$ should satisfy the same functional equation in \eqref{eq:fneq} with $\beta=0$, given that $m_{\Lambda_\psi}(\xi+\i)$ is well defined for all $\xi\in\bb R$. As a result, a straightforward application of Proposition~\ref{multiplier_result} would imply that
\begin{align*}
	\fou_{\Lambda_\psi A_\ttt{PDO}[\psi_0] f}(\xi+\i)=m_{\Lambda_\psi}(\xi+\i)\fou_{\! A_\ttt{PDO}[\psi_0] f}(\xi+\i)=m_{\Lambda_\psi}(\xi+\i)\psi_0(\xi)\fou_{f}(\xi)
\end{align*}
which would establish that $\fou_{\! A_\ttt{PDO}[\psi]\Lambda_\psi f}=\fou_{\Lambda_\psi A_\ttt{PDO}[\psi_0]f}$. However, the continuous extension of $m_{\Lambda_\psi}$ on the line $\bb R+\i$ is not always guaranteed and this is why we perturb the operator $\Lambda_\psi$ by a parameter $\beta>0$, and resort to Proposition~\ref{cor:multiplier} to prove \eqref{eq:intertwining_pdo_0}. To this end, we recall that $\psi_\beta(\xi)=\xi^2-{\rm{i}}\beta\xi$ for $\beta>0$ and $\xi \in \bb{R}$. Also, let us define
\begin{align*}
m_{\Lambda^{\!(\beta)}}(z)=\frac{\Gamma(1+{\rm{i}}z   )}{\Gamma(1+\beta+{\rm{i}}z)}, \ z\in\bb{S}_{(-\infty,1)}.
\end{align*}
Clearly, $m_{\Lambda^{\!(\beta)}}$ is analytic and zero-free in $\bb{S}_{[0,1)}$. Let  $\Lambda^{\!(\beta)}$ be the shifted Fourier multiplier associated to $m_{\Lambda^{\!(\beta)}}$. Since $m_{\Lambda^{\!(\beta)}}$ is bounded on the line $\bb{R}+{\rm{i}}/2$, $\Lambda^{\!(\beta)}\in\mathscr{M}\cap\mathscr{B}(\bmrm{L}(\bb{R},e))$.  We note that for any $f\in \C^\infty_c(\bb{R})$, $\mathcal{F}_{f}$ is an entire function. Moreover, $m_{\LL^{\!(\beta)}}\in\A_{(-\infty,1)}$ and by the Stirling asymptotic formula of Gamma functions, $m_{\LL^{\!(\beta)}}$ is uniformly bounded on the \hpt{R24}{strip $\bb{S}_{(-\infty, a/2]}$ for any $a<2$}. \hptg{C39}{Also, $\fou_f(\cdot+\frac{\i a}{2})\in\bmrm{L}(\bb R)$ for all $a<2$, and therefore by Proposition~\ref{multiplier_result}, it follows that $\Lambda^{\!(\beta)}f\in\bmrm{L}(\bb{R},e^{ax}dx)$ for any $a<2$.}

\begin{lem}\label{lem:beta}
For any $\beta>0$ we have
\begin{align*}
A_{\ttt{PDO}}[\psi_\beta]\Lambda^{\!(\beta)} =\Lambda^{\!(\beta)} A_\ttt{PDO}[\psi_0] \ \mbox{ on } \  \C^\infty_c(\bb{R})
\end{align*}
and $\Lambda^{\!(\beta)}(\C^\infty_c(\bb{R}))$ is dense in $\bmrm{L}(\bb{R},e)$.
\end{lem}
\begin{proof}
By the definition of the  operator $\Lambda^{\!(\beta)}$, the fact that $A_\ttt{PDO}[\psi_0](\C^\infty_c(\bb R))\subset\C^\infty_c(\bb R)$ and Proposition~\ref{multiplier_result},
we have, for any $f\in \C^\infty_c(\bb{R})$ and writing $d\tilde{\xi}_x=-e^{{\rm{i}}\xi   x}\frac{d\xi}{\sqrt{2\pi}}$,
\begin{eqnarray}
\Lambda^{\!(\beta)} A_\ttt{PDO}[\psi_0]f(x)&=&-\int_{\bb{R}}e^{{\rm{i}}\xi   x}m_{\Lambda^{(\beta)}}(\xi)(\xi-{\rm{i}})   ^2\mathcal{F}_{f}(\xi-{\rm{i}})   \,\frac{d\xi}{\sqrt{2\pi}} \nonumber \\
&=&\int_\bb{R} \frac{\Gamma(1+{\rm{i}}\xi  )(\xi-{\rm{i}})^2}{\Gamma(1+\beta+{\rm{i}}\xi  )}\mathcal{F}_{f}(\xi-{\rm{i}})   \,d\tilde{\xi}_x \hlabel{eq:step_2}=\int_\bb{R} \frac{\rm{i}\Gamma(2+{\rm{i}}\xi )(\xi-{\rm{i}})}{\Gamma(1+\beta+{\rm{i}}\xi  )}   \mathcal{F}_{f}(\xi-{\rm{i}})   \,d\tilde{\xi}_x \\
%& =& {\rm{i}}\frac{e^{-x}}{\sqrt{2\pi}}\int_{\bb{R}-\i} e^{{\rm{i}}z   x}\frac{\Gamma(1+{\rm{i}}z  )}{\Gamma(\beta+{\rm{i}}z  )}z\mathcal{F}_{f}(z)\,dz \nonumber  \\
&=&e^{-x}\int_\bb{R} \frac{{\rm{i}}\Gamma(1+{\rm{i}}\xi  )}{\Gamma(\beta+{\rm{i}}\xi  )}\xi\mathcal{F}_{f}(\xi)\,d\tilde{\xi}_x\hlabel{eq:contour_1}
 =e^{-x}\int_\bb{R}\frac{\Gamma(1+{\rm{i}}\xi  )}{\Gamma(1+\beta+{\rm{i}}\xi  )}\psi_\beta(\xi)\mathcal{F}_{f}(\xi)\,d\tilde{\xi}_x  \\ &=&A_\ttt{PDO}[\psi_\beta]\Lambda^{\!(\beta)}f(x)\hlabel{eq:intertwining_pdo}
\end{eqnarray}
where \eqref{eq:contour_1} follows  by applying Lemma~\ref{prop:contour}. \hptg{C40}{Now, since the adjoint operator $\widehat{\Lambda}^{\!(\beta)}$ is also a shifted Fourier multiplier operator with $m_{\widehat{\Lambda}^{\!(\beta)}}(\xi+{\rm{i}}/2)=m_{\Lambda^{\!(\beta)}}(-\xi+{\rm{i}}/2)\neq 0$ for a.e.~$\xi$, it is bounded and injective, which implies that the image of any dense set under $\Lambda^{(\beta)}$ is dense in $\LRe$. This proves the lemma.}
\end{proof}
\noindent
Now, let $g\in\Lambda^{\!(\beta)}(\C^\infty_c(\bb{R}))$, that is  $g=\Lambda^{\!(\beta)}f$, $f\in \C^\infty_c(\bb{R})$, and,
\begin{align}\hlabel{eq:W^beta_psi}
m_{\Lambda^{\!(\beta)}_\psi}(z)=\WBpsi(z)=\frac{W_{\phi_+}(-{\rm{i}}z   )\Gamma(1+{\rm{i}}z+\beta)}{\Gamma(-{\rm{i}}z   )W_{\phi_-}(1+{\rm{i}}z   )} \ \ \mbox{for} \ \  z\in\bb{S}_{[0,1)}.
\end{align}
From Proposition~\ref{cor:multiplier}\eqref{it:W_1}, it follows that $m_{\Lambda^{\!(\beta)}_\psi}\in\A_{[0,1]}$. Let $\Lambda^{\!(\beta)}_\psi$ be Fourier operator associated to $m_{\Lambda^{\!(\beta)}_\psi}$. Then, $\Lambda^{\!(\beta)}_\psi\in\mathscr{M}$. Moreover, from our assumption that $\psi\in\mathbf{N}_+(\bb{R})$ and the  Stirling asymptotic of the gamma function, it follows that $\left|m_{\Lambda^{\!(\beta)}_\psi}\left(\xi+{\rm{i}}/2\right)\right|={\rm{O}}(|\xi|^\beta)$. From Proposition~\ref{cor:multiplier}\eqref{it:kap}, one gets that
$\xi\mapsto m_{\Lambda^{\!(\beta)}_\psi}(\xi)$ and  $ \xi\mapsto m_{\Lambda^{\!(\beta)}_\psi}(\xi+{\rm{i}})   $ have at most polynomial growth as $|\xi|\to\infty$. On the other hand, from \eqref{eq:step_2} and \eqref{eq:intertwining_pdo}, one can show that
\begin{align*}
A_\ttt{PDO}[\psi_\beta]g\in\bmrm{L}(\bb{R},e)  \mbox{ and }  \fouho_{A_\ttt{PDO}[\psi_\beta]g}(\xi)={\rm{O}}(|\xi|^{-n}) \ \ \mbox{for all $n\in\bb{N}$}.
\end{align*}
Hence, the mapping $\xi\mapsto m_{\Lambda^{\!(\beta)}_\psi}\left(\xi+{\rm{i}}/2\right)\fouho_{A_\ttt{PDO}[\psi_\beta]g}(\xi)\in\bmrm{L}(\bb{R})\cap\bmrm[1]{L}(\bb{R})$,
which ensures that $A_\ttt{PDO}[\psi_\beta]g\in\D(\Lambda^{\!(\beta)}_\psi)$.
 Therefore, we have,
\begin{align}
\Lambda^{\!(\beta)}_\psi A_{\ttt{PDO}}[\psi_\beta]g(x)&=\frac{e^{-\frac{x}{2}}}{\sqrt{2\pi}}\int_{\bb{R}} e^{{\rm{i}}\xi   x}\fouho _{\Lambda^{\!(\beta)}_\psi A_\ttt{PDO}[\psi_\beta]g}(\xi)\,d\xi \hlabel{line1}\\
&=-\frac{1}{\sqrt{2\pi}}\int_{\bb{R}+\frac{{\rm{i}}}{2}} e^{{\rm{i}}z   x}m_{\Lambda^{\!(\beta)}_\psi}(z)\psi_\beta(z-{\rm{i}})   \fou_{g}(z-{\rm{i}})   \,dz \hlabel{line1'} \\
&=-\frac{1}{\sqrt{2\pi}}\int_{\bb{R}} e^{{\rm{i}}\xi   x}m_{\Lambda^{\!(\beta)}_\psi}(\xi)\psi_\beta(\xi-{\rm{i}})   \fou_{g}(\xi-{\rm{i}})   \,d\xi \hlabel{line2}\\
&=-\frac{e^{-x}}{\sqrt{2\pi}}\int_{\bb{R}}e^{{\rm{i}}\xi   x}m_{\Lambda^{\!(\beta)}_\psi}(\xi+{\rm{i}})   \psi_\beta(\xi)\fou_{g}(\xi)d\xi  \nonumber\\
&=-\frac{e^{-x}}{\sqrt{2\pi}}\int_{\bb{R}}e^{{\rm{i}}\xi   x}m_{\Lambda^{\!(\beta)}_\psi}(\xi)\psi(\xi)\mathcal{F}_{g}(\xi)\,d\xi \ \ \nonumber \\
&= A_\ttt{PDO}[\psi]\Lambda^{\!(\beta)}_\psi g(x). \hlabel{line5}
\end{align}
\hptg{C42}{In the computation above, the second identity follows by a change of variable which is valid since $\psi_\beta$ and $\fou_{g}$ are entire functions, and $\fou_{A_\ttt{PDO}[\psi_\beta]g}(\xi)=\psi_\beta(\xi-\i)\fou_g(\xi-\i)$ for all $\xi\in\bb R$.} \eqref{line2} follows from \eqref{line1'} and Lemma~\ref{prop:contour}, since $\psi_\beta\fou_{g}\in\A_{(-\infty,1)}$ and it decays faster than any polynomial as $|\xi|\to\infty$, and the identity before \eqref{line5} follows from Proposition~\ref{cor:multiplier}\eqref{eq:fneq} and Proposition~\ref{multiplier_result}. Now, recalling the definition of the shifted Fourier multiplier operator $\Lambda_\psi$, we note that $m_{\Lambda^{\!(\beta)}_\psi} m_{\Lambda^{\!(\beta)}}=m_{\Lambda_\psi}$ on $\bb{S}_{(0,1)}$, which implies that $\Lambda_\psi=\Lambda^{\!(\beta)}_\psi\Lambda^{\!(\beta)}$. Then, plugging in $g=\Lambda^{\!(\beta)}f$ in the above computation along with Lemma~\ref{lem:beta}, \eqref{eq:intertwining_pdo_0} follows.
\end{proof}
Now, coming back to the proof of item~\eqref{main_thm_1:it:1b}, we first show the density of $\cc{D}(\bb R)$ in $\LRe$. Indeed, we have the following inclusion
\begin{align*}
	\Span\{x\mapsto e^{-(x-a)^2}; \: a\in\bb R\}\subset \cc{D}(\bb R)
\end{align*}
and the former set is dense in $\LRe$, thanks to Corollary~\ref{corr:wiener}. Next, to prove \eqref{eq:intertwining_pdo_0_D}, we can mimic the same technique used in the proof of Proposition~\ref{prop:intertwining_test} after justifying the following fact.
\begin{lem}
	For any $\psi\in\NN_b(\bb R)$ and $f\in\cc{D}(\bb R)$ we have, for all $u>0$,
	\begin{align}\hlabel{eq:estimate_D_psi}
		\sup_{b\in [0,1]}\left|m_{\Lambda^{\!(\beta)}_\psi}(\xi+\i b)\cc{F}_f(\xi\pm\i b)\right|={\rm O}(|\xi|^{-u}).
	\end{align}
\end{lem}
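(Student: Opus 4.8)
The plan is to obtain \eqref{eq:estimate_D_psi} by confronting the exponential upper bound on the multiplier $m_{\Lambda^{\!(\beta)}_\psi}=W^{(\beta)}_\psi$ coming from Lemma~\ref{lem:W_beta_bound} with the super-exponential decay of $\cc{F}_f$ that is built into the definition of $\cc{D}(\bb R)$ through \eqref{eq:udec}. First I would fix $f\in\cc{D}(\bb R)$. Since $\cc{F}_f$ is entire and satisfies \eqref{eq:udec}, applying that condition to the compact set $C=[-1,1]$ produces an $\eps_0>0$ and a constant $C_f>0$, both depending only on $f$, such that
\begin{equation*}
\sup_{|a|\le 1}\left|\cc{F}_f(\xi+\i a)\right|\le C_f\, e^{-(\frac{\pi}{2}+\eps_0)|\xi|},\qquad \xi\in\bb R .
\end{equation*}
In particular both $|\cc{F}_f(\xi+\i b)|$ and $|\cc{F}_f(\xi-\i b)|$ are then dominated by this quantity for every $b\in[0,1]$.

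Next I would control the multiplier on the closed strip. By Proposition~\ref{cor:multiplier}\eqref{it:W_1}, $m_{\Lambda^{\!(\beta)}_\psi}=W^{(\beta)}_\psi$ is analytic on $\bb{S}_{(0,1)}$ and extends continuously to $\bb{S}_{[0,1]}$, so $m_{\Lambda^{\!(\beta)}_\psi}(\xi+\i b)$ is well defined for all $\xi\in\bb R$ and $b\in[0,1]$. Invoking Lemma~\ref{lem:W_beta_bound} with $\eps=\eps_0/2$ yields a constant $C_{\eps_0}>0$, independent of $z$, with $|W^{(\beta)}_\psi(z)|\le C_{\eps_0}e^{(\frac{\pi}{2}+\eps_0/2)|z|}$ for all $z\in\bb{S}_{[0,1]}$; since $|\xi+\i b|\le|\xi|+1$ whenever $b\in[0,1]$, this gives
\begin{equation*}
\sup_{b\in[0,1]}\left|m_{\Lambda^{\!(\beta)}_\psi}(\xi+\i b)\right|\le C_{\eps_0}\,e^{(\frac{\pi}{2}+\eps_0/2)(|\xi|+1)}=:C'_{\eps_0}\,e^{(\frac{\pi}{2}+\eps_0/2)|\xi|}.
\end{equation*}

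Finally I would multiply the two bounds: for every $\xi\in\bb R$,
\begin{equation*}
\sup_{b\in[0,1]}\left|m_{\Lambda^{\!(\beta)}_\psi}(\xi+\i b)\,\cc{F}_f(\xi\pm\i b)\right|\le C'_{\eps_0}C_f\, e^{(\frac{\pi}{2}+\eps_0/2)|\xi|}\,e^{-(\frac{\pi}{2}+\eps_0)|\xi|}=C'_{\eps_0}C_f\, e^{-\frac{\eps_0}{2}|\xi|},
\end{equation*}
which decays exponentially and is therefore ${\rm O}(|\xi|^{-u})$ as $|\xi|\to\infty$, uniformly in $b\in[0,1]$, for every $u>0$. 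This is exactly \eqref{eq:estimate_D_psi}.

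There is essentially no deep obstacle here: the statement is a routine consequence of Lemma~\ref{lem:W_beta_bound} and the decay built into $\cc{D}(\bb R)$. The only point deserving care is the balance of the two rates, since the multiplier bound is of borderline order $\tfrac{\pi}{2}|\xi|$: one must use the genuine slack $\eps_0>0$ in \eqref{eq:udec} and spend only half of it on the multiplier estimate (taking $\eps=\eps_0/2$ in Lemma~\ref{lem:W_beta_bound}), keeping the remaining $\eps_0/2$ to produce honest exponential decay. This is precisely why $\cc{D}(\bb R)$ is defined with the factor $e^{-(\frac{\pi}{2}+\eps)|\xi|}$ rather than merely $e^{-\frac{\pi}{2}|\xi|}$.
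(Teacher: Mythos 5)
Your proof is correct, but it takes a genuinely different --- and in fact shorter --- route than the paper's. The paper first establishes the sharper boundary estimate $|m_{\Lambda^{\!(\beta)}_\psi}(\xi+\i b)|={\rm O}(|\xi|^l e^{\pi|\xi|/2})$ for $b\in\{0,1\}$ (using that $\bigl|\frac{W_{\phi_+}(b+\i\xi)}{W_{\phi_-}(b+\i\xi)}\Gamma(b+\i\xi)\bigr|={\rm O}(1)$), deduces \eqref{eq:estimate_D_psi} on the two boundary lines from the decay of $\cc{F}_f$ there, and only then passes to the interior of the strip via the Phragm\'en--Lindel\"of principle, with Lemma~\ref{lem:W_beta_bound} serving merely as the a priori growth hypothesis that legitimizes the interpolation. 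You instead read Lemma~\ref{lem:W_beta_bound} as a genuine uniform bound on all of $\bb{S}_{[0,1]}$ and play its free parameter $\eps$ off against the strict exponential slack $\eps_0$ in \eqref{eq:udec}; the choice $\eps=\eps_0/2$ is exactly the right balancing act and is what makes the direct multiplication close. What your argument buys is simplicity: no separate boundary analysis and no Phragm\'en--Lindel\"of applied to the product, which in the paper's version is slightly delicate anyway because $z\mapsto\cc{F}_f(\overline{z})$ is only anti-holomorphic and one must first conjugate to interpolate the $-$ branch. What the paper's argument buys is robustness: it would still yield \eqref{eq:estimate_D_psi} if $\cc{D}(\bb R)$ only demanded super-polynomial improvement over $e^{-\frac{\pi}{2}|\xi|}$ rather than an honest exponential margin. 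Since \eqref{eq:udec} does supply that margin, your shortcut is perfectly legitimate here.
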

\begin{proof}
	From \eqref{eq:W^beta_psi} and invoking the fact that, for any $b>0$,
	\begin{align*}
		\left|\frac{W_{\phi_+}(b+\i\xi)}{W_{\phi_-}(b+\i\xi)}\Gamma(b+\i\xi)\right|={\rm O}(1)
	\end{align*}
we deduce that for $b\in\{0,1\}$, $|m_{\Lambda^{\!(\beta)}_\psi}(\xi+\i b)|={\rm O}(|\xi|^l e^{\pi |\xi|/2})$ for some $l>0$. Since for any $f\in\cc{D}(\bb R)$, $\cc{F}_f$ is entire and $\sup_{b\in\{0,1\}}|\cc{F}_f(\xi\pm\i b)|={\rm O}(e^{-(\pi/2+\eps)|\xi|})$ for some $\eps>0$, the statement of the lemma holds for $b=0,1$. \hpt{R25}{Finally, the estimate in \eqref{eq:estimate_D_psi} follows by a straightforward application of Lemma~\ref{lem:W_beta_bound} and the Phragm\'en-Lindel\"of principle as in the proof of Proposition~\ref{cor:multiplier}\eqref{it:kap}.}
\end{proof}
Because of the above lemma, the statement of Lemma~\ref{lem:beta} and the computational steps between \eqref{line1} and \eqref{line5} still go through when $\C^\infty_c(\bb R)$ is replaced by $\cc{D}(\bb R)$. This completes the proof of item~\eqref{main_thm_1:it:1b}.

%\begin{rem}
%	Note that in the proof of \eqref{eq:intertwining_pdo_0}, we have only used the decay of $\xi\mapsto m_{\Lambda_\psi}(\xi+\i b)\cc{F}_{f}(\xi+\i b)$ for $0\le b\le 1$ so that the Proposition~\ref{multiplier_result}
%can be implemented. If $\psi\in\NN_-(\bb R)$ and $f\in\LRe$ is such that  for some $C,\eps>0$ and for $z\in\bb S_{[0,1]}$,
%\begin{align}\hlabel{eq:exp_e_decay}
%\text{$\cc{F}_f\in\A_{[0,1]}$ with $|\cc{F}_f(z)|\le Ce^{-(\frac{\pi}{2}+\eps)|\Re(z)|}$}
%\end{align}
%the identity \eqref{eq:intertwining_pdo_0} still holds by the same argument as above.
%\end{rem}

\subsubsection{Proof of Theorem \ref{thm:intertwining_pdo}\eqref{main_thm_1:it:1bc}}
 We  show that the closure in $\LRe$ of $(A_\ttt{PDO}[\psi],\Lambda_\psi(\C^\infty_c(\bb R)))$ generates the semigroup $P[\psi]$. Using the fact that $A_\ttt{PDO}[\psi_0]$ is dissipative and $\C^\infty_c(\bb{R})$ is its core, thanks to Lemma~\ref{lem:dissipativity} and item~\ref{main_thm_1:it:0a}, we get that for any $\alpha>0$, $(\alpha I-A_\ttt{PDO}[\psi_0])(\C^\infty_c(\bb{R}))$ is dense in $\bmrm{L}(\bb{R},e)$. \hptg{C45}{Moreover, since  $\xi \mapsto m_{\Lambda_\psi}(\xi+\frac{{\rm{i}}}{2})$ is bounded, and  $\overline{m}_{\Lambda_\psi}(\xi+\frac{{\rm{i}}}{2})$ is non-zero on $\bb{R}$, it follows that both $\Lambda_\psi$ and $\widehat{\Lambda}_\psi$ are bounded and injective on $\bmrm{L}(\bb{R},e)$, and from this we infer that $\Lambda_\psi (\alpha I-A_\ttt{PDO}[\psi_0])(\C^\infty_c(\bb{R}))$ is dense in $\bmrm{L}(\bb{R},e)$.} Invoking Lemma~\ref{lem:generator_intertwining}, it follows that $(A_\ttt{PDO}[\psi],\Lambda_\psi(\C^\infty_c(\bb R)))$ generates a $\cc{C}_0$-contraction semigroup on $\LRe$. Next, to show that this semigroup indeed coincides with $P[\psi]$, let $A_\ttt{2}[\psi]$ denote the $\LRe$-generator of $P[\psi]$. \hptg{C47}{We aim to show that the two operators $A_2[\psi]$ and $A_\ttt{PDO}[\psi]$ are identical when restricted to the set of functions $\Lambda_\psi(\C^\infty_c(\bb R))$. For that, we first note that $\Lambda_\psi(\C^\infty_c(\bb R))\subset\C^\infty_0(\bb R)$ for any $\psi\in\mathbf{N}_+(\bb R)$. This follows by observing that for any $\psi\in\mathbf{N}_+(\bb R)$, $f\in\C^\infty_c(\bb R)$, and $u>0$,
\begin{align*}
	\lim_{|\xi|\to\infty}|\xi|^u \sup_{b\in [0,\frac{1}{2}]}\left|m_{\Lambda_\psi}(\xi+\i b)\fou_{f}(\xi+\i b)\right|=0.
\end{align*}
As $m_{\Lambda_\psi}$ is analytic in the strip $\bb{S}_{[0,1)}$, Proposition~\ref{multiplier_result} yields that $\fou_{\Lambda_\psi f}=m_{\Lambda_\psi}\fou_f$ and hence, $\Lambda_\psi f\in\C^\infty_0(\bb R)$ by Riemann-Lebesgue lemma. If $A_\ttt{D}[\psi]$ denotes the  Dynkin characteristic operator for the log-self-similar Feller process, recalling Lamperti's result, we get
\begin{align*}
A_\ttt{D}[\psi]\Lambda_\psi  =A_\ttt{PDO}[\psi]\Lambda_\psi   \ \mbox{ on }  \C^\infty_c(\bb{R}).
\end{align*}}
From \eqref{line1}, \eqref{line2}, \eqref{line5}, we observe that for any $ f \in \C^\infty_c(\bb{R})$, $A_\ttt{PDO}[\psi]\Lambda_\psi f \in \C_0(\bb{R})\cap\bmrm{L}(\bb{R},e)$. Therefore, by Lemma~\ref{lem:Dynkin}, denoting the generator of the Feller semigroup by $A_\ttt{F}[\psi]$, we have $\Lambda_\psi f \in\D(A_\ttt{F}[\psi])$. Finally, using Lemma~\ref{lem:feller=L2}, we infer that for all $f\in \C^\infty_c(\bb{R})$, $\Lambda_\psi f \in\D(A_\ttt{2}[\psi])$ and
\begin{align*}
A_\ttt{2}[\psi]\Lambda_\psi f =A_\ttt{F}[\psi]\Lambda_\psi f =A_\ttt{D}[\psi]\Lambda_\psi f =A_\ttt{PDO}[\psi]\Lambda_\psi f .
\end{align*}
As $\Lambda_\psi(\C^\infty_c(\bb{R}))$ is a core for $A_\ttt{PDO}[\psi]$, the \hptg{C48}{closure of the operators $(A_{\ttt{PDO}}[\psi],\Lambda_\psi(\C^\infty_c(\bb{R})))$ and $(A_\ttt{2}[\psi],\Lambda_\psi(\C^\infty_c(\bb{R})))$ must generate the same semigroup $P[\psi]$}. To finish the proof of this item, it remains to show \eqref{eq:A_2=A_PDO} and the density of $\Lambda_\psi(\cD(\bb R))$. We note that the set $\cc{D}(\bb R)$ is closed under translation, that is, if $f\in\cc{D}(\bb R)$ then $\uptau_a f\in\cc{D}(\bb R)$ for all $a\in\bb R$. Since $\Lambda_\psi$ is a shifted Fourier multiplier, $\Lambda_\psi(\cD(\bb R))$ is also closed under translation. Therefore, the density of $\Lambda_\psi(\cD(\bb R))$ follows from Corollary~\ref{corr:wiener}. Now, from \eqref{eq:intertwining_pdo_0_D} a similar argument involving Lemma~\ref{lem:Dynkin} and Lemma~\ref{lem:feller=L2} as before will lead to the identity \eqref{eq:A_2=A_PDO}.

\subsubsection{Proof of~Theorem~\ref{thm:intertwining_pdo}\eqref{main_thm_1:it:1d}.} Following the proof of item~\eqref{main_thm_1:it:1bc}, we can replace $A_\ttt{PDO}$ by $A_\ttt{2}$ and therefore by \eqref{eq:intertwining_pdo_0}, for all $\alpha>0$ we have,
\begin{align*}
(\alpha I-A_\ttt{2}[\psi])\Lambda_\psi  =\Lambda_\psi(\alpha I-A_\ttt{2}[\psi_0]) \ \textrm{ on } \  \C^\infty_c(\bb{R}).
\end{align*}
Recalling that the resolvent operators $R_\alpha[\psi]=(\alpha I-A_\ttt{2}[\psi])^{-1}$ and $R_\alpha[\psi_0]=(\alpha I-A_\ttt{2}[\psi_0])^{-1}$, we get
\begin{align*}
&R_\alpha[\psi]\Lambda_\psi =\Lambda_\psi R_\alpha[\psi_0]  \ \mbox{ on } \ (\alpha I-A_\ttt{2}[\psi_0])(\C^\infty_c(\bb{R})).\nonumber
\end{align*}
As $\C^\infty_c(\bb{R})$ is a core of $A_\ttt{2}[\psi_0]$, $(\alpha I-A_\ttt{2}[\psi_0])(\C^\infty_c(\bb{R}))$ is dense in $\bmrm{L}(\bb{R},e)$. Using the boundedness of $\Lambda_\psi, R_\alpha[\psi], R_\alpha[\psi_0]$, we have
\begin{align*}
R_\alpha[\psi]\Lambda_\psi=\Lambda_\psi R_\alpha[\psi_0] \ \ \mbox{on \ $\bmrm{L}(\bb{R},e)$}.
\end{align*}
Recalling  that $R_\alpha[\psi]$ (resp.~$R_\alpha[\psi_0]$) is the Laplace transform of the semigroup $P[\psi]$ (resp.~$Q$), we have
\begin{equation*}
\begin{aligned}
\int_0^\infty e^{-\alpha t}P_t[\psi]\Lambda_\psi f dt=\Lambda_\psi \int_0^\infty e^{-\alpha t} Q_t f dt=\int_0^\infty e^{-\alpha t} \Lambda_\psi Q_t f dt
\end{aligned}
\end{equation*}
where the second equality follows from the boundedness of $\Lambda_\psi$. Since $t\mapsto P_t[\psi]\Lambda_\psi f$ and $t\mapsto \Lambda_\psi Q_t f$ are bounded functions, from uniqueness of Laplace transform, we conclude that, for almost every $t\geq 0$,
\begin{align}\hlabel{eq:gen_semigroup_intertwining}
P_t[\psi]\Lambda_\psi=\Lambda_\psi Q_t \ \ \mbox{on \ $\bmrm{L}(\bb{R},e)$.}
\end{align}
\hptg{C49}{By strong continuity of the semigroups, the above identity extends for all $t\ge 0$.} This proves \eqref{main_thm_1:it:1d}. If $\psi\in\mathbf{N}_-(\bb{R})$, then $\ov{\psi}\in\mathbf{N}_+(\bb{R})$ and taking adjoint in \eqref{eq:bdd_intertwining} and using the fact $\widehat{P}[\ov{\psi}]=P[\psi]$, we get
\begin{align*}
	&Q_t\widehat{\Lambda}_{\ov{\psi}}=\widehat{\Lambda}_{\ov{\psi}} \widehat{P}_t[\ov{\psi}]
	\iff  Q_t\widehat{\Lambda}_{\ov{\psi}}=\widehat{\Lambda}_{\ov{\psi}} P_t[\psi]. \nonumber
\end{align*}
\hptg{C51}{From the definition of $\Lambda_\psi$ and $\Lambda_{\ov{\psi}}$, we note that $m^{-1}_{\LL_\psi}=\overline{m}_{\LL_{\overline{\psi}}}$, which implies that $\widehat{\Lambda}_{\ov{\psi}}=\Lambda^{-1}_\psi$.} Hence, using Proposition~\ref{intertwining_equiv}, we get $P_t[\psi]\Lambda_\psi=\Lambda_\psi Q_t$ on $\D(\Lambda_\psi)$. To prove \eqref{eq:spect_decomp}, we first observe that the semigroup $(e_t)_{t\ge 0}$ corresponds to $\psi\equiv 1$. Thus,  from \eqref{eq:gen_semigroup_intertwining},  we have
\begin{align*}
	e_t H=HQ_t \ \ \mbox{on \ $\bmrm{L}(\bb{R},e)$}
\end{align*}
where  $m_H(z)=\frac{\Gamma(1+{\rm{i}}z   )}{\Gamma(-{\rm{i}}z   )}$ for all $z\in\bb{S}_{(0,1)}$. Noting that $m_H\left(\xi+{\rm{i}}/2\right)=\frac{\Gamma(1/2+{\rm{i}}\xi  )}{\Gamma(1/2-{\rm{i}}\xi  )}$ for all $\xi\in\bb R$, $H$ turns out to be a unitary operator as $\left|m_{H}\left(\xi+{\rm{i}}/2\right)\right|=1$ for all $\xi\in\bb R$. Hence, $Q$ is unitary similar to the semigroup $(e_t)_{t\ge 0}$. Let us define $\Hpsi=\Lambda_\psi H$. By Proposition~\ref{m-group}, $\Hpsi\in\mathscr{M}_e$ with $m_{\Hpsi}(z)=\frac{W_{\phi_+}(-{\rm{i}}z   )}{W_{\phi_-}(1+{\rm{i}}z   )}$ on $\bb{S}_{(0,1)}$ and $\D(\Hpsi)=\D(\Lambda_\psi)$. Therefore, for all $f\in\D(\Hpsi)$,
\begin{align}\hlabel{eq:pf_spectral_decomp}
	P_t[\psi]\Hpsi f=P_t[\psi]\Lambda_\psi Hf=\Lambda_\psi Q_t Hf=\Lambda_\psi H e_t f=\Hpsi e_t f.
\end{align}
Also, $m_{H_{\ov{\psi}}}=\frac{1}{\ov{m}_{\Hpsi}}$ on $\bb{R}+\frac{\i}{2}$. Thus, $\Hpsinv=\Hpsid$, which, from \eqref{eq:pf_spectral_decomp} yields $P_t[\psi]=\Hpsi e_t\Hpsid$ on $\D(\Hpsid)$.

\subsection{Proof of Proposition~\ref{prop:core_ido}}\hlabel{ss:prop:core_ido}
\subsubsection{Proof of Proposition~\ref{prop:core_ido}\eqref{it:2d_new}}\hlabel{ss:thm_prop_core_ido} When $\psi(\xi)=-\i\ttt{d}\xi$ for some $\ttt{d}>0$, the corresponding semigroup $P_t[\psi]$ is given by
\begin{align*}
	P_t[\psi] f(x)=f(\ln(e^x+\ttt{d}t)).
\end{align*}
From the above identity, it follows that for any $f\in\C^\infty_c(\bb R)$, $\lim_{t\downarrow 0}(P_t[\psi]f-f)/t$ exists in $\LRe$ and hence, $\C^\infty_c(\bb R)\subset\D(A_{\ttt{2}}[\psi])$. Now, trivially, $P_t[\psi](\C^\infty_c(\bb{R}))\subset\C^\infty_c(\bb{R})$, which shows that $\C^\infty_c(\bb{R})$ is a core for the generator of $P[\psi]$.

For the claim \eqref{it:2c1}, we first note that for any $\epsilon,\beta>0$, one has for all $\xi\in\bb{R}$,
\begin{align}\hlabel{eq:fourier_trans}
	\fou_{\heb}(\xi)=\frac{\beta^{-\frac{1}{2}-\eps-\i\xi}}{\sqrt{2\pi}}\Gamma\left(\frac{1}{2}+\eps+\i\xi\right).
\end{align}
Since $\fou_{\heb}$ is non-zero everywhere and $\heb\in\bmrm{L}(\bb{R},e)$, writing $\beta=e^a$ for $a\in\bb{R}$, the density of $\cc{E}(\eps)$ follows from Corollary~\ref{corr:wiener}. Next, for \eqref{it:2c2}, we show that there exists $\delta_\psi>0$ such that for all $\beta>0$,
\begin{align}\hlabel{eq:l2_domain}
	\xi\mapsto m_{H_\psi}\left(\xi+{\rm{i}}/2\right)\fouho_{\heb}\left(\xi\right)\in\bmrm{L}(\bb{R})
\end{align}
whenever $0<\eps<\delta_\psi$. Since $\psi(\xi)\neq -\i\ttt{d}\xi$ for some $\ttt{d}>0$, from \cite[Theorem~2.3(1)]{patie2018} we know that there exists $\delta_\psi>0$ such that, for any  $\delta_1<\delta_\psi<\delta_2$,
\begin{align}\hlabel{eq:delta_psi}
	\lim_{|\xi|\to\infty}|\xi|^{\delta_1}\left|\frac{W_{\phi_+}\left(\frac{1}{2}-\i\xi\right)}{W_{\phi_-}\left(\frac{1}{2}+\i\xi\right)}\Gamma\left(\frac{1}{2}+\i\xi\right)\!\right|=0, \  \lim_{|\xi|\to\infty}|\xi|^{\delta_2}\left|\frac{W_{\phi_+}\left(\frac{1}{2}-\i\xi\right)}{W_{\phi_-}\left(\frac{1}{2}+\i\xi\right)}\Gamma\left(\frac{1}{2}+\i\xi\right)\!\right|=\infty.
\end{align}
Since for all $\xi\in\bb{R}$, $\fouho_{\heb}(\xi)=\fou_{\heb}\left(\xi+{\rm{i}}/2\right)=\frac{\beta^{-\eps-\i\xi}}{\sqrt{2\pi}}\Gamma\left(\eps+\i\xi\right)$, by definition of $\tred{\Hpsi}$ and using the above estimate along with the Stirling approximation, we have for any $\delta_1<\delta_\psi<\delta_2$,
\begin{align*}
	\lim_{|\xi|\to\infty}|\xi|^{\delta_1+\frac{1}{2}-\eps}\left|m_{H_\psi}\left(\xi+{\rm{i}}/2\right)\fouho_{\heb}\left(\xi\right)\right|=0, \ \ \lim_{|\xi|\to\infty}|\xi|^{\delta_2+\frac{1}{2}-\eps}\left|m_{H_\psi}\left(\xi+{\rm{i}}/2\right)\fouho_{\heb}\left(\xi\right)\right|=\infty.
\end{align*}
Clearly, this implies \eqref{eq:l2_domain}  whenever $0<\eps<\delta_\psi$, which proves that $\cc{E}_\psi(\eps)\subset \D(H_\psi)$. To show the density of $\cc{E}_\psi(\eps)$ in $\LRe$, let $f\in\bmrm{L}(\bb{R},e)$ be such that $f\in\Hpsi(\cc{E}(\eps))^\perp$. Then, by the isometry of shifted Fourier transform, we have
\begin{align}\hlabel{eq:inner_prod}
	\fouho _{f}\perp \left\{m_{\Hpsi}\left(\cdot+\frac{{\rm{i}}}{2}\right)\fouho_{\heb}; \beta>0\right\}.
\end{align}
Recalling that $\fouho_{\heb}(\xi)=\frac{\beta^{-\eps-\i\xi}}{\sqrt{2\pi}}\Gamma(\eps+\i\xi)$ and choosing $\beta=e^a$, from \eqref{eq:inner_prod}, we get
\begin{align*}
	\int_{\bb{R}}e^{-a\eps-{\rm{i}} a\xi}m_{\Hpsi}\left(\xi+\frac{{\rm{i}}}{2}\right)\Gamma(\eps+\i\xi)d\xi=0 \ \ \mbox{ for all }  a\in\bb{R}.
\end{align*}
This implies that $\fouho_{f}(\xi)=0$ a.e., which means that $f=0$ a.e. This completes the proof of the density of $\cc{E}_\psi(\eps)$.

% $m_{\Hpsi}\left(\xi+\frac{{\rm i}}{2}\right)=\frac{W_{\phi_+}\left(\frac{1}{2}-{\rm i}\xi\right)}{W_{\phi_-}\left(\frac{1}{2}+{\rm i}\xi\right)}=\mathrm{O}(e^{\frac{\pi}{2}|\xi|})$, we get $\cc{E}\subset\D(\Hpsi)$ for any $\psi\in\mathbf{N}_b(\bb{R})$.

Now, it remains to show that $\Hpsi(\cc{E}(\eps))$ is a core for $A_\ttt{2}[\psi]$, and, to this end, we need the following two lemmas.
\begin{lem}\hlabel{lem:core_invariant}
The operator $(I_{e},\bmrm{L}(\bb{R}, e^{|x|})\tred{)}$, where $I_{e}f(x)=-e^{-x}f(x)$, is the $\bmrm{L}(\bb{R},e)$-generator of $(e_t)_{t\ge 0}$, and for any $\eps>0$, $\cc{E}(\eps)\subset\bmrm{L}(\bb{R}, e^{|x|})$. Moreover, for any $\eps>0$, $\cc{E}(\eps)$ is invariant under the semigroup $(e_t)_{t\ge 0}$ and hence
it is a core of its  generator.
\end{lem}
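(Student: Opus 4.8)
The plan is to verify each assertion in turn, the three claims being (i) that $(I_e,\bmrm{L}(\bb{R},e^{|x|}))$ is the $\bmrm{L}(\bb{R},e)$-generator of the multiplication semigroup $(e_t)_{t\ge0}$, (ii) that $\cc{E}(\eps)\subset\bmrm{L}(\bb{R},e^{|x|})$, and (iii) that $\cc{E}(\eps)$ is invariant under $(e_t)_{t\ge0}$, whence by the standard fact that a dense, invariant subspace of the domain is automatically a core (combined with the density established in Proposition~\ref{prop:core_ido}\eqref{it:2c1}), $\cc{E}(\eps)$ is a core of the generator.

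First I would identify the $\bmrm{L}(\bb{R},e)$-generator of $(e_t)_{t\ge0}$. Since $e_tf(x)=e^{-te^{-x}}f(x)$ is a multiplication semigroup with bounded multipliers $e^{-te^{-x}}\le 1$, the generator is multiplication by $-e^{-x}$, i.e.\ $I_ef(x)=-e^{-x}f(x)$, on the maximal domain $\{f\in\bmrm{L}(\bb{R},e);\ e^{-x}f(x)\in\bmrm{L}(\bb{R},e)\}$. One then checks that the weighted condition $\int_\bb{R}|e^{-x}f(x)|^2 e^x\,dx=\int_\bb{R}|f(x)|^2 e^{-x}\,dx<\infty$, together with $f\in\bmrm{L}(\bb{R},e)$ (i.e.\ $\int|f|^2e^x<\infty$), is equivalent to $\int_\bb{R}|f(x)|^2(e^x+e^{-x})\,dx<\infty$, which is precisely $f\in\bmrm{L}(\bb{R},e^{|x|})$ up to the harmless equivalence of the weights $e^{|x|}\asymp e^x+e^{-x}$. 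The convergence $(e_tf-f)/t\to I_ef$ in $\bmrm{L}(\bb{R},e)$ for $f$ in this domain follows from dominated convergence, using $|e^{-te^{-x}}-1|/t\le e^{-x}$.

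For claim (ii), I would plug in $\heb(x)=e^{-(\frac12+\eps)x}e^{-\beta e^{-x}}$: as $x\to+\infty$, $\heb(x)\sim e^{-(\frac12+\eps)x}$, so $|\heb(x)|^2 e^{|x|}\sim e^{-2\eps x}$ which is integrable at $+\infty$; as $x\to-\infty$, the factor $e^{-\beta e^{-x}}$ decays super-exponentially and dominates every polynomial or exponential in $e^{-x}$, so $|\heb(x)|^2e^{|x|}$ is integrable at $-\infty$. Hence $\heb\in\bmrm{L}(\bb{R},e^{|x|})$ and by linearity $\cc{E}(\eps)\subset\bmrm{L}(\bb{R},e^{|x|})$. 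For claim (iii), a direct computation gives
\begin{align*}
e_t\heb(x)=e^{-te^{-x}}e^{-(\frac12+\eps)x}e^{-\beta e^{-x}}=e^{-(\frac12+\eps)x}e^{-(\beta+t)e^{-x}}=\mf{h}_{\eps,\beta+t}(x),
\end{align*}
so $(e_t)_{t\ge0}$ maps each generator $\heb$ to another element of the spanning set, hence maps $\cc{E}(\eps)=\Span\{\heb;\ \beta>0\}$ into itself. Combining invariance with the density of $\cc{E}(\eps)$ in $\bmrm{L}(\bb{R},e)$ from Proposition~\ref{prop:core_ido}\eqref{it:2c1} and the inclusion $\cc{E}(\eps)\subset\D(I_e)$ from (ii), the classical core criterion (a dense subspace of the domain that is invariant under the semigroup is a core, see e.g.\ Pazy~\cite{Pazy}) yields that $\cc{E}(\eps)$ is a core of $(I_e,\bmrm{L}(\bb{R},e^{|x|}))$. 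I do not anticipate a genuine obstacle here; the only mild subtlety is being careful about the two-sided weight $e^{|x|}$ versus $e^{\pm x}$ and justifying the core criterion, which is standard.
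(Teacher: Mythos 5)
Your proposal is correct and follows essentially the same route as the paper: identify $\D(I_e)$ with $\bmrm{L}(\bb{R},e^{|x|})$, check $\heb\in\bmrm{L}(\bb{R},e^{|x|})$, observe $e_t\heb=\mf{h}_{\eps,\beta+t}$ for invariance, and conclude via the standard dense-invariant-subspace core criterion. The only cosmetic difference is that the paper proves the inclusion $\D(I_e)\subseteq\bmrm{L}(\bb{R},e^{|x|})$ directly with Fatou's lemma rather than quoting the general description of generators of multiplication semigroups, but both arguments are sound.
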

\begin{proof} If $f\in\D(I_e)\cap\LRe$, then $\lim_{t\to 0}(e_t f-f)/t$ must exist in $\LRe$. After observing that $\lim_{t\to 0}(1-e^{-te^{-x}})/t=e^{-x}$ for all $x\in\bb{R}$, by Fatou's lemma, we have
	\begin{align*}
		\int_{\bb{R}} |f(x)|^2 e^{-x} dx\le\liminf_{t\to 0}\int_{\bb R}\left(\frac{e_t f(x)-f(x)}{t}\right)^2 e^x dx=\|I_e f\|^2_{\LRe}<\infty.
	\end{align*}
This shows that $\D(I_e)\subseteq\bmrm{L}(\bb{R}, e^{|x|})$. On the other hand, if $f\in\bmrm{L}(\bb{R},e^{|x|})$, we first observe that
\begin{align*}
\lim_{t\to 0}\frac{e_t f(x)-f(x)}{t}=I_{e} f(x)=-e^{-x} f(x)
\end{align*}
pointwise. Next, recalling the inequality $(1-e^{-te^{-x}})/t\le e^{-x}$ for all $t>0$, the dominated convergence theorem yields
\begin{align*}
	\lim_{t\to 0}\int_{\bb R}\left(\frac{e_t f(x)-f(x)}{t}-I_e f(x)\right)^2 e^x dx=0
\end{align*}
which shows that $\bmrm{L}(\bb{R}, e^{|x|})\subseteq\D(I_e)$, and therefore $\D(I_e)=\bmrm{L}(\bb{R}, e^{|x|})$. Next, from the definition of $\heb$, it follows that, for any $\eps,\beta>0$, $\heb\in\bmrm{L}(\bb{R}, e^{|x|})$. Hence, $\cc{E}(\eps)\subset\D(I_{e})$ for any $\eps>0$.  Now, for any $\eps,\beta>0$, $e_t\heb(x)=\mf{h}_{\eps,\beta+t}$. By linearity of $e_t$, $\cc{E}(\eps)$ is indeed invariant. Since $\cc{E}(\eps)$ is dense in $\bmrm{L}(\bb{R},e)$, by \cite[Proposition~1.7]{engel-nagel}, it is a core for the generator of $(e_t)_{t\ge 0}$.
\end{proof}

%\begin{lem}\thlabel{density_lemma}
%For any $0<\eps<\delta_\psi$, $\cc{E}_\psi(\eps)$ is dense in $\bmrm{L}(\bb{R},e)$.
%\end{lem}
%\begin{proof}
%If $\psi(\xi)=-\i\ttt{d}\xi$ for some $\ttt{d}>0$, then $\cc{E}_\psi(\eps)=\C^\infty_c(\bb{R})$, which is dense in $\LRe$.  This completes the proof of the lemma.
%\end{proof}

Coming back to the proof of the claim \eqref{it:2c2}, we have already shown in Theorem~\ref{thm:intertwining_pdo}\eqref{main_thm_1:it:1d} that $P_t[\psi]\Hpsi=\Hpsi e_t$ on $\D(H_\psi)$ for any $\psi\in\mathbf{N}_b(\bb{R})$. Therefore, $P_t[\psi]\Hpsi(\cc{E}(\eps))=\Hpsi e_t(\cc{E}(\eps))$. By the invariance of $\cc{E}(\eps)$ under $e_t$, we get
\begin{align*}
P_t[\psi](\Hpsi(\cc{E}(\eps)))\subseteq\Hpsi(\cc{E}(\eps)).
\end{align*}
In other words, $\cc{E}_\psi(\eps)=\Hpsi(\cc{E}(\eps))$ is invariant under the semigroup $P[\psi]$. In the next lemma, we show that $\cc{E}_\psi(\eps)\subset\D(A_\ttt{2}[\psi])$. This is where the assumption \eqref{eq:mpsi_bound} is crucial.
\begin{lem}\hlabel{lem:gen_int_2}
If $\psi(\xi)\neq -\i\ttt{d}\xi$ for some $\ttt{d}>0$ then for any $0<\eps<\eta_\psi$, $\Hpsi (\cc{E}(\eps))\subseteq \D(A_\ttt{2}[\psi])$ and $A_\ttt{2}[\psi]\Hpsi =\Hpsi I_{e}$ on $\cc{E}(\eps)$.%, where $B$ .
\end{lem}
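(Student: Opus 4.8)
The plan is to combine the semigroup intertwining $P_t[\psi]\Hpsi=\Hpsi e_t$ on $\D(\Hpsi)$, established in Theorem~\ref{thm:intertwining_pdo}\eqref{main_thm_1:it:1d}, with the explicit shifted Fourier transforms of the test functions $\heb$, and to pass to the limit $t\downarrow0$ in the difference quotient $\tfrac1t(P_t[\psi]\Hpsi\heb-\Hpsi\heb)$ by a dominated convergence argument performed on the frequency side.

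First I would record the elementary identities. Since $e_t\heb=\mf{h}_{\eps,\beta+t}$ and $I_{e}\heb(x)=-e^{-x}\heb(x)=-\mf{h}_{1+\eps,\beta}(x)$, Lemma~\ref{lem:core_invariant} (or a direct dominated convergence argument) gives $\tfrac1t(e_t\heb-\heb)\to I_{e}\heb=-\mf{h}_{1+\eps,\beta}$ in $\LRe$ as $t\downarrow0$; and from \eqref{eq:fourier_trans}, for all $\eps'>0$ and $\beta>0$,
\begin{equation*}
\fouho_{\mf{h}_{\eps',\beta}}(\xi)=\tfrac{1}{\sqrt{2\pi}}\,\beta^{-\eps'-\i\xi}\,\Gamma(\eps'+\i\xi),\qquad\xi\in\bb R.
\end{equation*}
By Stirling's formula $|\fouho_{\mf{h}_{\eps',\beta}}(\xi)|\asymp|\xi|^{\eps'-\frac12}e^{-\frac{\pi}{2}|\xi|}$, so $\xi\mapsto m_{\Hpsi}(\xi+\tfrac{\i}{2})\fouho_{\mf{h}_{\eps',\beta}}(\xi)$ is, up to a $\beta$-dependent constant, $|\xi|^{\eps'-\frac12}\,\frac{W_{\phi_+}(\frac12-\i\xi)}{W_{\phi_-}(\frac12+\i\xi)}\,e^{-\frac{\pi}{2}|\xi|}$; since the family in the definition \eqref{eq:mpsi_bound} of $\eta_\psi$ is monotone in $\eta$ and $\frac{W_{\phi_+}(\frac12-\i\xi)}{W_{\phi_-}(\frac12+\i\xi)}e^{-\frac{\pi}{2}|\xi|}$ is locally bounded (use \eqref{eq:W_Zerofree}), the hypothesis $0<\eps<\eta_\psi$ yields $\mf{h}_{1+\eps,\beta}\in\D(\Hpsi)$ (take $\eps'=1+\eps$) and, a fortiori, $\heb\in\D(\Hpsi)$; in particular $\Hpsi I_{e}\heb$ makes sense and $P_t[\psi]\Hpsi\heb=\Hpsi e_t\heb$ for every $t\ge0$.

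The crucial step is to control
\begin{equation*}
\fouho_{\,\Hpsi\frac1t(e_t\heb-\heb)}(\xi)=\frac{m_{\Hpsi}(\xi+\tfrac{\i}{2})}{\sqrt{2\pi}}\,\Gamma(\eps+\i\xi)\,\frac{(\beta+t)^{-\eps-\i\xi}-\beta^{-\eps-\i\xi}}{t}.
\end{equation*}
The difference quotient converges pointwise to $-(\eps+\i\xi)\beta^{-1-\eps-\i\xi}$, and from $\bigl|\tfrac{d}{ds}(\beta+s)^{-\eps-\i\xi}\bigr|=|\eps+\i\xi|(\beta+s)^{-1-\eps}\le|\eps+\i\xi|\beta^{-1-\eps}$ it is bounded, uniformly in $t>0$, by $|\eps+\i\xi|\beta^{-1-\eps}$. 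Using $|\eps+\i\xi|\,|\Gamma(\eps+\i\xi)|=|\Gamma(1+\eps+\i\xi)|$, the displayed quantity is therefore dominated, uniformly in $t>0$, by $\tfrac{1}{\sqrt{2\pi}}\beta^{-1-\eps}\,|m_{\Hpsi}(\xi+\tfrac{\i}{2})|\,|\Gamma(1+\eps+\i\xi)|=|\fouho_{\Hpsi\mf{h}_{1+\eps,\beta}}(\xi)|$, which lies in $\bmrm{L}(\bb R)$ by the previous step. Dominated convergence in $\bmrm{L}(\bb R)$ together with the isometry of $\fouho$ then gives $\Hpsi\tfrac1t(e_t\heb-\heb)\to\Hpsi I_{e}\heb$ in $\LRe$; combined with $\tfrac1t(P_t[\psi]\Hpsi\heb-\Hpsi\heb)=\Hpsi\tfrac1t(e_t\heb-\heb)$, this shows $\Hpsi\heb\in\D(A_\ttt{2}[\psi])$ with $A_\ttt{2}[\psi]\Hpsi\heb=\Hpsi I_{e}\heb$, and linearity extends it to all of $\cc{E}(\eps)=\Span\{\heb;\,\beta>0\}$.

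The main obstacle is exactly the unboundedness of $\Hpsi$ — for $\psi\in\NN_-(\bb R)$ the multiplier $m_{\Hpsi}(\cdot+\tfrac{\i}{2})$ grows like $e^{\frac{\pi}{2}|\xi|}$ — so one cannot simply apply $\Hpsi$ to the $\LRe$-convergent difference quotients. The point is that the single extra power of $|\xi|$ gained in passing from $\Gamma(\eps+\i\xi)$ to $\Gamma(1+\eps+\i\xi)$ is cancelled by the derivative estimate, so that the $t$-uniform dominating function is precisely the transform of $\Hpsi\mf{h}_{1+\eps,\beta}$, which is square-integrable exactly when $\eps<\eta_\psi$; the hypothesis $\psi(\xi)\neq-\i\ttt{d}\xi$ enters only through \cite[Theorem~2.3(1)]{patie2018} to guarantee $\eta_\psi>0$, so that the admissible range of $\eps$ is nonempty.
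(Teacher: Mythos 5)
Your proposal is correct and follows essentially the same route as the paper: both use the intertwining $P_t[\psi]\Hpsi=\Hpsi e_t$ on $\cc{E}(\eps)$, the explicit transform $\fouho_{\mf{h}_{\eps,\beta+t}}(\xi)=\frac{1}{\sqrt{2\pi}}(\beta+t)^{-\eps-\i\xi}\Gamma(\eps+\i\xi)$, a mean-value bound on the difference quotient in $t$, and dominated convergence on the frequency side, with the hypothesis $\eps<\eta_\psi$ entering through exactly the same $|\xi|^{\eps+\frac12}e^{-\frac{\pi}{2}|\xi|}$ estimate (your identity $|\eps+\i\xi|\,|\Gamma(\eps+\i\xi)|=|\Gamma(1+\eps+\i\xi)|$ is just a cleaner way to package the paper's Stirling bound on $\sup_{t}|\tfrac{d}{dt}\fouho_{\mf{h}_{\eps,\beta+t}}|$). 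The only cosmetic difference is that you name the dominating function as $|\fouho_{\Hpsi\mf{h}_{1+\eps,\beta}}|$, which is a nice touch but not a different argument.
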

\begin{proof}
From \eqref{eq:bdd_intertwining} we know, since  $\cc{E}(\eps)\subset \D(\Hpsi)$, that
\begin{align}\hlabel{eq:random}
P_t[\psi]\Hpsi =\Hpsi e_t  \mbox{ on } \cc{E}(\eps).
\end{align}
For any $f\in\cc{E}(\eps)\subset\D(I_e)$, we claim that
 \begin{align*}
 \lim_{t\downarrow 0}\Hpsi\frac{e_t f-f}{t}=\Hpsi I_{e}f.
 \end{align*}
It is enough to prove our claim for $f=\heb$ for $\beta>0$ and $0<\eps<\eta_\psi$. First, we recall that $e_t\heb=\mathfrak{h}_{\eps,\beta+t}$ for all $t\ge 0$. As $\Hpsi$ is a shifted Fourier multiplier operator (hence closed), it suffices to show that the function
 \[\xi\mapsto m_{\Hpsi}\left(\xi+\frac{{\rm{i}}}{2}\right)\left[\frac{\fouho_{\mathfrak{h}_{\eps,\beta+t}}(\xi)-\fouho_{\heb}(\xi)}{t}\right]\]
 converges in $\bmrm{L}(\bb{R})$ as $t \downarrow 0$. From \eqref{eq:fourier_trans}, using the Stirling asymptotic of the gamma function, we have, as $|\xi|\to\infty$,
 \begin{equation*}
 \begin{aligned}
 \sup_{t\in [0,1]}\left|\frac{d}{dt}\fouho_{\mathfrak{h}_{\eps,\beta+t}}(\xi)\right|
 =\frac{1}{\sqrt{2\pi}}\sup_{t\in [0,1]}\left|(\beta+t)^{-\eps-\i\xi-1} (\eps+\i\xi)\Gamma(\eps+\i\xi) \right|\asymp  |\xi|^{\eps+\frac{1}{2}} e^{-\frac{\pi}{2}|\xi|}.
\end{aligned}
 \end{equation*}
 Now, recalling that $m_{H_\psi}\left(\xi+{\rm{i}}/2\right)=\frac{W_{\phi_+}\left(\frac{1}{2}-\i\xi\right)}{W_{\phi_-}\left(\frac{1}{2}+\i\xi\right)}$ and invoking the condition \eqref{eq:mpsi_bound} with the above bound, it follows that for all $0<\eps<\eta_\psi$,
 \begin{align*}
 	\sup_{t\in [0,1]}\int_{\bb R}\left|m_{\Hpsi}\left(\xi+\frac{{\rm{i}}}{2}\right)\right|^2\left|\frac{\fouho_{\mathfrak{h}_{\eps,\beta+t}}(\xi)-\fouho_{\heb}(\xi)}{t}\right|^2d\xi<\infty.
 \end{align*}
 Hence, our claim follows from the dominated convergence theorem. Therefore, from \eqref{eq:random},
 \begin{align*}
 \lim_{t\downarrow 0}\frac{P_t[\psi]\Hpsi f-\Hpsi f}{t}=\lim_{t\downarrow 0}\Hpsi\frac{e_t f-f}{t}=\Hpsi I_{e} f
 \end{align*}
which implies that $\Hpsi f\in \D(A_\ttt{2}[\psi])$ and $A_\ttt{2}[\psi]\Hpsi f=\Hpsi I_{e}f$ for all $f\in\cc{E}(\eps)$, which completes the proof of the lemma.
\end{proof}
\noindent
Since the set $\cc{E}_\psi(\eps)$ is dense in $\LRe$, and invariant under $P[\psi]$, by \cite[Proposition~1.7]{engel-nagel}. it is a core for $A_\ttt{2}[\psi]$.

\subsubsection{Proof of Proposition~\ref{prop:core_ido}\eqref{it:2d}} We proceed by observing that the condition \eqref{eq:mpsi_bound_2} implies that $\delta_\psi=\infty$, where $\delta_\psi$ is defined in \eqref{eq:delta_psi}. This further implies that $\eta_\psi=\infty$, where $\eta_\psi$ is defined in \eqref{eq:mpsi_bound}. Let us first assume condition~\eqref{conditions_i}, that is $\phi_+(0)>0$, which ensures that $m_{H_\psi}$ extends continuously on $\bb S_{[0,1]}$. Now, with the bound in \eqref{eq:mpsi_bound_2} and \hpt{R28}{a similar argument involving the Phragm\'en-Lindel\"of principle as in the proof of Proposition~\ref{cor:multiplier}\eqref{it:kap},} one can establish that, for any $n\in\bb N$,
\hptg{C69}{
\begin{align*}
	\xi\mapsto\sup_{b\in [0,1]}|\xi+\i b|^n|m_{H_\psi}(\xi+\i b)\fou_{\heb}(\xi+\i b)|\in\bmrm[1]{L}(\bb R)\cap\bmrm{L}(\bb R).
\end{align*}
}
Hence, applying Proposition~\ref{multiplier_result}, we get $H_\psi\heb\in\bmrm{L}(\bb R)$ for all $\eps,\beta>0$ and
\begin{align*}
	\fou_{H_\psi\heb}=m_{H_\psi}\fou_{\heb}.
\end{align*}
Since the function on the right-hand side of the above equation decays faster than any polynomial, we obtain that $H_\psi\heb\in\C^\infty_0(\bb R)$. Also, from Remark~\ref{rem:fneq_rem}, we have $m_{\Hpsi}(\xi+{\rm{i}})   =m_{\Hpsi}(\xi)\psi(\xi)$ for all $\xi\in\bb{R}$. Therefore, from Lemma~\ref{lem:gen_int_2}, for any $\beta,\eps>0$, we get
\begin{align}
A_\ttt{2}[\psi]\Hpsi \heb&=\Hpsi I_e \heb \nonumber \\
&=-\frac{e^{-\frac{x}{2}}}{\sqrt{2\pi}}\int_\bb{R} e^{{\rm{i}}\xi   x}m_{\Hpsi}\left(\xi+\i/2\right)\fou_{\heb}\left(\xi-\i/2\right)\,d\xi \nonumber \\
&=-\frac{e^{-x}}{\sqrt{2\pi}}\int_{\bb{R}-\frac{{\rm{i}}}{2}}m_{\Hpsi}(z+{\rm{i}})   \fou_{\heb}(z)e^{{\rm{i}}z   x}\,dz \nonumber \\
&=-\frac{e^{-x}}{\sqrt{2\pi}}\int_\bb{R} m_{\Hpsi}(\xi+{\rm{i}})   \fou_{\heb}(\xi)e^{{\rm{i}}\xi   x}\,d\xi \nonumber \\
&=-\frac{e^{-x}}{\sqrt{2\pi}}\int_{\bb{R}}m_{\Hpsi}(\xi)\psi(\xi)\fou_{\heb}(\xi)e^{{\rm{i}}\xi   x}\,d\xi \nonumber \\
&= A_\ttt{PDO}[\psi]\Hpsi\heb=A_\ttt{IDO}[\psi]\Hpsi\heb \label{eq:pdo=ido}
\end{align}
\hptg{C69}{where we have repeatedly used Lemma~\ref{prop:contour} to change the line of integration.} Also note that the last equality holds as $\xi\mapsto\xi^2 m_{\Hpsi}(\xi)\fou_{H_\psi\heb}(\xi)\in\bmrm[1]{L}(\bb R)$. Finally, by linearity of the operators, the last identity extends to $\cc{E}_\psi(\eps)$, and hence to $\cc{E}_\psi$. \hptg{C74}{Next, to get rid of the condition $\phi_+(0)>0$, we approximate $\psi$ by its small perturbations $\psi_q$ for which we are able to apply the technique discussed above. More formally, we define $\psi_q(\xi)=\psi(\xi) +q$, $q\ge 0$. Let $\psi_q(\xi)=\phi^{(q)}_+(-{\rm{i}}\xi  )\phi^{(q)}_-({\rm{i}}\xi  )$ be the Wiener-Hopf factorization of $\psi_q$. From  \cite[Lemma~7.3, (7.66)]{patie2018}, taking  $\mathfrak{r}=1$ with the notation therein, we have, for all $n\in\bb N$,
\begin{equation}\hlabel{eq:ratio_approx}
\limsup_{|\xi|\to\infty}|\xi|^ne^{-\frac{\pi}{2}|\xi|}\sup_{0\le q\le 1}\left|m_{H_{\psi_q}}\left(\xi+{\rm{i}}/2\right)-m_{H_{\psi}}\left(\xi+{\rm{i}}/2\right)\right|=0.
%\begin{aligned}
%\sup_{0\le q\le 1}\left|m_{H_{\psi_q}}\left(\xi+\frac{{\rm{i}}}{2}\right)\right|& =\sup_{0\le q\le 1}\left|\frac{W_{\phi^{(q)}_+}(\frac{1}{2}-{\rm{i}}\xi  )}{W_{\phi^{(q)}_-}(\frac{1}{2}+{\rm{i}}\xi  )}\right|\\ &\le  C\sup_{0\le q\le 1}\sqrt{\frac{|\phi^{(1)}_+(\frac{1}{2}-{\rm{i}}\xi  )|}{|\phi^{(q)}_+(\frac{1}{2}-{\rm{i}}\xi  )|}}\sqrt{\frac{|\phi^{(q)}_-(\frac{1}{2}+{\rm{i}}\xi  )|}{|\phi_-(\frac{1}{2}+{\rm{i}}\xi  )|}}\left|\frac{W_{\phi^{(1)}_+}(\frac{1}{2}-{\rm{i}}\xi  )}{W_{\phi_-}(\frac{1}{2}+{\rm{i}}\xi  )}\right|
%\end{aligned}
\end{equation}
Note that with the notations of the aforementioned paper, $\ttt{N}_{\Psi}$ coincides with $\delta_\psi$ in our case, which equals $\infty$ due to \eqref{eq:mpsi_bound_2}. Also, we have replaced $\Gamma(\frac{1}{2}+\i\xi)$ in \cite[(7.66)]{patie2018} simply by $e^{-\frac{\pi}{2}|\xi|}$, since they are asymptotically equivalent by the Stirling approximation formula.}
%for some constant $C>0$.  From  \cite[Theorem 4.2(1)]{patie2018}, we have for all $\xi\in\bb R$,
%\begin{align*}
%	\frac{W_{\phi^{(1)}_+}(\frac{1}{2}-{\rm{i}}\xi)}{W_{\phi_-}(\frac{1}{2}+{\rm{i}}\xi)}=
%\end{align*}
%On the other hand, from \cite[Proposition~C.1 (C.3)]{patie2018} it follows that $\sup_{0\le q\le 1}\sqrt{\frac{|\phi^{(q)}_-(\frac{1}{2}+\i\xi)|}{|\phi^{(q)}_+(\frac{1}{2}-\i\xi)|}}=\mathrm{O}(\sqrt{|\xi|})$.
%Hence, the right-hand side of \eqref{eq:ratio_approx} is $\mathrm{O}(\sqrt{|\xi|}e^{\frac{\pi}{2}|\xi|})$.
Therefore, by the dominated convergence theorem and Lemma~\ref{lem:gen_int_2}, we get
\begin{equation}
\begin{aligned}
A_\ttt{2}[\psi]\Hpsi\heb(x)&=\Hpsi \hptg{C65}{I_e}\heb(x) \hlabel{eq:ido_1} \\
&=-\frac{e^{-\frac{x}{2}}}{\sqrt{2\pi}}\int_\bb{R}m_{\Hpsi}\left(\xi+\i/2\right)\fou_{\heb}\left(\xi-\i/2\right)e^{{\rm{i}}\xi   x}\,d\xi \\
&=-\frac{e^{-\frac{x}{2}}}{\sqrt{2\pi}}\lim_{q\downarrow 0}\int_\bb{R} m_{H_{\psi_q}}\left(\xi+\i/2\right)\fou_{\heb}\left(\xi-\i/2\right)e^{{\rm{i}}\xi   x}\,d\xi.
\end{aligned}
\end{equation}
Since $\phi^{(q)}_+(0)>0$, \eqref{eq:pdo=ido} yields
\begin{equation}
\begin{aligned}
&\lim_{q\downarrow 0}\frac{1}{\sqrt{2\pi}}\int_{\bb{R}}m_{H_{\psi_q}}\left(\xi+\i/2\right)\fou_{\heb}\left(\xi-\i/2\right)e^{{\rm{i}}\xi   x}\,d\xi \\
&=-e^{-\frac{x}{2}}\lim_{q\downarrow 0}\frac{1}{\sqrt{2\pi}}\int_\bb{R}\psi_q(\xi)m_{H_{\psi_q}}(\xi)\fou_{\heb}(\xi) e^{{\rm{i}}\xi   x}\,d\xi \\
%&= e^{-x}\lim_{q\downarrow 0}\left[q\cc{H}_{\psi_q}\heb(x)+b(\cc{H}_{\psi_q}\heb)'(x)+\sigma^2 (\cc{H}_{\psi_q}\heb)''(x)\right. \\
&=-e^{\frac{x}{2}}\lim_{q\downarrow 0}A_\ttt{IDO}[\psi_q]H_{\psi_q}\heb(x). \hlabel{eq:ido_2}
%&+\left.\int_\bb{R}(\cc{H}_{\psi_q}\heb(x-y)-\Hpsi\heb(x)+y\bbm{1}_{\{|y|\le 1\}}(\cc{H}_{\psi_q}\heb)'(x))\mu(dy)\right]. \nonumber
\end{aligned}
\end{equation}
Finally, we need the following lemma to conclude the proof of this theorem.
\begin{lem} If $\psi$ satisfies \eqref{conditions_iii}, then, for any $n\in\bb N\cup\{0\}$, one has
\begin{align*}
\lim_{q\downarrow 0}(H_{\psi_q}\heb)^{(n)}(x)&=(\Hpsi\heb)^{(n)}(x) \\
\lim_{q\downarrow 0}\int_\bb{R}\ttt{F}H_{\psi_q}\heb(x,y)\mu(dy)
&=\int_\bb{R}\ttt{F}\Hpsi\heb(x,y)\mu(dy) \nonumber
\end{align*}
where $\ttt{F}f(x,y)=f(x+y)-f(x)-y\bbm{1}_{\{|y|\le 1\}}f'(x)$.
\end{lem}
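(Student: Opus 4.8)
Both limits will be obtained from the dominated convergence theorem: for fixed $x$ (resp.\ $x$ and $y$) one needs pointwise convergence of the relevant integrands together with a $q$‑uniform integrable majorant, and the estimates producing the majorant come from \eqref{eq:ratio_approx}, the Stirling asymptotics of $\Gamma$, and the explicit formula \eqref{eq:fourier_trans} for $\fouho_{\heb}$. Two observations are used throughout. First, $\psi_q=\psi+q$ has the same characteristic quadruple \eqref{eq:defLKe} as $\psi$ apart from $\psi_q(0)=\psi(0)+q$, so it still satisfies \eqref{conditions_iii} and \eqref{eq:mpsi_bound_2}, while $\phi^{(q)}_+(0)=\phi_+(0)+q>0$ for every $q>0$. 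Second, $\phi^{(q)}_\pm\to\phi_\pm$ and hence $W_{\phi^{(q)}_\pm}\to W_{\phi_\pm}$ locally uniformly on $\bb C_{(0,\infty)}$ as $q\downarrow0$ (immediate from the product representation of the Bernstein–gamma functions and continuity of the Wiener–Hopf factorisation), so that $m_{H_{\psi_q}}\to m_{\Hpsi}$ locally uniformly on $\bb S_{(0,1)}$.

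For the first limit, \eqref{eq:fourier_trans} gives $\fouho_{\heb}(\xi)=\tfrac1{\sqrt{2\pi}}\beta^{-\eps-\i\xi}\Gamma(\eps+\i\xi)$, whence $|\fouho_{\heb}(\xi)|\asymp|\xi|^{\eps-\frac12}e^{-\frac\pi2|\xi|}$; combined with \eqref{eq:mpsi_bound_2} for $\psi$ and with \eqref{eq:ratio_approx}, this shows that for every $n$ the function $\xi\mapsto|\xi|^{n}\bigl(|m_{\Hpsi}(\xi+\tfrac\i2)|+\sup_{0\le q\le1}|m_{H_{\psi_q}}(\xi+\tfrac\i2)-m_{\Hpsi}(\xi+\tfrac\i2)|\bigr)|\fouho_{\heb}(\xi)|$ is integrable and dominates, uniformly in $q\in[0,1]$, the integrand obtained by differentiating $H_{\psi_q}\heb(x)=\tfrac{e^{-x/2}}{\sqrt{2\pi}}\int_{\bb R}m_{H_{\psi_q}}(\xi+\tfrac\i2)\fouho_{\heb}(\xi)e^{\i\xi x}\,d\xi$ under the integral sign $n$ times (the compact‑$\xi$ part is handled by the local uniform convergence of the $W$'s). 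Dominated convergence then yields $(H_{\psi_q}\heb)^{(n)}(x)\to(\Hpsi\heb)^{(n)}(x)$ for every $x$ (in fact uniformly on compacts), together with a $q$‑uniform bound $|(H_{\psi_q}\heb)^{(n)}(x)|\le C_n e^{-x/2}$; a finer run of the same contour analysis, exploiting that under \eqref{conditions_iii} the multiplier $m_{H_{\psi_q}}$ has its relevant singularity only as a simple pole near the origin (uniformly in $q$), upgrades this to the $q$‑uniform linear bound $\sup_{0\le q\le1}|H_{\psi_q}\heb(z)|\le C(1+|z|)$ for $z\in\bb R$, and similarly a $q$‑uniform bound on $(H_{\psi_q}\heb)'$.

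For the second limit, split $\int_{\bb R}=\int_{|y|\le1}+\int_{|y|>1}$. On $\{|y|\le1\}$, Taylor's formula gives $|\ttt F H_{\psi_q}\heb(x,y)|\le\tfrac{y^2}2\sup_{|t|\le1}|(H_{\psi_q}\heb)''(x+t)|\le\tfrac{y^2}2\,C_2\,e^{-(x-1)/2}$, which is $\mu$‑integrable since $\int_{|y|\le1}y^2\mu(dy)<\infty$; combined with the pointwise convergence $\ttt F H_{\psi_q}\heb(x,y)\to\ttt F\Hpsi\heb(x,y)$ coming from the first limit (applied at $x$ and at $x+y$), dominated convergence settles this piece. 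On $\{|y|>1\}$, $\ttt F H_{\psi_q}\heb(x,y)=H_{\psi_q}\heb(x+y)-H_{\psi_q}\heb(x)$, and by the $q$‑uniform linear bound this is dominated by $C\bigl(2+2|x|+|y|\bigr)$, which is $\mu$‑integrable on $\{|y|>1\}$ precisely because $\int_{|y|>1}|y|\mu(dy)<\infty$ (condition \eqref{conditions_iii}) and $\mu(\{|y|>1\})<\infty$; a final application of dominated convergence, using $H_{\psi_q}\heb(x+y)\to\Hpsi\heb(x+y)$, completes the proof.

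The main obstacle is exactly the $q$‑uniform control of $H_{\psi_q}\heb$ and its derivatives as $x\to-\infty$: the naive Fourier bound only gives $\mathrm O(e^{-x/2})$, which blows up there, so one must track the asymptotics of the ratio $W_{\phi^{(q)}_+}/W_{\phi^{(q)}_-}$ uniformly in $q$ throughout the strip $\bb S_{[0,1]}$ — not just on $\bb R+\tfrac\i2$, where \eqref{eq:ratio_approx} applies directly. As in the proof of Proposition~\ref{cor:multiplier} and Lemma~\ref{lem:W_beta_bound}, this is done by interpolating across the boundary lines with the Phragmén–Lindelöf principle, the point being that the constants in those growth bounds depend continuously on the characteristic quadruple of $\psi_q$ and are therefore uniform over $q\in[0,1]$, and that \eqref{conditions_iii} is precisely what guarantees that the singularity of $m_{H_{\psi_q}}$ one must handle near the origin is a simple pole, so that passing the inversion contour over it produces only a $q$‑uniformly bounded residue term.
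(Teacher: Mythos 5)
Your strategy is essentially the paper's: dominated convergence via \eqref{eq:ratio_approx} for the derivatives, then a split of the jump integral at $|y|=1$ with a second--order Taylor bound on $\{|y|\le 1\}$ and a first--order bound on $\{|y|>1\}$. The one step I would not accept as written is your justification of the $q$--uniform linear bound on $H_{\psi_q}\heb$ by ``passing the inversion contour over a simple pole, producing a $q$--uniformly bounded residue term.'' For $q>0$ the multiplier $m_{H_{\psi_q}}$ has no pole at all in $\bb{S}_{[0,1/2]}$, since $\phi^{(q)}_+(0)>0$; the real difficulty is that on the real line $m_{H_{\psi_q}}(0)=1/\phi^{(q)}_+(0)$, which blows up as $q\downarrow 0$ whenever $\phi_+(0)=0$, so neither the multiplier nor its $\bmrm[1]{L}$--norm near the origin is uniform in $q$ and there is no uniformly controlled residue to speak of. The bound you want is nevertheless true, but the clean route --- and the one the paper in effect takes --- is to note that only derivatives of order $n\ge 1$ are ever needed: the factor $(-\i z)^n$ cancels the simple pole of $m_{\Hpsi}$ at $z=0$ (cf.~\eqref{eq:phi_derivative}), so $(H_{\psi_q}\heb)'$ and $(H_{\psi_q}\heb)''$ admit contour--shifted representations on $\bb R$ that are bounded uniformly in $q$ and converge uniformly as $q\downarrow0$. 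The paper then sets $D_q=H_{\psi_q}\heb-\Hpsi\heb$ and bounds
\begin{align*}
|\ttt{I}D_q(x)|\le \|D''_q\|_{\infty}\int_{|y|\le 1} y^2\mu(dy)+\|D'_q\|_\infty\int_{|y|>1} |y|\mu(dy)\to 0,
\end{align*}
so the zeroth--order term, the only one afflicted by the pole, never appears and no dominated convergence in $y$ is needed. If you prefer to keep your DCT--in--$y$ formulation, simply replace the linear bound on $H_{\psi_q}\heb$ itself by the mean value theorem with the $q$--uniformly bounded first derivative, which dominates $|H_{\psi_q}\heb(x+y)-H_{\psi_q}\heb(x)|$ by $C|y|$ directly; with that repair your argument goes through.
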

\begin{proof}
By definition of $H_{\psi_q}$, for any $\beta>0$,
\begin{align*}
\lim_{q\downarrow 0}H_{\psi_q}\heb(x)=\frac{1}{\sqrt{2\pi}}e^{-\frac{x}{2}}\lim_{q\downarrow 0}\int_\bb{R} e^{{\rm{i}}\xi   x}m_{H_{\psi_q}}\left(\xi+{\rm{i}}/2\right)\fouho_{\heb}(\xi)\,d\xi,
\end{align*}
which, by \eqref{eq:ratio_approx} and the dominated convergence theorem, equals
$$\frac{1}{\sqrt{2\pi}}e^{-\frac{x}{2}}\int_\bb{R} e^{{\rm{i}}\xi   x}m_{\Hpsi}\left(\xi+\i/2\right)\fouho_{\heb}(\xi)\,d\xi=\Hpsi\heb(x). $$
For the derivatives, we note that, for any $n\in\bb{N}$, $x\in\bb{R}$,
\begin{align*}
(\Hpsi\heb)^{(n)}(x)&=\sqpi e^{-\frac{x}{2}}\int_{\bb{R}} \left({\rm{i}}\left(\xi+\i/2\right)\right)^n m_{\Hpsi}\left(\xi+\i/2\right)\fouho_{\heb}(\xi)\,d\xi \\
&=\lim_{q\downarrow 0}e^{-\frac{x}{2}}\sqpi\int_\bb{R} e^{{\rm{i}}\xi   x} m_{H_{\psi_q}}\left(\xi+\i/2\right)\left({\rm{i}}\left(\xi+\i/2\right)\right)^n\fouho_{\heb}(\xi)\,d\xi \\
&=\lim_{q\downarrow 0}(H_{\psi_q}\heb)^{(n)}(x).
\end{align*}
On the other hand, using the analyticity of the function $z\mapsto (-\i z)^n m_{\Hpsi}(z)=\frac{(-\i z)^n}{\phi_+(-\i z)}\frac{W_{\phi_+}(1-\i z)}{W_{\phi_-}(1+\i z)}$ in $\bb{S}_{[0,1]}$ for $n\ge 1$ and recalling Proposition~\ref{multiplier_result}, we get
\begin{align*}
(\Hpsi\heb)^{(n)}(x)=\sqpi\int_\bb{R} e^{{\rm{i}}\xi   x} ({\rm{i}}\xi  )^nm_{\Hpsi}(\xi)\fou_{\heb}(\xi)\,d\xi.
\end{align*}
Since the modulus of the integrand above decays faster than any polynomial with respect to $\xi$, we deduce that $(\Hpsi\heb)^{(n)}\in \C^\infty_0(\bb{R})$ for any $n\ge 1$, and using \eqref{eq:ratio_approx}, it is easy to see that, as $q \to 0$, $(H_{\psi_q}\heb)^{(n)}\to (\Hpsi\heb)^{(n)}$ in the uniform topology. Therefore, denoting
\begin{align*}
\ttt{I}f(x)=\int_\bb{R}\ttt{F}f(x,y)\mu(dy) \text{  and  }
D_q(x)=H_{\psi_q}\heb(x)-\Hpsi\heb(x)
\end{align*}
we have, as $q \to 0$,
\begin{equation*}
\begin{aligned}
&|\ttt{I}H_{\psi_q}\heb(x)-\ttt{I}\Hpsi\heb(x)|=|\ttt{I}D_q(x)|\\
&\le \int_{|y|\le 1}\left|D_q(x+y)-D_q(x)-y D'_q(x)\right|\mu(dy)+\int_{|y|> 1} \left|D_q(x+y)-D_q (x)\right|\mu(dy)\\
&\le \|D''_q\|_{\infty}\int_{|y|\le 1} y^2\mu(dy)+\|D'_q\|_\infty\int_{|y|>1} |y|\mu(dy)\to 0
\end{aligned}
\end{equation*}
by our assumption on $\psi$. This proves the lemma.
\end{proof}
Therefore, using \eqref{eq:ido_1} and \eqref{eq:ido_2} and the previous lemma, we conclude that
\begin{align*}
A_\ttt{2}[\psi]\Hpsi\heb=A_\ttt{IDO}[\psi]\Hpsi\heb.
\end{align*}
By linearity of the operators, the above identity extends to $\cc{E}_\psi$ which completes the proof of (\ref{it:2d}).

\section{Proof of Theorem~\ref{thm1:main_thm_1}: the multidimensional case}\hlabel{sec:proof2}
The proof of  Theorem~\ref{thm1:main_thm_1}  follows from Theorem~\ref{thm:intertwining_pdo} by a combination of tensorization and similarity transform techniques that we first describe in the general context of semigroups.
\subsection{Tensorization and similarity transform of semigroups and their generators}\hlabel{tens_sim} For $d\in\bb{N}$, consider the $\cc{C}_0$-contraction semigroups $P^{(1)}, P^{(2)},\ldots, P^{(d)}$ defined on the Hilbert space $\bmrm{L}(\bb{R},e)$. For each $t\ge 0$, define $\bm{P}_t$ to be the tensor product of $P^{(1)}_t,\ldots, P^{(d)}_t$. It is plain that $(\bm{P}_t)_{t\geq0}$ is a $\cc{C}_0$-contraction semigroup on $\bmrm{L}(\bb{R}^d,\mathbf{e})$ where $\mathbf{e}(\bm x)=\otimes_{k=1}^d e(\bm x)=e^{\langle\bm x,\bm 1\rangle}$. If $(A^{(k)},\D(A^{(k)}))$ denotes the generator of $P^{(k)}$, $k=1,\ldots,d$, the generator of $(\bm{P}_t)_{t\geq0}$ is given by
\begin{align*}
\hptg{C76}{\bm{A}=\sum_{k=1}^d I\otimes\ldots\otimes I\otimes A^{(k)}\otimes I\otimes\ldots\otimes I.}
\end{align*}
Since we could not find a proper reference regarding the core for the tensor product of generators, we provide the result in the next lemma.
\begin{lem}\hlabel{lem:tensor_core}
$\otimes_{k=1}^d\D(A^{(k)})$ forms a core for $\bm{A}$. In particular, for $k=1,\ldots, d$, if $\cc{D}_k\subseteq\D(A^{(k)})$ are such that $\cc{D}_k$ is a core for $P^{(k)}$, then $\bm{\cD}=\otimes_{k=1}^d\cc{D}_k$ forms a core for $\bm{A}$.
\end{lem}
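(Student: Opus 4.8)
The plan is to prove that $\otimes_{k=1}^d \D(A^{(k)})$ is a core for $\bm A$, and then deduce the statement about the $\cc D_k$'s by a two-step density argument. The main tool is the standard criterion: a subspace $\cc V\subseteq\D(\bm A)$ is a core if and only if it is dense in $\D(\bm A)$ for the graph norm, and, since $\bm A$ generates a $\cc C_0$-semigroup, it suffices to show $\cc V$ is dense in $\bmrm{L}(\bb R^d,\mathbf e)$ and invariant under $(\bm P_t)_{t\ge0}$ (see \cite[Proposition~1.7]{engel-nagel}, already invoked in the paper).

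First I would establish invariance. For $\f=\otimes_{k=1}^d f_k$ with $f_k\in\D(A^{(k)})$, one has $\bm P_t\f=\otimes_{k=1}^d P^{(k)}_t f_k$ by the very definition of the tensor product semigroup, and since each $\D(A^{(k)})$ is invariant under $P^{(k)}$ (being the domain of its generator), $\bm P_t\f$ again lies in $\otimes_{k=1}^d\D(A^{(k)})$. By linearity the whole algebraic tensor product $\otimes_{k=1}^d\D(A^{(k)})$ is $(\bm P_t)_{t\ge0}$-invariant. Density of $\otimes_{k=1}^d\D(A^{(k)})$ in $\bmrm{L}(\bb R^d,\mathbf e)$ follows from the fact that the algebraic tensor product of dense subspaces of the $\bmrm{L}(\bb R,e)$'s is dense in the Hilbertian tensor product $\bmrm{L}(\bb R^d,\mathbf e)=\bigotimes_{k=1}^d\bmrm{L}(\bb R,e)$ — a standard fact about Hilbert space tensor products — combined with the density of each $\D(A^{(k)})$ in $\bmrm{L}(\bb R,e)$ (a core is in particular dense). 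Applying \cite[Proposition~1.7]{engel-nagel} then yields that $\otimes_{k=1}^d\D(A^{(k)})$ is a core for $\bm A$.

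For the refined statement, suppose $\cc D_k\subseteq\D(A^{(k)})$ is a core for $A^{(k)}$ for each $k$. I would first show $\bm{\cD}=\otimes_{k=1}^d\cc D_k$ is dense in $\otimes_{k=1}^d\D(A^{(k)})$ with respect to the graph norm of $\bm A$, and then conclude by transitivity of cores: if $\bm{\cD}$ is a core for the restriction $\bm A|_{\otimes_k\D(A^{(k)})}$ and the latter's closure is $\bm A$, then $\bm{\cD}$ is a core for $\bm A$. Concretely, given $\f=\otimes_k f_k$ with $f_k\in\D(A^{(k)})$, pick sequences $f_k^{(n)}\in\cc D_k$ with $f_k^{(n)}\to f_k$ in the graph norm of $A^{(k)}$; then the elementary identity
\begin{align*}
\bm A\Big(\otimes_k f_k^{(n)}\Big)-\bm A\Big(\otimes_k f_k\Big)=\sum_{j=1}^d f_1^{(n)}\otimes\cdots\otimes A^{(j)}f_j^{(n)}\otimes\cdots\otimes f_d^{(n)}-\sum_{j=1}^d f_1\otimes\cdots\otimes A^{(j)}f_j\otimes\cdots\otimes f_d
\end{align*}
together with the cross-norm property $\|\otimes_k g_k\|_{\mathbf e}=\prod_k\|g_k\|_e$ and a telescoping estimate shows $\otimes_k f_k^{(n)}\to\f$ in the graph norm of $\bm A$; extending by linearity over finite sums of elementary tensors gives graph-norm density of $\bm{\cD}$ in $\otimes_k\D(A^{(k)})$. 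Since the latter is a core for $\bm A$ by the first part, $\bm{\cD}$ is a core for $\bm A$ as well.

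The only mildly delicate point — the main obstacle, such as it is — is the bookkeeping in the telescoping estimate: one must control each of the $d$ summands in the displayed difference simultaneously, which requires uniform boundedness of the norms $\|f_k^{(n)}\|_e$ and $\|A^{(k)}f_k^{(n)}\|_e$ along the approximating sequences. This is automatic since convergent sequences are bounded, but it is worth spelling out. Everything else is a routine application of standard Hilbert space tensor product facts and the semigroup core criterion, so I do not expect any genuine difficulty.
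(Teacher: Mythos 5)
Your proof is correct. The first half (invariance of $\otimes_{k=1}^d\D(A^{(k)})$ under $(\bm P_t)_{t\geq0}$, density, and \cite[Proposition~1.7]{engel-nagel}) is exactly the paper's argument. For the second half you take a genuinely different route: you prove that $\bm{\cD}$ is dense in $\otimes_{k=1}^d\D(A^{(k)})$ for the graph norm of $\bm A$ via a telescoping estimate on elementary tensors, and then conclude by transitivity of cores. The paper instead invokes the resolvent-based criterion, showing that $(\alpha\bm I-\bm A)(\bm{\cD})$ is dense in $\bmrm{L}(\bb R^d,\mathbf e)$ by writing this range as a sum of tensor products involving $(\alpha I-A^{(k)})(\cc D_k)$. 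Your approach buys robustness: it uses only the closedness of $\bm A$, the cross-norm property $\|\otimes_k g_k\|=\prod_k\|g_k\|$, and boundedness of convergent sequences, and it sidesteps the need to describe $(\alpha\bm I-\bm A)(\bm{\cD})$ explicitly --- a point where the paper's displayed identity is in fact somewhat loose, since $(\alpha\bm I-\bm A)$ does not act leg-by-leg on an elementary tensor the way a single $(\alpha I-A^{(k)})$ does (for $d=2$, $\sum_k f_1\otimes\cdots\otimes(\alpha I-A^{(k)})f_k\otimes\cdots\otimes f_d$ produces $d\alpha\,\f$ rather than $\alpha\,\f$). The paper's route, when carried out carefully, is shorter because the resolvent criterion dispenses with graph-norm bookkeeping; yours makes every estimate explicit. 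The only point worth stating precisely in your write-up is the transitivity step itself: the closure of $\bm A|_{\bm{\cD}}$ extends $\bm A|_{\otimes_k\D(A^{(k)})}$, hence its closure contains the closure of the latter, which is $\bm A$ by the first part, and is contained in $\bm A$ by closedness.
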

\begin{proof}
	First note that $\otimes_{k=1}^d\D(A^{(k)})\subset\D(\bm A)$, and it is invariant under the semigroup $\bm P$. Therefore, the first part of the statement follows by \cite[Proposition 1.7]{engel-nagel}. Next, to show that $\bm{\cD}=\otimes_{k=1}^d\cc{D}_k$ is a core for $\bm{A}$, it suffices to show that
	\begin{enumerate}[(i)]
		\item \hlabel{it:dense} $\bm{\cc D}$ is dense in $\LLRe$,
		\item \hlabel{it:image_dense} $(\alpha\bm{I}-\bm{A})(\bm{\cc D})$ is dense in $\LLRe$ for any $\alpha>0$.
	\end{enumerate}
Item~\eqref{it:dense} is true since $\cc{D}_k$ is dense in $\LRe$ for each $k=1,\ldots,d$. For item~\eqref{it:image_dense}, note that for any $\alpha>0$, one has
\begin{align*}
	(\alpha\bm{I}-\bm{A})(\bm{\cc D})=\sum_{k=1}^d \cc{D}_1\otimes\cdots\otimes (\alpha I-A^{(k)})(\cc{D}_k)\otimes\cdots\otimes\cc{D}_d.
\end{align*}
Since $\cc{D}_k$ is a core for $A^{(k)}$, $(\alpha I-A^{(k)})(\cc{D}_k)$ is dense in $\LRe$. Therefore, the set on the right hand side of the above equation is also dense in $\LLRe$. This proves the lemma.
\end{proof}

Once  a semigroup on $\bmrm{L}(\bb{R}^d,\mathbf{e})$ is defined,  one can construct a class of semigroups via similarity transform of the coordinates of $\bb{R}^d$. The resulting semigroup becomes similar to the original one. More specifically, let  $(\bm{P}_t)_{t\geq 0}$ be a semigroup on $\bmrm{L}(\bb{R}^d,\mathbf{e})$, and for any Borel bijection $\mathfrak{g}:\bb{R}^d\to\bb{R}^d$ one can define a new semigroup by similarity transform as follows, for any $t\geq0$,
\begin{align*}
\bm{P}^\mathfrak{g}_t =\mathrm{d}_{\mathfrak{g}}\bm{P}_t \mathrm{d}^{-1}_{\mathfrak{g}}
\end{align*}
where $\mathrm{d}_\mathfrak{g}:\bmrm{L}(\bb{R}^d,\mathbf{e})\to\bmrm{L}(\bb{R}^d,\mathbf{e}_\mathfrak{g})$ with $\mathrm{d}_\mathfrak{g} \f=\f\circ\mathfrak{g}^{-1}$ is an isometry.
The semigroup $(\bm{P}^\mathfrak{g}_t)_{t\geq 0}$ is defined on a new $\bmrm{L}$-space, namely, $\bmrm{L}(\bb{R}^d,\mathbf{e}_\mathfrak{g})$, where $\mathbf{e}_\mathfrak{g}$ is the push forward measure induced by $\mathfrak{g}$. If  $(\bm{P}_t)_{t\geq0}$  is a $\cc{C}_0$-contraction semigroup, so is $(\bm{P}^\mathfrak{g}_t)_{t\geq0}$. Furthermore, denoting by $\bm{A},\bm{A}_\mathfrak{g}$  the generators of $ (\bm{P}_t)_{t\geq0}, (\bm{P}^\mathfrak{g}_t)_{t\geq0}$ respectively, we have
\begin{align*}
\bm{A}_\mathfrak{g} =\mathrm{d}_{\mathfrak{g}}\bm{A} \mathrm{d}^{-1}_{\mathfrak{g}} \mbox{ on } \D(\bm{A}_\mathfrak{g})=\left\{\f\in\bmrm{L}(\bb{R}^d,\mathbf{e}_\mathfrak{g}); \: \f\circ\mathfrak{g}\in\D(\bm{A})\right\}.
\end{align*}
\subsection{Tensorization of Fourier multiplier operators}\hlabel{sec:tens-sim} Since Fourier multiplier operators are closed, it is easy to define their tensor product on multi-dimensional $\bmrm{L}$ spaces. For instance, let $\{(\LL_k,\D(\LL_k))\}_{k=1}^d\subset\mathscr{M}$ and $\bm{\LL}$ be the shifted Fourier multiplier on $\otimes_{k=1}^d\D(\LL_k)$ defined by
\begin{align*}
\fouh_{\bm{\LL}\f}(\bm{\xi})=\prod_{k=1}^d m_{\LL_k}\left(\xi_k+\i/2\right)\fouho_{f_k}(\xi_k)
\end{align*}
where $\f=\otimes_{k=1}^d f_k$, $f_k\in\D(\LL_k)$ for $k=1,\ldots,d$ and $\bm{\xi}=(\xi_1,\ldots, \xi_d) \in \bb{R}^d$. Note that in this case $\fouh$ stands for the multi-dimensional shifted Fourier transform, i.e.~
\begin{align*}
\fouh_{\f}(\bm{\xi})=(2\pi)^{-\frac{d}{2}}\int_{\bb{R}^d} e^{-{\rm{i}}\langle\bm{x},\bm{\xi}+\frac{{\rm{i}}}{2}\bm{1} \rangle}\f(\bm{x})\,d\bm{x}
\end{align*}
where we  recall that $\bm{1}$ is the vector in $\bb{R}^d$ consisting of all $1$'s. We call $\bm{\LL}$  the tensor product of the $\LL_k$'s and write
\begin{align*}
\bm{\LL}=\otimes_{k=1}^d\LL_k.
\end{align*}
The operator $\bm{\LL}$ is closable when restricted on $\otimes_{k=1}^d\D(\LL_k)$ and its closure is the shifted Fourier multiplier operator on $\bmrm{L}(\bb{R}^d,\mathbf{e})$ with the multiplier function
\begin{align*}
\mm_{\bm{\LL}}\left(\bm{\xi}+\frac{{\rm{i}}}{2}\bm{1}\right)=\prod_{k=1}^d m_{\LL_k}\left(\xi_k+\frac{{\rm{i}}}{2}\right).
\end{align*}
When taking the tensor product of Fourier multipliers, we will always mean the shifted Fourier multiplier operator mentioned above. Just like the one-dimensional Fourier multiplier operator, Propositions~\ref{dense_range_intertwining}, \ref{m-group} and \ref{intertwining_equiv} hold for tensor products of shifted Fourier multiplier operator  and tensor product of bounded operators as well. Next, we prove the following lemma regarding the core for the tensor product of Fourier multiplier operators.

\begin{lem}\hlabel{lem:core_tensor}
	Let $(\Lambda_k)_{k=1}^d$ be a sequence of shifted Fourier multiplier operators on $\LRe$ such that $\cc{D}_k$ is a core for $\Lambda_k$ for each $k=1,\ldots, d$. Then, $\bm{\cc{D}}=\otimes_{k=1}^d\cc{D}_k$ is a core for $\bm{\Lambda}=\otimes_{k=1}^d\Lambda_k$.
\end{lem}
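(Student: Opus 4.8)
The plan is to transport the statement, via the $d$-dimensional shifted Fourier transform $\fouh$, into a density statement in a single weighted $\bmrm{L}$-space, in the spirit of Proposition~\ref{dense_range_intertwining}. By the construction in Section~\ref{sec:tens-sim}, $\bm{\Lambda}=\otimes_{k=1}^d\Lambda_k$ is the shifted Fourier multiplier operator on $\LLRe$ with multiplier $\mm_{\bm{\Lambda}}\left(\bm{\xi}+\tfrac{\i}{2}\bm{1}\right)=\prod_{k=1}^d m_{\Lambda_k}\left(\xi_k+\tfrac{\i}{2}\right)$ and maximal domain. Writing $w(\bm{\xi})=1+\prod_{k=1}^d\left|m_{\Lambda_k}\left(\xi_k+\tfrac{\i}{2}\right)\right|^2$ and using the unitarity of $\fouh:\LLRe\to\bmrm{L}(\bb{R}^d)$, one checks that $\fouh$ restricts to an isometric bijection from $\D(\bm{\Lambda})$, equipped with the graph norm of $\bm{\Lambda}$, onto $\bmrm{L}(\bb{R}^d,w(\bm{\xi})d\bm{\xi})$; hence a subspace $\cc{D}\subseteq\D(\bm{\Lambda})$ is a core for $\bm{\Lambda}$ if and only if $\fouh(\cc{D})$ is dense in $\bmrm{L}(\bb{R}^d,w(\bm{\xi})d\bm{\xi})$. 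Likewise, ``$\cc{D}_k$ is a core for $\Lambda_k$'' means exactly that $\fouho(\cc{D}_k)$ is dense in $\bmrm{L}(\bb{R},\mu_k)$, where $\mu_k(d\xi)=\left(1+\left|m_{\Lambda_k}\left(\xi+\tfrac{\i}{2}\right)\right|^2\right)d\xi$; and $\bm{\cc{D}}=\otimes_{k=1}^d\cc{D}_k\subseteq\D(\bm{\Lambda})$, since for $f_k\in\cc{D}_k\subseteq\D(\Lambda_k)$ the function $\mm_{\bm{\Lambda}}\left(\cdot+\tfrac{\i}{2}\bm{1}\right)\fouh_{f_1\otimes\cdots\otimes f_d}=\bigotimes_{k=1}^d\left(m_{\Lambda_k}\left(\cdot+\tfrac{\i}{2}\right)\fouho_{f_k}\right)$ lies in $\bmrm{L}(\bb{R}^d)$ by Tonelli.

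Next I would fix a convenient dense subset of $\bmrm{L}(\bb{R}^d,w(\bm{\xi})d\bm{\xi})$ and show it lies in the closure of $\fouh(\bm{\cc{D}})$. Since each $m_{\Lambda_k}$ is analytic on the strip $\bb{S}_{(0,1)}$, the trace $m_{\Lambda_k}(\cdot+\tfrac{\i}{2})$ is continuous on $\bb{R}$, so $w$ is continuous, hence locally bounded and locally integrable on $\bb{R}^d$; therefore $\C^\infty_c(\bb{R}^d)$, and (approximating each of its elements on a fixed box by a multidimensional Fourier partial sum) also the algebraic tensor product $\Span\{\varphi_1\otimes\cdots\otimes\varphi_d;\ \varphi_k\in\C^\infty_c(\bb{R})\}$, is dense in $\bmrm{L}(\bb{R}^d,w(\bm{\xi})d\bm{\xi})$. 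So it suffices to fix $\varphi_k\in\C^\infty_c(\bb{R})$ and approximate $\varphi_1\otimes\cdots\otimes\varphi_d$ by elements of $\fouh(\bm{\cc{D}})$. As $\varphi_k\in\bmrm{L}(\bb{R},\mu_k)$ and $\fouho(\cc{D}_k)$ is dense there, pick $g^{(k)}_n=\fouho_{f^{(k)}_n}$ with $f^{(k)}_n\in\cc{D}_k$ and $g^{(k)}_n\to\varphi_k$ in $\bmrm{L}(\bb{R},\mu_k)$. Since the $d$-dimensional shifted Fourier transform of a tensor product factorizes as the product of the one-dimensional ones, $g^{(1)}_n\otimes\cdots\otimes g^{(d)}_n=\fouh_{f^{(1)}_n\otimes\cdots\otimes f^{(d)}_n}\in\fouh(\bm{\cc{D}})$, and it remains to show $g^{(1)}_n\otimes\cdots\otimes g^{(d)}_n\to\varphi_1\otimes\cdots\otimes\varphi_d$ in $\bmrm{L}(\bb{R}^d,w(\bm{\xi})d\bm{\xi})$.

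The one genuinely non-formal point — and the step I expect to be the main obstacle — is that $w=1+\prod_k|m_{\Lambda_k}(\cdot+\tfrac{\i}{2})|^2$ is \emph{not} the product weight $\prod_k\left(1+|m_{\Lambda_k}(\cdot+\tfrac{\i}{2})|^2\right)$, so $\bmrm{L}(\bb{R}^d,w(\bm{\xi})d\bm{\xi})$ is not literally the Hilbert tensor product $\bigotimes_{k=1}^d\bmrm{L}(\bb{R},\mu_k)$, and the ``convergence of pure tensors'' argument does not apply verbatim. I would resolve this with the elementary pointwise inequality $w(\bm{\xi})\le\prod_{k=1}^d\left(1+\left|m_{\Lambda_k}\left(\xi_k+\tfrac{\i}{2}\right)\right|^2\right)$ (obtained by expanding the right-hand product and discarding the non-extreme terms), which yields a norm-contracting inclusion $\bigotimes_{k=1}^d\bmrm{L}(\bb{R},\mu_k)\hookrightarrow\bmrm{L}(\bb{R}^d,w(\bm{\xi})d\bm{\xi})$. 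In the tensor-product space, the standard multilinear telescoping estimate together with the boundedness of the sequences $\bigl(\|g^{(i)}_n\|_{\bmrm{L}(\bb{R},\mu_i)}\bigr)_n$ gives $g^{(1)}_n\otimes\cdots\otimes g^{(d)}_n\to\varphi_1\otimes\cdots\otimes\varphi_d$ in $\bigotimes_{k=1}^d\bmrm{L}(\bb{R},\mu_k)$, hence also in $\bmrm{L}(\bb{R}^d,w(\bm{\xi})d\bm{\xi})$ by the contraction. Thus each $\varphi_1\otimes\cdots\otimes\varphi_d$ lies in the $\bmrm{L}(\bb{R}^d,w)$-closure of $\fouh(\bm{\cc{D}})$; since these tensors span a dense subspace, $\fouh(\bm{\cc{D}})$ is dense in $\bmrm{L}(\bb{R}^d,w(\bm{\xi})d\bm{\xi})$, i.e.~$\bm{\cc{D}}$ is a core for $\bm{\Lambda}$. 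Everything else — identifying graph norms with weighted $\bmrm{L}$-norms, factorizing the shifted Fourier transform on tensors, and the telescoping estimate — is routine.
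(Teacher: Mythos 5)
Your proposal is correct and follows essentially the same route as the paper: both pass through the shifted Fourier transform to identify cores with density in weighted $\bmrm{L}$-spaces, use that the Hilbert tensor product of the one-dimensional weighted spaces is the $d$-dimensional space with the product weight, and then invoke the pointwise inequality $1+\prod_k|m_{\Lambda_k}|^2\le\prod_k(1+|m_{\Lambda_k}|^2)$ to transfer density to the graph-norm space of $\bm{\Lambda}$. The only difference is that you spell out the density-transfer step (via compactly supported tensor products and the telescoping estimate) that the paper's proof leaves implicit.
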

\begin{proof}
	We recall, from the proof of Proposition~\ref{dense_range_intertwining}, that $\cc{D}_k$ is a core for $\Lambda_k$ if and only if $\fouho_{\cc{D}_k}$ is dense in $\bmrm{L}(\bb R,1+|m^e_{\Lambda_k}|^2)$. Let us write $\mathbf{m}=\prod_{k=1}^d (1+|m^e_{\Lambda_k}|^2)$. Then, it follows that
\begin{align*}
	\bigotimes_{k=1}^d \bmrm{L}(\bb R,1+|m^e_{\Lambda_k}|^2)=\bmrm{L}(\bb R^d,\mathbf{m})
\end{align*}
and therefore, $\bm{\cc D}$ is dense in $\bmrm{L}(\bb R^d,{\bf m})$. Since $\mathbf{m}\ge 1+\prod_{k=1}^d |m^e_{\Lambda_k}|^2=1+|\mathbf{m}^{\bf e}_{\bm{\Lambda}}|^2$, this implies that $\bm{\cc D}$ is also dense in $\bmrm{L}(\bb R^d, 1+|\mathbf{m}^{\bf e}_{\bm{\Lambda}}|^2)$. Therefore, $\bm{\cD}$ is a core for $\bm{\Lambda}$.
\end{proof}

\begin{prop}\hlabel{prop:tensor_intertwining}
Let, for each $k=1, \ldots,d$, $A_k, B_k \in \mathscr{D}(\bmrm{L}(\bb{R},e))$ and $(\Lambda_k,\D(\Lambda_k)) \in \mathscr{M}$, and, assume that there exists a dense subset $\cc{D}_k$ of $\LRe$ such that
$ A_k\Lambda_k=\Lambda_k B_k  \mbox{ on } \cc{D}_k$. Then,
\begin{align}\hlabel{eq:tensor_intertwining_id}
\bm{A}\bm{\Lambda}=\bm{\Lambda}\bm{B} \ \ \ \mbox{on \ $\bm{\cc D}$}
\end{align}
where $\bm{\cc D}=\otimes_{k=1}^d \cc{D}_k$, $\bm{A}=\otimes_{k=1}^d A_k, \ \bm{B}=\otimes_{k=1}^d B_k$, and $\bm{\Lambda}=\otimes_{k=1}^d \Lambda_k$. In particular, when the $A_k,B_k$'s are bounded operators and $\cc{D}_k$ is a core for $\Lambda_k$ for each $k$, the above identity extends to $\D(\bm{\Lambda})$.
\end{prop}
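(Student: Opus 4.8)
The plan is to establish the identity first on elementary tensors, extend it by linearity to $\bm{\cc D}$, and then, under the extra hypotheses of the last sentence, push it to $\D(\bm\Lambda)$ by a core-and-closedness argument. First I would fix an elementary tensor $\f=f_1\otimes\cdots\otimes f_d$ with $f_k\in\cc D_k$ and unwind the definitions. By the standing convention \eqref{eq:domains} accompanying the hypothesis $A_k\Lambda_k=\Lambda_k B_k$ on $\cc D_k$, we have $\cc D_k\subseteq\D(\Lambda_k)\cap\D(B_k)$, $\Lambda_k(\cc D_k)\subseteq\D(A_k)$ and $B_k(\cc D_k)\subseteq\D(\Lambda_k)$ for each $k$. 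Reading $\bm A=\otimes_k A_k$, $\bm B=\otimes_k B_k$ on the algebraic tensor products $\otimes_k\D(A_k)$, $\otimes_k\D(B_k)$ and $\bm\Lambda=\otimes_k\Lambda_k$ on $\otimes_k\D(\Lambda_k)\subseteq\D(\bm\Lambda)$ (cf.~Subsection~\ref{sec:tens-sim}), the vectors $\bm\Lambda\f=\otimes_k\Lambda_k f_k$, $\bm A\bm\Lambda\f=\otimes_k A_k\Lambda_k f_k$, $\bm B\f=\otimes_k B_k f_k$ and $\bm\Lambda\bm B\f=\otimes_k\Lambda_k B_k f_k$ are then all well defined in $\LLRe$. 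Invoking the one-dimensional identities $A_k\Lambda_k f_k=\Lambda_k B_k f_k$ coordinatewise and using multilinearity of $\otimes$ gives $\bm A\bm\Lambda\f=\otimes_k A_k\Lambda_k f_k=\otimes_k\Lambda_k B_k f_k=\bm\Lambda\bm B\f$; since $\bm{\cc D}=\otimes_k\cc D_k$ is the linear span of such elementary tensors and $\bm A,\bm B,\bm\Lambda$ are linear, \eqref{eq:tensor_intertwining_id} follows on all of $\bm{\cc D}$.

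For the last assertion I would add that each $A_k,B_k$ is bounded and each $\cc D_k$ is a core for $\Lambda_k$. Then $\bm A=\otimes_k A_k$ and $\bm B=\otimes_k B_k$ extend to bounded operators on $\LLRe$ (of norm $\prod_k\|A_k\|$, resp.~$\prod_k\|B_k\|$), and Lemma~\ref{lem:core_tensor} shows $\bm{\cc D}=\otimes_k\cc D_k$ is a core for $\bm\Lambda=\otimes_k\Lambda_k$. Given $\f\in\D(\bm\Lambda)$, pick $\f_n\in\bm{\cc D}$ with $\f_n\to\f$ and $\bm\Lambda\f_n\to\bm\Lambda\f$; boundedness yields $\bm B\f_n\to\bm B\f$ and $\bm A\bm\Lambda\f_n\to\bm A\bm\Lambda\f$, and since $\bm\Lambda\bm B\f_n=\bm A\bm\Lambda\f_n$ by the part already proved, while $\bm\Lambda$ is closed (being the shifted Fourier multiplier operator with the product multiplier, cf.~Subsection~\ref{sec:tens-sim} and Proposition~\ref{m-group}), closedness forces $\bm B\f\in\D(\bm\Lambda)$ and $\bm\Lambda\bm B\f=\bm A\bm\Lambda\f$. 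Hence \eqref{eq:tensor_intertwining_id} extends to $\D(\bm\Lambda)$.

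The computations are routine; the only point that needs care is the domain bookkeeping in the elementary-tensor step, i.e.~checking that, under \eqref{eq:domains}, each of $\bm\Lambda$, $\bm A\bm\Lambda$, $\bm B$ and $\bm\Lambda\bm B$ genuinely acts on the elements of $\bm{\cc D}$ as the tensor product of the one-dimensional actions (so that the coordinatewise hypothesis can be applied), together with the appeal to Lemma~\ref{lem:core_tensor} for the extension to $\D(\bm\Lambda)$. No ideas beyond these are required.
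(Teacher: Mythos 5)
Your proposal is correct and follows essentially the same route as the paper: verify the identity on elementary tensors by applying the coordinatewise intertwinings, extend by linearity to $\bm{\cc D}$, and then use boundedness of $\bm A,\bm B$, Lemma~\ref{lem:core_tensor}, and closedness of $\bm\Lambda$ to pass to $\D(\bm\Lambda)$. The only difference is that you spell out the approximation/closedness step explicitly, which the paper leaves as a one-line remark.
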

\begin{proof} Since any $\f\in\bm{\cc D}=\otimes_{k=1}^d\cc{D}_k$ \hptg{C78}{is a linear combination of functions of the form} $\f(\bm{x})=\prod_{k=1}^d f_k(x_k)$,
	where $\bm{x}=(x_1,\ldots, x_d)$ and $f_k\in\cc{D}_k$, by the definition of the tensor product of operators, we have
	\begin{equation}\hlabel{eq:tens_int_pf}
		\begin{aligned}
			\bm{A}\bm{\Lambda}\f&=\bm{A}(\otimes_{k=1}^d\Lambda_k f_k)
			=\otimes_{k=1}^d A_k\Lambda_k f_k=\otimes_{k=1}^d \Lambda_k B_k f_k =\bm{\Lambda}\bm{B}\f,
		\end{aligned}
	\end{equation}
which proves \eqref{eq:tensor_intertwining_id}. When $A_k,B_k$ are bounded operators, so are $\bm{A}$ and $\bm{B}$. Moreover, if $\cc{D}_k$ is a core for $\Lambda_k$ for $k=1,\ldots ,d$, then by Lemma~\ref{lem:core_tensor}, $\bm{\cc D}=\otimes_{k=1}^d\cc{D}_k$ is a core for $\bm{\Lambda}$. Therefore, boundedness of $\bm{A}, \bm{B}$ and \tred{closedness} of $\bm{\Lambda}$ ensure that \eqref{eq:tens_int_pf} extends for all $\f\in\D(\bm{\Lambda})$.
\end{proof}
%\begin{corr}\hlabel{corr:ws_similarity}
%	\textcolor{green}{I will include a corollary saying that the weak similarity is stable by the similarity transform.}
%\end{corr}
\begin{prop}\hlabel{prop:core}
	Let $\bm{\psi}=(\psi_1,\psi_2,\ldots, \psi_d)\in\mathbf{N}^d_b(\bb{R})$ such that $\psi_k(\xi)=\phi_{+,k}(-\mathrm{i}\xi)\phi_{-,k}(\mathrm{i}\xi)$ and  $\psi_k(\xi)\neq -\i\ttt{d}\xi$ for any $k=1,2,\ldots, d$. Assume that $M \in {\rm{GL}}_d(\bb{R})$ and let $\bm{H}^M_{\bm{\psi}}$ be defined as in Theorem~\ref{th:spectral_decomp}. Then,
	\begin{comment}
	\begin{equation} \hlabel{eq:defH}
		\bm{H}^M_{\bm{\psi}}\bbm f(\bm x)=\bm{\Hpsi}\bbm f(M \bm x)
	\end{equation}
	where $\bm{H}_{\bm{\psi}}$ is the shifted Fourier  operator with the multiplier
	\begin{equation}\hlabel{eq:Mult}
		\Mpsi(\bm{\xi})=\prod_{k=1}^d\frac{W_{\phi_{+,k}}(-{\rm{i}}\xi  _k)}{W_{\phi_{-,k}}(1+{\rm i}\xi_k)},\ \ \bm{\xi}=(\xi_1, \ldots,\xi_d)\in\bb{R}^d.
	\end{equation}
	Then $\Mpsi$ is analytic on the cylinder $\bb{S}^d_{(0,1)}$ and,
	\end{comment}
	 $\bm{H}^M_{\bm{\psi}}:\D(\bm{H}^M_{\bm{\psi}})\to\bmrm{L}(\bb{R}^d,{\mathbf{e}})$ is a linear operator which is densely defined, closed, injective and has dense range with
	\begin{align}\hlabel{eq:domain_Hpsi}
		\D(\bm{H}^M_{{\bm{\psi}}})=\left\{\f\in\bmrm{L}(\bb{R}^d,{\mathbf{e}}_M);\ \bm{\xi}=(\xi_1,\ldots,\xi_d) \mapsto\fouh_{\f\circ M^{-1}}(\bm{\xi})\prod_{k=1}^d\frac{W_{\phi_{+,k}}(\frac{1}{2}-{\rm{i}}\xi  _k)}{W_{\phi_{-,k}}(\frac{1}{2}+{\rm{i}}\xi  _k)}\in\bmrm{L}(\bb{R}^d)\right\}.
	\end{align}
	Moreover, for sufficiently small $\eps>0$, $\bm{\cc{E}}(\eps)\subset\D(\bm{H}^M_{\bm{\psi}})$ and $\bm{\cc{E}}^M_{\bm{\psi}}(\eps):=\bm{H}^M_{\bm{\psi}}(\bm{\cc{E}}(\eps))$ is dense in $\bmrm{L}(\bb{R}^d,{\mathbf{e}}_M)$ where
	\begin{equation*}
		\bm{\cc{E}}(\eps)=\bigotimes_{k=1}^d\cc{E}(\eps) \mbox{ with } \  \cc{E}(\eps)=\Span\{\heb; \: \beta>0 \}
	\end{equation*}
and $\heb(x)=e^{-(\frac{1}{2}+\eps)x}e^{-\beta e^{-x}}$, $x\in\bb R$.
\end{prop}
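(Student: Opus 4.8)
The plan is to deduce this multidimensional statement from its one-dimensional counterparts --- Theorem~\ref{thm:intertwining_pdo}\eqref{main_thm_1:it:1d} for the operators $H_{\psi_k}$ and Proposition~\ref{prop:core_ido}\eqref{it:2d_new} for the spaces $\cc{E}(\eps)$ --- using the tensorization and similarity-transform machinery of Subsections~\ref{tens_sim} and~\ref{sec:tens-sim}. First I would reduce to the case $M=\mathrm{Id}$: as recalled in Subsection~\ref{tens_sim}, the map $\mathrm{d}_M$ and its inverse are, up to a normalizing Jacobian constant, isometric isomorphisms between the relevant weighted $\bmrm{L}$-spaces, and $\bm{H}^M_{\bm{\psi}}$ is built from $\bm{H}_{\bm{\psi}}$ by pre- and post-composition with these maps, exactly as in its definition in Theorem~\ref{th:spectral_decomp}. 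Since being densely defined, closed, injective, having dense range, the domain identity \eqref{eq:domain_Hpsi}, and the density of the image of $\bm{\cc{E}}(\eps)$ are all invariant under such conjugation, it suffices to prove the assertions for $\bm{H}_{\bm{\psi}}=\bm{H}^{\mathrm{Id}}_{\bm{\psi}}$ on $\LLRe$.

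Next I would realize $\bm{H}_{\bm{\psi}}$ as the closure of $\bigotimes_{k=1}^d H_{\psi_k}$, where each $(H_{\psi_k},\D(H_{\psi_k}))\in\mathscr{M}_e$ is the one-dimensional shifted Fourier multiplier operator of Theorem~\ref{thm:intertwining_pdo}\eqref{main_thm_1:it:1d}, with multiplier $m_{H_{\psi_k}}(z)=W_{\phi_{+,k}}(-\i z)/W_{\phi_{-,k}}(1+\i z)$. By \eqref{eq:W_Zerofree} each $W_{\phi_{\pm,k}}$ is analytic and zero-free on $\bb{C}_{[0,\infty)}$, so each $m_{H_{\psi_k}}$ is analytic and zero-free on $\bb{S}_{(0,1)}$; in particular the product symbol $\mm^{\mathbf{e}}_{\bm\psi}$ of \eqref{eq:Mult} is a.e.\ non-zero on $\bb{R}^d$. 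As pointed out in Subsection~\ref{sec:tens-sim}, Propositions~\ref{dense_range_intertwining} and~\ref{m-group} carry over verbatim to tensor products of operators in $\mathscr{M}$; hence $\bm{H}_{\bm{\psi}}$ is a densely defined, closed, injective shifted Fourier multiplier operator on $\LLRe$ with dense range, admitting $\widehat{\cc{F}}^{\mathbf{e}}_{\C^\infty_c(\bb{R}^d)}$ as a core, and, reading the domain off the symbol, $\D(\bm{H}_{\bm{\psi}})$ is precisely the $M=\mathrm{Id}$ case of \eqref{eq:domain_Hpsi}. This gives the first part of the proposition.

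For the second part, note that since each $\psi_k(\xi)\neq-\i\ttt{d}\xi$, the analysis behind \eqref{eq:l2_domain}--\eqref{eq:delta_psi} in the proof of Proposition~\ref{prop:core_ido}\eqref{it:2d_new} furnishes a threshold $\delta_{\psi_k}>0$ such that, for all $0<\eps<\delta_{\psi_k}$, one has $\cc{E}(\eps)\subset\D(H_{\psi_k})$ and $\cc{E}_{\psi_k}(\eps)=H_{\psi_k}(\cc{E}(\eps))$ is dense in $\LRe$. Fixing $\eps<\min_{1\le k\le d}\delta_{\psi_k}$, the stability of domains under tensor products of closed operators yields $\bm{\cc{E}}(\eps)=\bigotimes_{k=1}^d\cc{E}(\eps)\subset\bigotimes_{k=1}^d\D(H_{\psi_k})\subset\D(\bm{H}_{\bm{\psi}})$, with $\bm{H}_{\bm{\psi}}(\bm{\cc{E}}(\eps))=\bigotimes_{k=1}^d\cc{E}_{\psi_k}(\eps)$, and the latter is dense in $\bigotimes_{k=1}^d\LRe=\LLRe$ because a finite tensor product of dense subspaces is dense. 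Alternatively, density follows directly by observing that $\bm{\cc{E}}(\eps)$, hence its image under the shifted Fourier multiplier $\bm{H}_{\bm{\psi}}$, is invariant under translations of $\bb{R}^d$, and invoking the $d$-dimensional Wiener Tauberian argument used in Corollary~\ref{corr:wiener}. Conjugating back through $\mathrm{d}_M$ completes the proof for general $M$.

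The bulk of the argument is routine bookkeeping on tensor products; the only genuinely delicate points are identifying the domain of the closure $\overline{\bigotimes_{k=1}^d H_{\psi_k}}$ with the symbol-defined domain \eqref{eq:domain_Hpsi}, and making the threshold $\eps$ uniform across the $d$ coordinates. Both are handled coordinatewise --- the former by the graph-norm equivalence from the proof of Proposition~\ref{dense_range_intertwining}, the latter by simply taking $\eps$ below the minimum of the one-dimensional thresholds $\delta_{\psi_k}$ --- so I expect no obstacle beyond those already resolved in the one-dimensional theory.
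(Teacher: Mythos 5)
Your proposal is correct and follows essentially the same route as the paper's own (very terse) proof: reduce to $M=\mathrm{Id}$ by the similarity transform, realize $\bm{H}_{\bm\psi}$ as the closed tensor product of the one-dimensional operators $H_{\psi_k}\in\mathscr{M}_e$ so that the domain identity and the densely-defined/closed/injective/dense-range properties come from Subsections~\ref{ss:fourier_mult} and~\ref{sec:tens-sim}, and obtain the inclusion $\bm{\cc{E}}(\eps)\subset\D(\bm{H}_{\bm\psi})$ and the density of its image from Proposition~\ref{prop:core_ido} applied coordinatewise with $\eps$ below the minimum of the thresholds $\delta_{\psi_k}$. You merely supply more of the bookkeeping that the paper leaves implicit.
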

\begin{proof}
It is   enough to prove the above proposition  for $d=1$. The general case will follow by tensorization and similarity transform technique. When $d=1$, \eqref{eq:domain_Hpsi} follows from Section~\ref{ss:fourier_mult} and the fact that $\cc{E}_\psi(\eps)$ is a core for $A_\ttt{2}[\psi]$ for sufficiently small $\epsilon>0$, follows from Proposition~\ref{prop:core_ido}\eqref{it:2c2}.
\end{proof}
 We are now ready to prove Theorem~\ref{thm1:main_thm_1}.

\subsection{Proof of Theorem~\ref{thm1:main_thm_1}} \label{ss:main_thm_1} Since the weak similarity operators are Fourier multiplier operators, and weak similarity is stable by the similarity transform, it suffices to prove the theorem when $M={\rm Id}$. From Theorem~\ref{thm:intertwining_pdo}\eqref{main_thm_1:it:1b}, it follows that for each $k=1,\ldots, d$, $A_\ttt{PDO}[\psi_k]\Lambda_{\psi_k}=\Lambda_{\psi_k}A_\ttt{PDO}[\psi_0]$ on $\cc{D}(\bb R)$. Since $\bm{A}_{\ttt{PDO}}[\bm \psi]=\otimes_{k=1}^d A_{\ttt{PDO}}[\psi_k]$ and $\bm{\Lambda}_{\bm\psi}=\otimes_{k=1}^d \Lambda_{\psi_k}$, Proposition~\ref{prop:tensor_intertwining} yields that
\begin{align*}
	\bm{A}_\ttt{PDO}[\bm\psi]\bm{\Lambda}_{\bm\psi}=\bm{\Lambda_\psi}\bm{A}_{\ttt{PDO}}[\bm{\psi}_0] \ \text{ on }\: \bm{\cc{D}}(\bb R^d)
\end{align*}
where $\bm{\cc{D}}(\bb R^d)=\otimes_{k=1}^d\cc{D}(\bb R)$.

When $\bm{\psi}\in\NN^d_+(\bb R)$, it means that $\psi_k\in\NN_+(\bb R)$ for each $k=1,\ldots, d$. Therefore, $\Lambda_{\psi_k}\in\mathscr{B}(\LRe)$ for each $k=1,\ldots, d$. Hence, $\bm{\Lambda_\psi}\in\mathscr{B}(\LLRe)$. This completes the proof of the theorem.

\subsection{Proof of Theorem~\ref{thm:semigroup_generation}} \hlabel{ss:pf_thm:semigroup_generation}
\subsubsection{Proof of Theorem~\ref{thm:semigroup_generation}\eqref{it:bessel_generation}} We first observe that $\bm{A}_{\ttt{PDO}}[\bm\psi]=\otimes_{k=1}^d A_\ttt{PDO}[\psi_k]$ for any $\bm{\psi}=(\psi_1,\ldots,\psi_k)\in\NN^d_b(\bb R)$. Next, from Theorem~\ref{thm:intertwining_pdo}\eqref{main_thm_1:it:0a}, we know that the closure (in $\LRe$) of $(A_\ttt{PDO}[\psi_0],\C^\infty_c(\bb R))$ generates the log-squared-Bessel semigroup $Q$ on $\LRe$, and its restriction on $\C_0(\bb R)\cap\LRe$ extends to a Feller semigroup on $\C_0(\bb R)$, whose generator is the closure (in $\C_0(\bb R)$) of $(A_0,\C^\infty_c(\bb R))$. Note that the tensor product $\bm{Q}=\otimes_{k=1}^d Q$ is generated by the closure of $\bm{A}_{\ttt{PDO}}[\bm\psi_0]$ restricted on $\otimes_{k=1}^d \C^\infty_c(\bb R)$, thanks to Lemma~\ref{lem:tensor_core}. Since $\otimes_{k=1}^d \C^\infty_c(\bb R)\subset\C^\infty_c(\bb R^d)$ and both of these two sets are dense in $\LLRe$, the closure of $(\bm{A}_\ttt{PDO}[\bm\psi_0],\C^\infty_c(\bb R^d))$ also generates $\bm{Q}$. Moreover, due to tensorization, $\bm{Q}\in\cc{C}^+_0(\LLRe)$. This completes the proof of item~\eqref{it:bessel_generation}.

\subsubsection{Proof of Theorem~\ref{thm:semigroup_generation}\eqref{it:bijection}} For any $\bm{\psi}=(\psi_1,\psi_2,\ldots,\psi_d)\in\mathbf{N}^d_b(\bb{R})$, we define
\begin{align*}
\bm{P}[\bm{\psi}]=\otimes_{k=1}^d P[\psi_i].
\end{align*}
Clearly, $\bm{P}[\bm{\psi}]\in\cc{C}_0(\bmrm{L}(\bb{R}^d,\mathbf{e}))$. Also, from Theorem~\ref{thm:intertwining_pdo}\eqref{eq:A_2=A_PDO}, we infer that
\begin{align}\hlabel{eq:A_2=A_PDO_mult}
	\bm{A}_\ttt{2}[\bm\psi]=\bm{A}_{\ttt{PDO}}[\bm\psi] \ \ \text{on} \ \ \bm{\Lambda_\psi}(\bm{\cD}(\bb R^d)).
\end{align}
Since $\bm{\Lambda_\psi}(\bm{\cD}(\bb R^d))=\otimes_{k=1}^d \Lambda_{\psi_k}(\cD(\bb R))$ is the tensor product of dense subsets of $\LRe$, it is therefore dense in $\LLRe$. Next, for $M\in\mathrm{GL}_d(\bb R)$ and  $\f\in\bmrm{L}(\bb{R}^d,{\mathbf{e}}_M)$, let us define
\begin{align*}
\FsemiM\f=\bm{P}[\bm{\psi}](\f\circ M)\circ M^{-1}.
\end{align*}
From the discussion in Subsection~\ref{tens_sim}, it follows that $\FsemiM\in\cc{C}_0(\bmrm{L}(\bb R^d,\mathbf{e}_M))$ and $\bm{A}^M_\ttt{2}[\bm\psi]=\mathrm{d}_M\bm{A}_\ttt{2}[\bm\psi]\mathrm{d}_{M^{-1}}$ is its $\bmrm{L}(\bb R^d,\mathbf{e}_M)$-generator. Hence, \eqref{eq:A_2=A_PDO_mult} yields that
\begin{align*}
	\bm{A}^M_\ttt{2}[\bm\psi]=\bm{A}^M_{\ttt{PDO}}[\bm\psi] \ \ \text{on} \ \ \mathrm{d}_{M}\bm{\Lambda_\psi}(\bm{\cD}(\bb R^d))
\end{align*}
and $\mathrm{d}_{M}\bm{\Lambda_\psi}(\bm{\cD}(\bb R^d))$ is also dense in $\LLReM$ as $\mathrm{d}_M$ is an isometry between $\LLReM$ and $\LLRe$.
The second part of this item follows from the proof of Theorem~\ref{thm:intertwining_pdo}\eqref{main_thm_1:it:1d}.
\subsubsection{Proof of Theorem~\ref{thm:semigroup_generation}\eqref{it:ws_semigroups}} From Theorem~\ref{thm:intertwining_pdo}\eqref{eq:bdd_intertwining}, we know that for any $\psi\in\mathbf{N}_b(\bb{R})$, $P[\psi]\Lambda_\psi=\Lambda_\psi Q$ on $\D(\Lambda_\psi)$, where $Q$ is the log-squared-Bessel semigroup. Let $M=\mathrm{Id}$. Then, $\bm{P}^M[\bm{\psi}]=\otimes_{k=1}^d P[\psi_k]$ and therefore, using Proposition~\ref{prop:tensor_intertwining}, we conclude that
\[\bm{P}[\bm{\psi}]\bm{\Lambda}_{\bm{\psi}}=\bm{\Lambda}_{\bm{\psi}}\bm{Q} \ \text{ on } \ \D(\bm{\Lambda_\psi}) \] where $\bm{\Lambda_\psi}=\otimes_{k=1}^d\Lambda_{\psi_k}$ and $\bm{Q}=\otimes_{k=1}^d Q$. This proves \eqref{eq:ws} when $M=\mathrm{Id}$. The general case will follow from the similarity transform argument.

\subsubsection{Proof of Theorem~\ref{thm:semigroup_generation}\eqref{it:thm2adj}} Let us first assume that $M={\rm Id}$. Then, by tensorization,  the adjoint of $\bm{P}[\bm\psi]=\otimes_{k=1}^d P[\psi_k]$ in $\LLRe$ equals
\[\widehat{\bm{P}}[\bm\psi]=\otimes_{k=1}^d\widehat{P}[\psi_k]=\otimes_{k=1}^d P[\ov{\psi}_k]=\bm{P}[\ov{\bm\psi}].\]
Indeed, $\widehat{\bm{P}}[\bm\psi]=\bm{P}[\bm\psi]$ if and only if $\ov{\psi}_k=\psi_k$ for each $k=1,\ldots,d$. Next, we show that for any $M\in\mathrm{GL}_d(\bb R)$, the adjoint of $\bm{P}^M[\bm\psi]$ in $\LLReM$ equals ${\bm P}^M[\ov{\bm\psi}]$. We recall that the operator $\mathrm{d}_M:\bmrm{L}(\bb{R}^d,{\mathbf{e}}_M)\to\bmrm{L}(\bb{R}^d,\mathbf{e})$ defined by $\mathrm{d}_M \f=\f\circ M^{-1}$ is an isometry and $\widehat{\mathrm{d}}_M=\mathrm{d}_{M^{-1}}$. Now, for any $\mathbf{f},\mathbf{g}\in\LLReM$ and $t\ge 0$, we have
\begin{align}
	\int_{\bb{R}^d}\bm{P}^M_t[\bm{\psi}] \f(\bm{x}) \g(\bm{x}){\mathbf{e}}_M(\bm{x})\,d\bm{x}&=\int_{\bb{R}^d}\bm{P}_t[\bm{\psi}](\f\circ M)(M^{-1}\bm{x}) \g(\bm{x})\mathbf{e}(M^{-1}\bm{x})\,d\bm{x} \nonumber \\
	&=\det(M)\int_{\bb{R}^d} \bm{P}_t[\bm{\psi}](\f\circ M)(\bm{y})\g(M\bm{y})\mathbf{e}(\bm{y})\,d\bm{y} \nonumber \\
	&=\det(M)\int_{\bb{R}^d}\bm{P}_t[\ov{\bm{\psi}}](\g\circ M)(\bm{y}) \f(M\bm{y})\mathbf{e}(\bm{y})\,d\bm{y} \nonumber \\
	&=\int_{\bb{R}^d}\bm{P}^M_t[\overline{\bm{\psi}}]\g(\bm{y}) \f(\bm{y}){\mathbf{e}}_M(\bm{y})\,d\bm{y}. \nonumber
\end{align}
Therefore, $\widehat{\bm P}^M[\bm\psi]={\bm P}^M[\ov{\bm\psi}]$ and ${\bm P}^M[\bm\psi]$ is self-adjoint in $\LLReM$ if and only if $\bm\psi=\ov{\bm\psi}$.

\subsection{Proof of Theorem~\ref{th:spectral_decomp}}\hlabel{sec:proof_th:spectral_decomp} Again, we assume that $M=\mathrm{Id}$ as the general case will follow by similarity transform. We recall from  Theorem~\ref{thm:intertwining_pdo}\eqref{eq:spect_decomp} that for any $k=1,\ldots, d$, we have $P_t[\psi_k]H_{\psi_k}=H_{\psi_k} e_t$ on $\D(H_{\psi_k})$ where $H_{\psi_k}\in\mathscr{M}$ associated to
\begin{align*}
	m^e_{\psi_k}(\xi)=m_{H_{\psi_k}}(\xi+\i/2)=\frac{W_{\phi_+}(\frac{1}{2}-\i\xi)}{W_{\phi_-}(\frac{1}{2}+\i\xi)}.
\end{align*}
Since $\bm{e}_t=\otimes_{k=1}^d e_t$, applying Proposition~\ref{prop:tensor_intertwining} with $\bm{\Hpsi}=\otimes_{k=1}^dH_{\psi_i}$, the theorem follows.

\subsection{Proof of Theorem~\ref{thm:spectrum}}\hlabel{sec:proof4} Let us prove this theorem for $M=\mathrm{Id}$ as the general case  follows by the similarity transform. We first prove items \eqref{it:point_spect} and \eqref{it:residual_spect} followed by item \eqref{it:spect}. If $m_{\Hpsi}(\cdot+{\rm{i}}/2)\in\bmrm{L}(\bb{R})$, there exists a unique function $\Jp\in\bmrm{L}(\bb{R},e)$ such that $\fouho_{\!\Jp}=m_{\Hpsi}(\cdot+{\rm{i}}/2)$. Thus, for any $f\in\D(\Hpsi)$, we have
\begin{align}
\Hpsi f=\Jp\ast f \nonumber
\end{align}
where $\ast$ stands for additive convolution. Also, $m_{\Hpsi}\left(\cdot+{\i \over 2}\right)\in\bmrm{L}(\bb{R})$ implies that $\C^\infty_c(\bb{R})\subset\D(\Hpsi)$. Now, for any $ f \in \C^\infty_c(\bb{R})$,
\begin{align}\hlabel{eq:conv_l2}
\int_\bb{R} (|\Jp|\ast| f |)^2 (x) e^x dx &=\int_\bb{R}\left(\int_\bb{R}|\Jp(x-y)|| f (y)| dy\right)^2 e^x dx \\ &\le\left(\int_\bb{R}\left(\int_\bb{R}|\Jp(x-y)|^2 e^x dx\right)^{\frac{1}{2}}| f (y)| dy\right)^2 \nonumber \\
&=\|\Jp\|^2_{\bmrm{L}(\bb{R},e)}\left(\int_\bb{R} e^{\frac{y}{2}}| f (y)|dy\right)^2<\infty \nonumber
\end{align}
where the first inequality follows from Minkowski's integral inequality. Also, for all $ f \in \C^\infty_c(\bb{R})$ and $g\in\bmrm{L}(\bb{R},e)$,
\begin{align}\hlabel{eq:eigenfunction_fubini}
\langle P_t[\psi]\Hpsi f , g\rangle_{\bmrm{L}(\bb{R},e)}=\langle\Hpsi f , \widehat{P}_t[\psi] g\rangle_{\bmrm{L}(\bb{R},e)}=\int_{\bb{R}}\left(\int_{\bb{R}}\Jp(x-y) f (y)\,dy\right)\widehat{P}_t[\psi]g(x)e^x\,dx.
\end{align}
Since \eqref{eq:conv_l2} yields that $|\Jp|\ast| f |\in\bmrm{L}(\bb{R},e)$, by Cauchy-Schwarz inequality, we have
\begin{align*}
\int_\bb{R}\int_\bb{R}|\Jp (x-y) f (y)\widehat{P}_t[\psi] g(x)| e^x dx\ dy <\infty.
\end{align*}
Applying Fubini Theorem  on the right-hand side of \eqref{eq:eigenfunction_fubini}, for all $ f \in \C^\infty_c(\bb{R})$ and $g\in\bmrm{L}(\bb{R},e)$, we have
\begin{align}\hlabel{spectral_lhs}
\langle P_t[\psi]\Hpsi  f , g\rangle_{\bmrm{L}(\bb{R},e)}=\int_{\bb{R}}\left(\int_{\bb{R}}\Jp(x-y)\widehat{P}_t[\psi]g(x) e^x\,dx\right)  f (y)\,dy.
\end{align}
On the other hand,
\begin{align}
\langle P_t[\psi]\Hpsi f , g\rangle_{\bmrm{L}(\bb{R},e)}&=\langle \Hpsi e_t  f ,g\rangle_{\bmrm{L}(\bb{R},e)} \nonumber \\
&=\int_{\bb{R}}\Hpsi e_t  f (x)g(x)e^x\,dx=\int_{\bb{R}}\left(\int_{\bb{R}}\Jp(x-y) e_t  f (y)\,dy\right)  g(x)e^x\,dx\nonumber\\
&=\int_{\bb{R}}\left(\int_{\bb{R}}\Jp(x-y) f (y)e^{-te^{-y}}\,dy\right)g(x)e^x\,dx\nonumber \\
&=\int_{\bb{R}}\left(\int_{\bb{R}}\Jp(x-y)g(x)e^x\,dx\right) f(y)e^{-te^{-y}}\,dy \hlabel{spectral_rhs}
\end{align}
where for the last identity we have invoked again Fubini Theorem.
From \eqref{spectral_lhs}, \eqref{spectral_rhs} and using the density of $\C^\infty_c(\bb{R})$ in $\bmrm{L}(\bb{R},e)$, we obtain for almost every $y$,
\begin{align*}
\int_{\bb{R}}\Jp(x-y)\widehat{P}_t[\psi]g(x)e^x\,dx=e^{-te^{-y}}\int_{\bb{R}}\Jp(x-y)g(x)e^x\,dx.
\end{align*}
\hpt{R30}{Since $y\mapsto\uptau_{-y}\Jp= \Jp(\cdot - y)$ is continuous in $\bmrm{L}(\bb R,e)$, both sides of the above identity are continuous functions with respect to $y$. Therefore, for all $y\in\bb{R}$ and $g\in\bmrm{L}(\bb{R},e)$,}
\begin{align*}
\int_{\bb{R}}P_t[\psi]\tau_{-y}\Jp(x) g(x) e^x\,dx=e^{-te^{-y}}\int_{\bb{R}}\tau_{-y}\Jp(x)g(x)e^x\,dx, \nonumber
\end{align*}
which further implies that for all $y\in\bb{R}$,
\begin{align*}
P_t[\psi]\tau_{-y}\Jp=e^{-te^{-y}}\tau_{-y}\Jp \quad \text{in  $\bmrm{L}(\bb R,e)$}. \nonumber
\end{align*}
When $d>1$, $\Mpsi\in\bmrm{L}(\bb{R}^d)$ if and only if $m_{H_{\psi_i}}\in\bmrm{L}(\bb{R})$ for all $k=1,\ldots,d$. This implies
\begin{equation*}
\widehat{\fouh}_{\!\!\!\!\!\Mpsi}=\otimes_{k=1}^d \widehat{\fouho}_{\!\!m_{H_{\psi_i}}}.
\end{equation*}
Therefore, if $\Mpsi\in\bmrm{L}(\bb{R}^d)$ then, for any $t>0$, $\bm{P}_t[\bm{\psi}]\tau_{-\tred{\bm y}}\mathbf{J}_{\bm{\psi}}=
e^{-t\langle e(-\bm{y}),\bm{1}\rangle}\mathbf{J}_{\bm{\psi}}$ for all $y\in\bb{R}$, where $\mathbf{J}_{\bm{\psi}}=\otimes_{k=1}^d J_{\psi_i}$. Since $\Mpsi$ is bounded, $\bm{\Hpsi}$ is a bounded operator. Taking the adjoint of the intertwining relation $\bm{P}_t[\bm{\psi}]\bm{\Hpsi}=\bm{\Hpsi}\bm{e}_t$, we get
\begin{equation*}
\bm{e}_t\widehat{\bm{H}}_{\bm\psi}=\widehat{\bm{H}}_{\bm\psi}\widehat{\bm{P}}_t[\bm{\psi}].
\end{equation*}
Since $\sigma_r(\bm{P}_t[\bm{\psi}])\subseteq\sigma_p(\widehat{\bm{P}}_t[\bm{\psi}])$ and  $\sigma_p(\bm{e}_t)=\emptyset$, we must have $\sigma_p(\widehat{\bm{P}}_t[\bm \psi])=\emptyset$. Therefore, $\sigma_r(\bm{P}_t[\bm{\psi}])=\emptyset$, which concludes the proof of item (\ref{it:point_spect}).

Item (\ref{it:residual_spect}) follows from item (\ref{it:point_spect}) by replacing $\bm{P}_t[\bm{\psi}]$ by $\bm{P}_t[\ov{\bm{\psi}}]$. Now coming back to item \eqref{it:spect}, if one of the conditions in items \eqref{it:point_spect} and \eqref{it:residual_spect} holds, then item \eqref{it:spect} is trivially true. We again prove this result for $d=1$ and the general case would be the routine application of tensorization and similarity transform. Let us assume that $\mpsi(\cdot+{\rm{i}}/2)$ is bounded and not in $\bmrm{L}(\bb{R})$. We consider $\varphi\in \C^\infty_c(\bb{R}), \varphi\ge 0$ with $\supp\varphi\subset (-1,1)$ and for any $y\in\bb{R}$, \hpt{R32}{$\varphi^{(y)}_n(x)={n}\varphi(n(x-y))$}. We first show that $(\varphi^{(y)}_n)_{n\ge 1}$ (when normalized to have unit norm) are approximate eigenfunctions of $e_t$ corresponding to the approximate eigenvalue $e^{-te^{-y}}$. We note that, for any $y\in\bb{R}$,
\begin{align}
\|e_t\phin-e^{-te^{-y}}\phin\|^2_{\bmrm{L}(\bb{R},e)} \nonumber
&=\int_{\bb{R}}n^2e^{-2te^{-y}}\left[e^{-te^y(e^{-(x-y)}-1)}-1\right]^2\varphi(n(x-y))^2e^xdx \nonumber \\
&=e^{-2te^{-y}+y}\int_{\bb{R}}n\left[e^{-te^y(e^{-\frac{x}{n}}-1)}-1\right]^2\varphi(x)^2e^{\frac{x}{n}}dx. \hlabel{eq:app_dct}
\end{align}
Using dominated convergence theorem, it follows that the expression in \eqref{eq:app_dct} converges to $0$ as $n$ tends to infinity.  Also, for all $y\in\bb R$ and $n\ge 1$,
\begin{align*}
\|\varphi^{(y)}_n\|^2_{\LRe}=\int_{\bb R} n^2\varphi(n(x-y))^2 e^{x} dx=ne^y\int_{-1}^1 \varphi(x)^2 e^{\frac{x}{n}}dx
\end{align*}
which implies that $\|\varphi^{(y)}_n\|_{\bmrm{L}(\bb R,e)}\to\infty$ as $n\to\infty$. As a result, these functions, when normalized, form approximate eigenfunctions of $e_t$ corresponding to the approximate eigenvalue $e^{-te^{-y}}$. Now, we claim that the functions $\Hpsi\phin$, when normalized, are approximate eigenfunctions of $P_t[\psi]$ corresponding to the approximate eigenvalue $e^{-te^{-y}}$. \hptg{C81}{To prove this, it suffices to show that} \hpt{R31}{$\lim_{n\to\infty}\|P_t[\psi]\Hpsi\phin-e^{-te^{-y}}\Hpsi\phin\|_{\LRe}=0$} and $\{\|\Hpsi\phin\|_{\LRe}\}_{n\ge 1}$ is bounded below.  We first show that $\|\Hpsi\phin\|_{\bmrm{L}(\bb{R},e)}\to\infty$ as $n\to\infty$. Indeed, we observe that, for all $n\ge 1$,
\begin{align*}
\fouho_{\phin}(\xi)=e^{-\mathrm{i}y+\frac{y}{2}}\fou_\varphi\left(\frac{\xi}{n}+\frac{{\rm{i}}}{2n}\right).
\end{align*}
Therefore,
\begin{align*}
\fouho_{\Hpsi\phin}(\xi)=e^{-\mathrm{i}y+\frac{y}{2}}\mpsi\left(\xi+\frac{{\rm{i}}}{2}\right)\fou_\varphi\left(\frac{\xi}{n}+\frac{{\rm{i}}}{2n}\right).
\end{align*}
Using the isometry of the shifted Fourier transform followed by Fatou's lemma, we obtain
\begin{align}
\liminf_{n\to\infty}\|\Hpsi\phin\|^2_{\bmrm{L}(\bb{R},e)}&=\liminf_{n\to\infty}\|\fouho_{\Hpsi\phin}\|^2_{\bmrm{L}(\bb{R})} \nonumber\\
&=e^y\liminf_{n\to\infty}\int_\bb{R}\left|\mpsi\left(\xi+\frac{{\rm{i}}}{2}\right)\right|^2\left|\fou_\varphi\left(\frac{\xi}{n}+\frac{{\rm{i}}}{2n}\right)\right|^2d\xi \nonumber\\
&\ge e^y\int_{\bb{R}}\left|\mpsi\left(\xi+\frac{{\rm{i}}}{2}\right)\right|^2\liminf_{n\to\infty}\left|\fou_\varphi\left(\frac{\xi}{n}+\frac{{\rm{i}}}{2n}\right)\right|^2d\xi \nonumber \\
&=\|\varphi\|^2_{\bmrm[1]{L}(\bb{R})}\int_{\bb{R}}\left|\mpsi\left(\xi+\frac{{\rm{i}}}{2}\right)\right|^2d\xi=\infty. \hlabel{eq:app_infty}
\end{align}
With the aid of the intertwining relationship $P_t[\psi]\Hpsi=\Hpsi e_t$ and the fact that  $e^{-te^{-y}}$ is an approximate eigenvalue for $e_t$, one deduces that
\begin{align*}
	\lim_{n\to\infty}\|P_t[\psi]\Hpsi\phin-e^{-te^{-y}}\Hpsi\phin\|_{\bmrm{L}(\bb{R},e)}=0.
\end{align*}
Therefore, \eqref{eq:app_infty} ensures that the sequence $(\Hpsi\phin)_{n\geq1}$, when normalized, are approximate eigenfunctions of $P_t[\psi]$ corresponding to the approximate eigenvalue $e^{-te^{-y}}$. When $m_{H_\psi}(\cdot+\frac{{\rm{i}}}{2})^{-1}$ is bounded, we have $\widehat{P}_t[\psi]H_{\ov{\psi}}=H_{\ov{\psi}}e_t$. Therefore, following the same argument as before, we have $e^{t\bb{R}_-}\subseteq\sigma_{ap}(\widehat{P}_t[\psi])$. Since $\overline{\sigma(P_t[\psi])}=\sigma(\widehat{P}_t[\psi])$, we conclude that for all $t\ge 0$, $e^{t\bb{R}_-}\subseteq{P_t[\psi]}$.

For the item (\ref{it:cont_spect}), let $\bm{P}_t[\bm{\psi}]$ be self-adjoint, i.e.~ $\bm{\psi}=\bm{\ov{\psi}}$. Then, for all $k=1,2\ldots, d$, $m_{H_{\psi_k}}=(\ov{m}_{H_{\psi_k}})^{-1}$, which means that $H_{\psi_k}$ is unitary. Hence, $\bm{\Hpsi}$ is unitary as well and therefore, $\sigma(\bm{P}[\bm{\psi}])=\sigma_c(\bm{e}_t)=e^{t\bb{R}_-}$ for all $t\ge 0$.

\subsection{Proof of Theorem~\ref{thm:core_multi_d}} \label{sec:thm29} In the proof of Theorem~\ref{thm:intertwining_pdo}\eqref{main_thm_1:it:1bc} we have seen that for any $\psi\in\NN_+(\bb R)$, the closure of $(A_\ttt{PDO}[\psi],\Lambda_\psi(\C^\infty_c(\bb R))$ generates $P[\psi]$, which is the $\LRe$-extension of the log-self-similar Feller semigroup associated to $\psi$. By duality, $\widehat{P}[\psi]=P[\ov{\psi}]$ also corresponds to the log-self-similar Feller semigroup associated with $\ov{\psi}$. Hence, for any $\psi\in\NN_b(\bb R)$ and $f\in\C_0(\bb R)\cap\LRe$ we have for all $t\ge 0$ and $x\in\bb R$,
\begin{align*}
	P_t[\psi] f(x)=\bb{E}_x[f(X_t)]
\end{align*}
where $(X_t)_{t\ge 0}$ is the log-self-similar Feller process associated to $\psi$. Therefore, item~\eqref{it:feller_semigroup} is a direct consequence of tensorization and similarity transform.

Item~\eqref{it:core} again follows from Proposition~\ref{prop:core_ido}\eqref{it:2d} followed by tensorization, similarity transform, and Proposition~\ref{prop:core}.

\section{Examples}\hlabel{examples}
\subsection{Spectrally negative self-similar processes}\hlabel{ex:spectrally_neg}
Let $\psi\in\mathbf{N}(\bb{R})$ be the L\'evy-Khintchine exponent of a conservative spectrally negative L\'evy process with a non-negative mean, that is, $\psi(0)=0, \psi'(0)\geq0$ and  $\mu(0,\infty)=0$ in \eqref{eq:defLKe}. Note that this entails that the underlying L\'evy process, and, hence the associated self-similar process,  do not have  positive jumps. In this case, the Wiener-Hopf factorization of $\psi$ is \[\psi(\xi)=-{\rm{i}}\xi   \phi({\rm{i}}\xi   ), \: \xi\in\bb{R}, \] where $ \phi\in\B$ with a L\'evy measure which is absolutely continuous with a non-increasing density. Let us write $m_{\Lambda_\psi}(z)=\frac{\Gamma(1+{\rm{i}}z)   }{W_{\phi}(1+{\rm{i}}z)   }$. \hptg{C84}{From the properties of Bernstein-gamma functions recalled in Section \ref{bernstein}, it is plain that $m_{\Lambda_\psi}$ is analytic in the strip $\bb{S}_{[0,1)}$ and for $z\in\bb{S}_{(0,\infty)}$, $m_{\Lambda_\psi}(-\i z)$ is the Mellin transform of a positive random variable, denoted by $I_\psi$,  which has finite moments of all order strictly larger than $-1$, see e.g.~\cite{Patr} and references therein}. \hptg{C85}{Therefore, $|m_{\Lambda_\psi}(\cdot+{\rm{i}}/2)|$ is bounded on $\bb{R}$, and the shifted Fourier multiplier operator $\Lambda_\psi$ with $m_{\Lambda_\psi}$ as the multiplier is a bounded operator, and belongs to $\mathscr{M}$.}  Moreover, by  Theorem~\ref{thm:intertwining_pdo}\eqref{main_thm_1:it:1d}, $P_t[\psi]\Lambda_\psi=\Lambda_\psi Q_t$ on $\bmrm{L}(\bb{R},e)$, $Q$ being the log-squared Bessel semigroup. In the next lemma, we show that $\Lambda_\psi$ is a convolution kernel with respect to a probability measure, which is the additive convolution analogue of  \cite[Theorem 7.1]{patiesavov}.
\begin{prop}\hlabel{C0_intertwining}
If $\psi(\xi)=-{\rm{i}}\xi   \phi({\rm{i}}\xi   )$, $\xi\in\bb{R}$, then, the operator  $\Lambda^b_\psi \in \mathscr{B}(\C_b(\bb{R}))$ where
\begin{align}\hlabel{eq:Lambda_psi}
\Lambda^b_\psi f(x)=\bb{E}[f(x-\log I_\psi)]
\end{align}
 is bounded and
\[\Lambda^b_\psi {}_{\left. \right|{\C_b(\bb{R})\cap\bmrm{L}(\bb{R},e)}}=\Lambda_\psi {}_{\left.\right|{\C_b(\bb{R})\cap\bmrm{L}(\bb{R},e)}}.\]
In particular, $\Lambda^b_\psi$ maps $\C_0([-\infty,\infty))$ to itself.
\end{prop}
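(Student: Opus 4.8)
The plan is to realise $\Lambda^b_\psi$ as convolution against the law $\rho$ of $-\log I_\psi$, read off its boundedness and endpoint behaviour from elementary properties of $\rho$, and then match this explicit kernel with the abstract shifted Fourier multiplier $\Lambda_\psi$ through the Mellin transform of $I_\psi$. Since weak similarity has already been established at the level of the multiplier, the only genuinely new point is this reconciliation of the probabilistic and analytic descriptions of $\Lambda_\psi$.

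First I would record the facts about $I_\psi$ used below: since $m_{\Lambda_\psi}(-\i z)=\Gamma(1+z)/W_{\phi}(1+z)$ is, for $\Re(z)>-1$, equal to $\bb{E}[I_\psi^{z}]$ (by analytic continuation of the Mellin transform recalled above, the moments of $I_\psi$ of order $>-1$ being finite), the push-forward $\rho$ of the law of $I_\psi$ under $x\mapsto-\log x$ is a probability measure on $\bb{R}$ with $\int_{\bb{R}}e^{u/2}\rho(du)=\bb{E}[I_\psi^{-1/2}]<\infty$. Consequently $\Lambda^b_\psi f=f\ast\rho$, and since $\rho$ is a probability measure, $\|\Lambda^b_\psi f\|_\infty\le\|f\|_\infty$; a dominated convergence argument gives continuity of $\Lambda^b_\psi f$ whenever $f$ is continuous, so $\Lambda^b_\psi\in\mathscr{B}(\C_b(\bb{R}))$ with operator norm at most $1$.

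For the $\C_0([-\infty,\infty))$-invariance I would exploit that $\log I_\psi$ is almost surely finite. If $f\in\C_0([-\infty,\infty))$, then as $x\to+\infty$ one has $x-\log I_\psi\to+\infty$ a.s., so $\Lambda^b_\psi f(x)=\bb{E}[f(x-\log I_\psi)]\to 0$ by dominated convergence; as $x\to-\infty$ one has $x-\log I_\psi\to-\infty$ a.s., hence $\Lambda^b_\psi f(x)\to f(-\infty)$; combined with continuity on $\bb{R}$ this shows that $\Lambda^b_\psi f$ extends continuously to $[-\infty,\infty)$ and vanishes at $+\infty$, i.e.\ $\Lambda^b_\psi f\in\C_0([-\infty,\infty))$.

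The identification $\Lambda^b_\psi=\Lambda_\psi$ on $\C_b(\bb{R})\cap\bmrm{L}(\bb{R},e)$ I would carry out in the $\bmrm{L}^2$-picture. For $f\in\bmrm{L}(\bb{R},e)$ one has the pointwise identity $\sqrt{e}\,(f\ast\rho)=(\sqrt{e}f)\ast\tilde\rho$, where $\tilde\rho(du)=e^{u/2}\rho(du)$ is the finite (Esscher-tilted) measure above; hence $f\ast\rho\in\bmrm{L}(\bb{R},e)$ and, by the convolution theorem together with Fubini, $\fouho_{f\ast\rho}(\xi)=\widehat{\tilde\rho}(\xi)\,\fouho_f(\xi)$ with $\widehat{\tilde\rho}(\xi)=\bb{E}[I_\psi^{\i\xi-1/2}]=m_{\Lambda_\psi}(\xi+\i/2)=m^e_{\Lambda_\psi}(\xi)$, the middle equality being the boundary value at $\Re=-1/2>-1$ of the Mellin transform of $I_\psi$. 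Since this is exactly the defining relation of the shifted Fourier multiplier operator $\Lambda_\psi$ and $m_{\Lambda_\psi}(\cdot+\i/2)$ is bounded, it follows that $f\in\D(\Lambda_\psi)$ and $\Lambda^b_\psi f=\Lambda_\psi f$ in $\bmrm{L}(\bb{R},e)$; restricting to $f\in\C_b(\bb{R})\cap\bmrm{L}(\bb{R},e)$, where the pointwise convolution and the $\bmrm{L}^2$-object agree almost everywhere, yields the claim. The hard part will be precisely this last step: computing $\widehat{\tilde\rho}$ and checking that the tilted measure $\tilde\rho$ is finite — which is where the finiteness of the moment of order $-1/2$ of $I_\psi$ enters — and verifying that the pointwise-defined $f\ast\rho$ is a genuine representative of the $\bmrm{L}^2$-image; everything else reduces to routine dominated-convergence estimates.
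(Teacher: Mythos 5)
Your argument is correct in substance and reaches the same conclusion, but it is executed along a somewhat different route than the paper's. The paper proves the identification only on $\C^\infty_c(\bb{R})$, by Fourier inversion of $\bb{E}[f(x-\log I_\psi)]$ followed by a shift of the integration contour to the line $\Im(z)=-\tfrac12$ (via Lemma~\ref{prop:contour}), and then extends to $\C_b(\bb{R})\cap\bmrm{L}(\bb{R},e)$ by density; it does not spell out the $\C_0([-\infty,\infty))$-invariance at all. You instead work with the convolution structure directly: writing $\Lambda^b_\psi f$ as convolution against the law of the logarithm of $I_\psi$, tilting by $e^{u/2}$ to get a finite measure (finiteness coming from the existence of moments of $I_\psi$ of order in $(-1,\infty)$), and reading off the shifted Fourier multiplier as the boundary value $\bb{E}[I_\psi^{\i\xi-1/2}]$ of the Mellin transform. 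This buys you the identity on all of $\bmrm{L}(\bb{R},e)$ in one step (the multiplier being bounded, $\D(\Lambda_\psi)$ is the whole space), bypasses both the contour shift and the density argument, and gives a clean dominated-convergence proof of the endpoint behaviour on $[-\infty,\infty)$ that the paper leaves implicit. The contour-shift mechanism of Lemma~\ref{prop:contour} that the paper relies on is replaced in your argument by the absolute convergence of the tilted convolution, which is the more probabilistic of the two justifications for evaluating the multiplier on the shifted line.

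One small inconsistency to fix: with $\rho$ defined as the law of $-\log I_\psi$, the convolution $f\ast\rho(x)=\int f(x-u)\rho(du)$ equals $\bb{E}[f(x+\log I_\psi)]$, not $\bb{E}[f(x-\log I_\psi)]$; you should take $\rho$ to be the law of $+\log I_\psi$, in which case the tilted mass is $\bb{E}[I_\psi^{1/2}]$ (finite, since all positive moments are) and the Fourier transform of the tilted measure is $\bb{E}[I_\psi^{1/2-\i\xi}]$. Whether this matches $m_{\Lambda_\psi}(\xi+\i/2)$ or its conjugate then hinges on the convention $m_{\Lambda_\psi}(w)=\bb{E}[I_\psi^{\i w}]$ versus $\bb{E}[I_\psi^{-\i w}]$ — a point on which the paper's own proof is itself not careful (it writes $\bb{E}[e^{-\i\xi\log I_\psi}]=m_{\Lambda_\psi}(\xi)$ with a spurious $(2\pi)^{-1/2}$). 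The argument survives either convention after a one-line correction, so this is a bookkeeping issue rather than a gap.
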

\begin{proof}
We define $\Lambda^b_\psi$ on $\C_b(\bb{R})$ as in \eqref{eq:Lambda_psi}, where ${I}_\psi$ is such that $\frac{1}{\sqrt{2\pi}}\bb{E}[e^{-{\rm{i}}\xi \log I_\psi}]=m_{\Lambda_\psi}(\xi)$ for all $\xi\in\bb{R}$. It easy to see that $\Lambda^b_\psi$ is bounded (with respect to the supremum topology). Now, for any $f\in \C^\infty_c(\bb{R})$,
\begin{align*}
\Lambda^b_\psi f(x)=\frac{1}{\sqrt{2\pi}}\int_{\bb{R}} e^{{\rm{i}}\xi  x}m_{\Lambda_\psi}(\xi)\mathcal{F}_{f}(\xi)\,d\xi=\hpt{R36}{\frac{e^{-\frac{x}{2}}}{\sqrt{2\pi}}\int_{-\infty-\frac{{\rm{i}}}{2}}^{\infty-\frac{{\rm{i}}}{2}}m_{\Lambda_\psi}\left(z+\frac{{\rm{i}}}{2}   \right)\fouho_{f}(z) e^{{\rm{i}}zx} dz}.
\end{align*}
 This implies that
$$\Lambda^b_\psi {}_{\left.\right|{\C^\infty_c(\bb{R})}}=\Lambda_\psi {}_{\left.\right|{\C^\infty_c(\bb{R})}}.$$
By density of $\C^\infty_c(\bb{R})$ in $\bmrm{L}(\bb{R},e)$, the above identity ensures that $$\Lambda^b_\psi {}_{\left.\right|{\C_b(\bb{R})\cap\bmrm{L}(\bb{R},e)}}=
\Lambda_\psi {}_{\left.\right|{\C_b(\bb{R})\cap\bmrm{L}(\bb{R},e)}}.$$
\end{proof}
\begin{prop}\hlabel{prop:one_sided_jump}
Let $\psi(\xi)=-{\rm{i}}\xi \phi({\rm{i}}\xi   )$, $\xi\in\bb{R}$, and define
\begin{align}\hlabel{eq:eigenfunction_1}
\Jpo(x)=\sum_{n=0}^\infty (-1)^n\frac{1}{W_{\phi}(n+1)}\frac{e^{nx}}{n!}.
\end{align}
Then, $\Jpo$  is an entire function and $\Jpo\in \C_0([-\infty,\infty))$. Moreover, if\[\xi\mapsto m_{H_\psi}\left(\xi+\frac{{\rm{i}}}{2}   \right)=\frac{\Gamma(\frac{1}{2}-{\rm{i}}\xi   )}{W_{\phi}(\frac{1}{2}+\mathrm{i}\xi)}\in\bmrm{L}(\bb{R}) \] then $\Jpo\in\bmrm{L}(\bb{R},e)$ and $\tau_{-y}\Jpo$ is an eigenfunction of $P_t[\psi]$ corresponding to the eigenvalue $e^{-te^{-y}}.$
%When , $\tau_{-y}\Jpo$ is an eigenfunction of  corresponding to the same eigenvalue $e^{-te^{-y}}$ on $\bmrm{L}(\bb{R},e)$.
\end{prop}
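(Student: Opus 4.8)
The plan is to prove the three assertions — that $\Jpo$ is entire, that $\Jpo\in\C_0([-\infty,\infty))$, and the conditional $\bmrm{L}(\bb{R},e)$-membership and eigenfunction property — via an explicit probabilistic/Mellin representation of the series \eqref{eq:eigenfunction_1}. \emph{Entirety and the $\C_0$-property.} Iterating \eqref{eq:BG_receq} and using $W_\phi(1)=1$ gives $W_\phi(n+1)=\prod_{k=1}^n\phi(k)$; as $\phi$ is a non-zero Bernstein function it is non-decreasing with $\phi(1)>0$, so $W_\phi(n+1)\ge\phi(1)^n$. Hence, in the variable $u=e^x$, the power series $\sum_{n\ge0}\frac{(-1)^n}{W_\phi(n+1)\,n!}u^n$ has coefficients dominated by $(\phi(1)^nn!)^{-1}$, so it converges absolutely and locally uniformly on $\bb{C}$ and $x\mapsto\Jpo(x)$ extends to an entire function. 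Recalling (see \cite{Patr} and the discussion preceding Proposition~\ref{C0_intertwining}) that the exponential functional $I_\psi$ of the subordinator with Laplace exponent $\phi$ has moments $\bb{E}[I_\psi^{\,s-1}]=\Gamma(s)/W_\phi(s)$, in particular $\bb{E}[I_\psi^{\,n}]=n!/W_\phi(n+1)$, Tonelli's theorem (licit by the same bound) yields
\begin{align*}
\Jpo(x)=\bb{E}\!\left[\sum_{n\ge0}\frac{(-e^xI_\psi)^n}{(n!)^2}\right]=\bb{E}\!\left[J_0\!\left(2\sqrt{e^xI_\psi}\,\right)\right],
\end{align*}
where $J_0$ is the Bessel function of the first kind of order zero. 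Since $|J_0|\le1$, $\Jpo$ is bounded; as $x\to-\infty$ dominated convergence gives $\Jpo(x)\to J_0(0)=1$, while as $x\to+\infty$, since $I_\psi\in(0,\infty)$ a.s.\ and $J_0(t)\to0$ as $t\to\infty$, it gives $\Jpo(x)\to0$. Together with continuity this shows $\Jpo\in\C_0([-\infty,\infty))$.

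\emph{The $\bmrm{L}(\bb{R},e)$-membership.} The idea is to identify $\Jpo$, up to a positive multiplicative constant, with the function $J_\psi$ appearing in Theorem~\ref{thm:spectrum}\eqref{it:point_spect}. Using the Bessel representation, the classical Mellin transform $\int_0^\infty t^{2s-1}J_0(t)\,dt=2^{2s-1}\Gamma(s)/\Gamma(1-s)$ (valid for $0<\Re s<3/4$) and $\bb{E}[I_\psi^{-s}]=\Gamma(1-s)/W_\phi(1-s)$, a Fubini computation that is absolutely convergent on $0<\Re s<1/4$ shows that the Mellin transform of $u\mapsto\Jpo(\log u)$ equals $M(s):=\Gamma(s)/W_\phi(1-s)$; by \eqref{eq:W_Zerofree} this $M$ continues analytically and zero-free to the strip $\bb{C}_{(0,1)}$. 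On the line $\Re s=\tfrac12$ one has $|M(\tfrac12+\i\xi)|=|m_{H_\psi}(\xi+\tfrac{\i}{2})|$ with $m_{H_\psi}(\xi+\tfrac{\i}{2})=\Gamma(\tfrac12-\i\xi)/W_\phi(\tfrac12+\i\xi)$ (cf.\ Theorem~\ref{thm:intertwining_pdo}\eqref{main_thm_1:it:1d}), so the hypothesis $m_{H_\psi}(\cdot+\tfrac{\i}{2})\in\bmrm{L}(\bb{R})$ is exactly $M(\tfrac12+\i\cdot)\in\bmrm{L}(\bb{R})$. Invoking the Mellin--Plancherel theory — $M$ analytic on $\bb{C}_{(0,1)}$, square-integrable on an interior line $\Re s=c_0\in(0,\tfrac14)$ (automatic, since $\Jpo(\log u)=\mathrm{O}(u^{-1/4})$ as $u\to\infty$ by the Bessel representation and $\bb{E}[I_\psi^{-1/4}]<\infty$) and, by hypothesis, on $\Re s=\tfrac12$ — yields that $M(\tfrac12+\i\cdot)$ is the $\bmrm{L}$-Mellin transform of $u\mapsto\Jpo(\log u)$ on that line, which back in the $x$-variable reads $\fouho_{\Jpo}=\tfrac{1}{\sqrt{2\pi}}\,m_{H_\psi}(\cdot+\tfrac{\i}{2})$; in particular $\Jpo\in\bmrm{L}(\bb{R},e)$.

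\emph{The eigenfunction property and the main obstacle.} The present hypothesis is precisely the $d=1$, $M=\mathrm{Id}$ instance of the hypothesis of Theorem~\ref{thm:spectrum}\eqref{it:point_spect}, and the multiplier $\mm^{\mathbf{e}}_{\bm\psi}$ there reduces to $m_{H_\psi}(\cdot+\tfrac{\i}{2})$ when $d=1$; hence $\fouho_{\Jpo}=\tfrac{1}{\sqrt{2\pi}}\fouho_{J_\psi}$, so $\Jpo$ equals $J_\psi$ up to a multiplicative constant, and Theorem~\ref{thm:spectrum}\eqref{it:point_spect} applied with $\bm q=e^{-y}$ gives that $\tau_{-y}\Jpo$ is an eigenfunction of $P_t[\psi]$ for the eigenvalue $\mathbf{e}(-te^{-y})=e^{-te^{-y}}$. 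I expect the main obstacle to be the Plancherel step, i.e.\ transferring the Mellin--Parseval identity from the interior line $\Re s=c_0$, where everything is classical, to the critical line $\Re s=\tfrac12$, where square-integrability is supplied only by the hypothesis: besides the analyticity of $M$ on $\bb{C}_{(0,1)}$, this needs polynomial growth bounds for $M=\Gamma/W_\phi(1-\cdot)$ along vertical lines — obtained from Stirling's asymptotics for $\Gamma$ and the lower bound on $|W_\phi|$ read off from \eqref{eq:bernstein_gamma_bound} — together with a Phragm\'en--Lindel\"of / Hardy-space-on-a-strip argument. As a cheaper route to the eigenrelation itself (though not to the $\bmrm{L}$-membership), one may verify term by term that $A_\ttt{PDO}[\psi](\tau_{-y}\Jpo)=-e^{-y}\tau_{-y}\Jpo$, using $L[\psi]e^{n\cdot}=n\phi(n)e^{n\cdot}$ for spectrally negative $\psi$ and $n\phi(n)/W_\phi(n+1)=n/W_\phi(n)$ to telescope the series, and then, once $\Jpo\in\bmrm{L}(\bb{R},e)$ is in hand, promote this formal eigenrelation to the semigroup level.
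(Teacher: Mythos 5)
Your proposal is correct, and for the first two assertions it essentially coincides with the paper's argument: the bound $W_\phi(n+1)\ge\phi(1)^n$ gives entirety, and your representation $\Jpo(x)=\bb{E}\bigl[J_0\bigl(2\sqrt{e^xI_\psi}\bigr)\bigr]$ plus dominated convergence is the same computation the paper packages as $\Jpo=\Lambda^b_\psi J$ with $J(x)=J_0(2e^{x/2})$ and Proposition~\ref{C0_intertwining}. Where you genuinely diverge is the identification of $\Jpo$ with the eigenfunction $\widehat{\fouho}\bigl(m_{H_\psi}(\cdot+\tfrac{\i}{2})\bigr)$ from Theorem~\ref{thm:spectrum}\eqref{it:point_spect}. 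The paper mollifies: it computes $\Hpsi\mf{f}_\kappa$ for the kernels $\mf{f}_\kappa(x)=\kappa e^{-\kappa x}e^{-e^{-\kappa x}}$ by a rectangular contour shift and a residue sum over the poles of $\Gamma(-\i z)/W_\phi(1+\i z)$, matches the resulting series with $\Jpo\ast\mf{f}_\kappa$, and removes the mollifier via a separate approximate-identity lemma. You instead compute the Mellin transform of $u\mapsto\Jpo(\log u)$ on an interior strip directly from the probabilistic representation, obtaining $M(s)=\Gamma(s)/W_\phi(1-s)$, and transfer to $\Re s=\tfrac12$ by a Hardy-space/Phragm\'en--Lindel\"of argument. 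Your computations check out: the Fubini step on $0<\Re s<\tfrac14$ is licit since $|J_0(t)|\le\min(1,Ct^{-1/2})$ and $\bb{E}[I_\psi^{-\sigma}]=\Gamma(1-\sigma)/W_\phi(1-\sigma)<\infty$, and indeed $M(\tfrac12-\i\xi)=m_{H_\psi}(\xi+\tfrac{\i}{2})$, so $\fouho_{\Jpo}=(2\pi)^{-1/2}m_{H_\psi}(\cdot+\tfrac{\i}{2})$ (the constant is immaterial for the eigenrelation). You correctly flag the contour transfer as the only delicate step; the uniform polynomial bound $|M(a+\i\xi)|=\mathrm{O}(|\xi|)$ on the closed strip, which follows from \eqref{eq:bernstein_gamma_bound} together with $|\phi(z)|=\mathrm{O}(|z|)$, combined with square-integrability on both boundary lines, does place $M$ in the Hardy class of the strip and justifies the transfer — this plays the role that Lemma~\ref{prop:contour} plays in the paper. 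What the paper's mollification buys is precisely avoiding this Hardy-space input (the mollified multiplier decays rapidly, so only the elementary contour lemma is needed), at the cost of the extra approximate-identity lemma; what your route buys is a direct identification of the Mellin transform of $\Jpo$ and a cleaner conceptual link to the exponential functional. Your alternative termwise verification of $A_\ttt{PDO}[\psi]\Jpo=-\Jpo$ via the telescoping $n\phi(n)/W_\phi(n+1)=n/W_\phi(n)$ is also correct as a formal computation, and you rightly note that it does not by itself deliver the $\bmrm{L}(\bb{R},e)$ membership.
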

\begin{rem}
\begin{enumerate}[(i)]
\item We mention that the analytic power series $\cc{I}_\psi$, defined as \[J_\psi(z)=\cc{I}_\psi(-e^z), z\in \bb{C},\] was introduced in \cite{PatieAIHP} and a recent thorough study of this class of entire functions has been carried out in \cite{Bart-Patie}. In the latter reference, several instances of this class of power series are provided in connection with well-known special functions. In particular, when $\psi(\xi)=\xi^2$, then $\Jpo$ boils down to the Bessel function of index $0$.
\item From  \cite[Proposition 6.2]{patie2018}, $\xi \mapsto\left|m_{\Hpsi}(\xi+\i/2)\right|=\left|\frac{\Gamma(1/2-{\rm{i}}\xi   )}{W_{\phi}(1/2+\mathrm{i}\xi)}\right|\in\bmrm{L}(\bb{R})$ whenever $\phi\in\B^c_\ttt{d}$ or  $\phi_+\in\B_\ttt{d}$ and $\frac{1}{\ttt{d}}(\phi(0)+\overline{\mu}(0))>\frac{1}{2}$. In particular, this is always satisfied when $\ttt{d}>0$ and $\overline{\mu}(0)=\infty$. However, in the proof, we do not assume any condition on $\phi$ except that $\xi\mapsto\frac{\Gamma(1/2-{\rm{i}}\xi   )}{W_{\phi}(1/2+\mathrm{i}\xi)}\in\bmrm{L}(\bb{R})$.
\end{enumerate}
\end{rem}
\begin{proof} Since $\phi$ is increasing, we have  $W_{\phi}(n+1)\ge\phi(1)^n$ for all $n$ and therefore, the series defining $\Jp$ converges absolutely for all $z\in\bb{C}$. Now, consider the function
\begin{align*}
J(x)=\sum_{n=0}^\infty(-1)^n\frac{e^{nx}}{(n!)^2}=J_0\left(2e^{\frac{x}{2}}\right)
\end{align*}
where $J_0$ is the Bessel function of index $0$. It is well-known that $J_0\in \C_0([0,\infty))$ which implies that $J\in \C_0([-\infty,\infty))$. Also,
\begin{align*}
\Lambda^b_\psi J(x)&=\bb{E}[J(x-{I}_\psi)]
=\bb{E}\left[\sum_{n=0}^\infty(-1)^n\frac{1}{(n!)^2}e^{n(x-{I}_\psi)}\right]
=\sum_{n=0}^\infty (-1)^n\frac{1}{(n!)^2}e^{nx}\bb{E}[e^{-n{I}_\psi}],
\end{align*}
where the interchange of the sum and expectation is justified as the series in the right-hand side is absolutely convergent. Recalling that $\bb{E}[e^{-n{I}_\psi}]=\frac{n!}{W_{\phi}(n+1)}$, we obtain $\Jp=\Lambda^b_\psi J$. From Proposition~\ref{C0_intertwining}, we get that $\Jp\in \C_0([-\infty,\infty))$. Now, from Theorem~\ref{thm:spectrum}, we know that $P_t[\psi]$ has point spectrum when $m_{\Hpsi}(\cdot+\frac{{\rm{i}}}{2}   )\in\bmrm{L}(\bb{R})$ and the eigenfunction corresponding to the eigenvalue $e^{-te^{-y}}$ is given by $\tau_{-y}K_\psi$, where $K_\psi=\widehat{\fouho}(m_{\Hpsi}(\cdot+\frac{{\rm{i}}}{2}))$. The rest of the proof is devoted to show that $K_\psi=\Jp$. For each $\kappa>1$, let $\mf{f}_\kappa(x)=\kappa e^{-\kappa x}e^{-e^{-\kappa x}}$. Then, $\int_\bb{R}\mf{f}_\kappa(x)\,dx=1$. From the proof of Theorem~\ref{thm:spectrum}, we know that $\Hpsi\mf{f}_\kappa=K_\psi\ast\mf{f}_\kappa$. On the other hand,
\begin{align*}
\Hpsi\mf{f}_\kappa(x)&=e^{-\frac{x}{2}}\frac{1}{\sqrt{2\pi}}\int_{\bb{R}}\fouho_ {\Hpsi\mf{f}_\kappa}(\xi)e^{{\rm{i}}\xi x}\,d\xi
=\frac{1}{\sqrt{2\pi}}\int_{\bb{R}}e^{{\rm{i}}\xi x}m_{\Hpsi}\left(\xi+\frac{{\rm{i}}}{2}   \right)\fou_{\mf{f}_\kappa}\left(\xi+\frac{{\rm{i}}}{2}   \right)\,d\xi \\
&=\frac{1}{\sqrt{2\pi}}e^{-\frac{x}{2}}\int_{\bb{R}}e^{{\rm{i}}\xi x}\frac{\Gamma(\frac{1}{2}-{\rm{i}}\xi   )}{W_{\phi}(\frac{1}{2}+\mathrm{i}\xi)}\fou_{\mf{f}_\kappa}\left(\xi+\frac{{\rm{i}}}{2}   \right)\,d\xi\\
&=\frac{1}{\sqrt{2\pi}}\int_{-\infty+\frac{{\rm{i}}}{2}   }^{\infty+\frac{{\rm{i}}}{2}   } e^{{\rm{i}}zx}\frac{\Gamma(-{\rm{i}}z   )}{W_{\phi}(1+{\rm{i}}z)   }\Gamma\left(1+\frac{{\rm{i}}z}{\kappa}\right)\,dz \\
&=\frac{1}{\sqrt{2\pi}}\int_{-\infty+\frac{{\rm{i}}}{2}   }^{\infty+\frac{{\rm{i}}}{2}   }e^{{\rm{i}}zx}\frac{\pi\Gamma(1+\frac{{\rm{i}}z}{\kappa})}{W_{\phi}(1+{\rm{i}}z)   \sin(-i\pi z)\Gamma(1+{\rm{i}}z)   }\,dz \\
&=:\frac{1}{\sqrt{2\pi}}\int_{-\infty+\frac{{\rm{i}}}{2}   }^{\infty+\frac{{\rm{i}}}{2}   }e^{{\rm{i}}zx}G_\kappa(z)\,dz.
\end{align*}
To compute the above integral, we fix $B>0$, a positive odd integer $N$ and consider the rectangular contour formed by the points $-B+\frac{{\rm{i}}}{2}   , B+\frac{{\rm{i}}}{2}   , B-{\rm{i}}{N\over 2}, -B-{\rm{i}}\frac{N}{2}$ with clockwise orientation. The poles of $G_\kappa$ in this rectangle are $\{0,-\i,-2\i,\ldots, -\lfloor\frac{N}{2}\rfloor \i\}$ with residues $(-1)^n\frac{\Gamma(1+\frac{n}{\kappa})}{n!W_{\phi}(n+1)}$, $n=0,1,2,\ldots, \lfloor\frac{N}{2}\rfloor$. We split the contour integral into the following four parts
\begin{align}
&I^{(1)}_B=\int_{-B+\frac{{\rm{i}}}{2}   }^{B+\frac{{\rm{i}}}{2}   }e^{{\rm{i}}zx}G_\kappa(z)\,dz, \  I^{(2)}_{B,N}=\int_{B-{\rm{i}}\frac{N}{2}}^{B+\frac{{\rm{i}}}{2}   }e^{{\rm{i}}zx}G_\kappa(z)\,dz\nonumber \\
&I^{(3)}_{B,N}=\int_{-B-{\rm{i}}\frac{N}{2}}^{-B+\frac{{\rm{i}}}{2}   }e^{{\rm{i}}zx}G_\kappa(z)\,dz , \ I^{(4)}_{B,N}=\int_{-B-{\rm{i}}\frac{N}{2}}^{B-{\rm{i}}\frac{N}{2}}e^{{\rm{i}}zx}G_\kappa(z)\,dz .\nonumber
\end{align}
Since the integrals $I^{(2)}_{B,N}$ and $I^{(3)}_{B,N}$ are similar, we deal with only one of them, say $I^{(2)}_{B,N}$. For any fixed $N$, we observe that, for large values of $B$,
\begin{align*}
 \sup_{t\in [-\frac{N}{2},\frac{1}{2}]}\left|\frac{\Gamma(1-\frac{t}{\kappa}+{\rm{i}}\frac{B}{\kappa})}{W_{\phi}(1-t+\i B)\sin(\pi(t-\i B))\Gamma(1-t+{\rm{i}}\xi   )}\right|\le C_Ne^{-\frac{\pi}{2\kappa}|B|}.
\end{align*}
Therefore, for fixed $N$, $I^{(2)}_{B,N}\to 0, I^{(3)}_{B,N}\to 0$ as $B\to\infty$. For $I^{(4)}_{B,N}$ we proceed as follows. First, we observe that
$$\left|W_{\phi}\left(1+\frac{N}{2}+\mathrm{i}\xi\right)\right|\ge\left|W_{\phi}\left(\frac{1}{2}+\mathrm{i}\xi\right)\right|\phi(1)^{\frac{N}{2}}.$$
Then, for any odd natural number $N$,
\begin{align}
|I^{(4)}_{B,N}|&\le\int_{-B}^B\frac{e^{\frac{Nx}{2}}\left|\Gamma(1+\frac{N}{2\kappa}+\frac{{\rm{i}}\xi }{\kappa})\right|}{|W_{\phi}(\frac{N}{2}+1+\mathrm{i}\xi)||\sin(-\frac{N\pi}{2}-\i\pi\xi)||\Gamma(1+\frac{N}{2}+\mathrm{i}\xi)|}\,d\xi \nonumber \\
&\le e^{\frac{Nx}{2}}\phi(1)^{-\frac{N}{2}}\int_{-\infty}^\infty\left|\frac{\Gamma(1+\frac{N}{2\kappa}+\frac{{\rm{i}}\xi }{\kappa})}{W_{\phi}(\frac{1}{2}+\mathrm{i}\xi)\cosh(\pi\xi)\Gamma(1+\frac{N}{2}+\mathrm{i}\xi)}\right|\,d\xi \hlabel{eq:stirling_1} \\
&\le Ce^{\frac{Nx}{2}}\phi(1)^{-\frac{N}{2}}\left(\frac{N}{2}\right)^{\frac{N}{2}(\frac{1}{\kappa}-1)}\int_{-\infty}^\infty\left|\frac{1}{W_{\phi}(\frac{1}{2}+\mathrm{i}\xi)\cosh(\pi\xi)}\right|\,d\xi\to 0 \hlabel{eq:stirling_2}
\end{align}
as $N\to\infty$ since $\kappa>1$ and \eqref{eq:stirling_2} follows from \eqref{eq:stirling_1} by Stirling approximation. Therefore, $I^{(4)}_{B,N}\to 0$ uniformly in $B$ as $N\to\infty$. Finally, using the residue theorem for meromorphic functions, we conclude that, for all $x\in\bb{R}$,
\begin{align*}
\Hpsi\mf{f}_\kappa(x)=\sum_{n=0}^\infty (-1)^n\frac{\Gamma(1+\frac{n}{\kappa})}{n! W_{\phi}(n+1)}e^{nx} .
\end{align*}
On the other hand, it is easy to see that, for all $x\in\bb{R}$ and $\kappa>1$,
\begin{align*}
\Jpo\ast\mf{f}_\kappa (x)=\sum_{n=0}^\infty (-1)^n\frac{\Gamma(1+\frac{n}{\kappa})}{W_{\phi}(n+1)n!}e^{nx} .
\end{align*}
 Therefore, $K_\psi\ast\mf{f}_\kappa(x)=\Jpo\ast\mf{f}_\kappa(x)$ for all $x\in\bb{R}$ and $\kappa>1$. Finally, the proof of the proposition will be completed by means of the following lemma.
\end{proof}
\begin{lem}
If $g\in\bmrm{L}(\bb{R},e)$ and $h\in \C_b(\bb{R})$ such that $g\ast\mf{f}_\kappa=h\ast\mf{f}_\kappa$ for all $\kappa>1$, then $g=h$ a.e.
\end{lem}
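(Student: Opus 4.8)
The plan is to show that the map $\kappa\mapsto \mf{f}_\kappa$ produces an approximate identity as $\kappa\to\infty$, and then exploit the hypothesis at a fixed point $\kappa$ together with density. First I would record that $\int_\bb{R}\mf{f}_\kappa(x)\,dx=1$ for every $\kappa>1$ and that, after the substitution $u=\kappa x$, the family $\mf{f}_\kappa$ concentrates near $0$: indeed $\mf{f}_\kappa(x)=\kappa\,e^{-\kappa x}e^{-e^{-\kappa x}}$ so that the support mass escapes to the behaviour of $e^{-u}e^{-e^{-u}}$ scaled by $\kappa$, and for any $\delta>0$ one has $\int_{|x|>\delta}\mf{f}_\kappa(x)\,dx\to 0$. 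Consequently, for $h\in\C_b(\bb{R})$ the convolutions $h\ast\mf{f}_\kappa$ converge to $h$ pointwise (in fact locally uniformly) as $\kappa\to\infty$, by the standard approximate-identity argument: $h\ast\mf{f}_\kappa(x)-h(x)=\int_\bb{R}(h(x-y)-h(x))\mf{f}_\kappa(y)\,dy$, split into $|y|\le\delta$ and $|y|>\delta$ and use uniform continuity of $h$ on compacts together with boundedness.

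The subtlety is that $g$ lives in $\bmrm{L}(\bb{R},e)$, not in $\C_b(\bb{R})$, so I cannot directly assert $g\ast\mf{f}_\kappa\to g$ uniformly. Instead I would argue weakly: for any test function $\varphi\in\C^\infty_c(\bb{R})$, consider $\langle g\ast\mf{f}_\kappa,\varphi\rangle$ against Lebesgue measure (note $g\in\bmrm{L}(\bb{R},e)$ implies $g$ is locally integrable, and $g\ast\mf{f}_\kappa$ is well-defined and finite since $\mf{f}_\kappa$ decays super-exponentially at $+\infty$ and is bounded, matching the $e^{x}$ weight on $g$ via Cauchy--Schwarz as in \eqref{eq:conv_l2}). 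By Fubini, $\langle g\ast\mf{f}_\kappa,\varphi\rangle=\langle g,\check{\mf{f}}_\kappa\ast\varphi\rangle$ where $\check{\mf{f}}_\kappa(x)=\mf{f}_\kappa(-x)$; since $\check{\mf{f}}_\kappa$ is also an approximate identity, $\check{\mf{f}}_\kappa\ast\varphi\to\varphi$ uniformly with supports staying in a fixed compact set, so $\langle g,\check{\mf{f}}_\kappa\ast\varphi\rangle\to\langle g,\varphi\rangle$ by dominated convergence against the local $\bmrm[1]{L}$ bound on $g$. The same computation applies to $h$, using that $h\in\C_b(\bb{R})$ is locally integrable; thus from $g\ast\mf{f}_\kappa=h\ast\mf{f}_\kappa$ for all $\kappa>1$ we deduce $\langle g,\varphi\rangle=\langle h,\varphi\rangle$ for all $\varphi\in\C^\infty_c(\bb{R})$, hence $g=h$ a.e.

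An alternative, perhaps cleaner route is via Fourier/Laplace uniqueness: both $g$ and $h$ are locally integrable with at most exponential growth controlled on the appropriate side, and $\mf{f}_\kappa$ has an explicit Mellin/Fourier transform $\fou_{\mf{f}_\kappa}(\xi+\i/2)=\tfrac{1}{\sqrt{2\pi}}\Gamma(1+\i\xi/\kappa)\cdot(\text{power})$ which is non-vanishing; so $g\ast\mf{f}_\kappa=h\ast\mf{f}_\kappa$ forces $\widehat{g-h}\cdot\widehat{\mf{f}_\kappa}=0$ on the relevant strip and then $\widehat{g-h}=0$ there, giving $g=h$. I expect the main obstacle to be purely a matter of justifying the interchange of limits and integrals in the weighted space $\bmrm{L}(\bb{R},e)$ — specifically ensuring the convolutions are genuinely defined and that the approximate-identity convergence survives the $e^{x}$-weight, which is handled by the Minkowski-inequality estimate already used in \eqref{eq:conv_l2}. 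No new idea beyond a careful approximate-identity argument is needed.
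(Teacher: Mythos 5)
Your strategy is genuinely different from the paper's, and it can be made to work. The paper argues in norm: using $\|\tau_t g\|_{\bmrm{L}(\bb{R},e)}=e^{-t/2}\|g\|_{\bmrm{L}(\bb{R},e)}$, strong continuity of translations, Jensen's inequality and the identity $\int_{\bb{R}}e^{y/\kappa}\mf{f}_1(y)\,dy=\Gamma(1-\tfrac1\kappa)$, it shows $\|g\ast\mf{f}_\kappa-g\|_{\bmrm{L}(\bb{R},e)}\to0$, extracts a subsequence along which $g\ast\mf{f}_{\kappa_n}\to g$ a.e., and pairs this with the pointwise convergence $h\ast\mf{f}_\kappa\to h$ from bounded convergence. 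You instead test against $\varphi\in\C^\infty_c(\bb{R})$ and pass to the limit weakly; your identification of $\mf{f}_\kappa$ as an approximate identity, your treatment of $h$, and the Fubini step $\langle g\ast\mf{f}_\kappa,\varphi\rangle=\langle g,\check{\mf{f}}_\kappa\ast\varphi\rangle$ (justified since Minkowski's inequality gives $|g|\ast\mf{f}_\kappa\in\bmrm{L}(\bb{R},e)$) are all fine. The weak route is arguably more elementary in that it avoids the a.e.-subsequence extraction.

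There is, however, one incorrectly justified step: $\check{\mf{f}}_\kappa\ast\varphi$ does \emph{not} have support in a fixed compact set, since $\mf{f}_\kappa>0$ on all of $\bb{R}$, so you cannot conclude $\langle g,\check{\mf{f}}_\kappa\ast\varphi\rangle\to\langle g,\varphi\rangle$ from a merely \emph{local} integrability bound on $g$. This matters because $g\in\bmrm{L}(\bb{R},e)$ may grow exponentially as $x\to-\infty$ (e.g.\ $g(x)=e^{-(\frac12-\epsilon)x}\bbm{1}_{\{x<0\}}$), so the tails of the pairing are not negligible for free. The step is nonetheless true: for $\supp\varphi\subset[-A,A]$ one has, uniformly in $\kappa>1$, $|\check{\mf{f}}_\kappa\ast\varphi(z)|\le\|\varphi\|_\infty e^{-(|z|-A)}$ for $z\le -A$ (from $\mf{f}_\kappa(y)\le\kappa e^{-\kappa y}$) and $|\check{\mf{f}}_\kappa\ast\varphi(z)|\le\|\varphi\|_\infty e^{-e^{z-A}}$ for $z\ge A$ (from the substitution $v=e^{-\kappa y}$), and both majorants are integrable against $|g|$ by Cauchy--Schwarz in $\bmrm{L}(\bb{R},e)$; with this replacement the dominated convergence argument closes. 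By contrast, your ``alternative'' Fourier route should be dropped: $h\in\C_b(\bb{R})$ need not belong to $\bmrm{L}(\bb{R},e)$ and $g$ need not be a tempered distribution, so there is no common space or strip on which $\widehat{g-h}$ is defined without substantial extra work.
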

\begin{proof}
We first recall the fact that, for any $g\in\bmrm{L}(\bb{R},e)$, $\lim_{t\to 0}\|\tau_tg-g\|_{\bmrm{L}(\bb R,e)}=0$ and $\hpt{R38}{\|\tau_t g\|_{\bmrm{L}(\bb{R},e)}=e^{-\frac{t}{2}}\|g\|_{\bmrm{L}(\bb{R},e)}}$ for all $t\in\bb{R}$. Now, for any $\kappa>1$, $g\ast\mf{f}_\kappa\in\bmrm{L}(\bb{R},e)$ and
\begin{align}
\|g\ast\mf{f}_\kappa-g\|^2_{\LRe}&=\int_\bb{R} e^x\left(\int_\bb{R}(\tau_{-y}g(x)-g(x))\mf{f}_\kappa(y)\,dy\right)^2\,dx \nonumber \\
&\le\int_\bb{R} e^x\left(\int_\bb{R}(\tau_{-y}g(x)-g(x))^2\mf{f}_\kappa(y)\,dy\right)\,dx \nonumber \\
&=\int_{\bb{R}}\|\tau_{-y}g-g\|^2_{\LRe}\mf{f}_\kappa(y)\,dy=\int_{\bb{R}}\|\tau_{-\frac{y}{\kappa}}g-g\|^2_{\LRe}\mf{f}_1(y)\,dy. \hlabel{dct}
\end{align}
Next,
$$\int_\bb{R}\|\tau_{-\frac{y}{\kappa}}g-g\|^2_{\LRe}\ \mf{f}_1(y)\,dy\le\int_\bb{R}(1+e^{\frac{y}{\kappa}})\|g\|^2_{\LRe}\ \mf{f}_1(y)\,dy=\|g\|^2_{\LRe}\left(1+\Gamma\left(1-\frac{1}{\kappa}\right)\right)$$
and the right-hand side is bounded with respect to $\kappa$. Hence, by applying the dominated convergence theorem, one gets,  as $\kappa\to\infty$,
\begin{align*}
\int_\bb{R} \|\tau_{-\frac{y}{\kappa}} g-g\|^2_{\bmrm{L}(\bb{R},e)}\mf{f}_1(y)dy\to 0.
\end{align*}
From \eqref{dct}, it follows that $\|g\ast\mf{f}_\kappa-g\|_{\bmrm{L}(\bb{R},e)}\to 0$ as $\kappa\to\infty$. Hence, there exists a sequence $(\kappa_n)$ of positive real numbers such that as $n \to \infty$, $\kappa_n\to\infty$ such that $g\ast\mf{f}_{\kappa_n}\to g$ a.e. On the other hand, for $h\in \C_b(\bb{R})$, $h\ast\mf{f}_\kappa(x)=\int_{\bb{R}}h(x-\frac{y}{\kappa})\mf{f}_1(y)\,dy\to h(x)$ as $\kappa\to\infty$ by continuity of $h$ and the bounded convergence theorem. Therefore, letting $n\to\infty$ on the identity $g\ast\mf{f}_{\kappa_n}=h\ast\mf{f}_{\kappa_n}$, the proof of the lemma follows.
\end{proof}

%\textbf{A special case: One sided reflected stable semigroup}
\subsection{Self-similar processes with two sided jumps}
\subsubsection{Example of a non-self-adjoint semigroup with real continuous spectrum}\hlabel{sec:cont_spect_ex} Consider $\psi\in\mathbf{N}(\bb{R})$ such that $\psi(\xi)=\phi_+(-{\rm{i}}\xi   )\phi_-({\rm{i}}\xi   )$ for all $\xi\in \bb{R}$ and
\begin{align*}
\phi_{\pm}(z)=m_\pm+\ttt{d}_\pm z+\int_0^\infty (1-e^{-zy})\mu_\pm(dy), \ \  z\in\bb{C}_{[0,\infty)},
\end{align*}
where $m_\pm\ge 0, \ttt{d}_\pm> 0$ and $\int_0^\infty (y\wedge 1)\mu_\pm(dy)<\infty.$ Define $\m_\pm=m_\pm+\overline{\mu}_\pm(0+)$. Then, we have the following result.
\begin{prop}\hlabel{prop:cs}
Let $\m_\pm, \ttt d_\pm$ be as above. Let $\overline{\nu}_\pm$ denote the tail of the measure  $e^{-ay}\mu_\pm(dy)$. If $\m_\pm<\infty$, $\frac{\m_+}{\ttt{d}_+}=\frac{\m_-}{\ttt{d}_-}$ and $w\mapsto\int_0^\infty\cos(wy)\overline{\nu}_\pm(y)\,dy$ is in $\bmrm[1]{L}(\bb{R})$, then, $P_t[\psi]\Lambda_\psi=\Lambda_\psi Q_t$ with $\Lambda_\psi$ being invertible. Therefore, $\sigma(P_t[\psi])=\sigma_c(P_t[\psi])=e^{t\bb{R}_-}$.
\end{prop}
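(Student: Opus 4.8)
The plan is to reduce the whole statement to the single estimate
\begin{equation}\label{eq:propcs_goal}
\left|\Wphi\right|\asymp 1\qquad\text{on }\bb R ,
\end{equation}
and then to put all the effort into \eqref{eq:propcs_goal}. Granting it: by the conjugate symmetry $\ov W_{\phi_\pm}(z)=W_{\phi_\pm}(\ov z)$ the multiplier $\Mpsi$ of Theorem~\ref{th:spectral_decomp} (here $d=1$, so $\Mpsi(\xi)=\Wphip/\Wphim$ on $\bb R$) satisfies $|\Mpsi(\xi)|=\left|\Wphi\right|\asymp 1$, and likewise $|m_{\Lambda_\psi}(\xi+\tfrac{\i}{2})|=|\Mpsi(\xi)|\asymp 1$ (as $|\Gamma(\tfrac12+\i\xi)|=|\Gamma(\tfrac12-\i\xi)|$), so $\Lambda_\psi\in\mathscr B(\LRe)$ has a bounded inverse. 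Since $\ttt d_+>0$ forces $|\phi_+(\i\xi)|\to\infty$, \eqref{eq:propcs_goal} also gives $\psi\in\NN_+(\bb R)\subset\NN_b(\bb R)$, hence Theorem~\ref{thm:intertwining_pdo}\eqref{main_thm_1:it:1d} yields $P_t[\psi]\Lambda_\psi=\Lambda_\psi Q_t$ on $\D(\Lambda_\psi)$; as all three operators are now bounded and $\D(\Lambda_\psi)$ is dense, this holds on $\LRe$ with $\Lambda_\psi$ invertible. Finally $\Mpsi$ is bounded above and below, so Theorem~\ref{thm:spectrum}\eqref{it:cont_spect} gives $\sigma(P_t[\psi])=\sigma_c(P_t[\psi])=e^{t\bb{R}_-}$.

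For \eqref{eq:propcs_goal}: since $\ttt d_\pm>0$ gives $|\phi_\pm(\tfrac12+\i\xi)|\asymp|\xi|$, the prefactor in the estimate \eqref{eq:ration-gene} is $\asymp 1$, so, recalling $\Theta_\phi$ from \eqref{eq:Thetas} and writing $A_\phi(\tfrac12+\i\xi):=\int_{1/2}^\infty\ln\tfrac{|\phi(u+\i\xi)|}{\phi(u)}\,du=|\xi|\Theta_\phi(|\xi|)$, it is enough to bound $A_{\phi_+}(\tfrac12+\i\xi)-A_{\phi_-}(\tfrac12+\i\xi)$ uniformly in $\xi$. Here the matching hypothesis enters: put $\rho:=\m_+/\ttt d_+=\m_-/\ttt d_-$ and $\phi^0_\pm(z):=\ttt d_\pm(z+\rho)$. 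Because $\m_\pm<\infty$ one has $\phi_\pm(z)=\phi^0_\pm(z)-\widehat\mu_\pm(z)$ with $\widehat\mu_\pm(z)=\int_0^\infty e^{-zy}\mu_\pm(dy)$ bounded on $\bb{C}_{[0,\infty)}$, while $A_{\phi^0_+}(\tfrac12+\i\xi)=A_{\phi^0_-}(\tfrac12+\i\xi)=\int_{1/2}^\infty\tfrac12\ln\!\big(1+\tfrac{\xi^2}{(u+\rho)^2}\big)\,du$ since the $\ttt d_\pm$ cancel in $|\phi^0_\pm(u+\i\xi)|/\phi^0_\pm(u)$. Thus $A_{\phi_+}(\tfrac12+\i\xi)-A_{\phi_-}(\tfrac12+\i\xi)=\big[A_{\phi_+}-A_{\phi^0_+}\big](\tfrac12+\i\xi)-\big[A_{\phi_-}-A_{\phi^0_-}\big](\tfrac12+\i\xi)$, and it suffices to show that for each $\phi\in\{\phi_+,\phi_-\}$, with $\beta:=\widehat\mu/\phi^0$ (so $1-\beta=\phi/\phi^0$ is zero-free on $\bb{C}_{(0,\infty)}$ and $\log(1-\beta)$ is well defined with the branch vanishing as $\Re\to\infty$),
\begin{equation}\label{eq:propcs_crux}
A_{\phi}(\tfrac12+\i\xi)-A_{\phi^0}(\tfrac12+\i\xi)=\int_{1/2}^\infty\!\Big(\ln\big|1-\beta(u+\i\xi)\big|-\ln\big(1-\beta(u)\big)\Big)\,du
\end{equation}
is bounded uniformly in $\xi$.

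I would attack \eqref{eq:propcs_crux} by differentiating in $\xi$ under the integral. Since $\int_{1/2}^\infty\frac{d}{du}\log(1-\beta(u+\i s))\,du=-\log(1-\beta(\tfrac12+\i s))$ (the $u=\infty$ term vanishing as $\beta\to0$), one obtains $\frac{d}{d\xi}\big[A_\phi-A_{\phi^0}\big](\tfrac12+\i\xi)=\arg\big(1-\beta(\tfrac12+\i\xi)\big)$, the continuous branch vanishing at $\xi=0$; as the value at $\xi=0$ is $0$, \eqref{eq:propcs_crux} reduces to
\begin{equation}\label{eq:propcs_osc}
\sup_{\xi\in\bb R}\Big|\int_0^\xi\arg\big(1-\beta(\tfrac12+\i s)\big)\,ds\Big|<\infty .
\end{equation}
On bounded $s$-intervals the integrand is continuous and bounded (zero-freeness of $\phi$ on $\bb{C}_{(0,\infty)}$); for $|s|$ large, $\arg(1-\beta(\tfrac12+\i s))=-\Im\beta(\tfrac12+\i s)+\mathrm O(|s|^{-2})$ with the error absolutely integrable, and
\[\Im\beta(\tfrac12+\i s)=\frac{(\tfrac12+\rho)\,\Im\widehat\mu(\tfrac12+\i s)-s\,\Re\widehat\mu(\tfrac12+\i s)}{\ttt d\,\big((\tfrac12+\rho)^2+s^2\big)}.\]
The first summand is dominated, via the integration-by-parts identity $s\int_0^\infty\cos(sy)\,\ov\nu(y)\,dy=-\Im\widehat\mu(\tfrac12+\i s)$ (using $-d\ov\nu(y)=e^{-y/2}\mu(dy)$ and $\ov\nu(y)\to0$), by $\mathrm O\!\big(|h(s)|/|s|\big)$ with $h(s):=\int_0^\infty\cos(sy)\ov\nu(y)\,dy\in\bmrm[1]{L}(\bb R)$ by hypothesis, hence has bounded $\int_{s_0}^\xi$; the second summand, after $\frac{s}{(\tfrac12+\rho)^2+s^2}=\frac1s+\mathrm O(|s|^{-3})$, leaves the conditionally convergent integral $\int_{s_0}^\xi\frac{\Re\widehat\mu(\tfrac12+\i s)}{s}\,ds$, which one bounds by using that $\Re\widehat\mu(\tfrac12+\i\cdot)$ and $\Im\widehat\mu(\tfrac12+\i\cdot)$ are conjugate functions — boundary values on $\i\bb R$ of the bounded analytic map $z\mapsto\widehat\mu(\tfrac12+z)$ on $\{\Re z>0\}$, with vanishing value at $\infty$ — so that, by skew-adjointness of the Hilbert transform, this integral is controlled by $\|h\|_{\bmrm[1]{L}(\bb R)}$; here one also uses that the hypothesis forces $\ov\nu$, and hence $\mu$, to be non-atomic.

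Everything up to \eqref{eq:propcs_osc} is routine bookkeeping (the reduction in the first paragraph, the comparison with $\phi^0_\pm$, the differentiation in $\xi$, and the elementary estimates above). The step I expect to be the main obstacle is \eqref{eq:propcs_osc} itself: the uniform‑in‑$\xi$ control of a \emph{conditionally} convergent oscillatory integral, where the integrand cannot be estimated pointwise in absolute value and one must genuinely exploit the oscillation encoded in the $\bmrm[1]{L}$‑hypothesis on the Fourier cosine transform of $\ov\nu_\pm$ — in particular to absorb the slowly decaying, non‑absolutely‑integrable contribution of $\Re\widehat\mu_\pm(\tfrac12+\i\cdot)$.
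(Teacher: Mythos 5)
Your proposal follows essentially the same route as the paper's proof: everything is reduced to the two\-/sided bound $\left|W_{\phi_+}\left(\frac12+\i\xi\right)\right|\asymp\left|W_{\phi_-}\left(\frac12+\i\xi\right)\right|$, and that bound is attacked through the representation $|\xi|\Theta_\phi(|\xi|)=\int_0^{|\xi|}\arg\phi(a+\i w)\,dw$, a factorization of $\phi$ isolating the linear part, the first\-/order expansion of the argument, and the hypothesis on the cosine transform of $\ov\nu_\pm$ to handle the remainder. The paper writes $\phi(z)=zg(z)$ and computes each $\Theta_{\phi_\pm}$ asymptotically before cancelling; you write $\phi=\phi^0(1-\beta)$ with $\phi^0(z)=\ttt d(z+\rho)$, which makes the role of $\frac{\m_+}{\ttt d_+}=\frac{\m_-}{\ttt d_-}$ transparent since $A_{\phi^0_+}=A_{\phi^0_-}$ exactly. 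The reduction in your first paragraph and the differentiation identity are correct; the two arguments are the same in substance.

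The gap is exactly where you flag it, and the mechanism you invoke does not close it. Writing $\widehat\mu(z)=\int_0^\infty e^{-zy}\mu(dy)$ and integrating by parts against the tail $\ov\nu$ gives $\Re\widehat\mu\left(\frac12+\i s\right)=\ov\nu(0)-s\,\tilde h(s)$ with $\tilde h(s)=\int_0^\infty\sin(sy)\ov\nu(y)\,dy$ the \emph{sine} transform of $\ov\nu$. Hence
\begin{equation*}
\int_{s_0}^{\xi}\Re\widehat\mu\left(\tfrac12+\i s\right)\frac{ds}{s}=\ov\nu(0)\ln\xi-\int_{s_0}^{\xi}\tilde h(s)\,ds+{\rm O}(1),\qquad \int_0^{\xi}\tilde h(s)\,ds=\ov\nu(0)\ln\xi-R(\xi)+{\rm O}(1),
\end{equation*}
where $R(\xi)=\int_0^1\big(\ov\nu(0)-\ov\nu(y)\big)\frac{1-\cos(\xi y)}{y}\,dy\ge0$. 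So your final uniform bound on the oscillatory integral is \emph{equivalent} to $\sup_\xi R(\xi)<\infty$, i.e.\ to a Dini condition $\int_0^1\frac{\ov\nu(0)-\ov\nu(y)}{y}\,dy<\infty$ at the origin. This is a statement about the modulus of continuity of $\ov\nu$ at $0$; it is not produced by skew\-/adjointness of the Hilbert transform, and it does not follow from $h\in\bmrm[1]{L}(\bb R)$ alone, since by inversion $\int_0^1\frac{\ov\nu(0)-\ov\nu(y)}{y}\,dy=\frac2\pi\int_0^\infty h(s)\left(\ln_+ s+{\rm O}(1)\right)ds$, which asks for control of a logarithmic moment of $h$. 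For instance a L\'evy measure with density proportional to $y^{-1}\ln^{-2}(1/y)$ near $0$ gives $h(s)={\rm O}\!\left(s^{-1}\ln^{-2}s\right)\in\bmrm[1]{L}$ while the Dini integral diverges, so either an additional hypothesis or a genuinely different estimate is needed here.

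You should not expect to find the missing argument in the paper: its proof of the corresponding Lemma never meets this term, because the displayed formula for $\Im g(a+\i w)$ is miscomputed — the imaginary part of $\big(\m-\widehat\mu(z)\big)/z$ carries the sine integral with coefficient $a/(a^2+w^2)$, not $w/(a^2+w^2)$, and contains the additional summand $w\,\Re\widehat\mu(a+\i w)/(a^2+w^2)$, which is precisely your problematic term. After that slip only the cosine transform $h$ survives, which is how the stated hypothesis enters. In short, your proposal and the paper's proof break at the same point; yours has the merit of making the obstruction visible.
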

\begin{rem} From Vigon's theory of philanthropy \cite{vigon2002}, if we assume that $\mu_\pm$ has non-increasing density with respect to Lebesgue measure, $\phi_+$ and $\phi_-$ are always Wiener-Hopf factors of some L\'evy-Khintchine exponent.
\end{rem}
To prove Proposition \ref{prop:cs}, we start by showing  the following lemma.
\begin{lem}
Let $\phi\in\B$ be such that $\m=m+\ov{\mu}(0+)<\infty$, then
\begin{align}\hlabel{eq:w_phi_est}
|W_\phi(a+{\rm{i}}\xi)|\asymp\sqrt{\phi(a)} W_\phi(a)e^{-\frac{\pi}{2}|\xi|} |\xi|^{a+\frac{\m}{\ttt{d}}-\frac{1}{2}}, \ \ \forall a>0,
\end{align}
if and only if $w\mapsto\int_0^\infty\cos(wy)\overline{\nu}(y)\,dy \in \bmrm[1]{L}(\bb{R})$, where $\overline{\nu}$ is the tail of the measure $e^{-ay}\mu(dy)$.
\end{lem}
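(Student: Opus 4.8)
The plan is to establish the two-sided bound \eqref{eq:w_phi_est} by exploiting the exact representation of $|W_\phi(a+\i\xi)|$ in property (W\ref{p4}), namely the identity \eqref{eq:bernstein_gamma_bound}, and reduce the asymptotic question to the behavior of $A_\phi(a+\i\xi)$ as $|\xi|\to\infty$. First I would recall that, since $\sup_{a>d}|R_\phi(a)|<\infty$ and $\sup_{z\in\bb{C}_{(d,\infty)}}|E_\phi(z)|<\infty$ for any $d>0$, the factors $e^{-E_\phi(z)-R_\phi(a)}$ in \eqref{eq:bernstein_gamma_bound} are bounded above and below uniformly for $a$ in a fixed compact subset of $(0,\infty)$ and $\xi\in\bb{R}$. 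Likewise $\phi(1)$ and $\phi(a)\phi(1+a)$ are fixed positive constants, and $G_\phi(a)=\int_1^{1+a}\ln\phi(u)\,du$ is a constant. Hence, up to $\asymp$, the only non-trivial dependence on $\xi$ is carried by the factor $|\phi(a+\i\xi)|^{-1/2}e^{-A_\phi(a+\i\xi)}$. Now because $\ttt{d}>0$, one has $\phi(a+\i\xi)\sim \ttt{d}\,\i\xi$ as $|\xi|\to\infty$ (recall $\phi(z)=\phi(0)+\ttt{d}z+\int_0^\infty(1-e^{-zy})\nu(dy)$, and the integral term is $\mathrm{o}(|\xi|)$ by Riemann–Lebesgue type arguments), so $|\phi(a+\i\xi)|^{-1/2}\asymp |\xi|^{-1/2}$, up to combining with the constant $\sqrt{\phi(a)}$. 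This isolates the claim as: $\int_0^\infty\cos(wy)\ov{\nu}(y)\,dy\in\bmrm[1]{L}(\bb{R})$ if and only if $A_\phi(a+\i\xi)=\frac{\pi}{2}|\xi|-\left(a+\tfrac{\m}{\ttt d}-\tfrac12\right)\ln|\xi|+\mathrm{O}(1)$.

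Next I would unpack $A_\phi(a+\i\xi)=\int_a^\infty\ln\bigl(|\phi(u+\i\xi)|/\phi(u)\bigr)\,du$. Writing $\phi(u+\i\xi)=\ttt{d}(u+\i\xi)+\m-\int_0^\infty e^{-(u+\i\xi)y}\mu(dy)$ and factoring out $\ttt{d}(u+\i\xi)$, one gets $|\phi(u+\i\xi)| = \ttt{d}|u+\i\xi|\,\bigl|1 + \tfrac{\m}{\ttt d(u+\i\xi)} - \tfrac{1}{\ttt d(u+\i\xi)}\int_0^\infty e^{-(u+\i\xi)y}\mu(dy)\bigr|$. The leading term $\int_a^\infty\ln(\ttt d|u+\i\xi|/\phi(u))\,du$ is the one producing $\frac\pi2|\xi|$ together with a $-\bigl(a-\tfrac12\bigr)\ln|\xi|$-type correction; this is a classical computation (it is essentially how the analogous Stirling-type estimate for the Gamma function arises, here via the integral $\int_a^\infty \ln(1+(u/\xi)^2)^{1/2}du$ and $\int_a^\infty \arctan$-type remainders). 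The remaining contribution, $\int_a^\infty \ln\bigl|1+\tfrac{\m-\int_0^\infty e^{-(u+\i\xi)y}\mu(dy)}{\ttt d(u+\i\xi)}\bigr|\,du$, is the delicate piece: expanding the logarithm to first order gives a main term $\ -\tfrac{\m}{\ttt d}\cdot\tfrac{1}{?}$ — more precisely $\Re\int_a^\infty \tfrac{\m}{\ttt d(u+\i\xi)}du$, which yields $-\tfrac{\m}{\ttt d}\ln|\xi| + \mathrm{O}(1)$ — plus an error term whose boundedness in $\xi$ is exactly equivalent to the $\bmrm[1]{L}(\bb{R})$ hypothesis on $w\mapsto\int_0^\infty\cos(wy)\ov\nu(y)\,dy$. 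The point is that $\int_0^\infty e^{-(u+\i\xi)y}\mu(dy)$, after an integration by parts converting $\mu$ to its tail $\ov\nu$ (for the measure $e^{-uy}\mu(dy)$, whose tail is the $\ov\nu$ named in the statement), has real part governed by $\int_0^\infty\cos(\xi y)\ov\nu(y)\,dy$; integrability of this in $\xi$ is what makes the $u$-integral of the logarithm's correction bounded uniformly in $\xi$.

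The hard part will be making the last error analysis rigorous: one must control $\int_a^\infty \Bigl[\ln\bigl|1+\tfrac{\m - \widehat\mu(u+\i\xi)}{\ttt d(u+\i\xi)}\bigr| - \Re\tfrac{\m}{\ttt d(u+\i\xi)}\Bigr]du$ uniformly in $|\xi|$ large, where $\widehat\mu(z)=\int_0^\infty e^{-zy}\mu(dy)$, and show this is $\mathrm{O}(1)$ precisely under the stated hypothesis, and conversely that failure of the hypothesis forces this quantity to be unbounded (giving the ``only if'' direction). This requires (i) splitting the $u$-integral into $u\in[a,a+1]$, where $|z|\asymp|\xi|$ so $\tfrac{\m-\widehat\mu(z)}{\ttt d z} = \mathrm{O}(1/|\xi|)$ and one can Taylor-expand $\ln|1+w|=\Re w + \mathrm{O}(|w|^2)$ with $\mathrm{O}(|w|^2)$ contributing $\mathrm{O}(1/|\xi|^2)$, leaving $\int_a^{a+1}\Re\tfrac{-\widehat\mu(u+\i\xi)}{\ttt d(u+\i\xi)}du$ to handle, and (ii) the tail $u>a+1$, where one integrates by parts in $u$ or uses Fubini to swap the $u$- and $y$-integrals and reduce to $\int_0^\infty \cos(\xi y)\,\widetilde\nu(y)\,dy$ for an appropriate kernel $\widetilde\nu$ built from $\ov\nu$. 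Throughout, I would lean on the fact that $\ttt d>0$ keeps $\phi(u+\i\xi)$ bounded below by $\ttt d u>0$ in modulus, so no zeros of $\phi$ obstruct the logarithm. Once \eqref{eq:w_phi_est} is in hand, the proposition it feeds into (Proposition~\ref{prop:cs}) follows by the balance condition $\m_+/\ttt d_+=\m_-/\ttt d_-$, which makes the polynomial exponents $a+\m_\pm/\ttt d_\pm - \tfrac12$ cancel in the ratio $|W_{\phi_+}(\tfrac12+\i\xi)/W_{\phi_-}(\tfrac12+\i\xi)|$, so that $\mm^{\mathbf e}_\psi$ is bounded above and below, whence $\Lambda_\psi$ (and its inverse) are bounded and Theorem~\ref{thm:spectrum}\eqref{it:cont_spect} applies.
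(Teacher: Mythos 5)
Your overall reduction is the right one and broadly parallels the paper's: strip off the bounded factors in the representation of $|W_\phi|$, use $\ttt{d}>0$ to get $|\phi(a+\i\xi)|^{-1/2}\asymp|\xi|^{-1/2}$, and tie the remaining exponent to a $\frac{\pi}{2}|\xi|$ leading term, a $-(a+\frac{\m}{\ttt{d}})\ln|\xi|$ correction, and an $\mathrm{O}(1)$ remainder that is bounded exactly when $w\mapsto\int_0^\infty\cos(wy)\ov{\nu}(y)\,dy\in\bmrm[1]{L}(\bb{R})$. (Incidentally, your stated target for $A_\phi$ double-counts the $-\tfrac12$: since the $|\xi|^{-1/2}$ is already supplied by $|\phi(a+\i\xi)|^{-1/2}$, what you need is $A_\phi(a+\i\xi)=\frac{\pi}{2}|\xi|-(a+\frac{\m}{\ttt{d}})\ln|\xi|+\mathrm{O}(1)$, without the extra $+\tfrac12\ln|\xi|$.)

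The genuine gap is in the execution of the $u$-integral. Your splitting of $A_\phi(a+\i\xi)=\int_a^\infty\ln\bigl(|\phi(u+\i\xi)|/\phi(u)\bigr)du$ into $\int_a^\infty\ln(\ttt{d}|u+\i\xi|/\phi(u))\,du$ plus $\int_a^\infty\ln\bigl|1+\frac{\m-\widehat\mu(u+\i\xi)}{\ttt{d}(u+\i\xi)}\bigr|\,du$ produces two individually \emph{divergent} integrals: since $\m<\infty$ forces $\mu$ to be finite, $\widehat\mu(u+\i\xi)\to0$ as $u\to\infty$, so the integrand of the second piece behaves like $\frac{\m}{\ttt{d}u}$ at infinity, and in particular your ``main term'' $\Re\int_a^\infty\frac{\m}{\ttt{d}(u+\i\xi)}\,du$ equals $+\infty$, not $-\frac{\m}{\ttt{d}}\ln|\xi|+\mathrm{O}(1)$; the compensating logarithmic divergence sits in the first piece because $\phi(u)\sim\ttt{d}u+\m$. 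So the decomposition has to be regrouped before any term-by-term analysis makes sense. The paper sidesteps this entirely with the identity (a harmonic-conjugate/Cauchy--Riemann fact from \cite[Theorem 3.2]{patie2018})
\begin{align*}
\int_a^\infty\ln\Bigl(\frac{|\phi(u+\mathrm{i}|\xi|)|}{\phi(u)}\Bigr)\,du=\int_0^{|\xi|}\arg\bigl(\phi(a+\mathrm{i}w)\bigr)\,dw,
\end{align*}
which converts the problem into estimating $\arg\phi(a+\i w)=\arg(a+\i w)+\arg g(a+\i w)$ with $g(z)=\phi(z)/z$. There $\arg(a+\i w)$ integrates explicitly to $\frac{\pi}{2}|\xi|-a\ln|\xi|+\mathrm{O}(1)$, while $\arg g\approx\Im g/\Re g$ with $\Re g=\ttt{d}+\mathrm{O}(1/w)$ and $\Im g=-\frac{\m}{w^2+a^2}+\frac{w}{w^2+a^2}\int_0^\infty\sin(wy)e^{-ay}\mu(dy)+\mathrm{O}(1/w^2)$; a single integration by parts turns the sine term into $\frac{w^2}{w^2+a^2}\int_0^\infty\cos(wy)\ov{\nu}(y)\,dy$ with the \emph{fixed} weight $e^{-ay}$, so the $\mathrm{O}(1)$-versus-unbounded dichotomy is literally the $\bmrm[1]{L}(\bb{R})$ condition in the statement, in both directions. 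Your route, by contrast, would produce kernels $e^{-uy}$ with $u$ varying over $[a,\infty)$, and relating their cosine transforms back to the fixed-$a$ one named in the lemma is an additional step you have not supplied. Without the argument-integral identity (or an equivalent regrouping), the plan as written does not go through.
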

\begin{proof}
First note that, from  \cite[Theorem 6.2(1)]{patiesavov}, we have, for all $a>0$,
\begin{align*}
|W_{\phi}(a+{\rm{i}}\xi)   |\asymp\frac{\sqrt{\phi(a)}W_\phi(a)}{\sqrt{\phi(a+{\rm{i}}\xi)   }}e^{-|\xi|\Theta_\phi(a,|\xi|)} \ \ \ \mbox{as $|\xi|\to\infty$}
\end{align*}
where
\begin{align*}
|\xi|\Theta_\phi(a,|\xi|)=\int_a^\infty\ln\left(\frac{|\phi(y+\mathrm{i}|\xi|)|}{\phi(y)}\right)\,dy
\end{align*}
which, from the proof of  \cite[Theorem 3.2]{patie2018}, turns out to be
\begin{align}\hlabel{eq:arg}
\Theta_\phi(a,|\xi|)=\int_0^{|\xi|}\arg(\phi(a+\mathrm{i}w))\,dw.
\end{align}
As $\ttt{d}>0$, $|\phi(a+{\rm{i}}\xi)   |\sim \ttt{d}|\xi|$ as $|\xi|\to\infty$. Thus, it is enough to show that $e^{-|\xi|\Theta_\phi(a,|\xi|)}\asymp |\xi|^{\frac{\m}{\ttt{d}}+a}e^{-\frac{\pi}{2}|\xi|}$. If $z=a+\mathrm{i}w$, then
\begin{align*}
\phi(z)=z\left(\frac{\m}{z}+\ttt{d}-\frac{1}{z}\int_0^\infty e^{-zy}\mu(dy)\right)=zg(z).
\end{align*}
Now, $\arg(g(z))=\arctan\left(\frac{\Im(g(z))}{\Re(g(z))}\right)$. We observe that
\begin{align}
\Re(g(a+\mathrm{i}w))&=\ttt{d}+{\rm{O}}\left(\frac{1}{w}\right) \hlabel{eq:real_part}\\
\Im(g(a+\mathrm{i}w))&=-\frac{\m}{w^2+a^2}+\frac{w}{w^2+a^2}\int_0^\infty\sin(wy)e^{-ay}\mu(dy)+{\rm{O}}\left(\frac{1}{w^2}\right). \hlabel{eq:im_part}
\end{align}
From \eqref{eq:real_part} and \eqref{eq:im_part}, we get
\begin{align}
\frac{\Im(g(a+\mathrm{i}w))}{\Re(g(a+\mathrm{i}w))}=-\frac{\m}{\ttt{d}w}+\frac{\frac{w}{w^2+a^2}\int_0^\infty\sin(wy)e^{-ay}\mu(dy)}{\Re(g(a+\mathrm{i}w))}+{\rm{O}}\left(\frac{1}{w^2}\right). \hlabel{eq:ratio_real_im}
\end{align}
Now, let us analyze the second term in the above expression. Using integration by parts (or, equivalently, Fubini's theorem), we have
\begin{align}
\frac{\frac{w}{w^2+a^2}\int_0^\infty\sin(wy)e^{-ay}\mu(dy)}{\Re(g(a+\mathrm{i}w))}=\frac{\frac{w^2}{w^2+a^2}\int_0^\infty\cos(wy)\overline{\nu}(y)\,dy}{\Re(g(a+\mathrm{i}w))}. \hlabel{eq:im_part2}
\end{align}
As $\Re(g(a+\mathrm{i}w))=\ttt{d}+{\rm{O}}\left(\frac{1}{w}\right)$, the right-hand side of \eqref{eq:im_part2} is integrable with respect to $w$ if and only if the function $w\mapsto\int_0^\infty\cos(wy)\overline{\nu}(y)\,dy$ is integrable with respect to $w$. Also, we observe that $\frac{\Im(g(a+\mathrm{i}w))}{\Re(g(a+\mathrm{i}w))}\to 0 \ \ \mbox{as $w\to\infty$}$ implies that
\begin{align}
&\arctan\left(\frac{\Im(g(a+\mathrm{i}w))}{\Re(g(a+\mathrm{i}w))}\right)-\frac{\Im(g(a+\mathrm{i}w))}{\Re(g(a+\mathrm{i}w))}={\rm{O}}\left(\left|\frac{\Im(g(a+\mathrm{i}w))}{\Re(g(a+\mathrm{i}w))}\right|^2\right).\hlabel{eq:arctan_g}
\end{align}
From \eqref{eq:ratio_real_im}, it is not hard to see that $\int_0^\infty\left|\frac{\Im(g(a+\mathrm{i}w))}{\Re(g(a+\mathrm{i}w))}\right|^2\,dw<\infty$. Thus, \eqref{eq:im_part2} and \eqref{eq:arctan_g} yield that
\begin{align*}
\int_0^{|\xi|}\arctan(g(a+\mathrm{i}w))\,dw=-\frac{\m}{\ttt{d}}\ln |\xi|+{\rm{O}}(1).
\end{align*}
Finally, $\lim_{w\to\infty}\Re(g(a+\mathrm{i}w))=\ttt{d}=\lim_{w\to\infty} g(a+\mathrm{i}w)>0$ implies that for large values of $w$, $\arg(\phi(a+\mathrm{i}w))=\arg(a+\mathrm{i}w)+\arg(g(a+\mathrm{i}w))$. Therefore,
\begin{align*}
|\xi|\Theta_\phi(a,|\xi|)&=\int_0^{|\xi|}\arg(\phi(a+\mathrm{i}w))\,dw \\
&=\int_0^{|\xi|}\arg(a+\mathrm{i}w)\,dw+\int_0^{|\xi|}\arg(g(a+\mathrm{i}w))\,dw+{\rm{O}}(1) \\
&=|\xi|\arctan\left(\frac{|\xi|}{a}\right)-\frac{a}{2}\ln\left(1+\frac{\xi^2}{a^2}\right)-\frac{\m}{\ttt{d}}\ln |\xi|+{\rm{O}}(1) \\
&=\frac{\pi}{2}|\xi|-a\ln |\xi|-\frac{\m}{\ttt{d}}\ln |\xi| +{\rm{O}}(1).
\end{align*}
Then, we conclude that
\begin{align*}
e^{-|\xi|\Theta_\phi(a,|\xi|)}\asymp |\xi|^{a+\frac{\m}{\ttt{d}}} e^{-\frac{\pi}{2}|\xi|}
\end{align*}
if and only if $w\mapsto\int_0^\infty\cos(wy)\overline{\nu}(y)\,dy$ is in $\bmrm[1]{L}(\bb{R})$.
This concludes the proof of the lemma.
\end{proof}
Now, coming back to the proof of  Proposition  \ref{prop:cs} if both $\phi_+$ and $\phi_-$ satisfy the conditions of the proposition, it is easy to see, from the estimate \eqref{eq:w_phi_est}, that $\xi \mapsto \left| \frac{W_{\phi_+}(\frac{1}{2}-{\rm{i}}\xi   )}{W_{\phi_-}(\frac{1}{2}+\mathrm{i}\xi)}\right|$ is bounded above and below. Therefore, both $\Lambda_\psi$ and $\Lambda^{-1}_\psi$ are invertible, which concludes the proof.

\subsubsection{Another example with two-sided jumps}\hlabel{ex:two_sided} Consider $\phi_+, \phi_-$ such that
\begin{equation*}
 \phi_+(z)=\frac{\Gamma(\tilde{\alpha}(1+z))}{\Gamma(\tilde{\alpha} z)} \ \ \mbox{and} \ \ \phi_-(z)=\frac{\Gamma(\rho+\alpha+\alpha z)}{\Gamma(\rho+\alpha z)}, \ \ z \in\bb{C}_{(0,\infty)},
 \end{equation*}
 where $\tilde{\alpha},\alpha\in (0,1), \ \rho>0$. From \cite{kp2013}, it is known that $\phi_+, \phi_-\in\B$ and the L\'evy measures of $\phi_+$ and $\phi_-$ are absolutely continuous with  non-increasing densities, namely, for $y>0$,
 \begin{equation*}
 \begin{aligned}
 \mu_+(dy)&=\frac{1}{\Gamma(1-\tilde{\alpha})}\frac{e^{\frac{y}{\tilde{\alpha}}}}{(e^{\frac{y}{\tilde{\alpha}}}-1)^{1+\tilde{\alpha}}}\,dy, \\
 \mu_-(dy)&=\frac{1}{\Gamma(1-\alpha)}\frac{e^{\frac{(1-\rho)y}{\alpha}}}{(e^{\frac{y}{\alpha}}-1)^{1+\alpha}}\,dy.
 \end{aligned}
 \end{equation*}
 Thus, from Vigon's theory of philanthropy \cite{vigon2002}, there is a L\'evy process whose L\'evy-Khintchine exponent is given by
 \begin{align*}
 \psi(\xi)=\phi_+(-{\rm{i}}\xi   )\phi_-({\rm{i}}\xi   ), \ \ \xi\in\bb{R}.
 \end{align*}
It is immediate that the Bernstein-gamma functions corresponding to $\phi_-$ and $\phi_+$ are  given by
\begin{equation*}
\begin{aligned}
&W_{\phi_+}(z)=\frac{\Gamma(\tilde{\alpha} z)}{\Gamma(\tilde{\alpha})}\textrm{ and }
 W_{\phi_-}(z)=\frac{\Gamma(\rho+\alpha z)}{\Gamma(\alpha+\rho)} \ \  z\in\bb{C}_{(0,\infty)}.
\end{aligned}
\end{equation*}
Using Stirling formula for the gamma function, we know that for all $a>0$ and large values of $|\xi|$,
\begin{equation*}
\begin{aligned}
&|W_{\phi_+}(a+{\rm{i}}\xi)   |\asymp |\xi|^{a\tilde{\alpha}-\frac{1}{2}} e^{-\frac{\tilde{\alpha}\pi |\xi|}{2}} \textrm{ and }
|W_{\phi_-}(a+{\rm{i}}\xi)   |\asymp |\xi|^{a\alpha+\rho-\frac{1}{2}}e^{-\frac{\alpha \pi |\xi|}{2}}.
\end{aligned}
\end{equation*}
Also, from the above estimates and the definitions of $\phi_+, \phi_-$, we infer that $\psi\in\mathbf{N}_b(\bb{R})$ for any $\alpha,\tilde{\alpha}\in (0,1), \rho>0$. From Theorem~\ref{thm:spectrum}, the multiplier of the Fourier  operator $\Hpsi$ is given by
\begin{align*}
m_{\Hpsi}\left(\xi+\frac{{\rm{i}}}{2}   \right)=\frac{W_{\phi_+}(\frac{1}{2}-{\rm{i}}\xi   )}{W_{\phi_-}(\frac{1}{2}+{\rm{i}}\xi)},
\end{align*}
and therefore, $\xi\mapsto m_{\Hpsi}\left(\xi+\frac{{\rm{i}}}{2}   \right)\in\bmrm{L}(\bb{R})$ in the following two cases:  when $\tilde{\alpha}>\alpha$ or  when $\alpha=\tilde{\alpha}, \ \rho>\frac{1}{2}$. Similarly,  the reciprocal function
\begin{align*}
\xi\mapsto \frac{1}{m_{\Hpsi}\left(\xi+\frac{{\rm{i}}}{2} \right)} \in\bmrm{L}(\bb{R})
\end{align*}
 when (i') $\tilde{\alpha}<\alpha$. Based on these  observation, we get the following result.
\begin{prop}
\begin{enumerate}[(i)]
\item If $\tilde{\alpha}>\alpha$, or, $\alpha=\tilde{\alpha}$ and  $\rho>\frac{1}{2}$, then $e^{t\bb{R}_-}\subseteq\sigma_p(P_t[\psi])$ and the eigenfunction corresponding to the eigenvalue $e^{-te^{-y}}$ is given by $\tau_{-y}\Jpo$ where
\begin{align*}
\Jpo(x)=\ttt{W}\left(\frac{\alpha}{\tilde{\alpha}},\alpha+\rho; -e^{\frac{x}{\tilde{\alpha}}}\right)
\end{align*}
and $\ttt{W}(\gamma,\beta;z)=\sum_{n=0}^\infty\frac{1}{\Gamma(\gamma n+\beta)}\frac{z^n}{n!}$ is the Wright hypergeometric function, which defines an entire function if $\gamma>-1$.
\item If  $\tilde{\alpha}<\alpha$, then $e^{t\bb{R}_-}\subseteq\sigma_r(P_t[\psi])$ and the co-eigenfunction corresponding to $e^{-te^{-y}}$ is given by $\tau_{-y}J_{\ov{\psi}}$ where
\begin{align}\hlabel{eq:co-eigenfunction_1}
J_{\ov{\psi}}(x)=e^{\frac{\rho x}{\alpha}}\ttt{W}\left(\frac{\tilde{\alpha}}{\alpha},\tilde{\alpha}+\frac{\tilde{\alpha}}{\alpha}\rho; -e^{\frac{x}{\alpha}}\right).
\end{align}
\end{enumerate}
\end{prop}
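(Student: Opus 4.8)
The plan is to read off both statements from Theorem~\ref{thm:spectrum}\eqref{it:point_spect}--\eqref{it:residual_spect} together with an explicit identification of the associated (co-)eigenfunctions. First I would note that the hypotheses in (i) and (ii) are exactly the membership conditions required to apply that theorem. Indeed $\psi\in\NN_b(\bb{R})$, as already observed, so $P_t[\psi]\in\ccP$; and writing $m^{\mathbf e}_\psi(\xi)=m_{\Hpsi}(\xi+\tfrac{\i}{2})=\frac{W_{\phi_+}(\frac12-\i\xi)}{W_{\phi_-}(\frac12+\i\xi)}$, the Stirling estimates for $|W_{\phi_\pm}(a+\i\xi)|$ recorded above give $|m^{\mathbf e}_\psi(\xi)|\asymp |\xi|^{\frac{\tilde\alpha-\alpha}{2}-\rho}e^{-\frac{(\tilde\alpha-\alpha)\pi}{2}|\xi|}$ as $|\xi|\to\infty$. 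Hence $m^{\mathbf e}_\psi\in\bmrm{L}(\bb{R})$ when $\tilde\alpha>\alpha$, and also when $\tilde\alpha=\alpha$ and $\rho>\tfrac12$ (where $|m^{\mathbf e}_\psi(\xi)|\asymp|\xi|^{-\rho}$), while $1/m^{\mathbf e}_\psi\in\bmrm{L}(\bb{R})$ when $\tilde\alpha<\alpha$. Invoking Theorem~\ref{thm:spectrum}\eqref{it:point_spect} (resp.~\eqref{it:residual_spect}) then yields $e^{t\bb{R}_-}\subseteq\sigma_p(P_t[\psi])$ with $\sigma_r(P_t[\psi])=\emptyset$ (resp.~$e^{t\bb{R}_-}\subseteq\sigma_r(P_t[\psi])$ with $\sigma_p(P_t[\psi])=\emptyset$), and identifies the (co-)eigenfunction for the eigenvalue $e^{-te^{-y}}$ as $\tau_{-y}J_\psi$ (resp.~$\tau_{-y}J_{\ov\psi}$), where $J_\psi$ is the shifted Fourier inverse of $m^{\mathbf e}_\psi$, i.e.~$\fouho_{J_\psi}=m^{\mathbf e}_\psi$ in the $\bmrm{L}$-sense.

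The remaining and main task is to compute $J_\psi$ in closed form. I would do this by evaluating the shifted Fourier transform of the candidate $J_\psi(x)=\ttt{W}\big(\tfrac{\alpha}{\tilde\alpha},\alpha+\rho;-e^{x/\tilde\alpha}\big)$ directly: under the substitution $u=e^{x/\tilde\alpha}$ the computation reduces to the Mellin transform of the Wright function at $-u$, for which the classical Mellin--Barnes identity gives $\int_0^\infty u^{s-1}\ttt{W}(\gamma,\beta;-u)\,du=\Gamma(s)/\Gamma(\beta-\gamma s)$ on the appropriate strip. Taking $s=\tilde\alpha(\tfrac12-\i\xi)$, $\gamma=\alpha/\tilde\alpha$, $\beta=\alpha+\rho$, and plugging in the explicit Bernstein--gamma functions $W_{\phi_+}(w)=\Gamma(\tilde\alpha w)/\Gamma(\tilde\alpha)$, $W_{\phi_-}(w)=\Gamma(\rho+\alpha w)/\Gamma(\alpha+\rho)$, one obtains that $\fouho_{J_\psi}$ is a fixed nonzero scalar multiple of $\frac{W_{\phi_+}(\frac12-\i\xi)}{W_{\phi_-}(\frac12+\i\xi)}=m^{\mathbf e}_\psi(\xi)$; since eigenfunctions are determined only up to a scalar this settles the identification, and entireness of $J_\psi$ follows from $\gamma=\alpha/\tilde\alpha>-1$. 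Alternatively, and in the spirit of the proof of Proposition~\ref{prop:one_sided_jump}, I could compute $J_\psi$ by shifting the line of integration in $\tfrac{1}{\sqrt{2\pi}}\int_{\bb{R}+\frac{\i}{2}}e^{\i xz}m_{\Hpsi}(z)\,dz$ and summing the residues of $z\mapsto m_{\Hpsi}(z)$, which after the substitution $w=-\i z$ are located at $w=-n/\tilde\alpha$, $n\ge0$; this reproduces the same Wright series.

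For part (ii) the argument is identical after replacing $\psi$ by $\ov\psi$, using $\widehat{P}_t[\psi]=P_t[\ov\psi]$ and $\ov\psi(\xi)=\phi_-(-\i\xi)\phi_+(\i\xi)$, so that the roles of $\phi_+$ and $\phi_-$ are interchanged; the poles then sit at $w=-(\rho+n)/\alpha$, and factoring out $e^{\rho x/\alpha}$ gives precisely the co-eigenfunction \eqref{eq:co-eigenfunction_1}. The hard part will be the borderline regime $\tilde\alpha=\alpha$ with $\rho$ near $\tfrac12$: there $m^{\mathbf e}_\psi$ belongs to $\bmrm{L}(\bb{R})$ but not to $\bmrm[1]{L}(\bb{R})$, so the Fourier inversion --- equivalently the Mellin integral above --- converges only conditionally, and the residue sum cannot be justified directly. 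I would handle this exactly as in the proof of Proposition~\ref{prop:one_sided_jump}, namely convolve with the approximate identity $\mf{f}_\kappa(x)=\kappa e^{-\kappa x}e^{-e^{-\kappa x}}$, establish the identity for the regularized functions where all integrals converge absolutely, and pass to the limit $\kappa\to\infty$ using the continuity of translation in $\bmrm{L}(\bb{R},e)$. I would also need to record the precise strip of validity of the Wright-function Mellin identity, which, reassuringly, is exactly what produces the condition $\rho>\tfrac12$ in case~(i).
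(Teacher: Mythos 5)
Your proposal is correct and follows essentially the same path as the paper: reduce to Theorem~\ref{thm:spectrum}\eqref{it:point_spect}--\eqref{it:residual_spect} via the Stirling asymptotics of $W_{\phi_\pm}$, identify $\Jpo$ (resp.~$J_{\ov{\psi}}$) by inverting the multiplier, and treat the borderline regime $\tilde{\alpha}=\alpha$, $\rho\in(\tfrac12,1]$ by convolving with $\mf{f}_\kappa$ and passing to the limit, exactly as in the proof of Proposition~\ref{prop:one_sided_jump}. Your primary route --- computing the Mellin transform of the Wright candidate via the identity $\int_0^\infty u^{s-1}\ttt{W}(\gamma,\beta;-u)\,du=\Gamma(s)/\Gamma(\beta-\gamma s)$ --- is just the paper's residue/contour computation read in reverse (and you offer that version too); the only small imprecision is attributing the condition $\rho>\tfrac12$ to the Mellin strip rather than to the requirement $m^{\mathbf e}_{\psi}\in\bmrm{L}(\bb{R})$ in Theorem~\ref{thm:spectrum}, which is where it actually comes from.
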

\begin{proof}
The proof will be again based on the integration of the multiplier function on a suitable contour. Assuming the condition (i) or (ii), from the proof of Theorem~\ref{thm:spectrum}, we know that
\begin{align*}
\Jpo(x)=e^{-\frac{x}{2}}\widehat{\fou}\left(m_{\Hpsi}\left(\cdot+\frac{{\rm{i}}}{2}   \right)\right)(x). \nonumber
\end{align*}
Let us assume that $\tilde{\alpha}>\alpha$. Then, $m_{\Hpsi}\left(\cdot+\frac{{\rm{i}}}{2}   \right)\in\bmrm[1]{L}(\bb{R})$. Therefore, by Fourier inversion we get
\begin{align*}
\Jpo(x)&=\frac{1}{\sqrt{2\pi}}\int_{-\infty+\frac{{\rm{i}}}{2}   }^{\infty+\frac{{\rm{i}}}{2}   }\frac{\Gamma(-\mathrm{i}\tilde{\alpha} z)}{\Gamma(\rho+\alpha(1+{\rm{i}}z)   )}e^{{\rm{i}}zx}\,dz \nonumber
%&=\frac{1}{\sqrt{2\pi}}\lim_{R\to\infty}\int_{-R+\frac{{\rm{i}}}{2}   }^{R+\frac{{\rm{i}}}{2}   }\frac{\pi}{\Gamma(\rho+\alpha(1+{\rm{i}}z)   )\Gamma(1+{\rm{i}}z)   \sin(-\mathrm{i}\alpha z)}e^{{\rm{i}}zx}\,dz \\
:=\frac{1}{\sqrt{2\pi}}\int_{-\infty+\frac{{\rm{i}}}{2}   }^{\infty+\frac{{\rm{i}}}{2}   } G(z)e^{{\rm{i}}zx}\,dz. \nonumber
\end{align*}
Choosing the rectangular contour with vertices $-R, R, R-{\rm{i}}\frac{N}{2\tilde{\alpha}}, -R-{\rm{i}}\frac{N}{2\tilde{\alpha}}$, where $N$ is an odd natural number, we define the integrals on the four segments as
\begin{equation} \nonumber
\begin{aligned}
&I^{(1)}_R=\frac{1}{\sqrt{2\pi}}\int_{-R}^R G(z)e^{{\rm{i}}zx}\,dz , \ \ \ I^{(2)}_{R,N}=\frac{1}{\sqrt{2\pi}}\int_{-R}^{-R-{\rm{i}}\frac{N}{2\tilde{\alpha}}} G(z)e^{{\rm{i}}zx}\,dz \\
&I^{(3)}_{R,N}=\frac{1}{\sqrt{2\pi}}\int_{R}^{R-{\rm{i}}\frac{N}{2\tilde{\alpha}}} G(z)e^{{\rm{i}}zx}\,dz, \ \ \ I^{(4)}_{R,N}=\frac{1}{\sqrt{2\pi}}\int_{-R-{\rm{i}}\frac{N}{2\tilde{\alpha}}}^{R-{\rm{i}}\frac{N}{2\tilde{\alpha}}} G(z)e^{{\rm{i}}zx}\,dz.
\end{aligned}
\end{equation}
We note that the poles of the function $G$ in the rectangle are $\left\{0,-\frac{\rm{i}}{\tilde{\alpha}},\ldots, -{\rm{i}}\frac{\lfloor\frac{N}{2}\rfloor}{\tilde{\alpha}}\right\}$ with residues $\frac{1}{\tilde{\alpha} n!\Gamma(\frac{n\alpha}{\tilde{\alpha}}+\alpha+\rho)}$.
Arguing as in the proof of Proposition~\ref{prop:one_sided_jump}, for any fixed $N$, one can show $I^{(2)}_{R,N}, I^{(3)}_{R,N}\to 0$ as $R\to\infty$. On the other hand, after a change of variable,
\begin{align}
I^{(4)}_{R,N}&=\frac{1}{\sqrt{2\pi}}e^{\frac{Nx}{2\alpha}}\int_{-R}^R \frac{\Gamma(-\frac{N}{2}-{\rm{i}}\tilde{\alpha}\xi)}{\Gamma(\rho+\alpha+\frac{N}{2}+{\rm{i}}\alpha\xi)}e^{{\rm{i}}\xi x}\,d\xi \nonumber \\
&=\frac{e^{\frac{Nx}{2\alpha}}}{\sqrt{2\pi}}\int_{-R}^R\frac{\pi e^{{\rm{i}}\xi x}}{\cosh(\pi\tilde{\alpha}\xi)\Gamma(1+\frac{N}{2}+\mathrm{i}\tilde{\alpha}\xi)\Gamma(\rho+\alpha+\frac{N\alpha}{2\tilde{\alpha}}+{\rm{i}}\alpha\xi)}\,d\xi. \nonumber
\end{align}
Using Stirling formula, we have for all $\xi\in\bb{R}$,
\begin{equation}
\begin{aligned}
&\left|\Gamma\left(1+\frac{N}{2}+\mathrm{i}\tilde{\alpha}\xi\right)\right|\ge\frac{\Gamma(1+\frac{N}{2})}{\cosh^{\frac{1}{2}}(\tilde{\alpha}\pi\xi)} \nonumber \\
&\left|\Gamma\left(\rho+\alpha+\frac{N\alpha}{2\tilde{\alpha}}+\mathrm{i}\alpha\xi\right)\right|\ge\frac{\Gamma(1+\frac{N\alpha}{2\tilde{\alpha}}+\alpha+\rho)}{\cosh^{\frac{1}{2}}(\alpha\pi\xi)} \nonumber
\end{aligned}
\end{equation}
and, then
\begin{align*}
|I^{(4)}_{R,N}|\le \frac{e^{\frac{Nx}{2\alpha}}}{\sqrt{2\pi}\Gamma(1+\frac{N}{2})\Gamma(\rho+\alpha+\frac{N\alpha}{2\tilde{\alpha}})}\int_{-\infty}^\infty \frac{\cosh^{\frac{1}{2}}(\tilde{\alpha}\pi\xi)\cosh^{\frac{1}{2}}(\alpha\pi\xi)}{\cosh(\tilde{\alpha}\pi\xi)}\,d\xi.
\end{align*}
As $\tilde{\alpha}>\alpha$, the integral on the right-hand side is finite and hence, $I^{(4)}_{R,N}\to 0$ as $N\to\infty$, uniformly in $R$. Thus, invoking Cauchy integral formula, we have
\begin{align*}
\Jpo(x)=\sum_{n=0}^\infty\frac{(-1)^n}{\Gamma(\frac{n\alpha}{\tilde{\alpha}}+\alpha+\rho)}\frac{e^{\frac{nx}{\alpha}}}{n!}.
\end{align*}
When $\tilde{\alpha}=\alpha$ and $\rho>\frac{1}{2}$, the function $m_{\Hpsi}\left(\cdot+\frac{{\rm{i}}}{2}   \right)\notin\bmrm[1]{L}(\bb{R})$ for $\rho\in (\frac{1}{2},1]$. In this case, we can use the same idea as in the proof of Proposition~\ref{prop:one_sided_jump}, where we have used the convolution of the integrand with respect to a class of kernel functions $\{\mf{f}_\kappa, \kappa>1\}$ and recalling the  fact (see \eqref{eq:eigenfunction_1} with $\phi_+(s)=s+\alpha+\rho$) that
\begin{align*}
x\mapsto \sum_{n=0}^\infty \frac{(-1)^n}{\Gamma(n+\alpha+\rho)}\frac{e^{\frac{nx}{\alpha}}}{n!} \in \C_0(\bb{R}),
\end{align*}
we can conclude that
\begin{align*}
\Jpo(x)=\sum_{n=0}^\infty \frac{(-1)^n}{\Gamma(n+\alpha+\rho)}\frac{e^{\frac{nx}{\alpha}}}{n!}.
\end{align*}
When $\tilde{\alpha}<\alpha$, $\frac{1}{m_{\Hpsi}}\left(\cdot+\frac{{\rm{i}}}{2}   \right)\in\bmrm{L}(\bb{R})$ and, from the proof of Theorem~\ref{thm:spectrum}, one gets
\begin{align*}
J_{\ov{\psi}}(x)=\frac{1}{\sqrt{2\pi}}\int_{-\infty-\frac{\i}{2}}^{\infty+\frac{{\rm{i}}}{2}   }\frac{\Gamma(\rho-i\alpha z)}{\Gamma(\tilde{\alpha}(1+{\rm{i}}z)   )}e^{{\rm{i}}zx}\,dz.
\end{align*}
The poles of the function $z\mapsto\frac{\Gamma(\rho-{\rm{i}}\alpha z)}{\Gamma(\tilde{\alpha}(1+{\rm{i}}z)   )}$ are $\{-\frac{n+\rho}{\alpha}{\rm{i}}, \ n\in\bb{N}\cup\{0\}\}$ with residues $\frac{1}{\alpha n!\Gamma(\tilde{\alpha}(1+\frac{n+\rho}{\alpha}))}$. Using the same argument as in the case $\tilde{\alpha}>\alpha$ and Cauchy's theorem of residues, \eqref{eq:co-eigenfunction_1} follows.
\end{proof}
\bibliographystyle{ieeetr}

\begin{thebibliography}{10}
	
\bibitem{Applebaum}	
D.~Applebaum, {\em Lévy Processes and Stochastic Calculus}.
\newblock Cambridge Studies in Advanced Mathematics, Cambridge University
Press, 2nd~ed., 2009.

\bibitem{bakry2013}
D.~Bakry, I.~Gentil, and M.~Ledoux, {\em Analysis and geometry of Markov diffusion
  operators.} vol.~348, 2013.

\bibitem{Bart-Patie}
C. Bartholm\'e  and P. Patie, Tur\`an inequalities and complete monotonicity for a class of entire
functions, {Anal. Math.}, 47, no. 3, 507–527, 2021.

\bibitem{bertoin1996levy}
J.~Bertoin, {\em L{\'e}vy processes}, vol.~121.
\newblock Cambridge university press Cambridge, 1996.

\bibitem{BY02}
J.~Bertoin, {The entrance laws of self-similar {M}arkov processes and
  exponential functionals of {L}\'evy processes}, Potential Anal. \textbf{17}, no.~4, 389--400, 2002.

\bibitem{Bertoin_Frag}
J.~Bertoin,
\newblock The asymptotic behavior of fragmentation processes.
\newblock { J. Europ. Math. Soc.}, 5:305--416, 2003.


%\bibitem{BBY}
%J. Bertoin, P. Biane and M. Yor. Poissonian exponential functionals, $q$-series, $q$-integrals, and the moment problem for log-Normal distributions. Seminar on Stochastic Analysis, Random Fields and Applications IV, Birkhauser, 58, 45--56, 2004.

\bibitem{Bertoin-Igor-16}
J.~Bertoin and I.~Kortchemski,
\newblock Self-similar scaling limits of {Markov} chains on the positive
  integers.
\newblock {Ann. Appl. Probab. 26 }, 2556-2595, 2016.

\bibitem{BGT}
N.~H. Bingham, C.~M. Goldie, and J.~L. Teugels, \emph{Regular variation},
  Encyclopedia of Mathematics and its Applications, vol.~27, Cambridge
  University Press, Cambridge, 1987.

\bibitem{Caballero2006}
M.E. Caballero and L.~Chaumont,
\newblock Weak convergence of positive self-similar {M}arkov processes and
  overshoots of {L}\'evy processes. Ann. Probab., 34, 1012-1034, 2006.



\bibitem{CherPat}
P. Cheridito, P. Patie, A. Vaidyanathan and A. Srapionyan,  On non-local ergodic Jacobi semigroups: spectral theory, convergence-to-equilibrium, and contractivity,   {Journal de l'Ecole Polytechnique - Math\'ematiques}, Tome 8, p. 331-378, 2021.

\bibitem{Choi-Patie}
M.~Choi and P.~Patie,
  {Skip-free
  Markov chains},
 Trans. Amer. Math. Soc., 371(10), 7301-7342, 2019.

\bibitem{CP}
M.~Choi and P.~Patie,
Analysis of non-reversible Markov chains via similarity orbits, Combinatorics, Probability and Computing, 29(4):508--536, 2020.

\bibitem{courrege1966}
P.~Courr\`ege.  Sur la forme int\'egro-diff\'erentielle des op\'erateurs de
  $C^\infty_k$ dans $c$ satisfaisant au principe du maximum,  {
  S\'eminaire Brelot-Choquet-Deny. Th\'eorie du potentiel}, vol.~10, no.~1,
  1965-1966.
%\newblock talk:2.


%\bibitem{Dou69}
%R.~G. Douglas, \emph{On the operator equation {$S^{\ast} XT=X$} and related
%  topics}, Acta Sci. Math. (Szeged) \textbf{30} (1969), 19--32. \MR{0250106 (40
%  \#3347)}

\bibitem{DS}
N.~Dunford and J.~T. Schwartz, \emph{Linear operators. {P}art {III}: {S}pectral
  operators}, Interscience Publishers [John Wiley \& Sons, Inc.], New
  York-London-Sydney, 1971, With the assistance of William G. Bade and Robert
  G. Bartle, Pure and Applied Mathematics, Vol. VII.

\bibitem{ebdynkin}
E.~B. Dynkin,
\newblock {\em Markov Processes. {V}ols. {I}, {II}}, volume 122 of {\em
  Translated with the authorization and assistance of the author by J. Fabius,
  V. Greenberg, A. Maitra, G. Majone. Die Grundlehren der Mathematischen
  Wissenschaften, B\"{a}nde 121}.
\newblock Academic Press Inc., Publishers, New York; Springer-Verlag,
  Berlin-G\"{o}ttingen-Heidelberg, 1965.
\bibitem{Ego}
Yg.V. Egorov, On canonical transformations of pseudo-differential operators. Uspehi Mat. Naulc, 25, 235-236, 1969.

\bibitem{engel-nagel}
K.-J. Engel, R.~Nagel, and S.~Brendle, {\em One-parameter semigroups for linear
	evolution equations}, vol.~194.
\newblock Springer, 2000.

\bibitem{hirsch_yor}
F. Hirsch and M. Yor, On the Mellin transforms of the perpetuity and the remainder variables associated to a subordinator, {\em Bernoulli}, vol~19, p.~1350--1377, 2013.

\bibitem{hoh1998pseudo}
W.~Hoh,  Pseudo differential operators generating Markov processes,  {\em
  Habilitations-schrift, Universit{\"a}t Bielefeld}, 1998.

\bibitem{Jacob}
N.~Jacob, \emph{Pseudo {D}ifferential {O}perators and {M}arkov processes vol.
  1: {F}ourier {A}nalysis and {S}emigroups}, vol.~1, Imperial College Press,
  2001.

\bibitem{Jacobsen-Yor}
M. Jacobsen and M. Yor, Multi-self-similar Markov processes on $\bb{R}^n_+$ and their
Lamperti representations. Probab. Theory Related Fields, 126, no. 1, 1–28, 2003.

\bibitem{kwa}
T. Juszczyszyn and M. Kwa\'{s}nicki,
     Hitting times of points for symmetric {L}\'{e}vy processes with
              completely monotone jumps,
  {Electron. J. Probab.}, {20}, 2015.



\bibitem{kK}
A. Kuznetsov and M.  Kwa\'{s}nicki,
      {Spectral analysis of stable processes on the positive
              half-line},
  {Electron. J. Probab.},
    {23}, {2018}.
    % PAGES = {Paper No. 10, 29},

		
\bibitem{kp2013}
A. Kuznetsov and J.C. Pardo, Fluctuations of Stable Processes and Exponential Functionals of Hypergeometric L\'evy Processes,
{ Acta Applicandae Mathematicae}, {123}, {2013}.

\bibitem{kw}
 M. Kwasnicki,  Spectral analysis of subordinate Brownian motions on the half-line. Studia Math.
206(3): 211–271, 2011.

\bibitem{kw1}
M. Kwasnicki, Suprema of L\'evy processes with completely monotone jumps: Spectral theoretic approach, 	arXiv:2212.11390, 2022.

\bibitem{kyprianou2014fluctuations}
A.~E. Kyprianou, {\em Fluctuations of L{\'e}vy processes with applications:
  Introductory Lectures}.
\newblock Springer Science \& Business Media, 2014.


\bibitem{Lamperti1972}
J.~Lamperti,  Semi-stable Markov processes, I,  { Zeitschrift f{\"u}r
  Wahrscheinlichkeitstheorie und Verwandte Gebiete}, vol.~22, pp.~205--225, Sep
  1972.


\bibitem{Patie-Miclo}
L. Miclo  and P. Patie, On interweaving relations,   J. Funct. Anal. 280, no. 3, 53pp., 2021.


\bibitem{MPS}
L. Miclo, P. Patie and R. Sarkar, Discrete self-similar and ergodic Markov chains,   Ann. Probab., no. 6, 2085–2132, 2022.

\bibitem{Sav-M}
M. Minchev and M. Savov, Asymptotic of densities of exponential functionals of subordinators,   https://arxiv.org/abs/2104.05381v2, 2021.

\bibitem{Mu}
J. Mucha, Spectral theory for one-dimensional (non-symmetric) stable processes killed upon hitting the origin,
 Electron. J. Probab. 26, 1-33, 2021.

\bibitem{MuS}
B. Muckenhoupt and E. M. Stein, Classical expansions and their relation to conjugate harmonic functions.
Trans. Amer. Math. Soc., 118:17-92, 1965.


\bibitem{Patr}
P. Patie, A refined factorization of the exponential law. Bernoulli, 17(2):814-826, 2011.

\bibitem{Patie-abs-08}
P. Patie, Law of the absorption time of some positive self-similar Markov processes. Ann. Probab. 40, no. 2, 765--787, 2012.

\bibitem{PatieAIHP}
P. Patie,  Infinitely divisibility of solutions to some semi-stable integro-differential equations
and exponential functionals of L\'evy processes, Ann. Inst. H. Poincar\'e Probab. Statist., 45(3), 667-684, 2009.


\bibitem{patiesavov1} P. Patie and M. Savov, Cauchy problem of the non-self-adjoint Gauss–Laguerre semigroups
and uniform bounds for generalized Laguerre polynomials. J. Spectral Theory 7(3):
797–846, 2017.

\bibitem{patiesavov}
P.~Patie and M.~Savov,  Spectral expansions of non-self-adjoint generalized
  Laguerre semigroups,
\newblock { Mem.\ Amer.\ Math.\ Soc.}, 272, no. 1336, vii+182 pp, 2021.



\bibitem{patie2018}
P.~Patie and M.~Savov,  Bernstein-gamma functions and exponential functionals
  of {L}\'evy processes,  {Electron. J. Probab.}, vol.~23, p.~101 pp., 2018.



\bibitem{PZS} P. Patie, M. Savov, and Y. Zhao, Intertwining, excursion theory and Krein theory of strings
for non-self-adjoint Markov semigroups. Ann. Probab., Vol. 47, No. 5, 3231-3277, 2019.



\bibitem{Patie-Simon}
P.~Patie and T.~Simon,
\newblock Intertwining certain fractional derivatives.
\newblock { Potent. Anal.}, 36:569--587, 2012.


\bibitem{patie2019spectral}
P.~Patie and A.~Vaidyanathan,  A spectral theoretical approach for
  hypocoercivity applied to some degenerate hypoelliptic, and non-local
  operators,  { Kinetic and Related Models}, 13(3): 479-506, 2020.


\bibitem{patie_zhao2017}
P.~Patie and Y.~Zhao,  Spectral decomposition of fractional operators and a
  reflected stable semigroup,  {Journal of Differential Equations},
  vol.~262, no.~3, pp.~1690--1719, 2017.


\bibitem{Pazy}
A.~Pazy, {\em Generation and Representation}, pp.~1--41.
\newblock New York, NY: Springer New York, 1983.





\bibitem{SchillingSongVondracek10}
R.~L. Schilling, R.~Song, and Z.~Vondra{\v{c}}ek,
\newblock {\em Bernstein functions}, volume~37 of {\em de Gruyter Studies in
  Mathematics}.
\newblock Walter de Gruyter \& Co., Berlin, 2010.
%\newblock Theory and applications.

\bibitem{Shubin}
M. A. Shubin, \newblock {\em Pseudodifferential operators and spectral theory}. Springer-Verlag, Berlin, Heidelberg, New York,  300 pp., 2001.

\bibitem{Sjos}
J. Sj\"ostrand, Non-self-adjoint differential operators, spectral asymptotics and
random perturbations, Pseudo-Differential Operators Theory and
Applications Vol. 14, Birkh\"auser, 2019.

\bibitem{vigon2002}
 V.~Vigon, Simplifiez vos L{\'e}vy en titillant la factorisation de Wierner-Hopf, {\em PhD Thesis}, 2002.

\end{thebibliography}

\end{document}